\DeclareMathOperator*{\hocolim}{hocolim}
\DeclareMathOperator{\Hot}{H^{0}}
\DeclareMathOperator{\cone}{cone}
\DeclareMathOperator{\MC}{MC}
\DeclareMathOperator{\MCmod}{\mathcal{MC}}
\DeclareMathOperator{\MChmod}{\mathcal{MC}_h}
\DeclareMathOperator{\MCdg}{MC_{dg}}
\DeclareMathOperator{\Tw}{Tw}
\DeclareMathOperator{\Twfg}{Tw_{perf}}
\DeclareMathOperator{\Twfgfree}{Tw_{fg}}
\DeclareMathOperator{\Twm}{{Tw}^m}
\DeclareMathOperator{\Twfgm}{Tw_{perf}^m}
\DeclareMathOperator{\Twred}{Tw^{red}}
\DeclareMathOperator{\Twfgred}{Tw_{perf}^{red}}
\DeclareMathOperator{\ad}{ad}
\DeclareMathOperator{\Hom}{Hom}
\DeclareMathOperator{\uHom}{\underline{Hom}}
\DeclareMathOperator{\sHom}{\cat H \! \it{om}}
\DeclareMathOperator{\End}{End}
\DeclareMathOperator{\uEnd}{\underline{End}}
\DeclareMathOperator{\Ext}{Ext}
\newcommand{\Dperf}{D_{\operatorname{perf}}}
\newcommand{\Dlf}{D_{\operatorname{lf}}}
\begin{document}
\markright{Maurer-Cartan moduli}
\bibliographystyle{hsiam2}

\setcounter{tocdepth}{2}
\setlength{\parindent} {0pt}
\setlength{\parskip}{1ex plus 0.5ex}

\newcommand{\noproof}{\hfill \ensuremath{\Box}}

\newcommand{\cat}[1]{\mathcal{#1}} % dto outside $$
\newcommand{\ob}{\textrm{ob }}
\newcommand{\mor}{\textrm{mor }}
\newcommand{\id}{\mathbf 1} % the identity
\newcommand{\mods}{\textrm{-Mod}}

\newcommand{\watchit}{\marginpar{$\bigstar$}}

\newcommand{\ground}{k}

%sets
\newcommand{\set}[1]{\mathbb{#1}}
\newcommand{\Q}{\mathbb{Q}}
\newcommand{\C}{\mathbb{C}}
\newcommand{\Z}{\mathbb{Z}}
\newcommand{\R}{\mathbb{R}}
\newcommand{\G}{\mathbb{G}}

\newcommand{\PR}{\mathbb{P}}
\newcommand{\A}{\mathcal{A}}
\newcommand{\B}{\mathcal{B}}
\newcommand{\K}{\mathcal{K}}

% greeks
\newcommand{\De}{\Delta}
\newcommand{\Ga}{\Gamma}
\newcommand{\Om}{\Omega}
\newcommand{\ep}{\epsilon}
\newcommand{\de}{\delta}
\newcommand{\la}{\lambda}
\newcommand{\al}{\alpha}
\newcommand{\om}{\omega}

\renewcommand{\to}{\rightarrow}
\newcommand{\oo}{\infty}
\newcommand{\di}{\mbox{d}}

\newcommand{\op}{^{\textrm{op}}} %new

\newcommand{\comment}[1]{}
\newcommand{\margin}[1]{\marginpar{\footnotesize #1}}

\theoremstyle{plain}
\newtheorem{thm}{Theorem}[section]
\newtheorem{cor}[thm]{Corollary}
\newtheorem{lemma}[thm]{Lemma}
\newtheorem{prop}[thm]{Proposition}
\newtheorem{conj}{Conjecture}
\newtheorem{claim}{Claim}

\theoremstyle{definition}
\newtheorem{defn}{Definition}[section]
\newtheorem*{altdef}{Alternative Definition}
\newtheorem{eg}{Example}[section]%[chapter]
\newtheorem*{conv}{Convention}
\newtheorem{fact}{Construction}
\newtheorem{qn}{Question}

\theoremstyle{remark}
\newtheorem{rk}{Remark}[section]

\newtheorem{frk}{Temporary Remark}%[chapter]
\newtheorem{ork}{Temporary Remark}

\def \frk{\color{gray}\begin{ork}}
\def \endfrk{\end{ork}\color{black}}

\NewEnviron{killcontents}{}
%\let\frk\killcontents %This replaces the content of all frk's
%\let\endfrk\endkillcontents

% B. TITELSEITE UND INHALTSVERZEICHNIS
% ***************************************************************************************************

\title{Maurer-Cartan moduli and theorems of Riemann-Hilbert type}

\author{Joseph ~Chuang}
\address{Department of Mathematics\\
	City University London\\
	Northampton Square\\
	London EC1V 0HB\\United Kingdom}
\email{j.chuang@city.ac.uk}

\author{Julian Holstein}
\address{Department of Mathematics\\
	Universit\"at Hamburg\\
20146 Hamburg\\
Germany	
}
\email{julian.holstein@uni-hamburg.de}

\author{Andrey~Lazarev}
\address{Department of Mathematics and Statistics\\
	Lancaster University\\
	Lancaster LA1 4YF\\United Kingdom}
\email{a.lazarev@lancaster.ac.uk}
\thanks{This work was partially supported by EPSRC grants EP/N015452/1 and EP/N016505/1}
%\thanks{A substantial part of this paper was completed during the third author's visit to IHES, and he wishes to thank this institution for excellent working conditions, and M. Kontsevich for stimulating discussions.}

\date{}
\begin{abstract}
	We study Maurer-Cartan moduli spaces of dg algebras and associated dg categories and show that, while not quasi-isomorphism invariants, they are invariants of strong homotopy type, a natural notion that has not been studied before. We prove, in several different contexts, Schlessinger-Stasheff type theorems comparing the notions of homotopy and gauge equivalence for Maurer-Cartan elements as well as their categorified versions. As an application, we re-prove and generalize Block-Smith's higher Riemann-Hilbert correspondence, and develop its analogue for simplicial complexes and topological spaces.
\end{abstract}
\maketitle
\tableofcontents

\section{Introduction}
The simplest version of the Riemann-Hilbert correspondence is the statement, known for many decades, that the category of flat vector bundles
on a smooth manifold $M$ is equivalent to the category of representations of its fundamental group $\pi_1(M)$. Recently Block and Smith \cite{Block09} developed a higher generalization of this statement. In it, the category of representations of $\pi_1(M)$ was replaced by a differential graded category of infinity local systems on $M$ and the category of flat vector bundles by a differential graded (dg) category of certain modules, called \emph{cohesive modules}, over $\Omega(M)$, the de Rham algebra of $M$. The correspondence was given by a certain $A_\infty$ functor.

The proof in loc.cit.\ is technically complicated and our original motivation was to understand it in simple terms, particularly keeping in mind that one side of the equivalence -- the category of infinity local systems -- is essentially the same as the more classical notion of a cohomologically locally constant (clc) complex of sheaves, i.e.\  a complex of sheaves whose cohomology forms an ordinary (graded) locally constant sheaf. An obvious approach to proving the desired result is based on the observation that $\Omega(M)$ is the global sections of the sheaf of de Rham algebras on $M$ and the latter is a soft resolution of the constant sheaf $\mathbb R$. Similarly, a dg module $N$ over $\Omega(M)$ could be sheafified and viewed as a module over the sheaf of de Rham algebras. Imposing suitable restrictions on $M$, one could hope that the resulting sheaf of modules would be quasi-isomorphic to a clc sheaf and that this procedure establishes an equivalence between the derived category of clc complexes of sheaves on $M$ and a suitable homotopy subcategory of dg $\Omega(M)$-modules (such as cohesive $\Omega(M)$-modules). Taking into account that
the category of clc sheaves makes sense for spaces more general than manifolds, e.g.\ simplicial complexes, one could further ask whether this programme can be carried out in this more general context. Next, one could try to achieve a similar result working with the singular cochain complex of a topological space or a simplicial set, with values in rings other than $\mathbb R$, e.g.\ $\mathbb Z$. Finally, one should study the functorial properties of this construction, in particular its liftability to the suitable homotopy category of spaces that are being considered (manifolds, simplicial complexes, topological spaces or simplicial sets).

Somewhat surprisingly, this naive
approach does work and eventually produces all the results one would initially hope to obtain (and, in fact, quite a bit more). The main difficulty in implementing the strategy outlined above is proving, in different contexts, that the associated complex of sheaves of a dg $\Omega(M)$-module $N$ is clc. 
To show this, one needs to work with Maurer-Cartan (MC) elements in dg algebras and their moduli spaces. 
MC elements and their moduli arise in deformations of various geometric and algebraic objects  (flat connections in vector bundles, complex analytic manifolds \cite{Huybrechts05},  associative algebras \cite{Kontsevich03}), models of function spaces in rational homotopy theory \cite{Lazarev13} and innumerable other contexts of differential and algebraic geometry, homological and homotopical algebra. MC elements are also known as `twisting cochains', particularly in algebro-topological literature \cite{BrownJr59}. 

A priori there are different notions of equivalence for MC elements and it is both necessary for our applications and generally desirable to compare them.
We establish various versions of the classical Schlessinger-Stasheff theorem \cite{Schlessinger12} which states that, under appropriate conditions, homotopy equivalent MC elements must be gauge equivalent, and vice-versa. This result is usually formulated in the context of dg (pro)nilpotent Lie algebras but we need it for dg associative algebras.

Schlessinger-Stasheff type results are established in this paper in two different contexts: analytical (for dg algebras such as the smooth de Rham algebra of a manifold) and algebraic (for dg algebras without any topology or with a pseudo-compact topology such as the singular cochain algebra of a topological space).

The algebraic version of the Schlessinger-Stasheff theorem is particularly interesting and has ramifications far beyond higher Riemann-Hilbert correspondence; some of them have been explored in the present paper but others await further study.

We associate to any dg algebra $A$
 several dg categories, of which the most important is the category of twisted $A$-modules $\Tw(A)$. A version of this category (in the context where $A$ itself is a dg category) was first introduced by Bondal and Kapranov in the seminal paper \cite{Bonda90} where it was called the category of (two-sided) twisted complexes and denoted by $\operatorname{Pre-Tr(A)}$ (in fact, $\Tw(A)$ is obtained from   $\operatorname{Pre-Tr(A)}$ by adding infinite direct sums of objects). The homotopy category $\Hot(\Tw(A))$ is superficially similar to $D(A)$, the derived category of $A$, but is a finer invariant; in particular it is not, generally, a quasi-isomorphism invariant of $A$, unlike $D(A)$ (as pointed out by Drinfeld \cite[Remark 2.6]{Drinfeld04}). It turns out that the correct notion to use in this context is that of \emph{strong homotopy equivalence} of dg algebras. This is a chain homotopy equivalence that takes into account the multiplicative structure and it was not studied before, as far as we know. We show that two strongly homotopy equivalent dg algebras have quasi-equivalent dg categories of twisted modules.

Furthermore, the notion of strong homotopy and strong homotopy equivalence exists also for dg coalgebras (equivalently, pseudo-compact dg algebras), such as the normalized chain complex of a simplicial set, and we show that two weakly equivalent Kan simplicial sets give rise to strongly homotopy equivalent dg coalgebras. This is an important ingredient in the proof of the singular version of the higher Riemann-Hilbert correspondence, but it also has philosophical significance as it shows that the singular chain coalgebra on a simplicial set that is not Kan (or fibrant) might have the wrong homotopy type.
The simple example of a non-fibrant model of the circle $S^1$ shows that this indeed happens, cf. Remark \ref{rem:cobar} below. 
This phenomenon also showed up in the recent paper by Rivera and Zeinalian \cite{Zeina18} where a generalization of Adams' cobar-construction to the non-simply connected case was established.

Denoting by $C^*(X)$ the normalized cochain algebra of a Kan simplicial set $X$, we show that the homotopy category of twisted $C^*(X)$-modules is equivalent to the derived category of clc complexes of sheaves on $|X|$, the geometric realization of $X$. If $X$ is not Kan, the category $\Tw(C^*(X))$ has no homotopy invariant meaning, but one could speculate that it is related to the category of sheaves on $|X|$ that are constructible with respect to some stratification. A related idea is contained in Kontsevich's preprint \cite[pp. 3-4]{Kontsevich09}.

The paper is organized as follows. Section \ref{basic} introduces the notion of an MC element in a dg algebra as well as concomitant concepts: gauge equivalence, MC twisting and a notion of \emph{homotopy gauge equivalence}  
\footnote{It was pointed out to us by the referee that the notion of a homotopy gauge equivalence was already present in \cite{Behrend17} where it was called `quasi-invertible Maurer-Cartan element' and Proposition 8.4 in op.cit. is essentially equivalent to our Theorem \ref{thm-singularss}.} that is, as the name suggests, a relaxation of familiar gauge equivalence to an up to homotopy notion.

Section \ref{twisted} introduces twisted modules, and gives a comparison with Block's cohesive modules \cite{Block10}. In Section \ref{smooth} we study smooth homotopies of topological algebras and their MC elements, and prove an appropriate analogue of the Schlessinger-Stasheff theorem, its categorified version and show that homotopic maps of manifolds give rise to isomorphic functors between the corresponding categories of twisted modules over their de Rham algebras. In Section \ref{strong} we introduce the notions of a strong homotopy of dg algebra morphisms and of a strong homotopy equivalence. A comparison is given with various weaker notions, of which the notion of \emph{derivation homotopy} has been previously known, particularly in the context of rational homotopy theory. We obtain a suitable version of the Schlessinger-Stasheff theorem that implies that strongly homotopy equivalent dg algebras have quasi-equivalent dg categories of twisted modules and obtain a similar result for pseudo-compact dg algebras. In Section \ref{simplicial} we apply our results to normalized cochain algebras of simplicial sets and show that weakly equivalent Kan simplicial sets give rise to quasi-equivalent categories of twisted modules.

In Section \ref{sheaves} we consider complexes of sheaves on a locally ringed space and, using our Schlessinger-Stasheff theorems, show that, under suitable assumptions, the homotopy category of perfect (i.e.\ finitely generated up to homotopy retractions) twisted modules over the dg algebra of global sections is equivalent to the derived category of perfect complexes. This is applied in Section \ref{applications} to produce versions of the higher Riemann-Hilbert correspondence for smooth, possibly non-compact, manifolds and finite-dimensional simplicial complexes, thus generalizing the results of Block and Smith \cite{Block09}. We also consider the case of the Dolbeault algebra and coherent sheaves on a complex manifold, slightly strengthening the result of Block \cite{Block10}. Finally, we treat the most interesting case, that of the singular cochain algebra on a topological space and the corresponding higher Riemann-Hilbert correspondence. The latter is obtained under very general assumptions, i.e.\ we consider any locally contractible topological space and its dg category of possibly infinitely generated and unbounded clc sheaves over any ring of finite homological dimension.

The paper contains an appendix where relevant facts from the theory of nuclear spaces are collected.

\subsection{Notation and conventions}\label{sect-notations}
We work in the category of $\mathbb Z$-graded dg modules over a fixed commutative ring $\ground$; an object in this category is a pair $(V,d_V)$ where $V$ is a graded $\ground$-module and $d_V$ is a differential on it; it will always be assumed to be of cohomological type (so it raises the degree of a homogeneous element). Unmarked tensor products and Homs will be understood to be taken over $\ground$. The  \emph{shift} of a graded $\ground$-module $V$ is the graded $\ground$-module $V[1]$ with $V[1]^i=V^{i+1}$.

A \emph{pseudo-compact relative} graded $\ground$-module is a a projective limit of finitely generated free $\ground$-modules; it is thus complete and separated with respect to the projective limit topology. The adjective `relative' pertains to the discrete ground ring $\ground$; note that in the original definition of Gabriel \cite{Gabriel62} the ground ring is itself supposed to be topological and pseudo-compact modules considered were more general, i.e. not necessarily topologically free. Later on, we shall omit the adjective `relative' as no other pseudocompact modules will be considered.  Pseudo-compact $\ground$-modules form a category where maps are required to be continuous. The category of pseudo-compact $\ground$-modules is anti-equivalent to that of (discrete) free $\ground$-modules via $\ground$-linear
duality. The category of pseudo-compact $\ground$-modules is monoidal:
if $V=\varprojlim V_{\alpha}$ and $U=\varprojlim U_{\beta}$ are two
pseudo-compact $\ground$-modules represented as inverse limits of finitely generated free $\ground$-modules, then $V\hat{\otimes}U:=\varprojlim_{\alpha,\beta}(V_{\alpha}\otimes U_\beta)$. Later on, the hat will always be omitted (but understood) for the tensor product of two pseudo-compact $\ground$-modules. We will also need to form the tensor product of a pseudo-compact $\ground$-module $V=\varprojlim V_{\alpha}$ and a discrete $\ground$-module $U$; such a tensor product will be defined as $V\hat{\otimes} U:=\varprojlim_\alpha(V_{\alpha}\otimes U)$ and, as before, the hat will be omitted but understood. Note that the tensor product of a pseudo-compact and discrete $\ground$-modules has a topology but is not, in general, pseudo-compact. Overviews of this monoidal structure can be found, e.g. in \cite{Hamilton09} (where pseudo-compact modules are called profinite) and in \cite{Vandenberg15}.

A dg algebra is an associative monoid in the dg category of dg $\ground$-modules and in the examples we consider its underlying $\ground$-module is free.  A (right) dg module over a dg algebra $A$ is a dg $\ground$-module $V$ together with a map $V\otimes A\to V$ of dg $\ground$-modules  satisfying the usual conditions of associativity and unitality. Similarly a pseudo-compact dg algebra is a monoid in the monoidal category of pseudo-compact $\ground$-modules. Via continuous linear duality a pseudo-compact dg algebra becomes a dg coalgebra, and the two notions are therefore equivalent. We, however, will work consistently with pseudo-compact algebras rather than coalgebras. An important example of a pseudo-compact dg algebra over $\mathbb Z$ is the singular integer-valued cochain complex $C^*(X,\mathbb Z)$ of a topological space $X$ (or, more pertinently, its normalized version); it is pseudo-compact as dual to the dg coalgebra $C_*(X,\mathbb Z)$ of singular chains on $X$.

We will consider dg \emph{contramodules} over dg pseudo-compact algebras, cf. \cite{Positselski11, Positselski15}; a (right) contramodule over a pseudo-compact algebra $A$ is a \emph{discrete} $\ground$-module $V$ supplied with a `contra-action' map $V\otimes A\to V$ satisfying the usual conditions of associativity and unitality. Note that in loc.cit. a contramodule $M$ over a \emph{coalgebra} $C$ is defined via a structure map $\Hom(C,V)\to V$ satisfying 
suitable conditions; this definition is equivalent, via dualization $A:=C^*$, to ours.

We reiterate that $V\otimes A$ is a \emph{completed} tensor product so a contramodule is not merely an $A$-module where the topology on $A$ is disregarded; at the same time the contra-action map $V\otimes A\to V$ is \emph{not} required to be continuous. Importantly, a contramodule cannot be viewed as a module over a monoid in a symmetric monoidal category in same way as discrete modules or pseudo-compact modules can; this  subtlety  makes the category of contramodules quite peculiar.  Prominent among contramodules are those of the form $V\otimes A$ with the $A$-(contra)action given by the right multiplication. These contramodules are free in the sense that if $U$ is another $A$-contramodule, then $\Hom_A(V\otimes A,U)\cong \Hom_{\ground}(V,U)$ just as it is in the case of usual free $A$-modules. Contramodules encountered in this paper will only be free (and so we will steer clear of various peculiar phenomena alluded to above). For example, $X$ is a topological space and $V$ is a (possibly infinitely generated) free abelian group then $C^*(X,V)\cong V\otimes C^*(X,\mathbb Z)$, the singular cochain complex of $X$ with coefficients in $V$ is a free $C^*(X, \mathbb Z)$-contramodule.

If $M$ is a dg object (such as a dg module, dg algebra etc), we will write $M^{\#}$
for its underlying graded object (i.e. graded module, graded algebra etc).

A \emph{dg category} in this paper will be understood to be a category enriched over dg $\ground$-modules. For example, if $A$ is a dg algebra then the category of dg $A$-modules is a dg category; similarly the category of contramodules over a pseudo-compact dg algebra is also a dg category. The dg $\ground$-module of homomorphisms in a dg category $C$ will be denoted by $\underline{\Hom}(-,-)$ and similarly for endomorphisms. The homotopy category $\Hot(C)$ of the dg category $C$ has the same objects as $C$ and for two objects $O_1, O_2$ in $C$ we have $\Hom_{\Hot(C)}(O_1,O_2):=H^0[\uHom_C(O_1,O_2)]$.

A dg functor $F:C\to C^\prime$ between two dg categories is \emph{quasi-essentially surjective} if $\Hot(F):\Hot(C)\to\Hot(C^\prime)$ is essentially surjective and \emph{quasi-fully faithful} if $F$ induces quasi-isomorphisms on the $\Hom$-spaces; if both conditions are satisfied then $F$ is called a \emph{quasi-equivalence}. A stronger notion is that of a \emph{dg equivalence}: this is a dg functor
$F:C\to C^\prime$  admitting a quasi-inverse dg functor $G:C^\prime\to C$, in the sense that there exist natural closed isomorphisms $F\circ G\cong \id_{C^\prime}$ and $G\circ F\cong\id_{C} $.

A dg category is \emph{strongly pre-triangulated} if it admits cones and shifts, and has a zero object (precise definitions can be found in e.g. \cite{Drinfeld04}), and \emph{pre-triangulated} if it is quasi-equivalent to a strongly pre-triangulated category. A dg functor between  pre-triangulated dg categories is a quasi-equivalence if and only if it induces an equivalence on their homotopy categories.
A category dg-equivalent to a strongly pre-triangulated category is likewise strongly pre-triangulated. Examples of strongly pre-triangulated categories are provided by dg $A$-modules or dg $A$-contramodules where $A$ is a dg algebra or a dg pseudo-compact algebra respectively.

If $X$ is a topological space, we denote by $C_*(X)$ its normalized singular chain dg coalgebra with coefficients in $\ground$ and by $C^*(X)$  its $\ground$-dual normalized cochain (pseudo-compact) dg algebra; similarly if $X$ is a simplicial set, $C_*(X)$ and $C^*(X)$ will stand for its normalized chain dg coalgebra and normalized cochain (pseudo-compact) dg algebra. 

We will call a complex of sheaves on a topological space a \emph{dg sheaf}. For a $\ground$-module $M$ we define by $\underline{M}$ the corresponding constant sheaf on a given topological space. For two dg sheaves $\mathcal {F,G}$ the corresponding dg sheaf of homomorphisms is denoted by ${\sHom}(\mathcal {F,G})$.

We denote by $\Omega(M)$ the de Rham algebra of a smooth manifold $M$. If $K$ is a simplicial complex, then we write $\Omega(K)$ for its \emph{piecewise smooth} de Rham algebra. 
Recall that a smooth form on an $n$-simplex $\Delta^n$ is a smooth form on the interior of $\Delta^n$ such that it and all its derivatives extend continuously to the boundary of $\Delta^n$. It follows from Seeley's extension theorem \cite{Seeley64} that such a form restricts to piecewise smooth forms on the faces of $\Delta^n$. The elements of $\Omega(K)$ are collections of smooth forms on the simplices of $K$ that are compatible with restriction maps. We define the sheaf $\Omega$ on $|K|$, the geometric realization of $K$, by setting $\Omega(U) = \lim_{\Delta^{n}\in K} \cat A^{*}(|\Delta^{n}|\cap U)$ for $U \subset |K|$. Then it is clear that  $\Omega(K)$ coincides with the global sections of $\Om$.
 
When working with complete locally convex spaces $U$ and $V$, we will write $U\otimes V$ for the completed projective tensor product of $U$ and $V$; in the examples relevant to us, $U$ and $V$ will be nuclear, for which this choice of a tensor product is isomorphic to any other reasonable one.  

\subsection{Acknowledgements}
The authors would like to thank Jonathan Block, Chris Braun and Maxim Kontsevich for stimulating discussions, and the anonymous referee for drawing our attention to the paper \cite{Behrend17}.

We also thank Zhaoting Wei for finding some mistakes in an earlier version of this paper and sharing a proof for Lemma \ref{lemma-fineperfect}.

A substantial part of this paper was completed during the third author's visit to IHES, and he wishes to thank this institution for excellent working conditions.

\section{Maurer-Cartan elements for algebras: basic notions, definitions and examples}\label{basic}
Let $A$ be a dg algebra.
\begin{defn}
An element $x\in A^1$ is \emph{Maurer-Cartan}  or MC if it satisfies the equation \begin{equation}\label{eq:MC}  d(x) + x^{2}= 0.\end{equation}
The set of Maurer-Cartan elements in $A$ will be denoted by $\MC(A)$.

The group $A^\times$ of invertible degree 0 elements in $A$ acts on $\MC(A)$ by \emph{gauge equivalences}: for $g\in A^\times, x\in \MC(A)$ set \[g\cdot x=gxg^{-1}-d(g)g^{-1}\]
%We also call such a $g$ a \emph{gauge equivalence}  between MC elements $a, a'$ is given by units $f \in A^{0}$ such that $a' = df.f^{-1} + faf^{-1}$.
The \emph{Maurer-Cartan moduli set} $\MCmod(A)$ is the quotient of $\MC(A)$ modulo gauge equivalences.
\end{defn}
We now introduce the notion of MC \emph{twisting}.
\begin{defn}
For $x\in \MC(A)$ the dg $A$ module $A^{[x]}$ has $A$ as its underlying graded space and the differential $d^{[x]}:$
\[
d^{[x]}(a):=d(a)+xa.
\]
The right $A$-module structure on $A^{[x]}$ is the ordinary right multiplication. We will call $A^{[x]}$ the \emph{module twisting} of $A$ by $x$. Similarly define the \emph{algebra} twisting $A^x$ as the dg algebra
having $A$ as an underlying graded algebra and the differential $d^x:$
\[
d^x(a)=d(a)+[x,a].
\]
Note that the MC condition (\ref{eq:MC}) for $x$ implies (in fact, is equivalent to) $d^{[x]}$ squaring to zero in $A^{[x]}$. It also implies that $d^x$ squares to zero in $A^x$. With these definitions, $A^{[x]}$ becomes a dg $(A^x,A)$-bimodule.
\begin{eg}
Let	$X$ be a smooth manifold and $E\to X$ be a flat vector bundle on $X$. Consider $\End(E)$, the associated endomorphism bundle and set $A=\Omega(X,\End(E))$, the de Rham algebra of $X$ with values in $\End(E)$. The given flat structure determines a derivation $d$ on $A$ of square zero; if the bundle $E$ is topologically trivial then $d$ could be taken to be the ordinary de Rham differential. Then an MC element of $A$ is an $\End(E)$-valued 1-form $x$ on $X$ satisfying the MC equation (\ref{eq:MC}). The set $\MC(A)$ is the set of all flat connections on the bundle $E$ and $\MCmod(A)$ is the set of gauge equivalence classes of such flat connections. The complexes $A^{[x]}$ and $A^x$ are respectively one-sided and two-sided twisted de Rham complexes of $X$ with values in $\End(E)$.
\end{eg}
\begin{eg}\label{ex:universal}
Let $A:=\ground[x], d(x)=-x^2$. Clearly $x$ is a non-zero MC element of $A$, and it is not gauge equivalent to 0. This algebra is universal in the sense that an MC element $y$ in a dg algebra $B$ is equivalent to a dg algebra map $A\to B$ with $x\mapsto y$. Note that $A$ is quasi-isomorphic to $\ground$, which implies that the MC moduli set is not quasi-isomorphism invariant.
\end{eg}	
\end{defn}
Recall that  the category of (right) $A$-modules is enriched over dg modules: for any two right dg $A$-modules $M$ and $N$, we have the dg module of homomorphisms  $\underline{\Hom}(M,N)$ from $M$ to $ N$; it is the graded
vector space $\bigoplus_{n=-\infty}^\infty\Hom(M, N[n])$ with the differential $d(f)(m):= df(m)-(-1)^{|f|}f(dm)$. Then we have the following result whose proof is straightforward inspection.
\begin{prop}
Let $x,y\in\MC(A)$.	The dg module $A^{[x,y]}$ of right $A$-module homomorphisms $A^{[x]}\to A^{[y]}$ has $A$ as its underlying graded space and the differential
$d^{[x,y]}:$
\[
d^{[x,y]}(a):=d(a)+ya-(-1)^{|a|}ax.
\]
The operations of left and right multiplications determine a dg $(A^y,A^x)$-bimodule structure on $A^{[x,y]}$. \qed
\end{prop}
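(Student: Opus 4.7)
The plan is a direct verification, in three steps, exploiting the fact that both $A^{[x]}$ and $A^{[y]}$ have $A$ as underlying graded right $A$-module, so their graded right $A$-module maps are freely parameterized by where they send $1 \in A$.

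First, I would identify the underlying graded module. Since $A^{[x]}$ and $A^{[y]}$ coincide with $A$ as graded right $A$-modules, and the functor $M \mapsto M(1)$ identifies right $A$-linear graded maps $A \to A$ with elements of $A$, we obtain a graded isomorphism $\uHom_A(A^{[x]},A^{[y]}) \cong A$, $f \mapsto f(1)$, with inverse sending $a \in A$ to the map $b \mapsto ab$. This gives the claimed underlying graded structure on $A^{[x,y]}$.

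Next, I would compute the induced differential. Under the standard dg-category convention $d(f) = d_{A^{[y]}} \circ f - (-1)^{|f|} f \circ d_{A^{[x]}}$, for $a \in A$ viewed as the map $b \mapsto ab$ of degree $|a|$, evaluation at $1$ yields
\[
d(f)(1) = d^{[y]}(a) - (-1)^{|a|} f\bigl(d^{[x]}(1)\bigr) = d(a) + ya - (-1)^{|a|} f(x) = d(a) + ya - (-1)^{|a|} ax,
\]
which matches $d^{[x,y]}(a)$. One should also check that $d^{[x,y]}$ squares to zero; expanding $(d^{[x,y]})^2(a)$, the cross terms cancel and the remainder is $(d(y) + y^2)a - a(d(x) + x^2) = 0$ by the MC equations for $x$ and $y$. (Equivalently, $d^{[x,y]}$ inherits $d^2 = 0$ from the dg-category structure, but giving the explicit cancellation is clarifying.)

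Finally, I would check the $(A^y, A^x)$-bimodule structure. Left and right multiplication on $A$ clearly commute and give a graded $(A,A)$-bimodule structure on $A^{[x,y]}$; the content is that these actions are chain maps with respect to the twisted differentials. For $a' \in A^y$ and $a \in A^{[x,y]}$, a straightforward expansion gives
\[
d^{[x,y]}(a'a) = d(a')a + (-1)^{|a'|} a' d(a) + ya'a - (-1)^{|a'a|} a'ax,
\]
while $d^y(a') a + (-1)^{|a'|} a' d^{[x,y]}(a) = (d(a') + [y,a'])a + (-1)^{|a'|} a'(d(a) + ya - (-1)^{|a|} ax)$; the two agree after noting that $[y,a']a = ya'a - (-1)^{|a'|}a'ya$ and collecting signs. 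The right $A^x$-action is verified analogously, and the two actions commute on the nose, so we obtain the desired dg bimodule structure.

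The computations are entirely mechanical; the only place where anything substantive is used is the MC equations for $x$ and $y$ to guarantee $(d^{[x,y]})^2 = 0$, and this is really the same cancellation that makes $A^{[x]}$ and $A^{[y]}$ dg modules in the first place, lifted to the hom-space.
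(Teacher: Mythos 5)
Your proof is correct, and it fills in exactly the direct computation that the paper dismisses as ``straightforward inspection'' (the paper offers no explicit argument for this Proposition). The three steps — identifying $\uHom_A(A^{[x]},A^{[y]})$ with $A$ via evaluation at $1$, computing the transported differential, and verifying that left multiplication by $A^y$ and right multiplication by $A^x$ are chain maps — are precisely what is needed, and the sign bookkeeping checks out.
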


Note that for two right $A$-modules $M$ and $N$ a map $M\to N$ of right $A$-modules is precisely a zero-cocycle in $\underline{\Hom}(M,N)$. Then $M$ and $N$ are \emph{homotopy equivalent} if there are maps of (right) $A$-modules $f:M\to N$ and $g:N\to M$ such that $f\circ g$ is cohomologous to $1\in\underline\Hom(N,N)$ and $g\circ f$ is cohomologous to $1\in \underline\Hom(M,M)$.
The notion of a gauge equivalence of MC elements admits an important weakening to a \emph{homotopy gauge equivalence}.
\begin{defn}
Let $\MCdg(A)$ be the dg category whose objects are MC elements of $A$ and for $x,y\in A$
the dg module of morphisms
${\Hom(x,y)_{\MCdg(A)}:=\underline{\Hom}(A^{[x]},A^{[y]})}$. The correspondence $A\mapsto\MCdg(A)$ is a functor from dg algebras to dg categories.

Two MC elements $x,y\in A$ are called \emph{homotopy gauge equivalent} if they are homotopy equivalent as objects in $\MCdg(A)$.
The \emph{Maurer Cartan homotopy moduli set} $\MChmod(A)$ is the set of isomorphism classes of objects in $\Hot(\MCdg(A))$, i.e. the quotient of $\MC(A)$ modulo homotopy gauge equivalences.
\end{defn}
Thus, $x,y\in\MC(A)$ are homotopy gauge equivalent if there exist elements $g, h\in A^0$ such that
\begin{enumerate}
	\item
	$dg+yg-gx=0$;
	\item  $dh+xh-hy=0$;
	\item
	$hg$ is cohomologous to $1$ in $A^x$; %$H^0(A^x)$;
	\item   $gh$ is cohomologous to $1$ in $A^y$. %$ H^0(A^y)$.
\end{enumerate}
Note that if $g\in A$ is invertible (i.e. $x$ and $y$ are \emph{isomorphic}, as opposed to merely homotopy equivalent in $\MCdg(A)$) then we could take $h=g^{-1}$ and conditions (2), (3) and (4) above are automatically implied by condition
(1). In that case $x$ and $y$ are gauge equivalent. However, the following example shows that the relation of homotopy gauge equivalence is strictly weaker than that of gauge equivalence.
\begin{eg}
Let $A:=\ground\langle x,y,g,h,s,t\rangle$, the free algebra with two generators $x,y$ in degree 1, two generators $g,h$ in degree 0 and two generators $s,t$ in degree -1. The differential in $A$ is given by the formulae:
\begin{align*}	
d(x)&=-x^2,& d(y)&=-y^2,\\
d(g)&=gx-yg, & d(h)&=hy-xh ,\\
d(s)&=-xs+gh-1, & d(t)&=-yt+hg-1.
\end{align*}
It is clear that $x,y\in \MC(A)$ and that $g$ and $h$ provide maps of right dg $A$-modules $A^{[x]}\to A^{[y]}$ and
$A^{[y]}\to A^{[x]}$ respectively that are homotopy equivalences with homotopies given by $s$ and $t$. As an aside, also note that $A$ is the universal dg algebra having two homotopy gauge equivalent MC elements in the sense that any other such dg algebra $B$ receives a unique map from $A$. Now $A$, being free, has no non-scalar invertible elements, and it follows that the MC elements $x$ and $y$ are not gauge equivalent, although they are homotopy gauge equivalent.
\end{eg}

\section{Twisted modules and cohesive modules}\label{twisted}
We will now introduce the notion of a twisted module over a dg algebra $A$.
\begin{defn}
A\emph{	twisted $A$-module} is a (right) dg $A$-module $M$ such that $M^{\#}$ is free as an $A^{\#}$-module. A twisted $A$-module is \emph {finitely generated} if $M^{\#}$ is finitely generated. Finally, any twisted module that is a homotopy retract of a finitely generated  twisted module is called a \emph {perfect twisted module}.

We will denote the dg category of twisted $A$-modules by $\Tw(A)$, and its full subcategories of finitely generated and perfect twisted $A$-modules by $\Twfgfree(A)$ and $\Twfg(A)$ respectively. 
\end{defn}
\begin{rk}
	If $A$ is a dg ring, then a dg $A$-module $M$ is sometimes called perfect if it represents a compact object in the derived category of $A$. This is \emph{not} the same as a perfect twisted $A$-module, in particular the latter need not represent a compact object in a triangulated category. Later on, we will also use the notion of a perfect dg sheaf of modules. In all cases, our terminology will always be clear from the context and unambiguous.
\end{rk}
\begin{rk}
 A twisted $A$-module can be written as $(V \otimes A^{\#}, D_{V})$ where $V$ is a free $\ground$-module and $D_V$ is a  differential on the free $A$-module $V \otimes A^{\#}$ compatible with the $A$-module structure. This is further equivalent to that of an MC element $x\in\End(V)\otimes A$: for such an element $D_V=\id\otimes d_A+x$ gives a differential $D_V$ on $V\otimes A$ compatible with that of $A$ and any compatible differential on $V\otimes A$ must be of this form. We will often slightly abuse notation and write $V \otimes A$ for $(V \otimes A^{\#}, D_{V}).$
%Note that we do not introduce separately the notion of `twisted $A$-module', mainly because any homotopy retract of a free twisted $A$-module in the category of all $A$-modules is homotopy equivalent to a free twisted $A$-module; see Proposition \ref{prop:idempotent} below. On the other hand, it is possible for a finitely generated twisted $A$-module, as we have defined it, not to be homotopy equivalent to a strict summand of any finitely generated free twisted $A$-module.
%We will often slightly abuse notation and write $V \otimes A$ for $(V \otimes A^{\#}, D_{V})$.
\end{rk}

%The correspondences $A\mapsto\Tw(A), A\mapsto\Twfgfree(A)$ and $A\mapsto\Twfg(A)$ are functors from dg algebras to dg %categories;.
It is easy to see that $\Tw(A), \Twfgfree(A)$ and $\Twfg(A)$ are strongly pre-triangulated dg categories. Shifts are induced by the shift functor on $V$ and the cone on $f: (V \otimes A, D_V) \to (W \otimes A, D_W)$ is given by the complex $(V\oplus W[1])\otimes A$ with differential $\left(\begin{matrix} D_V & f \\  &D_W[1]\end{matrix}\right)$.

The following result shows that the categories $\Tw(A)$ and $\Twfg(A)$ are closed with respect to taking retracts up to homotopy, i.e.\ their homotopy categories are idempotent complete.
\begin{prop}\label{prop:idempotent}
	%Any idempotent morphism in $\Hot(\Tw(A))$ is split. If $M\in\Twfg(A)$ and $N$ is a retract of $M$ in $\Hot(\Twfg(A))$, then $N\in\Twfg(A)$.
	Any idempotent morphism in $\Hot(\Tw(A))$ or  $\Hot(\Twfg(A))$ is split.
\end{prop}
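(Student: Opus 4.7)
My plan is to split the idempotent using a homotopy colimit (telescope) construction, carried out inside $\Tw(A)$ itself.

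First I would unpack the data. An idempotent in $\Hot(\Tw(A))$ on a twisted module $M = V \otimes A$ is a closed degree-zero endomorphism $e \colon M \to M$ together with $h \in \underline{\End}(M)^{-1}$ witnessing $e^2 - e = [D_V, h]$. In terms of the MC description, $e$ is an element of $(\End(V) \otimes A)^0$ with $d_A(e) + [x,e] = 0$, where $x$ is the MC element defining $M$, and the idempotency is up to a homotopy $h$ of the same form.

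Next I would observe that $\Tw(A)$ admits arbitrary direct sums: if $M_i = V_i \otimes A$ then $\bigoplus_i M_i = (\bigoplus_i V_i) \otimes A$ with MC element given by the block-diagonal sum of the $x_i$, and this preserves freeness of the underlying graded module. Since $\Tw(A)$ is also strongly pre-triangulated (already noted in the paper), $\Hot(\Tw(A))$ is a triangulated category with countable coproducts. One can now apply the standard telescope argument (as in Neeman): form $\widetilde M = \bigoplus_{n \geq 0} M$ and take the cone $N_e$ of the endomorphism $\id - \mathrm{sh}\circ e \colon \widetilde M \to \widetilde M$, where $\mathrm{sh}$ shifts the $n$-th summand to the $(n+1)$-th. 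Similarly form $N_{1-e}$ from the idempotent $1-e$. A direct check, using that $e(1-e)$ is null-homotopic, shows that $M \simeq N_e \oplus N_{1-e}$ in $\Hot(\Tw(A))$ and that the projector onto $N_e$ recovers $e$. Both cones lie in $\Tw(A)$ because the cone construction in a strongly pre-triangulated dg category stays inside it, and the underlying graded module of each cone is a direct sum of (shifted copies of) free $A^{\#}$-modules.

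For $\Twfg(A)$, the subtlety is that finitely generated twisted modules are not closed under countable direct sums, so the telescope lives only in the ambient $\Tw(A)$. However, the definition of a perfect twisted module already builds in closure under homotopy retraction: if $P \in \Twfg(A)$ is a homotopy retract of a finitely generated $F$ and $e$ is an idempotent in $\Hot(\Tw(A))$ on $P$, then the image $N_e$ produced above is a homotopy retract of $P$; composing the retraction $P \rightleftarrows F$ with $N_e \rightleftarrows P$ exhibits $N_e$ as a homotopy retract of the finitely generated module $F$, so $N_e \in \Twfg(A)$. Thus the splitting obtained in $\Hot(\Tw(A))$ lifts automatically to a splitting in $\Hot(\Twfg(A))$.

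The main technical point — and the only place where care is needed — is verifying the isomorphism $M \simeq N_e \oplus N_{1-e}$ in $\Hot(\Tw(A))$. This amounts to writing down explicit closed morphisms using $e$, $1-e$ and the homotopy $h$, and exhibiting homotopies for the composition identities; this is a direct calculation with the block-triangular differentials on the telescopes, similar to the proof of Neeman's theorem on idempotent completion of triangulated categories with countable coproducts, and is routine once the telescope differentials are written out.
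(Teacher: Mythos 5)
Your proof is correct and takes essentially the same approach as the paper: the paper simply observes that $\Hot(\Tw(A))$ is a triangulated category with arbitrary direct sums and cites B\"okstedt--Neeman's idempotent-splitting result, which is proved by exactly the telescope argument you carry out explicitly (including the well-definedness of the telescope inside $\Tw(A)$ via cones and infinite coproducts of twisted modules). Your treatment of the $\Twfg(A)$ case by composing the two homotopy retractions is likewise what the paper means by ``follows directly.''
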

\begin{proof}
	Since $\Hot(\Tw(A))$ is a triangulated category with direct sums, all idempotents in it split by \cite[Proposition 3.2]{Bokstedt93}. The statement about $\Twfg(A)$ follows directly.
\end{proof}
\begin{rk}\label{remark:Morita}
	We defined the category $\Twfg(A)$ as a certain subcategory of $\Tw(A)$. We see that $\Tw(A)$ is pre-triangulated, with $\Hot(\Tw(A))$ being idempotent complete; it is thus \emph{Morita fibrant}, cf. \cite{Tabuada05} regarding this notion.
	Moreover, the inclusion of the category
	 $\Twfgfree(A)$ of
	finitely generated twisted $A$-modules into $\Twfg(A)$ is a Morita morphism. Thus, $\Twfg(A)$ is a Morita fibrant replacement of $\Twfgfree(A)$ and could be defined, up to a quasi-equivalence, independently of the category $\Tw(A)$.
\end{rk}
The notion of a twisted $A$-module is closely related to that of a \emph{cohesive} $A$-module cf. \cite{Block10}. 
\begin{defn} A right dg $A$-module $M$ is cohesive if $M^{\#}$ is of the form $E \otimes_{A^{0}} A^{\#}$ for a graded right $A^{0}$-module $E$
that is projective, finitely generated in every degree and bounded. 
We will denote the dg category of cohesive $A$-modules by $\cat P_{A}$.\end{defn}
The following result shows that any cohesive $A$-module is, up to a homotopy, a perfect twisted $A$-module.
\begin{prop}\label{prop:retract}
	Any cohesive $A$-module is a retract of a free cohesive $A$-module.
\end{prop}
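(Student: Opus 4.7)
The plan is to realize $M$ as a dg direct summand of a free cohesive $A$-module $N=M\oplus M'$, where $M'$ is built on a carefully chosen complement of $E$ inside a free graded $A^0$-module.

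First, since $E=\bigoplus_i E^i$ is bounded and each $E^i$ is finitely generated projective over $A^0$, in each of the finitely many non-zero degrees we pick a finitely generated projective $A^0$-module $E'^i$ such that $E^i\oplus E'^i$ is finitely generated free over $A^0$. Setting $E':=\bigoplus_i E'^i$ and $F:=E\oplus E'$, we obtain a free graded $A^0$-module $F$ which is bounded and finitely generated in each degree.

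The crux of the proof is to equip $E'\otimes_{A^0}A^{\#}$ with the structure of a dg $A$-module, making it a cohesive $A$-module $M'$. The naïve choice of $1\otimes d_A$ does not descend to the tensor product over $A^0$, because $d_A$ does not preserve $A^0$: compatibility with the tensor relation $e'r\otimes a=e'\otimes ra$ for $r\in A^0$ would force the extra term $e'\otimes d_A(r)\cdot a$ to vanish, which need not hold unless $E'$ is free. One corrects for this using that $E'$ is a summand of a free $A^0$-module: invoking Block \cite{Block10}, a flat $\mathbb{Z}$-connection $\mathbb{E}'=1\otimes d_A+\mathbb{E}'^{1}+\mathbb{E}'^{2}+\cdots$ exists on any such projective $E'$, constructed by starting from any connection $\mathbb{E}'^{1}$ (which exists for projective modules) and iteratively adding higher-order terms, with the successive obstructions to $\mathbb{E}'^{2}=0$ vanishing thanks to projectivity of $E'$.

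With $M'=(E'\otimes_{A^0}A^{\#},d_{M'})$ in hand, set $N:=M\oplus M'$. Its underlying graded $A^{\#}$-module is $(E\oplus E')\otimes_{A^0}A^{\#}=F\otimes_{A^0}A^{\#}$, which is free over $A^{\#}$ because $F$ is free over $A^0$; combined with boundedness and finite generation in each degree, this makes $N$ a free cohesive $A$-module. The tautological inclusion $M\hookrightarrow N$ and projection $N\twoheadrightarrow M$ are dg $A$-module maps whose composition is $\id_M$, exhibiting $M$ as a retract of $N$.

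The principal obstacle is the middle step---producing a cohesive structure on the projective complement $E'$. For free $E'$ this step is trivial (use the canonical one), but for merely projective $E'$ it requires the iterative obstruction-theoretic construction above and is the substantive content of the proof.
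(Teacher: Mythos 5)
Your decomposition in the first paragraph is fine: choosing degreewise complements $E'^i$ and setting $E' = \bigoplus_i E'^i$ so that $E\oplus E'$ is free over $A^0$ is exactly the kind of graded splitting one wants. The fatal step is the middle one. You assert that every bounded, degreewise finitely generated projective graded $A^0$-module $E'$ admits a \emph{flat} $\Z$-graded connection, i.e.\ a differential on $E'\otimes_{A^0}A^{\#}$ making it a cohesive $A$-module, and attribute this to \cite{Block10}. This is false. If $E'$ is concentrated in a single degree, a $\Z$-graded connection is forced to have $\mathbb{E}'^0=0$ and $\mathbb{E}'^{1}$ an ordinary connection, and flatness is then the usual flatness of a connection; for $A=\Omega(X)$ this would say every smooth vector bundle on $X$ admits a flat connection, which is not the case. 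Projectivity of $E'$ over $A^0$ guarantees the existence of a connection $\mathbb{E}'^1$, not the vanishing of its curvature nor of the subsequent obstructions — those obstructions live in the cohomology of $\uEnd(E',\mathbb{E}'^0)$, which projectivity alone does not kill. So there is in general no dg $A$-module structure on $E'\otimes_{A^0}A^{\#}$ and the summand $M'$ in your $N=M\oplus M'$ does not exist.

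The paper's proof avoids trying to put a differential on a prescribed graded complement. Instead, it works with the left adjoint $G$ of the forgetful functor $A\mods\to A^{\#}\mods$: for a graded $A^{\#}$-module $L$ the induced dg module $G(L)$ carries a canonical differential, and $G(L)^{\#}\cong L\oplus L[-1]$. Choosing $L$ with $M^{\#}\oplus L$ free over $A^{\#}$ and setting $F=M\oplus M[-1]\oplus G(L)$, one gets $F^{\#}\cong (M^{\#}\oplus L)\oplus(M^{\#}\oplus L)[-1]$ free, with no need to solve any Maurer--Cartan problem for the complement. The extra $M[-1]$ and the doubling $L\oplus L[-1]$ are precisely what you pay to get the differential for free. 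If you want to salvage your argument you would need to complement $M$ by modules of the form $G(L)$ (whose underlying graded module is $L\oplus L[-1]$) rather than by an arbitrary graded complement of $E$.
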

\begin{proof}
The forgetful functor $A\mods \to A^{\#}\mods$  has a left adjoint sending a (right) $A^{\#}$-module $L$ to the $A$-module $G(L)$ consisting of formal symbols $x+dy$ for $x,y \in L$ with $A$-action given by
\[(x+dy)a=xa+d(ya)-(-1)^{|y|}y\, da \] and differential $d(x+dy) = dx$, see e.g.\ the proof of Theorem 3.6 in \cite{Positselski11}. The unit map $L \to G(L)$ is injective with cokernel isomorphic to $L[-1]$. In particular, if $L$ is projective, then $G(L)^{\#}$ is isomorphic to $L\oplus L[-1]$.
	
Let $P$ be a dg $A$-module and assume that $P^{\#}$ is projective. Let $L$ be a (projective) $A^{\#}$-module such that $P^{\#}\oplus L$ is free. Then $P$ is a retract of $F \coloneqq P \oplus P[-1] \oplus G(L)$ and $F^{\#}$ is isomorphic to $P^{\#}\oplus P^{\#}[-1]\oplus L \oplus L[-1]$, which is a free $A^{\#}$-module. Note that if $P^{\#}$ is finitely generated then $F$ can be chosen so that $F^{\#}$ is of finite rank.
	
In particular a cohesive module $M$ is a retract of a module $F$ such that $F^{\#}$ is a free $A^{\#}$-module of finite rank. But then we can write $F^{\#} = F'\otimes_{A^{0}}A^{\#}$ for some graded $A^{0}$-module $F'$ that is bounded and free of finite rank in every degree, i.e.\ $F$ is a free cohesive module.
\end{proof}

Under mild assumptions cohesive modules and perfect twisted modules agree.

\begin{lemma}\label{lemma-cohesiveretracts}
If $A$ is concentrated in non-negative degrees and $A$ is flat over $A^{0}$ then idempotents split in the homotopy category of cohesive modules.
\end{lemma}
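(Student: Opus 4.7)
The plan is to bootstrap the splitting of idempotents in $\Hot(\Twfg(A))$ provided by Proposition \ref{prop:idempotent} back to $\Hot(\cat P_{A})$, using the hypotheses on $A$ to identify the resulting perfect twisted modules with cohesive ones. Given an idempotent $e$ on a cohesive module $M$, Proposition \ref{prop:retract} realizes $M$ as a strict summand of a free cohesive module $F=F'\otimes_{A^{0}}A^{\#}$ with $F'$ bounded and finitely generated free over $A^{0}$; extending $e$ by zero on the complement reduces us to the case $M=F$, which is in particular a finitely generated free twisted $A$-module. Proposition \ref{prop:idempotent} then yields a splitting $M\simeq N\oplus N'$ in $\Hot(\Twfg(A))$ with $N,N'$ perfect twisted $A$-modules and $e$ corresponding to the projection onto $N$.

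The main task, and the principal obstacle, is to show that $N$ and $N'$ are homotopy equivalent, as dg $A$-modules, to cohesive modules. Here I would exploit the non-negative grading: $A^{\geq 1}$ is a two-sided dg ideal and the projection $\epsilon\colon A\to A^{0}$ is a morphism of dg algebras, where $A^{0}$ carries the zero differential. Tensoring along $\epsilon$ sends $M$ to the bounded complex $\bar M=(F',\bar d)$ of finitely generated free $A^{0}$-modules (the leading term of $d_{M}$), and sends $e$ to a homotopy idempotent $\bar e$ in $K^{b}(\mathrm{proj}(A^{0}))$. The latter category is idempotent complete: a homotopy retract of a bounded complex of finitely generated projective modules is a perfect complex, hence quasi-isomorphic to one, and $K^{b}(\mathrm{proj}(A^{0}))\hookrightarrow D(A^{0})$ is fully faithful. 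Thus $\bar e$ splits as $\bar M\simeq\bar N\oplus\bar N'$ in $K^{b}(\mathrm{proj}(A^{0}))$.

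It then remains to lift this splitting. Using flatness of $A$ over $A^{0}$, the graded $A^{\#}$-module $N^{\#}:=\bar N^{\#}\otimes_{A^{0}}A^{\#}$ is cohesive. The plan is to construct a differential $d_{N}$ on $N$ together with chain maps $N\leftrightarrows M$ whose reductions modulo $A^{\geq 1}$ recover the $A^{0}$-level data. I would do this by an obstruction-theoretic induction along the filtration $M\supseteq M\cdot A^{\geq 1}\supseteq M\cdot A^{\geq 2}\supseteq\cdots$, which has finite length in each cohomological degree because $F'$ is bounded and $A$ is non-negatively graded. Flatness identifies the successive quotients with $\bar N\otimes_{A^{0}}\mathrm{gr}_{A^{\geq 1}}^{k}(A)$, reducing each obstruction to the vanishing of an $\Ext$ group over $A^{0}$, which holds since $\bar N$ is projective. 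The analogous construction for $\bar N'$ then completes the desired splitting of $e$ in $\Hot(\cat P_{A})$.
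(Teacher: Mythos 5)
Your proposal is correct in outline and is structurally parallel to the paper's proof, but it re-derives the key technical ingredient rather than citing it. Both arguments share the same mechanism: reduce the homotopy idempotent $e$ modulo $A^{\geq 1}$ (using non-negativity of $A$), split the resulting homotopy idempotent $\bar e$ on the bounded complex $(F',\bar d)$ of finitely generated projective $A^{0}$-modules, and then use flatness of $A$ over $A^{0}$ to lift that splitting to a cohesive splitting of $e$. The paper realizes the splitting explicitly via the B\"okstedt--Neeman telescope $\cone(\sigma_h)$ applied directly to the cohesive module, getting an infinitely generated quasi-cohesive module; it then invokes Block's Theorem 3.2.7 to conclude this is cohesive, with the hypothesis (that $\cone(\sigma_{h^{0}})$ is perfect over $A^{0}$) verified via Quillen's algebraically nuclear maps. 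You instead call Proposition \ref{prop:idempotent} and idempotent completeness of $K^{b}(\mathrm{proj}(A^{0}))$ abstractly, and propose to re-prove the lifting statement from scratch by obstruction theory along the $A^{\geq 1}$-adic filtration.

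The weak point is precisely the step you flag as ``the main task, and the principal obstacle.'' What you would be proving there is essentially Block's Theorem 3.2.7, and the sketch is too quick: the claim that the obstruction ``is the vanishing of an $\Ext$ group over $A^{0}$, which holds since $\bar N$ is projective'' does not immediately parse, because $\bar N$ is a bounded complex with projective terms rather than a single projective module, so the obstructions live in the cohomology of a Hom-complex and their vanishing requires an actual argument (lifting the coupled data $d_N$, the two chain maps, and the two homotopies through the projective terms order by order, with termination by boundedness). Nothing in your outline is visibly false, but the hard content is left unproved, whereas the paper delegates it to Block. Two smaller remarks: (i) the appeal to Proposition \ref{prop:idempotent} to produce perfect twisted modules $N,N'$ is redundant in your scheme --- once the obstruction-theoretic lift produces cohesive modules splitting $e$, those are the $N,N'$ by uniqueness of idempotent splittings, so you never use that intermediate output; (ii) flatness of $A$ over $A^{0}$ is not what makes $\bar N^{\#}\otimes_{A^{0}}A^{\#}$ cohesive (that only needs $\bar N^{\#}$ bounded, finitely generated, projective in each degree) --- flatness is what makes the filtration on $\sHom$-complexes behave well in the inductive lift, exactly as in Block's hypothesis.
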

\begin{proof}
We call a bounded complex of finitely generated projective modules over $A^{0}$  \emph{strictly perfect}; any dg-module $A^0$-module quasi-isomorphic to a strictly perfect will be called \emph{perfect}. 

Let $h: E' \otimes_{A^0} A \to E' \otimes_{A^0} A$ be a homotopy idempotent. We can construct a splitting in the homotopy category of all $A$-modules by the well-known telescope trick. Writing $E = E' \otimes_{A^0} A$ we define  a map $\sigma_{h} : \oplus_{\set N} E \rightarrow \oplus_{\set N} E$ defined by sending the $i$-th copy of $E$ to the $(i+1)$-st copy using $h$ and to the $i$-th copy using $1-h$.
Then the cone of $\sigma_{h}$ splits $h$, i.e.\ there is an equivalence $E \simeq \cone(\sigma_{h})\oplus \cone(\sigma_{1-h})$.

By construction $\cone(\sigma_{h})$ is of the form $(N' \otimes_{A^0} A, D_N)$ for some graded  $A^0$-module $N'$. Moreover, inspecting the construction we see that $(N', D_N^{0})$ is equal to $\cone(\sigma_{h^{0}})$, which is the complex obtained by going through the above construction with $(E', h^{0})$ in place of $(E, h)$.
To check this note that the underlying complex of $\cone(\sigma_{h})$ consists of a direct sum of copies of $E' \otimes A$ with some degree shifts. Writing the differential as a matrix each coefficient is given by $\id$, $D_E$ or $h$. Dividing out by $A^{\geq 1}$ leaves a direct sum of shifted copies of $E'$ with differential given by a matrix of $\id$, $D_E^{0}$ and $h^{0}$, which is exactly $\cone(\sigma_{h^{0}})$.

The complex $\cone(\sigma_h)$ is a quasi-cohesive module in the sense of \cite{Block10} (i.e.\ a cohesive module without the assumption of finite generation) and we assumed that $A$ is in non-negative degrees and is flat over $A^{0}$. In this situation Theorem 3.2.7 of loc.\ cit.\ states that $\cone(\sigma_{h})$ is cohesive if $\cone(\sigma_{h^{0}})$ is perfect.

But as $A$ is in non-negative degrees we can check that $h^{0}$ is a homotopy idempotent for $(E', D_E^{0})$ in $A^{0}\mods$. In fact assuming $K$ is a homotopy from $h$ to $h^{2}$ then $K^{0}$ is a homotopy from $h^{0}$ to $(h^{0})^{2}$.
Thus $\cone(\sigma_{h^{0}})$ is a direct summand of $E'$ in the homotopy category.
We claim that this implies it is perfect.
Following \cite{Quillen96} we say a map is algebraically nuclear if it factors through a strictly perfect complex. Then a complex is homotopy equivalent to a strictly perfect complex if and only if the identity is homotopy equivalent to a nuclear map,
see \cite[Proposition 1.1]{Quillen96}.
Since the identity of $\cone(\sigma_{h^{0}})$ factors through $E'$ it is algebraically nuclear up to homotopy. This proves the claim and the lemma.
\end{proof}
\begin{cor}\label{cor-twistedcohesive}
If $A$ is concentrated in non-negative degrees and flat over $A^{0}$ then the dg categories $\Twfg(A)$ and $\cat P_{A}$ are quasi-equivalent.
\end{cor}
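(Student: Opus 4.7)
The plan is to realize $\Twfg(A)$ and $\cat P_A$ as two dg-enhancements of the idempotent completion of a common full subcategory, and to compare them via a zigzag through a shared ambient dg category. First, I would identify $\Twfgfree(A)$ with the full subcategory of \emph{free} cohesive modules sitting inside $\cat P_A$: if $M\in\Twfgfree(A)$ has $M^{\#}\cong V\otimes_{\ground}A^{\#}$ for a bounded free graded $\ground$-module $V$ of finite total rank, then setting $F':=V\otimes_{\ground}A^{0}$ rewrites $M^{\#}\cong F'\otimes_{A^{0}}A^{\#}$ with $F'$ bounded and $A^{0}$-free of finite rank in each degree, exhibiting $M$ as free cohesive; conversely, the free cohesive modules produced at the end of Proposition~\ref{prop:retract} are manifestly finitely generated twisted modules. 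Thus $\Twfgfree(A)$ sits as a common full dg subcategory of $\cat P_A$ and $\Twfg(A)$.

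Next, let $\cat D$ denote the full dg subcategory of all dg $A$-modules whose objects are those of $\Twfg(A)\cup\cat P_A$. I would show that both inclusions $\iota_1:\Twfg(A)\hookrightarrow\cat D$ and $\iota_2:\cat P_A\hookrightarrow\cat D$ are quasi-equivalences. Both are quasi-fully faithful, being full subcategory inclusions into the same ambient category of dg $A$-modules. For quasi-essential surjectivity of $\iota_1$: any cohesive $P$ is, by Proposition~\ref{prop:retract}, a homotopy retract of some $F$ that is free cohesive and, by the identification above, lies in $\Twfgfree(A)\subset\Twfg(A)$; the associated homotopy idempotent $e\in\End(F)$ splits in $\Hot(\Twfg(A))$ by Proposition~\ref{prop:idempotent}, and by the uniqueness of idempotent splittings the resulting perfect twisted module is canonically isomorphic to $P$ in $\Hot(\cat D)$. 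The argument for $\iota_2$ is dual, invoking Lemma~\ref{lemma-cohesiveretracts} in place of Proposition~\ref{prop:idempotent}. The zigzag $\Twfg(A)\xrightarrow{\iota_1}\cat D\xleftarrow{\iota_2}\cat P_A$ of quasi-equivalences then yields the desired quasi-equivalence of $\Twfg(A)$ and $\cat P_A$.

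The principal obstacle is really bookkeeping: one must verify that the idempotent $e$, split independently inside $\Twfg(A)$ and $\cat P_A$, produces objects that are canonically isomorphic when viewed in $\Hot(\cat D)$. This is a consequence of the universal property of idempotent splittings, but it is what motivates the detour through $\cat D$, since no canonical direct dg functor $\cat P_A\to\Twfg(A)$ (or vice versa) is apparent: the splittings produced by Proposition~\ref{prop:retract} depend on non-canonical choices of the auxiliary module $L$, whereas the zigzag through $\cat D$ bypasses this dependency entirely.
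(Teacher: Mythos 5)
Your proof is correct, and it takes a genuinely different route from the paper's. The paper argues abstractly via Morita theory: it observes (cf. Remark~\ref{remark:Morita}) that the inclusion $J\colon\Twfgfree(A)\hookrightarrow\cat P_A$ is a Morita morphism, that $\cat P_A$ is Morita fibrant by Lemma~\ref{lemma-cohesiveretracts}, and hence that $\cat P_A$ and $\Twfg(A)$ are both Morita fibrant replacements of $\Twfgfree(A)$ and therefore quasi-equivalent. You instead construct an explicit zigzag $\Twfg(A)\hookrightarrow\cat D\hookleftarrow\cat P_A$ through the ambient full dg subcategory of dg $A$-modules spanned by all objects involved, and prove each inclusion is a quasi-equivalence by hand using the same three ingredients (Proposition~\ref{prop:retract}, Proposition~\ref{prop:idempotent}, Lemma~\ref{lemma-cohesiveretracts}) together with uniqueness of idempotent splittings. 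What your approach buys is independence from the Morita model structure machinery (no appeal to uniqueness of fibrant replacements in a localized model category), at the modest cost of having to verify the idempotent-splitting bookkeeping explicitly inside $\Hot(\cat D)$; the paper's route is shorter but leans on more black-boxed theory. Your observation at the end --- that no canonical direct dg functor in either direction is apparent, which is precisely why a zigzag (or the Morita framework) is needed --- is a useful clarification that the paper leaves implicit.
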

\begin{proof}
The inclusion of the dg category of finitely generated twisted modules  $J:\Twfgfree(A) \to \cat P_{A}$  is quasi-fully faithful by construction; moreover it induces, by Proposition \ref{prop:retract} and Lemma \ref{lemma-cohesiveretracts} an equivalence on idempotent completions of its homotopy categories. It follows that $J$ is a Morita morphism and since by Lemma \ref{lemma-cohesiveretracts} $\cat P_{A}$ is Morita fibrant, it could be viewed as a Morita fibrant replacement of $\Twfgfree(A)$. It is, thus, quasi-equivalent to $\Twfg(A)$, cf. Remark \ref{remark:Morita}.
\end{proof}

\begin{eg}
Let $A=\Omega(X)$, the de Rham algebra of a smooth manifold $X$, and $E\to X$ be a flat vector bundle over $X$. Then $\Gamma(X,E)$, the sections of the bundle $E$, form a finitely generated projective module over $A^0$ and the given flat connection form on $E$ determines the structure of a cohesive $A$- module (and thus, of a perfect twisted $A$--module) on  $\Gamma(X,E)\otimes_{A^0} A$.
\end{eg}
The notions described in this and the previous sections make sense when $A$ is a pseudo-compact dg algebra. The definitions of $ \MCmod(A), \MChmod(A), \MCdg(A), \Tw(A)$ and $\Twfg(A)$ are repeated verbatim. One slight subtlety is that the notions of twisted module over $A$ as a pseudo-compact dg algebra and as a discrete dg algebra (i.e. forgetting its pseudo-compact structure) are different, in general. This is because the tensor product of a pseudo-compact algebra and a (discrete) vector space is understood to be completed. A twisted $A$-module in this case is the same as a free $A$-contramodule.
\begin{rk}
In good cases the homotopy category of twisted modules also agrees with Positselski's derived category of the second kind \cite{Positselski11}. It follows from the proof of Proposition \ref{prop:retract} that twisted $A$-modules agree up to homotopy with Positselski's projective $A$-modules $A\mods_{\text {proj}}$. Under certain conditions on $A^{\#}$, the underlying graded algebra of $A$, there is an equivalence $\Hot(A\mods_{\text {proj}}) \cong D^{\text{ctr}}(A\mods)$. See Sections 3.8, 3.9 and 4.4 of \cite{Positselski11}. 
\end{rk}

Thus, for any dg algebra or pseudo-compact dg algebra $A$, we associated several invariants: the dg categories
$\MCdg(A), \Tw(A)$ and $\Twfg(A)$ as well as moduli sets
$\MCmod(A)$ and $\MChmod(A)$.
These are \emph{not} quasi-isomorphism invariants of $A$ as, e.g. Example \ref{ex:universal} demonstrates. Later on we will show that they are, nevertheless, homotopy invariants in two different contexts: analytic and algebraic.

\section{Smooth homotopies for dg algebras}\label{smooth}

In this section we will consider dg Arens-Michael (AM) algebras. These are complete, Hausdorff, locally $m$-convex topological dg algebras over $\mathbb R$. For a detailed introduction see \cite{Pirkovskii08}. A special case of a dg AM algebra is a nuclear dg algebra, e.g the de Rham algebra $\Omega(X)$ where $X$ is a smooth manifold or a simplicial complex.

For our purposes it is enough to know that any dg AM algebra is an inverse limit of dg Banach algebras.
There is a natural notion of smooth homotopy between dg AM algebras.
\begin{defn}
	Let $f_0, f_1:A\to B$ be two continuous maps between dg AM algebras $A$ and $B$. A \emph{smooth homotopy} between $f_0$ and $f_1$ is a map $F:A\to B\otimes\Omega[0,1]$ such that $(\id_A\otimes ev_{0})\circ F=f_0$ and $(\id_A\otimes ev_{1})\circ F=f_1$.
	
	Furthermore, $A$ and $B$ are called smooth homotopy equivalent if there are maps $f:A\to B$ and $g:B\to A$ such that $f\circ g$ and $g\circ f$ are smooth homotopic to $\id_B$ and $\id_A$ respectively.
\end{defn}
\begin{lemma}\label{lem:diagonal1}
	Any AM dg algebra $A$ is smooth homotopy equivalent to $A\otimes\Omega[0,1]$.
\end{lemma}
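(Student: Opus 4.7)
The plan is to exhibit explicit structure maps in both directions. Define $f \colon A \to A \otimes \Omega[0,1]$ to be the canonical unit inclusion $a \mapsto a \otimes 1$, and $g \colon A \otimes \Omega[0,1] \to A$ to be $\id_A \otimes \mathrm{ev}_0$, where $\mathrm{ev}_0 \colon \Omega[0,1] \to \mathbb R$ is the dg algebra map sending a function $h(s)$ to $h(0)$ and annihilating all positive degree forms. Both are continuous dg algebra morphisms, and the composition $g \circ f$ equals $\id_A$ on the nose; so no homotopy is required on this side and the entire content of the lemma concerns the other composite.

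The main task is to exhibit a smooth homotopy from $f \circ g$ to $\id_{A \otimes \Omega[0,1]}$. Unwinding the definition, I need a continuous dg algebra map
\[
H \colon A \otimes \Omega[0,1]_s \longrightarrow A \otimes \Omega[0,1]_s \otimes \Omega[0,1]_t,
\]
where the subscripts label the two parameter intervals and $t$ plays the role of the homotopy variable. I take $H := \id_A \otimes \phi^*$, where $\phi^* \colon \Omega[0,1]_s \to \Omega([0,1]_s \times [0,1]_t)$ is pullback along the smooth map $\phi(s,t) = st$, followed by the canonical identification $\Omega([0,1]^2) \cong \Omega[0,1] \otimes \Omega[0,1]$ valid for nuclear Fréchet de Rham algebras. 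Since the pullback of forms is automatically a continuous dg algebra morphism, so is $H$.

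The boundary conditions are verified by direct inspection. Restricting $\phi$ to $t=0$ gives the constant map $s \mapsto 0$, so $(\id_{A \otimes \Omega[0,1]_s} \otimes \mathrm{ev}_{t=0}) \circ H$ sends $a \otimes h(s)$ to $a \otimes h(0)$ and kills $a \otimes h(s)\, ds$, which is precisely $f \circ g$. Restricting $\phi$ to $t=1$ gives $\id_{[0,1]}$, so $(\id_{A \otimes \Omega[0,1]_s} \otimes \mathrm{ev}_{t=1}) \circ H$ is the identity.

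The only non-formal point, and the place where the AM (in the examples, nuclear) hypothesis enters, is the associativity and identification of the various completed projective tensor products, so that $A \otimes \Omega[0,1] \otimes \Omega[0,1]$ may legitimately be regarded as $A \otimes \Omega([0,1]^2)$ and that $\phi^*$ lands in this space. This is the standard nuclearity argument recalled in the appendix, and is the only real obstacle; once granted, the construction above is just the evident linear deformation of the interval to a point.
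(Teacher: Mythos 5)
Your proof is correct and follows the same route as the paper: both reduce the matter to exhibiting a smooth homotopy between $\mathrm{ev}_0$ and the identity on $\Omega[0,1]$ (tensored with $\id_A$), and both take that homotopy to be pullback along the multiplication map $(s,t)\mapsto st$ on $[0,1]^2$, invoking the nuclearity results of the appendix to identify $\Omega([0,1]^2)$ with $\Omega[0,1]\otimes\Omega[0,1]$. You have simply spelled out the boundary checks and the tensor-product bookkeeping that the paper leaves implicit.
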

\begin{proof} It suffices to prove that $\Omega[0,1]$ is smooth homotopy equivalent to $\mathbb R$. This,  in turn, would follow if we show that
	the map $1_{\Omega[0,1]}\circ ev_0:\Omega[0,1]\to \Omega[0,1]$ is smooth homotopic to the identity map on $\Omega[0,1]$. This last homotopy
	can be taken to be the diagonal map $\Delta:\Omega[0,1]\to \Omega[0,1]\otimes \Omega[0,1]$ induced by the multiplication $[0,1]\times[0,1]\to[0,1]$.
\end{proof}
\begin{prop}\label{prop:transitivity}
	The relation of smooth homotopy on morphisms between AM algebras is an equivalence relation.
\end{prop}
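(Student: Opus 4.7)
The plan is to verify the three equivalence-relation axioms in turn. Reflexivity and symmetry are essentially formal, while transitivity requires a smooth reparametrization argument to bypass the fact that naive concatenation of two smooth homotopies at the midpoint $t = 1/2$ is only continuous and not smooth.

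For reflexivity, given a continuous dg algebra map $f: A \to B$, the map $F := f \otimes 1: A \to B \otimes \Omega[0,1]$ is a continuous dg algebra morphism (as $1$ is a closed degree-zero element of $\Omega[0,1]$) satisfying $(\id_B \otimes ev_0) \circ F = (\id_B \otimes ev_1) \circ F = f$. For symmetry, the involution $\tau: [0,1] \to [0,1]$, $t \mapsto 1 - t$, induces a continuous dg algebra automorphism $\tau^*$ of $\Omega[0,1]$, so if $F$ is a smooth homotopy from $f_0$ to $f_1$, then $(\id_B \otimes \tau^*) \circ F$ is a smooth homotopy from $f_1$ to $f_0$.

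For transitivity, suppose $F$ is a smooth homotopy from $f_0$ to $f_1$ and $G$ is one from $f_1$ to $f_2$. I would fix smooth maps $\alpha: [0, 1/2] \to [0,1]$ and $\beta: [1/2, 1] \to [0,1]$ with $\alpha(0) = 0$, $\alpha(1/2) = 1$, $\beta(1/2) = 0$, $\beta(1) = 1$, and all derivatives of $\alpha$ and $\beta$ vanishing at all four endpoints; these exist by the standard construction with a smooth bump profile. Their pullbacks yield continuous dg algebra maps $\alpha^*: \Omega[0,1] \to \Omega[0, 1/2]$ and $\beta^*: \Omega[0,1] \to \Omega[1/2, 1]$, so $F_\alpha := (\id_B \otimes \alpha^*) \circ F$ and $G_\beta := (\id_B \otimes \beta^*) \circ G$ are continuous dg algebra maps into $B \otimes \Omega[0, 1/2]$ and $B \otimes \Omega[1/2, 1]$ respectively. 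Because all derivatives of $\alpha$ and $\beta$ vanish at $1/2$, the form $F_\alpha(a)$ is the constant $f_1(a) \otimes 1$ in a one-sided neighborhood of $t = 1/2$, and similarly for $G_\beta(a)$; thus the pair $(F_\alpha(a), G_\beta(a))$ has matching values and matching one-sided jets of all orders at $1/2$. It therefore lifts uniquely to an element $H(a) \in B \otimes \Omega[0,1]$, and $a \mapsto H(a)$ inherits continuity and the dg algebra morphism structure from $F_\alpha$ and $G_\beta$. By construction $(\id_B \otimes ev_0) \circ H = f_0$ and $(\id_B \otimes ev_1) \circ H = f_2$, giving the required smooth homotopy.

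The step requiring the most care will be the final assembly of $H$ from $F_\alpha$ and $G_\beta$ inside the completed tensor product $B \otimes \Omega[0,1]$: one must verify that $B \otimes \Omega[0,1]$ is realised, via the restriction maps, as the closed subspace of $(B \otimes \Omega[0, 1/2]) \times (B \otimes \Omega[1/2, 1])$ cut out by the matching-of-jets condition at $1/2$. This is a Mayer-Vietoris type statement for smooth forms, tensored with $B$, and holds because $\Omega[0, 1]$ is nuclear Fr\'echet, so the completed projective tensor product with $B$ commutes with the relevant closed inclusions and finite products.
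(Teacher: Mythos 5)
Your proof is correct and takes essentially the same approach as the paper: transitivity is handled by smoothly reparametrizing so that all jets vanish at the seam $t = 1/2$, after which the two pieces glue to a genuine smooth homotopy. The only differences are cosmetic --- you reparametrize $F$ and $G$ separately via flat surjections $[0,1/2]\to[0,1]$ and $[1/2,1]\to[0,1]$ and then glue, whereas the paper first forms the naive concatenation inside a fiber product $\Omega[0,\tfrac12]\times_{\mathbb R}\Omega[\tfrac12,1]$ and then reparametrizes using flat diffeomorphisms of the half-intervals --- and your closing remark usefully makes explicit the nuclearity argument (identifying $B\otimes\Omega[0,1]$ with the jet-matching closed subspace of $(B\otimes\Omega[0,\tfrac12])\times(B\otimes\Omega[\tfrac12,1])$) that the paper's proof leaves implicit.
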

\begin{proof}
	Reflexivity is obvious and symmetry follows from the existence of a auto-diffeomorphism of $[0,1]$ swapping the endpoints. For transitivity
	consider a homotopy $F:A\to B\otimes\Omega[0,1]\cong  B\otimes\Omega[0,\frac{1}{2}]$ such that
	% $ev_{0}F=f_1$ and $ev_{\frac{1}{2}}F=f_2$
	 $(\id_B\otimes ev_{0})\circ F=f_1$ and $(\id_B\otimes ev_{\frac{1}{2}})\circ F=f_2$,
	  and another one $G:A\to B\otimes\Omega[0,1]\cong  B\otimes\Omega[\frac{1}{2},1]$ such that 
%	  $ev_{\frac{1}{2}}G=f_2$ and $ev_{1}G=f_3$.
$(\id_B\otimes ev_{\frac{1}{2}})\circ G=f_2$ and $(\id_B\otimes ev_{1})\circ G=f_3$,
	   The homotopies $F$ and $G$ together constitute a map \[H:A\to B\otimes (\Omega[0,\frac{1}{2}]\times_{\mathbb{R}}\Omega[\frac{1}{2},1])\] where the target of the last map could be viewed as $B$-valued forms on $[0,1]$ that are not necessarily smooth at $\frac{1}{2}$.
	
	To remedy the non-smoothness issue at $\frac{1}{2}$, let $h_1\colon [0,\frac{1}{2}]\to[0,\frac{1}{2}]$ be a smooth function such that $h(0)=0,h(\frac{1}{2})=\frac{1}{2}$ and constant in small neighbourhoods of the endpoints. The correspondence $\omega\to \omega\circ h$ determines a homomorphism
	$\Omega[0,\frac{1}{2}]\to\Omega_0[0,\frac{1}{2}]$ where $\Omega_0$ denotes differential forms constant near the endpoints.  Note that this homomorphism preserves the values of the differential forms at the endpoints. Similarly, there is a homomorphism $h_2:\Omega[\frac{1}{2},1]\to \Omega_0[\frac{1}{2},1]$  preserving the values at endpoints.
 The homomorphisms $h_1$ and $h_2$ together constitute a map 
	\[(\Omega[0,\frac{1}{2}]\times_{\mathbb{R}}\Omega[\frac{1}{2},1]\to
	 (\Omega_0[0,\frac{1}{2}]\times_{\mathbb{R}}\Omega_0[\frac{1}{2},1]\]
	and we denote by $\tilde{h}$ the composition of the latter map with the inclusion $ (\Omega_0[0,\frac{1}{2}]\times_{\mathbb{R}}\Omega_0[\frac{1}{2},1]\subset \Omega[0,1]$; the maps $\tilde{h}$ thus gets rid of a potential non-smoothness at $\frac{1}{2}$. Then 
\[	(\id_B\otimes\tilde{h})\circ H:A\to B\otimes\Omega[0,1]
	\]
	is the desired homotopy between $f_1$ and $f_3$.
\end{proof}
There are also obvious notions of a polynomial or real analytic homotopy, both of which imply smooth homotopy. The relations of polynomial or analytic homotopy are not necessarily transitive.

As in the discrete setting a MC element $x$ in a dg AM algebra $A$ is an element of degree
$1$ such that $dx+x^2=0$ and we can define the gauge action etc.\ in the same way.

\begin{defn}Let $A$ be a dg AM algebra.
	Two MC elements $x_0, x_1\in A$ are called \emph{smoothly homotopic} if there exists a MC element $X\in A\otimes\Omega[0,1]$ such that $(\id_A\otimes ev_{0})(X)=x_0$ and $(\id_A\otimes ev_{1})(X)=x_1$.
\end{defn}
We have the following  result that is a direct consequence of Proposition \ref{prop:transitivity}.
\begin{lemma}
	The relation of smooth homotopy on MC elements of an AM algebra is an equivalence relation.\qed
\end{lemma}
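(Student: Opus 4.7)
The plan is to mimic, in the MC-element setting, the three moves used in the proof of Proposition \ref{prop:transitivity}. The key observation is that each of those moves (the constant path, pullback along an endpoint-swapping self-map of $[0,1]$, and the smoothed concatenation through $\Omega_0[0,\tfrac12]\times_{\mathbb R}\Omega_0[\tfrac12,1]\to\Omega[0,1]$) acts on the $\Omega[0,1]$-factor by a morphism of dg algebras. Since $\id_A\otimes\varphi$ carries MC elements to MC elements whenever $\varphi$ is a dg algebra morphism, the same maneuvers apply verbatim to $\MC(A\otimes\Omega[0,1])$.

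Reflexivity is immediate: for $x\in\MC(A)$, take $X:=x\otimes 1\in A\otimes\Omega[0,1]$; clearly $dX+X^2=0$ and $X$ evaluates to $x$ at both endpoints. For symmetry, given an MC homotopy $X$ from $x_0$ to $x_1$, apply $\id_A\otimes\varphi^*$ where $\varphi\colon[0,1]\to[0,1]$ sends $t\mapsto 1-t$; the result is an MC element in $A\otimes\Omega[0,1]$ that evaluates to $x_1$ at $0$ and $x_0$ at $1$.

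For transitivity, suppose $X\in A\otimes\Omega[0,1]$ and $Y\in A\otimes\Omega[0,1]$ are MC homotopies from $x_0$ to $x_1$ and from $x_1$ to $x_2$ respectively. Rescale via the obvious dg-algebra isomorphisms $\Omega[0,1]\cong\Omega[0,\tfrac12]$ and $\Omega[0,1]\cong\Omega[\tfrac12,1]$ to produce MC elements on the two halves that both restrict to $x_1$ at $t=\tfrac12$. Together they give an MC element $H\in A\otimes\bigl(\Omega[0,\tfrac12]\times_{\mathbb R}\Omega[\tfrac12,1]\bigr)$, which however fails to be smooth at $\tfrac12$. Exactly as in the proof of Proposition \ref{prop:transitivity}, use a diffeomorphism $h$ of each half-interval that is the identity on endpoints and has vanishing higher derivatives there: composition with $\id_A\otimes\tilde h$ lands $H$ in $A\otimes\bigl(\Omega_0[0,\tfrac12]\times_{\mathbb R}\Omega_0[\tfrac12,1]\bigr)$, and the canonical dg-algebra inclusion of the latter into $\Omega[0,1]$ produces the desired MC element in $A\otimes\Omega[0,1]$ whose endpoint values are $x_0$ and $x_2$.

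The only thing to check at any step is that each of the dg-algebra morphisms applied to the $\Omega$-factor sends MC elements to MC elements and preserves the endpoint evaluations; this is formal and already verified in the proof of Proposition \ref{prop:transitivity}, which is why the authors advertise the present lemma as a direct consequence. No genuine obstacle arises, so the statement reduces entirely to a bookkeeping exercise on the earlier proof.
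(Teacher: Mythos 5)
Your proof is correct, but you take the long way around. The paper's phrase ``direct consequence of Proposition \ref{prop:transitivity}'' is meant to invoke the identification of Example \ref{ex:universal}: an MC element of $A$ is precisely a dg algebra morphism $\ground[x \mid dx+x^2=0]\to A$, and unwinding the definitions, a smooth homotopy of MC elements $x_0, x_1\in A$ is exactly a smooth homotopy of the corresponding morphisms $\ground[x]\to A$. Under this identification the lemma literally \emph{is} Proposition \ref{prop:transitivity} specialized to the source algebra $\ground[x]$, and there is nothing left to prove. You instead re-run the three moves (constant homotopy, endpoint-swap, smoothed concatenation) by hand inside $\MC(A\otimes\Omega[0,1])$. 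That works, and your observations (that each move acts on the $\Omega[0,1]$-factor by a dg algebra morphism, hence preserves MC elements and endpoint evaluations) are exactly what makes the re-run go through; the only blemish is a small notational slip in transitivity, where you write ``composition with $\id_A\otimes\tilde h$'' for the step that should be composition with the pullback along the reparametrizing diffeomorphism (which lands in $\Omega_0$), followed by the inclusion $\tilde h$ into $\Omega[0,1]$. So: same ingredients, but you re-derive where the paper intends you to simply specialize a result already proved; the functorial viewpoint is cleaner and is the reason the lemma was stated without proof.
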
\label{lemma:transitive1}

Let $X=x(z)+y(z)dz$ be a smooth homotopy as above. Then it is equivalent to the system of equations
\begin{align}\label{system}
dx(z)+x(z)^2 &= 0, \\
\partial_zx(z)&= -dy(z)+[y(z),x(z)].
\end{align}

\begin{thm}\label{thm-completess}
	Two MC elements $x_0$ and $x_1$ are smoothly homotopic if and only if they are gauge equivalent via an element of $A^{\times}$ in the path component of $1$.
\end{thm}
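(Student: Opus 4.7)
The plan is to prove each direction of the biconditional by constructing the relevant object explicitly: a closed-form formula handles the ``if'' direction, while the ``only if'' direction reduces to solving a linear ordinary differential equation in the topological algebra $A$.

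For the ``if'' direction, assume $x_1 = g\cdot x_0$ with $g$ in the path component of $1$ in $A^{\times}$, and pick a smooth path $g:[0,1]\to A^{\times}$ with $g(0)=1$ and $g(1)=g$. I set
\[ x(z) := g(z)\cdot x_0 = g(z) x_0 g(z)^{-1} - d(g(z))\, g(z)^{-1}, \qquad y(z) := \dot g(z)\, g(z)^{-1}, \]
and $X := x(z) + y(z)\, dz$. The first equation of system~(\ref{system}) is automatic because the gauge action preserves MC elements, so $x(z)\in \MC(A)$ for every $z$. The second equation is verified by expanding $\partial_z x(z)$ via $\partial_z(g^{-1}) = -g^{-1}\dot g\, g^{-1}$ and matching it term-by-term with $-dy(z) + [y(z), x(z)]$; this is a mechanical but routine computation.

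For the ``only if'' direction, let $X = x(z) + y(z)\, dz$ be a smooth MC homotopy from $x_0$ to $x_1$. The strategy is to integrate the degree-$0$ component $y(z)$ along $[0,1]$ by solving the initial value problem
\[ \dot g(z) = y(z) g(z), \qquad g(0) = 1. \]
Since $A$ is Arens--Michael, it is an inverse limit of dg Banach algebras $A_\alpha$; in each $A_\alpha$ the Picard iteration converges to a unique smooth solution (concretely the path-ordered exponential of $y$), and these solutions are compatible under the projections, yielding a smooth path $g:[0,1]\to A$. An inverse is supplied by the companion ODE $\dot h(z) = -h(z) y(z)$, $h(0)=1$: from $\partial_z(hg) = 0$ and $h(0)g(0)=1$ we get $h(z)g(z)\equiv 1$, and symmetrically $g(z)h(z)\equiv 1$. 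Thus $g(z)\in A^{\times}$ and the path $g$ connects $1$ to $g(1)$ inside the unit group. To conclude, set $\tilde x(z) := g(z)\cdot x_0$; the ``if'' computation shows $\tilde x$ satisfies the same linear equation $\partial_z u = -dy + [y,u]$ as $x$ does, with $\tilde x(0)=x_0=x(0)$. Uniqueness for linear ODEs in a Banach algebra, applied at each level $A_\alpha$, then forces $x(z)=\tilde x(z)$ for all $z$; in particular $x_1 = g(1)\cdot x_0$.

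The main technical obstacle is the analytic step: solving the ODE in the Arens--Michael algebra, controlling the convergence of the Picard iteration, constructing the inverse $h(z)$, and deducing uniqueness for the linear equation that compares $x$ and $\tilde x$. All of this reduces, via the inverse-limit presentation of $A$ by dg Banach algebras, to the classical linear-ODE theory in Banach spaces; the residual work is simply to check that the Banach-level solutions and inverses are compatible with the structure maps of the inverse system so as to assemble into genuine elements of $A$. The remaining steps are formal manipulations with the gauge formula.
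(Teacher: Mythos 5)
Your proof is correct and follows essentially the same approach as the paper: the forward direction constructs the homotopy from a smooth path in $A^{\times}$, and the converse reduces to Banach algebras via the inverse-limit structure of the AM algebra, solves the linear ODE $\dot g = y g$ with $g(0)=1$, and concludes by uniqueness. The only minor variation is that you establish invertibility of $g(z)$ via the companion ODE $\dot h = -h y$ and the observation $\partial_z(hg)=0$, whereas the paper exhibits $g$ as a path-ordered exponential and cites Araki for its invertibility; both are standard and equally valid.
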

\begin{proof}
	Note first that we can, without loss of generality, assume that $A$ is a Banach space. Indeed, having a MC element in $A \coloneqq \varprojlim A_\alpha$ where $A_\alpha$ are Banach spaces, is the same as having a compatible collection of MC elements in every $A_\alpha$ (as MC elements are just maps from the algebra $\R[x \ | \ dx + x^{2} = 0]$). The same is true for gauge equivalences and also for homotopies since tensoring with the nuclear space $\Omega[0,1]$ commutes with inverse limits by Theorem \ref{thm:limit}.
	
	The proof is similar to that in \cite[Theorem 4.4]{Chuang10}. Suppose that two MC elements $x_0,x_1\in A$ are gauge equivalent; that means that there exists $g\in A^\times$ for which $g x_0g^{-1}-dg\cdot g^{-1} = x_1$. By assumption, there exists a smooth curve $g(z)$ with $g(0)=1$ and $g(1)=g$. Then define the homotopy $x(z)+y(z)$ in $A$ by $x(z)=g(z)x_0g^{-1}(z)-dg\cdot g^{-1}$ and $y(z)=\partial_z g(z)g^{-1}(z)$. Then a straightforward inspection shows that (\ref{system}) holds.
	
	Conversely, suppose that there is a homotopy $x(z)+y(z)dz$ such that (\ref{system}) holds. Consider the differential equation
	\begin{equation}\label{difeq}
	\partial_z g(z)=y(z)g(z)
	\end{equation}
	with the initial condition $g(0)=1$. (We note that this gives a compatible system of differential equations in Banach algebras.)
	If $g(z)$ is a solution of this differential equation and is invertible in $A$ then  (\ref{system}) would be satisfied with $g(z) x_0g^{-1}(z)-dg\cdot g^{-1}$ in place of $x(z)$ Since a solution of a linear differential equation in a Banach algebra is unique, the solution in the AM algebra $A$ is likewise unique and we conclude that, in fact, $x(z)=g(z)x_0g^{-1}(z)-dg\cdot g^{-1}$ and thus, $x_0$ and $x_1$ are gauge equivalent.
	
	But (\ref{difeq}) does have the solution $g(z)=\operatorname{P}\exp\int_0^zy(t)dt$
	where $\operatorname P\exp$ denotes the path ordered integral, defined by
	\[
	\operatorname{P}\exp\int_0^zy(t)dt \coloneqq 1 + \sum_{n=1}^{\infty} \int_{0\leq t_{1}\leq \dots \leq t_{n}\leq z} y(t_{n}) \cdots y(t_{1}) \ dt_{1}\cdots dt_{n}.
	\]
	By \cite[Propositions 3 and 4]{Araki73}, $g(z)$ is invertible.
\end{proof}
	\begin{cor} \label{cor-completehomotopygauge}
		A smooth homotopy equivalence between two AM dg algebras $A$ and $B$ induces bijections $\MCmod(A)\cong\MCmod(B)$ and $\MChmod(A)\cong\MChmod(B)$.	
	\end{cor}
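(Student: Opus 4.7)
The plan is to exhibit $f_\ast \colon \MCmod(A) \to \MCmod(B)$ and $g_\ast \colon \MCmod(B) \to \MCmod(A)$ (and likewise on $\MChmod$) as mutually inverse bijections. Since $\MCdg(-)$ is functorial, any continuous dg algebra map $\varphi \colon A \to B$ automatically gives a dg functor $\MCdg(A) \to \MCdg(B)$ and hence maps on objects of $\MCmod$ and $\MChmod$; the entire content of the corollary therefore reduces to showing that smoothly homotopic maps induce equal maps on these two moduli sets.

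For this I would first unpack what a smooth homotopy $F \colon A \to B \otimes \Omega[0,1]$ between $f_0, f_1 \colon A \to B$ says about an MC element $x \in \MC(A)$. Because $F$ is a map of dg algebras, $F(x)$ is an MC element in $B \otimes \Omega[0,1]$ whose evaluations at the endpoints are $f_0(x)$ and $f_1(x)$. By definition this is precisely a smooth homotopy of MC elements in $B$, so Theorem \ref{thm-completess} supplies a gauge equivalence $f_0(x) \sim f_1(x)$ in $B^\times$. In particular $f_{0\ast} = f_{1\ast}$ on $\MCmod(B)$. Since any gauge equivalence is \emph{a fortiori} a homotopy gauge equivalence (take $h = g^{-1}$ in the four conditions following the definition of $\MChmod$), the same identity holds on $\MChmod(B)$, where one also uses that the dg functor $\MCdg(f_0) = \MCdg(f_1)$ on objects and carries homotopy equivalences to homotopy equivalences.

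Applying this homotopy invariance to the smooth homotopies $g \circ f \sim \id_A$ and $f \circ g \sim \id_B$ yields $g_\ast f_\ast = \id$ and $f_\ast g_\ast = \id$ on both $\MCmod$ and $\MChmod$, which completes the argument. The main obstacle is really already contained in Theorem \ref{thm-completess} — integrating the linear ODE (\ref{difeq}) in each Banach quotient of $B$ and controlling invertibility of the path-ordered exponential via \cite{Araki73} — so the present corollary is largely formal: it is a functoriality statement built on top of that theorem. The only subtlety I would take care to spell out is that $f$ is not literally invertible, merely a homotopy inverse to $g$, but the two-sided homotopy invariance established in the previous paragraph is exactly what is needed to pass between smooth homotopy and equality on moduli.
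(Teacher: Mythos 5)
Your proof is correct and takes essentially the same route as the paper: reduce to Theorem~\ref{thm-completess} by applying a smooth homotopy (between $g\circ f$ and $\id_A$, resp.\ $f\circ g$ and $\id_B$) to each MC element, producing a gauge equivalence and hence, a fortiori, a homotopy gauge equivalence. The only slip is the parenthetical claim that $\MCdg(f_0)=\MCdg(f_1)$ on objects, which is false (they send $x$ to $f_0(x)$ and $f_1(x)$, respectively); this assertion is not actually needed, since the relevant fact, which you do establish, is that $f_0(x)$ and $f_1(x)$ are gauge equivalent and hence represent the same class in $\MChmod(B)$.
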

	\begin{proof}	Let $f: A \to B$ and $g: B \to A$ be homomorphisms such that $g \circ f$ and $f\circ g$ are smoothly homotopic to the identity. Then $f$ and $g$ induce functions between $\MCmod(A)$ and $\MCmod(A)$. To see they are inverse note that, for each $a \in A$, $g \circ f(a)$ is smoothly homotopic to $a$ and thus by Theorem \ref{thm-completess} it is gauge equivalent to $a$, similarly $f \circ g(b)$ is gauge equivalent to $b$. The bijection $\MChmod(A)\cong\MChmod(B)$ is proved in the same way. 
	%	The given smooth homotopy equivalence between $A$ and $B$ clearly induces a bijection of MC elements up to smooth homotopy.		By Theorem \ref{thm-completess} this becomes a bijection of MC elements up to gauge equivalence, i.e. a bijection $\MCmod(A)\cong\MCmod(B)$. The bijection $\MChmod(A\cong)\MChmod(B)$ is likewise clear. 
	\end{proof}
We would like to consider twisted modules over AM algebras. Since an endomorphism algebra of an infinite-dimensional space is, in general, not AM, it is not clear whether an arbitrary (infinitely generated) twisted module is a reasonable notion. For an AM algebra $A$, we consider the  dg category $\Twfgfree(A)$ of dg $A$-modules of the form $V\otimes A$ where $V$ is a finite-dimensional $\mathbb R$-space and denote $\Twfg(A)$, its Morita fibrant replacement. The latter can be obtained, e.g. by taking the closure of the Yoneda embedding of $\Twfgfree(A)$ with respect to homotopy idempotents.
\begin{rk}\label{rk:twfgam}
Note that the definition of $\Twfg(A)$ depends, strictly speaking, on whether $A$ is viewed as an AM algebra or a discrete one since in the latter case $\Twfg(A)$ is defined in terms of $\Tw(A)$ which is not considered for an AM algebra $A$. Nevertheless, this is only an ambiguity up to quasi-equivalence since the notion of a Morita fibrant replacement is well-defined up to a quasi-equivalence of dg categories, cf.\ Remark \ref{remark:Morita}. Corollary \ref{cor-twistedcohesive} continues to hold for an AM algebra $A$ with the same proof. 
\end{rk}
A map $A\to B$ of AM algebras induces  functors $\MCdg(A)\to \MCdg(B)$ and $\Twfg(A)\to \Twfg(B)$. It is natural to ask how these induced functors differ for smoothly homotopic maps. The following result answers this question.
\begin{prop}\label{prop:homotopic}
	Let $f,g:A\to B$ be two smoothly homotopic maps of AM algebras. Then the induced functors on \emph{homotopy categories}
	$\Hot\MCdg(A)\to \Hot\MCdg(B)$ and $\Hot\Twfg(A)\to \Hot\Twfg(B)$ are isomorphic.
\end{prop}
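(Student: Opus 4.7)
The plan is to construct an explicit natural isomorphism $\eta\colon f_* \Rightarrow g_*$ on the homotopy categories whose component at each object is the gauge equivalence produced in the proof of Theorem~\ref{thm-completess}. Let $H\colon A\to B\otimes\Omega[0,1]$ be a smooth homotopy with $(\id_B\otimes ev_0)\circ H = f$ and $(\id_B\otimes ev_1)\circ H = g$. For each $x\in \MC(A)$, decompose $H(x) = x(z) + \tilde x(z)\,dz$ with $x(0) = f(x)$, $x(1) = g(x)$. The path-ordered exponential $g_x(z) := \operatorname{P}\exp \int_0^z \tilde x(t)\,dt$, specialised at $z=1$, gives an element $g_x\in B^\times$ with $g_x\cdot f(x) = g(x)$; viewed as a degree 0 cocycle in $B^{[f(x),g(x)]}$, it defines an isomorphism $\eta_x\colon f(x)\to g(x)$ in $\MCdg(B)$.

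The key step is to verify naturality. Given a morphism $a\colon x\to y$ in $\MCdg(A)$, the image $H(a) = a(z) + \tilde a(z)\,dz$ is a degree 0 cocycle in $(B\otimes\Omega[0,1])^{[H(x),H(y)]}$: the $dz$-free component shows that $a(z)$ is closed in $B^{[x(z),y(z)]}$ for every $z$, while the $dz$-component relates $\partial_z a(z)$ to $\tilde a(z)$ and the path components of $H(x)$ and $H(y)$. Consider the smooth family
\[
\psi(z) \;:=\; g_y\, g_y(z)^{-1}\, a(z)\, g_x(z) \in B^{[f(x),\,g(y)]},
\]
which is closed for every $z$ by composition and interpolates $\psi(0) = g_y\cdot f(a) = \eta_y\circ f_*(a)$ with $\psi(1) = g(a)\cdot g_x = g_*(a)\circ \eta_x$. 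Differentiating $\psi$ in $z$ using the ODEs for $g_x(z)$ and $g_y(z)$, and substituting the $dz$-component of the MC equation for $H(a)$, I expect to identify $\partial_z \psi(z)$ with $d^{[f(x),g(y)]}$ applied (up to an overall sign) to $g_y\, g_y(z)^{-1}\, \tilde a(z)\, g_x(z)$; integrating from $0$ to $1$ then exhibits $\psi(1) - \psi(0)$ as a coboundary, so the naturality square commutes in $\Hot\MCdg(B)$.

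For the $\Twfg$ case I would reduce to $\MCdg$ via the correspondence between finitely generated free twisted $A$-modules and MC elements in $\End(V)\otimes A$ for finite-dimensional $V$, together with the analogous description of morphisms. The smooth homotopy $H$ induces $\id_{\End(V)}\otimes H\colon \End(V)\otimes A\to \End(V)\otimes B\otimes\Omega[0,1]$ between $\id_{\End(V)}\otimes f$ and $\id_{\End(V)}\otimes g$, and the preceding construction, applied uniformly in $V$, yields a natural isomorphism of functors $\Hot\Twfgfree(A)\to\Hot\Twfgfree(B)$. Since $\Twfg$ is the Morita fibrant replacement of $\Twfgfree$ and its homotopy category is idempotent complete (cf.\ Remarks~\ref{remark:Morita} and~\ref{rk:twfgam}), this natural isomorphism extends uniquely to $\Hot\Twfg$.

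The main obstacle is the naturality identity for $\partial_z \psi$: its derivation requires delicate bookkeeping of Koszul signs in the graded Leibniz rule applied to the quadruple product together with the precise $dz$-component of the MC equation for $H(a)$. The ostensibly obstructing terms that appear on differentiation must cancel exactly by virtue of $H(a)$ being a cocycle in the morphism complex of $\MCdg(B\otimes\Omega[0,1])$, not merely a degree 0 element.
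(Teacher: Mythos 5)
Your approach is correct but entirely different from the paper's. The paper reduces everything to Lemma~\ref{lem:homotopic}, which asserts that the two evaluation maps $\id_A\otimes ev_{0,1}\colon A\otimes\Omega[0,1]\to A$ induce isomorphic functors on homotopy categories; the proof there is abstract, observing that the inclusion $i\colon A\to A\otimes\Omega[0,1]$ induces a quasi-equivalence (quasi-essential surjectivity via Corollary~\ref{cor-completehomotopygauge}, quasi-full-faithfulness via acyclicity of $\Omega[0,1]$), that both evaluation maps are right inverses to $i$, and that quasi-inverses are unique up to natural isomorphism. Proposition~\ref{prop:homotopic} then follows by composing with the given homotopy $h$. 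You instead build the natural isomorphism $\eta\colon f_*\Rightarrow g_*$ explicitly via path-ordered exponentials, which is a genuinely different strategy. I checked the computation you flagged as the ``main obstacle'': with signs consistent with the tensor-product Koszul conventions, the $dz$-component of the cocycle condition for $H(a)$ together with the ODEs $\partial_z g_x(z)=\mp\tilde x(z)g_x(z)$, $\partial_z g_y(z)=\mp\tilde y(z)g_y(z)$ and the gauge relations $d_B g_x(z)=g_x(z)f(x)-x(z)g_x(z)$, $d_B g_y(z)=g_y(z)f(y)-y(z)g_y(z)$ do yield $\partial_z\psi(z)=\mp d^{[f(x),g(y)]}\bigl(g_y\,g_y(z)^{-1}\tilde a(z)g_x(z)\bigr)$, so $\psi(1)-\psi(0)$ is indeed a coboundary and naturality holds. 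So the plan closes; what remains is only to actually carry out the sign-tracking you describe, and to fix the sign of the ODE consistently with the convention used in $(\ref{system})$. Two caveats worth flagging: (i) you should state clearly that naturality need only be checked for degree-$0$ cocycles, since the statement concerns homotopy categories; and (ii) for the $\Twfg$ case, the reduction via $\End(V)\otimes A$ as stated only handles endomorphisms of a single twisted module---morphisms $(V\otimes A,D_V)\to(W\otimes A,D_W)$ with $V\ne W$ are degree-$0$ cocycles in $\Hom(V,W)\otimes A$, not in $\End(V)\otimes A$, so the argument for naturality should be re-run with $\psi(z)=g_W\,g_W(z)^{-1}a(z)g_V(z)$ where $g_V,g_W$ come from the MC elements in $\End(V)\otimes A$, $\End(W)\otimes A$; the computation is identical. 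What the paper's route buys is that none of this bookkeeping is needed; what yours buys is an explicit formula for the natural isomorphism, which could be useful downstream.
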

\begin{lemma}\label{lem:homotopic}
	For an AM algebra $A$ consider the two natural maps \[\id_A\otimes ev_{0,1}:A\otimes\Omega[0,1]\rightrightarrows A.\] Then the induced functors
	\[\Hot\MCdg(A\otimes\Omega[0,1])\rightrightarrows \Hot\MCdg(A)\] are isomorphic.
\end{lemma}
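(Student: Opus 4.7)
The plan is to construct an explicit natural isomorphism $\eta\colon \Hot\MCdg(ev_0) \Rightarrow \Hot\MCdg(ev_1)$ by parallel transport, reusing the construction from the proof of Theorem \ref{thm-completess}. For an MC element $X = x(z) + y(z)\,dz \in A\otimes\Omega[0,1]$, set $g_X(z) := \operatorname{P}\exp\int_0^z y(t)\,dt$ and define $\eta_X := g_X(1) \in A^{\times}$. By Theorem \ref{thm-completess}, $g_X(z)$ gauges $ev_0(X)$ to $ev_z(X)$, so $\eta_X$ is a closed invertible degree-zero morphism $ev_0(X) \to ev_1(X)$ in $\MCdg(A)$ and hence an isomorphism in $\Hot\MCdg(A)$.

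The substantive step is naturality. For a morphism $a : X \to X'$ in $\MCdg(A\otimes\Omega[0,1])$, decompose $a = a_0(z) + a_1(z)\,dz$ with $|a_0|=0$ and $|a_1|=-1$. Expanding the cocycle condition $d^{[X,X']}a = 0$ along the bidegree decomposition of $A\otimes\Omega[0,1]$ gives two equations: (i) for each $z$, $a_0(z)$ is closed in $A^{[ev_z(X),\,ev_z(X')]}$; (ii) a relation expressing $\partial_z a_0$ as a combination of $d^{[ev_z(X),\,ev_z(X')]}(a_1)$ and terms of the form $a_0 y$ and $y' a_0$, with signs fixed by the tensor product conventions for $A\otimes\Omega[0,1]$. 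I must show that $\eta_{X'}\cdot ev_0(a)$ and $ev_1(a)\cdot \eta_X$ are cohomologous in $A^{[ev_0(X),\,ev_1(X')]}$.

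I would do this by considering the path
\[ c(z) := g_{X'}(z)^{-1}\, a_0(z)\, g_X(z), \]
which lies in $A^{[ev_0(X),\,ev_0(X')]}$ at every $z$ (since left multiplication by $g_{X'}^{-1}$ and right multiplication by $g_X$ intertwine the respective twisted differentials), and which satisfies $c(0) = ev_0(a)$ and $c(1) = \eta_{X'}^{-1}\cdot ev_1(a) \cdot \eta_X$. Differentiating, using $\partial_z g_X = y g_X$ and $\partial_z g_{X'}^{-1} = -g_{X'}^{-1} y'$, then substituting (ii), the $a_0 y$- and $y' a_0$-contributions from the two sources cancel in pairs, leaving
\[ \partial_z c(z) = -\,d^{[ev_0(X),\,ev_0(X')]}\bigl(g_{X'}(z)^{-1}\,a_1(z)\,g_X(z)\bigr). \]
Integrating over $[0,1]$ exhibits $c(1) - c(0)$ as a coboundary, so $c(0) \sim c(1)$; left multiplication by $\eta_{X'}$ then gives $\eta_{X'}\cdot ev_0(a) \sim ev_1(a) \cdot \eta_X$, which is the required naturality in $\Hot\MCdg(A)$. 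The main obstacle is precisely this sign/term cancellation: condition (ii) is exactly what is needed to absorb the $y$- and $y'$-contributions produced by the conjugation, and verifying it demands careful bookkeeping of the Koszul signs carried by the $dz$ factors. The integral is well defined in the AM algebra $A$ by the same reduction to Banach algebras employed in Theorem \ref{thm-completess}.
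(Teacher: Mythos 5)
Your proof is correct (and the Koszul-sign cancellation you flag as the main obstacle does work out), but it takes a genuinely different route from the paper's. The paper argues abstractly: the inclusion $i\colon A\to A\otimes\Omega[0,1]$, $a\mapsto a\otimes 1$, is a smooth homotopy equivalence by Lemma \ref{lem:diagonal1}, so $\MCdg(i)$ is quasi-essentially surjective by Corollary \ref{cor-completehomotopygauge} and quasi-fully faithful by acyclicity of $\Omega[0,1]$; since $(\id_A\otimes ev_0)\circ i=(\id_A\otimes ev_1)\circ i=\id_A$, both $\Hot\MCdg(ev_0)$ and $\Hot\MCdg(ev_1)$ are quasi-inverse to $\Hot\MCdg(i)$ and hence isomorphic by uniqueness of quasi-inverses. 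You instead build the natural isomorphism explicitly by parallel transport, setting $\eta_X=\operatorname{P}\exp\int_0^1 y(t)\,dt$ and proving naturality by showing $\partial_z\bigl(g_{X'}(z)^{-1}a_0(z)g_X(z)\bigr)$ is exact in $A^{[x(0),x'(0)]}$ and integrating over $[0,1]$. The deferred sign check is fine: with the conventions giving (\ref{system}), the $dz$-component of $d^{[X,X']}a=0$ reads $\partial_z a_0 = d^{[x,x']}(a_1)+y'a_0-a_0y$, and substituting into $\partial_z c = g_{X'}^{-1}(-y'a_0+\partial_z a_0+a_0y)g_X$ collapses it to $g_{X'}^{-1}d^{[x,x']}(a_1)g_X = d^{[x(0),x'(0)]}(g_{X'}^{-1}a_1 g_X)$ by the graded Leibniz rule, since $g_X$ and $g_{X'}^{-1}$ are closed of degree zero. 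Your route is more concrete — it furnishes an actual formula for the isomorphism of functors and sidesteps both the appeal to uniqueness of quasi-inverses and the use of acyclicity of $\Omega[0,1]$ for quasi-fully faithfulness — at the cost of explicit sign bookkeeping and the (routine, Banach-reducible) analytic checks on existence of the integral; both arguments ultimately rest on the solvability and invertibility of the transport equation established in Theorem \ref{thm-completess}.
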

\begin{proof}
	The map $i:A\to A\otimes \Omega[0,1];a\mapsto a\otimes 1$ induces a quasi-equivalence \[\MCdg(A)(i):\MCdg(A)\to\MCdg(A\otimes\Omega[0,1]).\] 
	Indeed, %$A\otimes\Omega[0,1]$ is smooth homotopy equivalent to $A$,
	$i$ is a smooth homotopy equivalence by Lemma \ref{lem:diagonal1}, and
	 Corollary \ref{cor-completehomotopygauge}  then implies that $\MCdg(A)(i)$ is quasi-essentially surjective (even essentially surjective). The quasi-fully faithfulness of $\MCdg(A)(i)$ follows from acyclicity of $\Omega[0,1]$.
	
	The composition $(\id_A\otimes ev_{0})\circ i:A\to A$ is clearly the identity map on $A$ and it follows that the map $\id_A\otimes ev_{0}$ induces the functor $\Hot\MCdg(A\otimes\Omega[0,1])\rightarrow \Hot\MCdg(A)$ that is quasi-inverse to the one induced by $i$. The same can be said about the functor induced by $\id_A\otimes ev_{1}$. Since quasi-inverse functors are determined uniquely up to an isomorphism, the desired claim follows.
\end{proof}
\begin{proof}[Proof of Proposition \ref{prop:homotopic}].
	Let $h:A\to B\otimes\Omega[0,1]$ be a smooth homotopy between $f$ and $g$. Then $(\id_B\otimes ev_1)\circ h=f$ and $(\id_B\otimes ev_2)\circ h=g$, and applying Lemma
	\ref{lem:homotopic} we obtain the desired result.
\end{proof}
\begin{thm}\label{thm:homotopyequivalence1}
	Let $A$ and $B$ be two dg AM algebras that are smoothly homotopy equivalent. Then
	there is a quasi-equivalence between the dg categories
	\begin{enumerate}
		\item $\MCdg(A)$ and $\MCdg(B)$,
		\item $\Twfg(A)$ and $\Twfg(B)$.
	\end{enumerate}
\end{thm}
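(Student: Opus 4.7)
The plan is to leverage the cylinder dg algebra $A\otimes\Omega[0,1]$ together with machinery already contained in the proof of Proposition \ref{prop:homotopic}. That argument shows, through Lemma \ref{lem:homotopic}, that the inclusion $i:A\to A\otimes\Omega[0,1]$ induces a quasi-equivalence $\MCdg(i)$; an entirely parallel argument (using Corollary \ref{cor-completehomotopygauge} applied to the AM matrix algebras $\End(V)\otimes A$ for essential surjectivity, and the acyclicity of $\Omega[0,1]$ via a Kunneth-type computation for quasi-fully faithfulness) shows that $\Twfg(i)$ is likewise a quasi-equivalence. Since $ev_k\circ i=\id_A$ for $k=0,1$, functoriality gives $\MCdg(ev_k)\circ\MCdg(i)=\id$ and $\Twfg(ev_k)\circ\Twfg(i)=\id$. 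I would then record the general observation that if $GF=\id$ of dg functors and either of $F,G$ is a quasi-equivalence then so is the other: on homotopy categories one inherits an inverse from the other, while on each $\uHom$-complex the relation $H(G_*)H(F_*)=\id$ combined with one of them being an isomorphism forces the other to be one too. Applying this shows $\MCdg(ev_k)$ and $\Twfg(ev_k)$ are quasi-equivalences.

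Let $f:A\to B$ and $g:B\to A$ realize a smooth homotopy equivalence via smooth homotopies $h_A:A\to A\otimes\Omega[0,1]$ with $ev_0\circ h_A=\id_A$, $ev_1\circ h_A=g\circ f$, and $h_B$ constructed symmetrically. Functoriality yields $\MCdg(ev_0)\circ\MCdg(h_A)=\id$ and $\MCdg(ev_1)\circ\MCdg(h_A)=\MCdg(g)\circ\MCdg(f)$. The first identity, together with the observation of the previous paragraph, shows $\MCdg(h_A)$ is a quasi-equivalence; composing with the quasi-equivalence $\MCdg(ev_1)$ then shows $\MCdg(g)\circ\MCdg(f)$ is a quasi-equivalence. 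Symmetrically $\MCdg(f)\circ\MCdg(g)$ is a quasi-equivalence, and the identical reasoning with $\Twfg$ in place of $\MCdg$ yields the parallel statements for twisted modules.

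To conclude that $\MCdg(f)$ itself is a quasi-equivalence, note that Proposition \ref{prop:homotopic} provides natural isomorphisms $\Hot\MCdg(gf)\cong\id$ and $\Hot\MCdg(fg)\cong\id$, exhibiting $\Hot\MCdg(f)$ and $\Hot\MCdg(g)$ as mutually inverse equivalences of homotopy categories; in particular $\Hot\MCdg(f)$ is essentially surjective. For quasi-fully faithfulness, writing $F_*=\MCdg(f)_*$ and $G_*=\MCdg(g)_*$, consider the chain
\[\uHom(x,y)\xrightarrow{F_*}\uHom(Fx,Fy)\xrightarrow{G_*}\uHom(GFx,GFy)\xrightarrow{F_*}\uHom(FGFx,FGFy)\]
in which, by the previous paragraph, the composite of the first two arrows and the composite of the last two arrows are both quasi-isomorphisms. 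A cohomology-level pigeonhole -- from $H(G_*F_*)$ being an iso the middle $H(G_*)$ is surjective, from $H(F_*G_*)$ being an iso $H(G_*)$ is injective, hence $H(G_*)$ is an iso and therefore so is $H(F_*)=H(G_*)^{-1}\circ H(G_*F_*)$ -- completes the argument. The identical reasoning handles $\Twfg$, yielding part (2). I anticipate the main subtlety to be this final cohomological step, which is elementary but must be presented without invoking triangulated-category arguments since $\MCdg(A)$ is not pre-triangulated; fortunately the pigeonhole operates on each graded piece of cohomology independently and requires no triangulated structure.
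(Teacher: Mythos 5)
Your proof is correct and uses the same ingredients as the paper's argument (Lemma~\ref{lem:homotopic}, Corollary~\ref{cor-completehomotopygauge}, Proposition~\ref{prop:homotopic}, and for part~(2) the identification of finitely generated twisted modules with MC elements of $\End(V)\otimes A$ followed by Morita fibrant replacement). Where you are more explicit than the paper's written proof is in deriving quasi-full-faithfulness, and this is genuinely worth spelling out: the paper passes from ``$\Hot\MCdg(f)$ and $\Hot\MCdg(g)$ are mutually inverse equivalences'' directly to a quasi-equivalence of dg categories, but this inference is not automatic for dg categories that, like $\MCdg(A)$, lack shifts and cones --- one can have dg functors $F,G$ with $\Hot(F),\Hot(G)$ mutually inverse equivalences yet neither quasi-fully faithful, since an equivalence on $\Hot$ only controls $H^{0}$ of Hom-complexes. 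Your intermediate step, establishing that $\MCdg(gf)$ and $\MCdg(fg)$ are themselves quasi-equivalences via the cylinder algebra $A\otimes\Omega[0,1]$ and the quasi-equivalences $\MCdg(ev_{k})$, followed by the cohomological pigeonhole, supplies exactly the missing content. Two minor points: (i) in your ``general observation,'' when $F$ is the given quasi-equivalence with $GF=\id$, the relation $H(G_{*})H(F_{*})=\id$ directly yields that $G_{*}$ is a quasi-isomorphism only on Hom-complexes of the form $\uHom(Fx,Fy)$; covering arbitrary pairs additionally uses quasi-essential surjectivity of $F$ together with compatibility of $G_{*}$ with the quasi-isomorphisms of Hom-complexes induced by $\Hot$-isomorphisms of objects, which you leave implicit. (ii) The strict functoriality manipulations in part~(2) should be phrased in terms of $\Twfgfree(-)$, with $\Twfg(-)$ obtained as its Morita fibrant replacement only at the end; writing $\Twfg(i)$ in your first paragraph conflates the two.
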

\begin{proof}
	Let $f:A\to B$ and $g:B\to A$ be the dg algebra maps such that $f\circ g$ is smoothly homotopic
	to $\id_B$ and $g\circ f$ is smoothly homotopic to $\id_A$. These maps
	induce functors $\MCdg(A)(f):\MCdg(A)\to \MCdg(B)$ and
	$\MCdg(A)(g):\MCdg(B)\to\MCdg(A)$. Corollary \ref{cor-completehomotopygauge} implies that $\MCdg(A)(f)$ is quasi-essentially surjective and Proposition \ref{prop:homotopic} -- that $\MCdg(A)(f)$ and $\MCdg(A)(g)$ induce an equivalence $\Hot\MCdg(A)\to \Hot\MCdg(B)$. It follows that $\MCdg(A)$ and $\MCdg(B)$ are quasi-equivalent.
	
	The same argument establishes a quasi-equivalence between $\Twfgfree(A)$ and $\Twfgfree(B)$ after one observes that a  finitely generated twisted $A$-module is the same as an MC element in the dg algebra $A\otimes\End(V)$ where $V$ is a graded finitely generated free $\ground$-module and similarly for $B$. It follows that $\Twfg(A)$ and $\Twfg(B)$ (as Morita fibrant replacements of $\Twfgfree(A)$ and $\Twfgfree(B)$) are quasi-equivalent. 
\end{proof}
If $X$ is a smooth manifold or a simplicial complex we write $\MCdg(X)$ and $\Tw(X)$ for the dg categories  $\MCdg(\Omega(X))$ and $\Tw(\Omega(X))$ respectively. For a smooth (piecewise smooth in the case of simplicial complexes) homotopy $X\times[0,1]\to Y$ of maps between $X$ and $Y$,  the associated map 
$\Omega(Y)\to\Omega(Y\times[0,1])\cong\Omega{Y}\otimes\Omega[0,1]$
 (see Corollary \ref{cor:product} regarding the last isomorphism) is a smooth homotopy of the corresponding dg AM algebras. Therefore, we have the following result.
\begin{cor}\label{cor:deRham}
	Let $f,g:X\to Y$ be (piecewise) smooth homotopic maps between $M$ and $N$. Then the induced functors 
	\[\MCdg(A)(f), \MCdg(A)(g):\Hot\MC(X)\rightrightarrows\Hot\MC(Y);\] 
	\[\MCdg(A)(f), \MCdg(A)(g):\Hot\Tw(X)\rightrightarrows\Hot\Tw(Y)\] are isomorphic.
	
	If $X$ and $Y$ are (piecewise) smooth homotopy equivalent smooth manifolds or simplicial complexes then the following dg categories
	are quasi-equivalent. 
	\[
		\MCdg(X) \text{~and~} \MCdg(Y),
	\]
	\[
	\pushQED{\qed} 
	\Twfg(X) \text{~and~} \Twfg(Y). \qedhere
	\popQED
	\]
\end{cor}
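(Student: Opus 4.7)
The plan is to reduce the corollary to the AM-algebra statements already proved, namely Proposition \ref{prop:homotopic} and Theorem \ref{thm:homotopyequivalence1}, via contravariant pullback of differential forms. The key observation, already flagged in the paragraph preceding the corollary, is that a (piecewise) smooth homotopy $H:X\times[0,1]\to Y$ between $f,g:X\to Y$ gives, by pulling back forms, a continuous dg algebra map
\[
H^*:\Omega(Y)\longrightarrow\Omega(X\times[0,1])\cong\Omega(X)\otimes\Omega[0,1],
\]
where the identification on the right is supplied by Corollary \ref{cor:product}. Because pullback is functorial and restricts correctly along the inclusions $X\cong X\times\{0\},X\times\{1\}\hookrightarrow X\times[0,1]$, composing $H^*$ with $\id_{\Omega(X)}\otimes ev_0$ and with $\id_{\Omega(X)}\otimes ev_1$ recovers $f^*$ and $g^*$ respectively. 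In the language of Section \ref{smooth}, this exhibits $H^*$ as a smooth homotopy of AM algebras between $f^*$ and $g^*$.

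With this translation in hand the first assertion is immediate: applying Proposition \ref{prop:homotopic} to the pair $f^*,g^*:\Omega(Y)\to\Omega(X)$ yields the claimed isomorphism of the induced functors on $\Hot\MCdg$ and on $\Hot\Twfg$ (with $X$ and $Y$ swapped in the direction of the arrows because pullback is contravariant). For the second assertion, a (piecewise) smooth homotopy equivalence between $X$ and $Y$ consists of maps $u:X\to Y$, $v:Y\to X$ together with homotopies $u\circ v\sim\id_Y$, $v\circ u\sim\id_X$; the same pullback procedure turns these into a smooth homotopy equivalence of the AM algebras $\Omega(X)$ and $\Omega(Y)$, and Theorem \ref{thm:homotopyequivalence1} then produces the required quasi-equivalences between $\MCdg(X)$ and $\MCdg(Y)$, and between $\Twfg(X)$ and $\Twfg(Y)$.

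The only genuinely nontrivial point is the isomorphism $\Omega(X\times[0,1])\cong\Omega(X)\otimes\Omega[0,1]$ of dg AM algebras compatible with the endpoint evaluations. In the smooth manifold case this is standard nuclearity (handled in the appendix and packaged as Corollary \ref{cor:product}); in the simplicial complex case one must further check that piecewise smooth forms on a product of a simplicial complex with $[0,1]$ are detected by smooth forms on the interiors of product cells with Seeley-extendable behaviour on faces, and that this matches the completed tensor product description. I expect this piece of bookkeeping to be the main technical obstacle, but once Corollary \ref{cor:product} is in place, the proof of the present corollary really is just the two-step application of the previously proven AM-algebra results outlined above, together with the remark that $H^*$ is automatically continuous because pullback along smooth maps is continuous in the Fréchet (hence AM) topology on de Rham algebras.
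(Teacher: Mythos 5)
Your proof is correct and matches the paper's own argument: the paper likewise observes that a (piecewise) smooth homotopy $X\times[0,1]\to Y$ induces, via pullback and the identification $\Omega(X\times[0,1])\cong\Omega(X)\otimes\Omega[0,1]$ of Corollary \ref{cor:product}, a smooth homotopy of AM algebras, and then invokes Proposition \ref{prop:homotopic} and Theorem \ref{thm:homotopyequivalence1}. The only thing the paper does not spell out but you do is the routine check that the endpoint evaluations recover $f^*$ and $g^*$.
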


\section{Strong homotopies for dg algebras}\label{strong}
In this section we introduce the notion of strong homotopy between maps of dg algebras and the concomitant notion of strong homotopy equivalence of dg algebras. All definitions, results and proofs are applicable verbatim to dg pseudo-compact algebras as long as we keep in mind our conventions that homomorphisms of pseudo-compact algebras are assumed to be continuous, unmarked tensor products are automatically completed etc.

Let $I$ be the singular simplicial set of the unit interval $[0,1]$; recall that the set $I_n$ of $n$-simplices of $I$ is the set of singular $n$-simplices of $[0,1]$, i.e. the set of continuous maps $\Delta^n\to[0,1]$ where $\Delta^n$ is the standard topological $n$-simplex. We will consider a collection of simplicial subsets of $I$ defined as follows.

\begin{enumerate}
\item The simplicial set $K_0$ is generated by two nondegenerate simplices $a_0,b_0$ in degree zero corresponding to the endpoints of $[0,1]$ and one nondegenerate simplex $a_1$ in degree one corresponding to the linear path from $0$ to $1$ in $[0,1]$, viewed as a $1$-simplex in $[0,1]$.
\item The simplicial set $K_1$ contains all the simplices of $K_0$ and has, additionally, one other nondegenerate $1$-simplex $b_1$ corresponding to the linear path from $1$ to $0$ in $[0,1]$.
\item Assuming that for $n\geq 1$ the simplicial set $K_n$ has been defined, we let $K_{n+1}$ contain all the simplices of $K_n$ plus two additional nondegenerate simplices $a_n,b_n$ in degree $n$ defined as follows. Writing $\Delta^n$ as the convex hull of its vertices $x_0,\ldots, x_n$, we let $a_n:\Delta^n\to[0,1]$ and $b_n:\Delta^n\to[0,1]$ be the affine maps for which
\[a(x_i)=\begin{cases}0, \text{if $i$ is even,}\\1,\text{if $i$ is odd}\end{cases} \text{and~} b(x_i)=\begin{cases}1, \text{if $i$ is even,}\\0,\text{if $i$ is odd.}\end{cases}\]
\item The simplicial set $K_\infty$ is the union of the nested sequence of simplicial sets $K_1\subset K_2\subset\ldots$.
\end{enumerate}
\begin{rk}
We have the following inclusions of the simplicial sets introduced above:
\[K_0\subset K_1\subset\ldots \subset K_\infty\subset I\]
as well as their geometric realizations $|K_n|$. It is clear that $|K_0|$ is a cell decomposition of $[0,1]$ with two 0-cells and one 1-cell. Furthermore, for $n=1,\ldots,\infty$ the cell complex $|K_n|$ is homeomorphic to the $n$-sphere $S^n$ with two cells in each dimension.
\end{rk}
\begin{lemma}\label{lemma:retract}
	The simplicial set $K_\infty$ is a retract of $I$.
\end{lemma}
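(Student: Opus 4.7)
The plan is to write down an explicit simplicial retraction $r\colon I\to K_\infty$ based on thresholding. The starting observation is that $K_\infty$ has a very rigid combinatorial structure: there is a natural bijection between the set of $n$-simplices of $K_\infty$ and the set $\{0,1\}^{n+1}$, obtained by sending a simplex $\sigma$ to its vertex sequence $(\sigma(x_0),\ldots,\sigma(x_n))$. Existence of a simplex with any prescribed vertex sequence $\vec v$ is by explicit construction: if $k$ denotes the number of indices $i$ with $v_i\ne v_{i+1}$, one starts from the non-degenerate simplex $a_k$ (when $v_0=0$) or $b_k$ (when $v_0=1$) and applies the degeneracies that duplicate vertices within each constant run of $\vec v$. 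Uniqueness follows from the Eilenberg--Zilber decomposition of simplices in $K_\infty$ combined with the fact that the non-degenerate simplices $a_n,b_n$ are themselves determined by their vertex sequences. Under this bijection, the face maps in $K_\infty$ correspond to omission of a coordinate and the degeneracy maps to duplication of a coordinate, matching exactly the action of these operators on the vertex sequence of any singular simplex in $I$.

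Having established this identification, fix any function $\phi\colon[0,1]\to\{0,1\}$ with $\phi(0)=0$ and $\phi(1)=1$; for concreteness one may take $\phi(t)=0$ for $t<1/2$ and $\phi(t)=1$ for $t\ge 1/2$. Then define $r\colon I\to K_\infty$ by sending a singular simplex $\sigma\colon\Delta^n\to[0,1]$ to the unique simplex of $K_\infty$ whose vertex sequence is $(\phi(\sigma(x_0)),\ldots,\phi(\sigma(x_n)))$.

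The simplicial property of $r$ is immediate: face and degeneracy operators act on $\sigma\in I_n$ by omitting or duplicating a vertex value, and by the bijection discussed above the same operations performed on the image vertex sequence in $\{0,1\}^{n+1}$ are computed by the face and degeneracy operators of $K_\infty$. The retraction property follows because $\phi$ fixes $\{0,1\}$ pointwise, so for $\sigma\in K_\infty$ the simplices $\sigma$ and $r(\sigma)$ have the same vertex sequence, and then uniqueness forces $r(\sigma)=\sigma$. The only step that requires real care is the structural lemma identifying $n$-simplices of $K_\infty$ with $\{0,1\}^{n+1}$; this is the main obstacle, but it reduces to a transparent check using the explicit description of the generators $a_n,b_n$ and the simplicial identities among degeneracy operators.
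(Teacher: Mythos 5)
Your proof is correct, and it takes a genuinely different route from the paper. The paper's argument is abstract: it observes that $K_\infty$ is the nerve of the groupoid $\mathcal K$ with two objects and a unique isomorphism between them, hence a Kan complex; since the inclusion $K_\infty\hookrightarrow I$ is a monomorphism between weakly contractible simplicial sets, it is an acyclic cofibration, and the retraction exists because $K_\infty\to *$ has the right lifting property against acyclic cofibrations. Your argument is explicit: you identify $(K_\infty)_n$ with $\{0,1\}^{n+1}$ via vertex sequences (equivalently, you identify $K_\infty$ with the $0$-coskeleton of the two-element set, which is exactly the nerve of the indiscrete groupoid, so your ``structural lemma'' is really a verification of the paper's parenthetical ``by definition'' claim), observe that faces and degeneracies in $K_\infty$ act by deleting or duplicating coordinates, and then define the retraction by post-composing vertex values with a threshold function $\phi$ fixing $\{0,1\}$. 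Simpliciality and the retraction property then follow formally. The paper's argument is shorter and also delivers, as a byproduct, the fact that $K_\infty$ is Kan (which is reused implicitly elsewhere, e.g.\ in choosing very good cylinder objects for simplicial sets); your argument is more elementary and self-contained, producing a concrete retraction without invoking the model structure or the Kan condition, and it makes transparent exactly which simplices of $I$ belong to $K_\infty$ (the affine simplices with vertex values in $\{0,1\}$). Both are valid; the explicit construction is arguably more illuminating for a reader who has not internalized the lifting calculus, while the paper's proof better fits the surrounding homotopical machinery.
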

\begin{proof}
Consider the category $\mathcal K$ with two objects and two mutually inverse morphisms between them. The simplicial set $K_\infty$ is, by definition, the nerve of $\mathcal K$. Since $\mathcal K$ is a groupoid, its nerve is a Kan simplicial set (cf. \cite[Lemma 3.5]{Goerss99}). Clearly, the inclusion $K_\infty\to I$ is an acyclic cofibration and it follows that it admits a splitting, exhibiting $K_\infty$ as a retract of $I$.
\end{proof}
\begin{rk}
Since the simplicial sets $K_n, 1\leq n<\infty$ are not contractible, they are not retracts of $I$. The simplicial set $K_0$, while contractible, is still not a retract of $I$ since it is not Kan.
\end{rk}	
We denote by $K_n^*, n=0,1,\ldots, \infty$ and $I^*$ the complexes of normalized cochains on the corresponding simplicial sets with values in $\ground$.  Endowed with the Alexander-Whitney product, these become dg algebras, in fact pseudo-compact dg algebras (as duals to dg coalgebras). We re-iterate that, even though $K_\infty^*$ is a degree-wise finitely generated free $\ground$-module, it will be regarded as pseudo-compact, in particular tensor products with it will always be understood in the completed sense, as per our convention. Note that this subtlety is vacuous for $K_n^*, n<\infty$ as these free $\ground$-modules have totally finite rank. We have the following tower of surjective maps of dg pseudo-compact algebras:
\[
K_0^*\leftarrow K_1^*\leftarrow\ldots\leftarrow K_\infty^*\leftarrow I^*.
\]
Note that any pseudo-compact dg algebra in this tower admits two maps $ev_1$ and $ev_2$ to $\ground$ corresponding to the inclusion of the two endpoints of $[0,1]$ into the corresponding simplicial set.  We can define the notion of a $K$-multiplicative homotopy of dg algebra maps where $K$ is any simplicial subset of $I$ containing the 0-simplices corresponding to the endpoints of $[0,1]$. In the following definition $K$ is $K_n, n=0, 1,\ldots,\infty$, or $I$.
\begin{defn}
Let $f,g:A\to B$ be two dg algebra maps. An \emph{elementary $K$-homotopy} between them is a map $H:A\to B\otimes K^*$ such that $(\id_B \otimes ev_1)(H)=f$ and  $(\id_B\otimes ev_2)(H)=g$. We say that $f$ and $g$ are $K$-homotopic, if they are related by the equivalence relation generated by elementary $K$-homotopy. If $K=I$, we will refer to $K$-homotopy as \emph{strong homotopy}.

Furthermore, $A$ and $B$ are called $K$ homotopy equivalent if there are maps $f:A\to B$ and $g:B\to A$ such that $f\circ g$ and $g\circ f$ are $K$-homotopic to $\id_B$ and $\id_A$ respectively. If $K=I$ we will refer to a $K$ homotopy equivalence as a \emph{strong homotopy equivalence}.
\end{defn}
\begin{rk}
It is easy to see that for $n>0$ the relation of elementary homotopy is symmetric but not transitive and for $n=0$ it is not even symmetric. Furthermore, using \emph{normalized} cochains is essential: almost all of our results will fail for un-normalized cochains. For example,
the un-normalized singular cochain algebra of the one-point topological space is the dg algebra of Example \ref{ex:universal} having non-trivial
dg categories of MC elements and twisted modules.
	
Since $K_\infty$ is a retract of $I$, the notions of strong homotopy and strong homotopy equivalence are equivalent to those of a $K_\infty$	homotopy and $K_\infty$ homotopy equivalence respectively. It is this notion of multiplicative homotopy that is of chief relevance to this paper. Also of interest is  the notion of $K_0$ homotopy
(sometimes called \emph{derivation homotopy}); it has been used in rational homotopy theory, cf. for example \cite{Anick89}.
\end{rk}
\begin{lemma}\label{lem:diagonal2}
Any dg algebra $A$ is strongly homotopy equivalent to $A\otimes I^*$ (and thus, also to $A\otimes K_\infty^*$).	
\end{lemma}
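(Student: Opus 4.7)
The plan is to build explicit strong homotopy inverses between $A$ and $A\otimes I^*$. Define $\iota\colon A\to A\otimes I^*$ by $\iota(a)=a\otimes 1$ and $\pi\colon A\otimes I^*\to A$ by $\pi=\id_A\otimes\mathrm{ev}_{a_0}$, where $\mathrm{ev}_{a_0}$ denotes evaluation of a cochain at the $0$-simplex $a_0$ corresponding to $0\in[0,1]$. Both are dg algebra maps, since the unit $1\in I^0$ is a cocycle and $\mathrm{ev}_{a_0}$ is a dg algebra homomorphism, and $\pi\circ\iota=\id_A$ follows immediately.

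The core of the proof is to produce an elementary strong homotopy between $\iota\circ\pi$ and $\id_{A\otimes I^*}$, that is, a dg algebra map $H\colon A\otimes I^*\to A\otimes I^*\otimes I^*$ with $(\id\otimes\mathrm{ev}_{a_0})\circ H=\iota\circ\pi$ and $(\id\otimes\mathrm{ev}_{b_0})\circ H=\id_{A\otimes I^*}$. The model, parallel to Lemma \ref{lem:diagonal1}, is the multiplication $\mu\colon[0,1]\times[0,1]\to[0,1]$, $\mu(s,t)=st$, whose restriction at $s=0$ is the constant map $0$ and at $s=1$ is the identity. Applying $\Sing$ yields a simplicial map $\mu\colon I\times I\to I$ and dually a dg algebra map $\mu^*\colon I^*\to(I\times I)^*$ already with the correct endpoint behavior.

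The main obstacle, absent in the smooth de Rham setting of Lemma \ref{lem:diagonal1} where the nuclear identification $\Omega([0,1]^2)\cong\Omega[0,1]\otimes\Omega[0,1]$ is strict, is that in the simplicial setting $(I\times I)^*$ and $I^*\otimes I^*$ are only Alexander--Whitney/Eilenberg--Zilber quasi-isomorphic as dg algebras; $\mu^*$ does not strictly factor through $I^*\otimes I^*$. To circumvent this, we use Lemma \ref{lemma:retract}: since $K_\infty$ is a retract of $I$, strong homotopy coincides with $K_\infty$-homotopy, so it suffices to construct the analogous $H'\colon A\otimes K_\infty^*\to A\otimes K_\infty^*\otimes K_\infty^*$. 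Here the combinatorics of the groupoid nerve $\mathcal K$ are rigid enough to afford a direct algebraic coproduct on $K_\infty^*$ with the desired endpoint properties, and the resulting $K_\infty$-homotopy is then pushed to a strong homotopy via the dg algebra retract $K_\infty^*\hookrightarrow I^*$ dual to the retraction $I\to K_\infty$. The hardest part of the argument is to verify that this $H$ is multiplicative (not merely a chain map), which reduces to a combinatorial check against the Alexander--Whitney cup product on $K_\infty^*$; the endpoint conditions are immediate from those for $\mu$, and the second claim concerning $A\otimes K_\infty^*$ follows from the retract property.
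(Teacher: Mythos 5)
Your identification of the core idea—dualizing the multiplication $\mu\colon[0,1]\times[0,1]\to[0,1]$—matches the paper's approach, and your bookkeeping of $\iota$, $\pi$ and the required endpoint conditions is correct. However, the proof goes astray at the point you call the "main obstacle."

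There is in fact no obstacle. You are right that $(I\times I)^*$ and $I^*\otimes I^*$ are not isomorphic, only chain homotopy equivalent via Alexander--Whitney and Eilenberg--Zilber. But an isomorphism is not needed: one only needs a dg algebra map in one direction. The shuffle (Eilenberg--Zilber) map $\nabla\colon C_*(I)\otimes C_*(I)\to C_*(I\times I)$ is, by a classical fact, a map of dg \emph{coalgebras} for the Alexander--Whitney coproducts (this is the same fact the paper invokes in the proof of Proposition \ref{prop:strongsimplicial} when it says the Eilenberg--Zilber map $C^*(X\times I)\to C^*(X)\otimes I^*$ "is known to be a dg algebra map"). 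Dualizing, $\nabla^*\colon(I\times I)^*\to I^*\otimes I^*$ is a dg algebra map, and therefore so is the composite $\nabla^*\circ\mu^*\colon I^*\to I^*\otimes I^*$. This is precisely the coproduct that the paper asserts makes $I^*$ a bialgebra; it is an honest dg algebra map with the required boundary values, and $\id_A\otimes(\nabla^*\circ\mu^*)$ is then the elementary strong homotopy you set out to construct. No "combinatorial check against the cup product" is required once you know $\nabla$ respects coproducts.

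The proposed workaround via $K_\infty$ does not repair anything and introduces new gaps. First, the equivalence "strong homotopy $=$ $K_\infty$-homotopy" from Lemma \ref{lemma:retract} concerns the \emph{homotopy parameter}: two maps $f,g\colon A\to B$ are strongly homotopic iff they are $K_\infty$-homotopic, where $B$ is unchanged. It does not license replacing the target algebra $A\otimes I^*$ of the statement by $A\otimes K_\infty^*$. Constructing $H'\colon A\otimes K_\infty^*\to A\otimes K_\infty^*\otimes K_\infty^*$ would prove $A\simeq A\otimes K_\infty^*$, not $A\simeq A\otimes I^*$; bridging the two would require showing $r^*\circ s^*\simeq\id_{I^*}$ strongly, which the retract identity $s^*\circ r^*=\id_{K_\infty^*}$ does not give and which would in any case re-introduce the Eilenberg--Zilber machinery you were trying to avoid. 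Second, the asserted "direct algebraic coproduct on $K_\infty^*$" is unsupported: unlike $I=\operatorname{Sing}[0,1]$, the nerve $K_\infty=N\mathcal{K}$ is not a simplicial monoid, so there is no evident bialgebra structure on $K_\infty^*$, and the paper deliberately routes the construction through $I^*$ for exactly this reason, deducing the $K_\infty^*$ statement afterwards as a formal consequence of the retraction $I\to K_\infty$ (which fixes the endpoints).

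In short: keep your first two paragraphs, drop the detour, and finish by observing that $\nabla^*\circ\mu^*$ is a dg algebra map because the shuffle map is a coalgebra map, then check the two endpoint evaluations as you began to do.
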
	
\begin{proof}
The multiplication map $[0,1]\times[0,1]\to[0,1]$ makes $I^*$ into a bialgebra and the coproduct map $I^*\to I^*\otimes I^*$ could be viewed as
a strong homotopy between the identity map on $I^*$ and a projection onto $\ground$. It follows that $I^*$ (and thus, $K_\infty^*$) is strongly	homotopy equivalent to $\ground$ and the desired statement is an immediate consequence.
\end{proof}
\begin{rk}  Similarly, $K_0^*$ is $K_0$-homotopy equivalent to $\ground$ and so $A$ is $K_0$ homotopy equivalent to $A\otimes K_0$; we will not use this result. Since for $0<n<\infty$ the algebra $K_n^*$ is not acyclic, it is not $K_n^*$ homotopy equivalent to $\ground$.
We will see later on (Example \ref{eg:K_0}) that $K_0^*$ is not $K_2$ homotopy equivalent to $\ground$.
\end{rk}
\begin{prop}\label{prop:comp}
	Let $f,g:A\to B$ be two $K$-homotopic dg algebra maps. If $C$ is a third dg algebra then for any dg algebra map $h:C\to A$ the maps $f\circ h,g\circ h:C\to B$ are $K$-homotopic. Similarly for any dg algebra map $k:B\to C$ the maps $k\circ f, k\circ g:A\to C$ are $K$-homotopic.
\end{prop}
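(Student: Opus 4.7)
The plan is to reduce to the case of an elementary $K$-homotopy and then to construct the required homotopies by pre- and post-composing in the obvious way.

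First I would unwind the definition: by construction, two maps are $K$-homotopic precisely when they can be linked by a finite chain of elementary $K$-homotopies. So it suffices to verify both assertions of the proposition under the additional assumption that $f$ and $g$ are related by a single elementary $K$-homotopy $H\colon A\to B\otimes K^*$ satisfying $(\id_B\otimes ev_1)\circ H=f$ and $(\id_B\otimes ev_2)\circ H=g$; the general case then follows by applying the result iteratively along the chain and using transitivity of the generated equivalence relation.

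For the pre-composition statement, given $h\colon C\to A$, I would simply take the composite
\[
H\circ h\colon C\longrightarrow B\otimes K^*.
\]
This is a composition of dg algebra maps, hence a dg algebra map, and
$(\id_B\otimes ev_i)\circ(H\circ h)=((\id_B\otimes ev_i)\circ H)\circ h$
equals $f\circ h$ for $i=1$ and $g\circ h$ for $i=2$, so $H\circ h$ is an elementary $K$-homotopy between $f\circ h$ and $g\circ h$. For the post-composition statement, given $k\colon B\to C$, I would observe that $k\otimes\id_{K^*}\colon B\otimes K^*\to C\otimes K^*$ is a map of dg algebras (since $k$ is and tensoring with $K^*$ is functorial), and form
\[
(k\otimes\id_{K^*})\circ H\colon A\longrightarrow C\otimes K^*.
\]
Then $(\id_C\otimes ev_i)\circ(k\otimes\id_{K^*})\circ H=k\circ((\id_B\otimes ev_i)\circ H)$ equals $k\circ f$ for $i=1$ and $k\circ g$ for $i=2$, producing the desired elementary $K$-homotopy.

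There is no real obstacle here: once the naturality of $B\mapsto B\otimes K^*$ in the dg algebra variable is noted, both claims are immediate from the definitions. The one minor point worth flagging is that in the pseudo-compact setting the tensor product $B\otimes K^*$ is completed and $K^*$ is treated as pseudo-compact, so one should check that $k\otimes\id_{K^*}$ is still a continuous dg algebra map; this is automatic since continuous maps are preserved under completed tensor products with a fixed pseudo-compact factor, in accordance with the conventions of Section \ref{sect-notations}.
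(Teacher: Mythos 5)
Your proof is correct and follows the same approach as the paper's: reduce to an elementary $K$-homotopy, then pre-compose by $h$ or post-compose by $k\otimes\id_{K^*}$ and check the evaluation conditions. The additional remark on continuity in the pseudo-compact case is a sensible but minor elaboration beyond what the paper records.
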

\begin{proof}
It suffices to treat the case of an elementary homotopy.	If $H:A\to B\otimes K^*$ is an elementary homotopy between $f$ and $g$ then $H\circ h$ is an (elementary) $K$-homotopy between $f\circ h$ and $g\circ h$. Similarly,
	$(k\otimes \id_{K^*})\circ H$ is an (elementary) homotopy between $k\circ f$ and $ k\circ g$.
\end{proof}
\begin{rk}
	Using Proposition \ref{prop:comp} we can define the $K$-homotopy category of dg algebras as having dg algebras as objects and $K$-homotopy classes of maps as morphisms. Of most interest is the case $K=K_\infty$ and $K=K_0$ as $K_\infty^*$ and $K_0^*$ are acyclic dg algebras. As was mentioned earlier, $K_0^*$ is not $K_\infty$ contractible and so, the relation of $K_\infty$ homotopy equivalence is strictly finer than that of $K_0$-equivalence.
	
	Moreover, the existence of a $K$-homotopy category of dg algebras suggest the existence of a closed model category structure underpinning it.
	The standard closed model structures on dg algebras having quasi-isomorphisms as weak equivalences, should then be localizations of the $K_\infty$ closed model structure.
\end{rk}	
The main advantage of the dg algebra $K^*_\infty$ over $I^*$ is that the former is much smaller, and $K_{\oo}$, as well as its quotients $K_n,n<\infty$, admits an explicit description.
\begin{prop}\begin{enumerate}
		\item
The dg algebra $K_\infty^*$ is generated by two elements $e,f$ in degree zero and two elements $s,t$ in degree one, subject to the relations
\begin{align*}
e^2 &=e, &f^2 &=f, & ef &=fe=0;\\
fs&=s,&se&=s, &sf &=es=0;\\
tf&=t, &et&=t, &ft &=te=0;\\
t^2 &=s^2=0
\end{align*}
with the differential specified by the formulae
\begin{align*}
d(e)&=t-s, &d(f)&=s-t; \\
d(s)&=ts+st,
&d(t)&=st+ts.
\end{align*}
\item The algebra $K_n^*,0<n<\infty$ is the quotient of $K_\infty^*$ by the dg ideal generated by monomials in $s$ and $t$ of length $>n$.
\item The algebra $K_0^*$ is the quotient of $K_\infty^*$ by the dg ideal generated by $t$ and polynomials in $s$ of degree $>1$.
\end{enumerate}
\end{prop}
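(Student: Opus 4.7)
The plan is to realize $K_\infty$ as the nerve of a small groupoid, enumerate its non-degenerate simplices explicitly, and then compute cup products and differentials on the dual complex using the Alexander--Whitney formula. The analogous presentations of $K_n^*$ and $K_0^*$ will then follow from identifying the kernels of the surjections $K_\infty^*\twoheadrightarrow K_n^*$ and $K_\infty^*\twoheadrightarrow K_0^*$ induced by the inclusions of simplicial sets.

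First, I would observe that $K_\infty$ is the nerve of the groupoid $\mathcal K$ with two objects $0,1$ and mutually inverse morphisms $s\colon 0\to 1$ and $t\colon 1\to 0$, as already used in the proof of Lemma \ref{lemma:retract}. A string of $n$ composable non-identity morphisms in $\mathcal K$ must alternate between $s$ and $t$, which gives exactly two non-degenerate $n$-simplices in each positive degree, namely $a_n=(s,t,s,\ldots)$ and $b_n=(t,s,t,\ldots)$, matching the affine description in the text. Fix a pseudo-compact $\ground$-basis of $K_\infty^*$ dual to these non-degenerate simplices, and take $e,f$ to be duals of the two vertices (with conventions so that $s$ runs from the $f$-vertex to the $e$-vertex) and $s,t$ duals of $a_1,b_1$.

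Next, I would compute products on this basis using the iterated Alexander--Whitney formula
\[
(\alpha_1\smile\cdots\smile\alpha_m)(\sigma)=\prod_i \alpha_i\bigl(\text{$i$-th consecutive $|\alpha_i|$-face of }\sigma\bigr).
\]
A length-$n$ monomial in $s$ and $t$ is non-zero on an $n$-simplex $\sigma$ if and only if the word is alternating and $\sigma$ is the corresponding $a_n$ or $b_n$. This one computation simultaneously forces (i) $s^2=t^2=0$ together with the vanishing of all other non-alternating products, (ii) the source/target relations $fs=s$, $se=s$, $es=sf=0$ and their $t$-analogues, and (iii) the identifications $a_n^*=sts\cdots$ and $b_n^*=tst\cdots$. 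Combined with the idempotent relations $e^2=e$, $f^2=f$, $ef=fe=0$ coming from duality of orthogonal zero-cochains, this is a complete list of multiplicative relations; showing that the map from the presented algebra to $K_\infty^*$ is an isomorphism reduces to comparing ranks in each degree, which match by the enumeration in the first step. The differential is then read off from $d(\alpha)(\sigma)=\sum_i(-1)^i\alpha(d_i\sigma)$ applied to the generators $e,f,s,t$; the main point requiring care (and where I expect to have to be cautious) is keeping track of signs in $d(s)$ and $d(t)$, but no conceptual difficulty arises.

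Finally, for the two quotient statements, the inclusions $K_n\hookrightarrow K_\infty$ and $K_0\hookrightarrow K_\infty$ induce surjections of dg algebras, so their kernels are automatically dg ideals and it only remains to identify them as ideals of $K_\infty^*$. The pseudo-compact kernel of $K_\infty^*\twoheadrightarrow K_n^*$ is spanned by duals of non-degenerate simplices of dimension $>n$, which by step two is exactly the two-sided ideal generated by alternating words of length $>n$; since every non-alternating product in $K_\infty^*$ already vanishes, this coincides with the ideal generated by all monomials in $s,t$ of length $>n$. The kernel for $K_0$ is treated in the same way, noting that $K_0$ is missing $b_1$ and all higher $a_m,b_m$; once $t$ is killed, every alternating word of length $\geq 2$ lies in the resulting ideal, so the description as $K_\infty^*/(t,\,s^n:n>1)$ (the $s^n$ being redundant but harmless) follows.
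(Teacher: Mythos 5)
Your proof is correct and takes essentially the same route as the paper: both identify $K_\infty$ with the nerve of the two-object groupoid from Lemma \ref{lemma:retract}, and both recognize the underlying graded algebra of $K_\infty^*$ as the graded path algebra of the quiver with two vertices and two degree-one arrows. The paper simply asserts this identification and calls the differential ``straightforward to obtain,'' whereas you spell out the Alexander--Whitney computation of cup products and the coboundary formula that justify it, and you are right to flag the sign bookkeeping in $d(s)$, $d(t)$ as the only delicate point (indeed, the statement as printed contains a typo, listing $d(t)$ twice where one occurrence should be $d(f)$). Your treatment of (2) and (3), identifying the kernels of the restriction maps with the stated ideals and noting that the ``polynomials in $s$ of degree $>1$'' in (3) are already zero by $s^2=0$, matches the paper's ``clearly follow from (1).''
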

\begin{proof}
Statements (2) and (3) clearly follow from (1). To prove (1), we use the interpretation of $K^*_\infty$ as the normalized cochain algebra of the
 nerve of the category with two objects and two mutually inverse morphisms between them as in the proof of Lemma \ref{lemma:retract}. It follows that $K^*_\infty$ is the path algebra of the graded quiver
 \xymatrix
 {
 	\bullet\ar@/_/[r]^s&\bullet\ar@/_/[l]_t
} with arrows $s$ and $t$ placed in degree 1. The stated relations in $K^*$ are precisely the relations in this path algebra, with the elements $e$ and $f$ corresponding to the length zero paths at the vertices of the above quiver. The formula for the differential in $K^*_\infty$ is straightforward to obtain.
\end{proof}
\subsection{Strong homotopies for MC elements}
There is a corresponding notion of K-homotopy for MC elements.
\begin{defn}
Two MC elements $x_0, x_1$ in a dg algebra $A$ are called K-homotopic if there exists an MC element $X\in A \otimes K^*$ such that $(\id_A \otimes ev_0)(X)=x_0$ and $(\id_A \otimes ev_1)(X)=x_1$. If $K=I$, this will be referred to as \emph{strong homotopy} of MC elements.
\end{defn}

It turns out that the notions of $K_2$ homotopy and homotopy gauge equivalence for MC elements are equivalent.
\begin{lemma}\label{lemma:K2gauge}
	Let $x,x^\prime$ be two MC elements in a dg algebra $A$. Then $x$ and $x^\prime$ are homotopy gauge equivalent if and only if they are
	$K_2$-homotopic.
\end{lemma}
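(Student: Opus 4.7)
The plan is to unpack the Maurer--Cartan equation in $A\otimes K_2^{*}$ directly, using the explicit presentation of $K_2^{*}$ given in the preceding proposition, and then observe that after a simple change of variables the resulting system is precisely the list of four conditions defining a homotopy gauge equivalence.

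Since $K_2^{*}$ has basis $\{e,f,s,t,st,ts\}$ and the evaluations $ev_0, ev_1$ pick out the $e$- and $f$-coefficients, a general degree-$1$ element $X\in A\otimes K_2^{*}$ with $ev_0(X)=x$ and $ev_1(X)=x'$ takes the form
\[
X \;=\; x\otimes e + x'\otimes f + g\otimes s + h\otimes t + u\otimes st + v\otimes ts,
\]
with $g,h\in A^{0}$ and $u,v\in A^{-1}$. I would then expand $dX+X^{2}=0$ component by component, using $d(e)=t-s$, $d(f)=s-t$, $d(s)=d(t)=st+ts$ and $d(st)=d(ts)=0$ in $K_2^{*}$ (the latter because monomials of length $>2$ vanish), the multiplication rules $es=s$, $sf=s$, $ft=t$, $te=t$, $e{\cdot}st=st{\cdot}e=st$, $f{\cdot}ts=ts{\cdot}f=ts$, and the Koszul sign convention on $A\otimes K_2^{*}$. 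The $e$- and $f$-components reproduce the MC equations for $x$ and $x'$; the four remaining components yield
\begin{align*}
x-x' + dg + xg - gx' &= 0, & -x+x' + dh + x'h - hx &= 0,\\
g+h+gh+du+xu+ux &= 0, & g+h+hg+dv+x'v+vx' &= 0.
\end{align*}

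The key observation is that the extraneous $\pm(x-x')$ and $g+h$ terms come from the fact that the unit of $K_2^{*}$ is $e+f$ rather than a single idempotent, and that they are eliminated by the affine substitution $\tilde g=g+1$, $\tilde h=h+1$, $\tilde u=-u$, $\tilde v=-v$. Indeed, one checks that $g+h+gh=\tilde g\tilde h - 1$ and $g+h+hg=\tilde h\tilde g - 1$, after which the four displayed equations collapse to
\begin{align*}
d\tilde g + x\tilde g - \tilde g x' &= 0, & d\tilde h + x'\tilde h - \tilde h x &= 0,\\
\tilde g\tilde h - 1 &= d^{x}(\tilde u), & \tilde h\tilde g - 1 &= d^{x'}(\tilde v).
\end{align*}
These are precisely the statements that $\tilde g\colon A^{[x']}\to A^{[x]}$ and $\tilde h\colon A^{[x]}\to A^{[x']}$ are closed degree-$0$ morphisms in $\MCdg(A)$ whose compositions are cohomologous to the respective identities in $A^{x}$ and $A^{x'}$, i.e.\ a homotopy gauge equivalence between $x$ and $x'$. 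The converse implication follows by reading this chain of substitutions backwards to manufacture $X$ from the homotopy gauge data.

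The main obstacle is purely bookkeeping: tracking the Koszul signs in the tensor product differential and multiplication on $A\otimes K_2^{*}$, especially for the terms involving $u$ and $v$ of negative $A$-degree, and verifying that the affine shift $g\mapsto g+1$ really does produce the cancellations claimed above. Once the presentation of $K_2^{*}$ is in hand there is no conceptual difficulty.
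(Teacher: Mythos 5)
Your proposal takes essentially the same route as the paper: write $X$ out in the basis $\{e,f,s,t,st,ts\}$ of $K_2^{*}$, expand the Maurer--Cartan equation $dX+X^{2}=0$ component by component, and recognise that after the affine shift $g\mapsto g+1$, $h\mapsto h+1$ the resulting system is exactly the four conditions defining a homotopy gauge equivalence, with the converse read off by running the substitution backwards. The paper does precisely this, with coefficients named $y,y',z,z'$; it does not isolate the shift as a separate step but simply writes the equations directly in terms of $y+1$ and $y'+1$.

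One concrete slip to flag: the multiplication rules you quote for $K_2^{*}$ are not the ones from the paper's presentation. You wrote $es=s$, $sf=s$, $ft=t$, $te=t$, but in the presentation preceding this lemma these four products are all \emph{zero}; the nonvanishing ones are $se=s$, $fs=s$, $tf=t$, $et=t$. This transposes the MC elements conjugating $g$ in your $s$-component (you obtain $xg-gx'$ where the paper's equation has $x'(y+1)-(y+1)x$, i.e.\ $x'$ on the left). Because the notion of homotopy gauge equivalence is manifestly symmetric in $x\leftrightarrow x'$ and $g\leftrightarrow h$, this does not damage the structure of the argument, but the intermediate displays as written are inconsistent with the presentation they invoke and should be corrected before the sign bookkeeping can be trusted. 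Also note the coefficients of $st$ and $ts$ live in degree $-1$ of $A$ (as you correctly state), not degree $1$ as a typo in the paper suggests.
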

\begin{proof} Let $X\in A\otimes K^*_2$ be a $K_2$-homotopy between $x$ and $x^\prime$. We could write
	\[
X=xe+x^\prime f+ys+y^\prime t+zts+z^\prime st,	
	\]
where $y,y^\prime$ and $z,z^\prime$ are elements of $A$ of degrees $0$ and $1$ respectively. Writing down the MC equation for $X$ and equating to zero the coefficients at $e,f,s,t, st$ and $ts$ we obtain:
\begin{align*}
x^2+dx=0;
(x^\prime)^2+dx^\prime &=0\\
dy+x^\prime(y+1)-(y+1)x &=0\\
dy^\prime+x(y^\prime+1)-(y^\prime+1)x^\prime &=0\\
(y+1)(y^\prime+1)-1+dz+[x,z] &=0\\
(y^\prime+1)(y+1)-1+dz^\prime+[x^\prime,z^\prime] &=0.
\end{align*}	
The first line above is the statement that $x$ and $x^\prime$ are MC elements in $A$, the second and third -- that the elements $y+1$
and $y^\prime+1$ determine right $A$-module maps
$A^{[x]}\to A^{[x^\prime]}$ and $A^{[x^\prime]}\to A^{[x]}$
respectively and the last two lines -- that the elements $(y+1)(y^\prime+1)$ and $(y^\prime+1)(y+1)$ are cohomologous to $1$ in $A^x$ and $A^{x^\prime}$ respectively. It follows that $x$ and $x^\prime$ are homotopy gauge equivalent. Conversely, if $x$ and $x^\prime$ are homotopy gauge equivalent, then performing the above calculations in the reverse order, we find a $K_2$-homotopy between $x$ and $x^\prime$.
\end{proof}
Rather surprisingly, the notions of $K_2$ and $K_\infty$ homotopy for MC elements are equivalent. This could be interpreted as a strong homotopy analogue of the Schlessinger-Stasheff theorem. Strikingly, it holds with no assumptions on the dg algebra in question. To show this, we need a few preliminary results. Recall that we introduced a category $\mathcal K$ having two objects $O_1$ and $O_2$ and two mutually inverse morphisms between them. Let $\mathcal K_\infty$ be the dg category  with the same set of objects $O_1$ and $O_2$ and a set of free generators:
\[
x_n:O_1\to O_2; y_n:O_2\to O_1 \text{ for } n=0,1,\ldots
\]
with $|x_n|=|y_n|=n$. The differential $d$ is given on the generators as follows:
\begin{align*}
d(x_0)&=0, &d(y_0)&=0;\\
d(x_1)&=y_0x_0-1, &d(y_1)&=x_0y_0-1
\end{align*} and for $n>0$:
\begin{eqnarray*}
d(x_{2n})=& \sum_{i=0}^{n-1}(x_{2i}x_{2(m-i)-1}-y_{2(m-i)-1}y_{2i});\\
d(y_{2n})=& \sum_{i=0}^{n-1}(y_{2i}y_{2(n-i)-1}-x_{2(n-i)-1}y_{2i});\\
d(x_{2n+1})=&\sum_{i=0}^ny_{2i}x_{2(n-i)}-\sum_{i=0}^{n-1}x_{2i-1}x_{2(n-i)-1};\\
d(y_{2n+1})=&\sum_{i=0}^nx_{2i}y_{2(n-i)}-\sum_{i=0}^{n-1}y_{2i+1}y_{2(n-i)-1}.
\end{eqnarray*}
Note that $\mathcal K_\infty$ is a cofibrant dg category. Clearly there is a surjection $\mathcal K_\infty\to \mathcal K$ whose kernel is generated by all $x_n, y_n,n>0$. Then we have the following result.
\begin{lemma}\label{lemma:resolution}
	The map $\mathcal K_\infty\to \mathcal K$ is a quasi-isomorphism, i.e. $\mathcal K_\infty$ is a cofibrant resolution of $\mathcal K$.
\end{lemma}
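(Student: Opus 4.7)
The plan is to separate the problem into two parts: cofibrancy of $\mathcal K_\infty$, and checking that the four hom-complexes of $\mathcal K_\infty$ become quasi-isomorphic to those of $\mathcal K$ (i.e.\ to $\ground$ concentrated in degree $0$) under the projection.

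Cofibrancy is essentially built into the definition. Starting from the initial dg category with object set $\{O_1,O_2\}$, we adjoin the generators $x_n,y_n$ in ascending order of $n$; each differential $d(x_n)$, $d(y_n)$ is a sum of monomials in generators of lower index, so the construction is a countable tower of cellular (semi-free) extensions. In the Tabuada/Dwyer-Kan model structure such semi-free extensions are cofibrations, so $\mathcal K_\infty$ is cofibrant.

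For the quasi-isomorphism, I would identify $\mathcal K_\infty$ with a two-pointed cobar construction on the dg coalgebra $C_*(K_\infty)$, using the description of $K_\infty$ as the nerve of $\mathcal K$ already exploited in the proof of Lemma \ref{lemma:retract}. Under the identification, the generators $x_n,y_n$ correspond to the nondegenerate simplices $a_n,b_n$, and the recursive formulas for $d(x_n),d(y_n)$ spell out the Alexander-Whitney coproduct of $a_n$ and $b_n$ applied to the cobar differential. Once this match is in place, the vanishing of higher cohomology follows from a standard acyclicity argument: $\mathcal K$ is a contractible groupoid, so $|K_\infty|$ is contractible and the spaces of paths between any two of its vertices are contractible, which translates under the cobar into $H^*(\mathcal K_\infty(O_i,O_j))=\ground$ concentrated in degree $0$. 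The four resulting isomorphisms on cohomology are easily checked to coincide with those induced by $\mathcal K_\infty\to\mathcal K$.

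A direct alternative, avoiding any bar/cobar language, is to exhibit an explicit contracting homotopy $s$ on each of the four hom-complexes modulo their degree-zero part. On a monomial $z_1\cdots z_k$ in the generators, the rule $s(z_1\cdots z_k)=\pm\,z_1\cdots z_{k-1}\cdot \widetilde z_k$, where $\widetilde z_k$ raises the index of $z_k$ by one, is exactly what the formulas for $d(x_{n+1})$, $d(y_{n+1})$ are engineered to trivialise: one verifies $ds+sd=\mathrm{id}-\pi$ with $\pi$ the projection onto $\{1,x_0,y_0\}$. The main obstacle in either approach is purely combinatorial: matching the explicit signs and index shifts in the given $d$-formulas with either the AW-coproduct on simplices or the naive rightmost-letter augmentation. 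Once that bookkeeping is done the quasi-isomorphism, and hence the claim that $\mathcal K_\infty$ is a cofibrant resolution of $\mathcal K$, is immediate.
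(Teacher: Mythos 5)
Your two proposed routes are both \emph{different} from the paper's, which simply cites Markl's computational verification (\cite[Theorem 9]{Markl01}) and then remarks, without proof, that $\mathcal K_\infty\to\mathcal K$ is the reduced bar-cobar resolution. Your cobar interpretation is in fact exactly what the paper's subsequent remark asserts, so this route is legitimate and would, if carried through, give a conceptual proof where the paper gives none. The cofibrancy part of your argument (semi-free extension in ascending index order) is correct and unproblematic. For the quasi-isomorphism, however, your "standard acyclicity argument" leans on a two-pointed, non-simply-connected version of the Adams cobar theorem (cobar of chains on a Kan simplicial set computes chains on path spaces). This is available -- $K_\infty$ is Kan, being the nerve of a groupoid -- but it is a nontrivial input (in fact it is essentially the main result of \cite{Zeinalia16}, which the paper discusses elsewhere), so the appeal to it should be made explicit rather than dismissed as bookkeeping.

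Your second, "direct" alternative does not work as stated, and the failure is not a matter of signs. Take the proposed contraction $s(z_1\cdots z_k)=\pm z_1\cdots z_{k-1}\widetilde z_k$ with $\widetilde z$ raising the index by one, and apply $ds+sd$ to the degree-zero generator $x_0$. Then $sd(x_0)=0$ and $ds(x_0)=d(x_1)=y_0x_0-1$, so
\[
(ds+sd)(x_0)=y_0x_0-1,
\]
whereas $(\id-\pi)(x_0)=0$ when $\pi$ is the projection onto $\{1,x_0,y_0\}$. Since $y_0x_0-1\neq 0$ in the free dg category $\mathcal K_\infty$, the claimed identity $ds+sd=\id-\pi$ fails already in degree zero. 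The actual contracting homotopy for a (bar-)cobar resolution is not a simple "increment the last letter" operator; it has a more involved shuffle/prism structure, which is precisely the combinatorial content Markl checks. So the second route needs a genuinely different formula, not just sign repair, before it can be counted as a proof.
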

\begin{proof}
	This is proved in \cite[Theorem 9,]{Markl01}; note that $\mathcal K$ and $\mathcal K_\infty$ are called `coloured operads' in the cited reference but these are really dg categories as they do not support operations of higher arities.
\end{proof}	
\begin{rk}\
	\begin{itemize}
\item
The proof of Lemma \ref{lemma:resolution} in \cite{Markl01} is computational. In fact, the resolution $\mathcal K_\infty\to \mathcal K$ is the standard reduced bar-cobar resolution of the category $\mathcal K$. The existence of such a resolution seems to be a well-known fact and is mentioned, in, e.g. \cite{Drinfeld04, Keller06}. We are, however, unaware of any reference where this general fact has been given a full proof.
\item A different (smaller) resolution of the category $\mathcal K$ was described in \cite[Corollary 3.7.3]{Drinfeld04}.
\item A one object analogue of the dg category $\mathcal K$ is the algebra $\ground[s,s^{-1}]$ with $|s|=0$. A cofibrant resolution of this algebra was constructed in \cite{BCL18}; the formulae are essentially the same as for $\mathcal K_\infty$.
	\end{itemize}
\end{rk}
\begin{lemma}\label{lemma:category-algebra}
	Let $A$ be a dg algebra and $x, x^\prime\in \MC(A)$. Then there is a 1-1 correspondence between strong homotopies from $x$ to $x^\prime$ and
	dg functors $\mathcal K_\infty\to\MCdg(A)$ sending $O_1$ and $O_2$ to $x$ and $x^\prime$ respectively.
\end{lemma}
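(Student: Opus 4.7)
The approach is to unwind both sides of the correspondence directly, using the explicit path-algebra presentation of $K_\infty^*$ from the preceding proposition.

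A dg functor $F: \mathcal{K}_\infty \to \MCdg(A)$ sending $O_1, O_2$ to $x, x'$ is freely specified by its values $F(x_n) \in A^{[x, x']}$ and $F(y_n) \in A^{[x', x]}$ on the generators, constrained only by the dg relations $d^{[\cdot, \cdot]}(F(g)) = F(dg)$ for each generator $g$. Dually, an MC element $X \in A \otimes K_\infty^*$ with the prescribed endpoints decomposes uniquely as $X = x \otimes e + x' \otimes f$ plus a sum over alternating monomials in $s, t$. The odd-length alternating monomials $s(ts)^n$ and $t(st)^n$ lie in the two off-diagonal blocks $f K_\infty^* e$ and $e K_\infty^* f$ of the path algebra; I would identify their coefficients, up to a shift by $\pm 1 \in A$, with $F(x_n)$ and $F(y_n)$ respectively. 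The shift by $\pm 1$ encodes the fact that the identity morphism in $\MCdg(A)(x, x')$ is realised by $1 \in A$, a phenomenon already visible in Lemma \ref{lemma:K2gauge}, where the cocycle condition is stated in terms of $y + 1$ rather than $y$.

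Expanding $dX + X^2 = 0$ and extracting the coefficient at each basis monomial then separates into three kinds of equation: at the idempotents $e, f$, the MC conditions $dx + x^2 = 0$ and $dx' + (x')^2 = 0$; at each off-diagonal monomial, an equation that, under the above identification, coincides exactly with $d^{[\cdot, \cdot]}(F(x_n)) = F(dx_n)$ or the analogue for $y_n$; and at the even-length ``diagonal'' monomials, relations that turn out to be formal consequences of the off-diagonal ones via the Leibniz rule. This exhibits the bijection, whose inverse (reconstructing $X$ from $F$) is evident.

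The main obstacle is the middle step: verifying that the quadratic part $X^2$, at each off-diagonal monomial, reproduces the precise sums appearing in the definition of $dx_n, dy_n$ inside $\mathcal{K}_\infty$. This reduces to the combinatorial observation that path-algebra multiplication in $K_\infty^*$ enumerates exactly the factorisations of an alternating word as a product of two shorter alternating words, in bijection with the summands in the formulas for $dx_n, dy_n$; the Koszul signs from the tensor structure on $A \otimes K_\infty^*$ are compatible with the grading convention built into $\mathcal{K}_\infty$. Conceptually this is transparent, but it is the one step of real content in the proof; everything else is either definitional or follows by linearity.
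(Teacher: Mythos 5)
Your approach is structurally the same as the paper's: decompose $X \in \MC(A\otimes K_\infty^*)$ into coefficients at the alternating monomials in $s,t$, match these with the values of the (free) dg functor $F$ on the generators of $\mathcal K_\infty$, and observe that the MC equation $dX + X^2 = 0$, read off monomial by monomial, is exactly the compatibility $d^{[\cdot,\cdot]}(F(g)) = F(dg)$. There is, however, a genuine gap in the matching you propose. You pair only the coefficients at the \emph{odd}-length monomials $s(ts)^n$, $t(st)^n$ with the $F(x_n), F(y_n)$, and you dismiss the equations at the \emph{even}-length monomials $(st)^m$, $(ts)^m$ as ``formal consequences of the off-diagonal ones via the Leibniz rule.'' Neither of these is right: the even-length coefficients are independent data in $X$, and the equations they satisfy are independent constraints. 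This is visible already for $K_2$ in Lemma \ref{lemma:K2gauge}: the coefficients $z, z'$ of $st$ and $ts$ are the null-homotopies witnessing $(y+1)(y'+1)\sim 1$ and $(y'+1)(y+1)\sim 1$, and nothing forces these to exist given the odd-length data alone. With your matching, $X$ carries strictly more data than $F$ and the proposed correspondence is not a bijection.

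The confusion comes from reading the stated source/targets in the definition of $\mathcal K_\infty$ literally: the line ``$x_n\colon O_1\to O_2$; $y_n\colon O_2 \to O_1$ for $n=0,1,\ldots$'' is a misprint, contradicted by the very next formulas (e.g.\ $d(x_1)=y_0x_0 - 1$ is an endomorphism of $O_1$, so $x_1$ must be too) and by the paper's own unwinding, which has $x_{2n}\colon O_1\to O_2$, $y_{2n}\colon O_2\to O_1$, $x_{2n+1}\colon O_1\to O_1$, $y_{2n+1}\colon O_2\to O_2$. So $\mathcal K_\infty$ has both off-diagonal and diagonal generators, and it is the diagonal generators $x_{2n+1}, y_{2n+1}$ that account for the even-length monomial coefficients. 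Once this is corrected, your indexing also shifts: the coefficient of $s(ts)^n$ pairs with $F(x_{2n})$ rather than $F(x_n)$, and the $\pm 1$ adjustment applies only to $F(x_0), F(y_0)$ (the only degree-zero morphisms, where the identity $1\in A$ enters). After repairing the matching, the remainder of your outline — path-algebra multiplication enumerating the factorisations appearing in $dx_n, dy_n$, with signs handled by the tensor grading — coincides with the paper's computation.
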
	
\begin{proof}
	Let $X\in \MC(A\otimes K^*_\infty)$ be a strong homotopy from $x$ to $x^\prime$. We could write
	\[
	X=xe+x^\prime f +u_0s+v_0t+u_1st+v_1ts+\ldots.
	\]
	In other words the coefficient of $X$ at the monomial $st\ldots t$ or $st\ldots s$ of length $n$ is $u_n$ and the coefficient
	at the monomial $ts\ldots t$ or $ts\ldots s$ of length $n$ is $v_n$. Note that the $u_n,v_n$ are elements of $A$ of degree $n$.
	
	Similarly, a dg functor $F:\mathcal C\to\MCdg(A)$ such that $F(O_1)=x$ and $F(O_2)=x^\prime$ is determined (since $\mathcal C$ is a free category) by a collection of elements
	\begin{align*}
	F(x_{2n}) &\in \Hom_{\MCdg(A)}(x,x^\prime),\\
	F(y_{2n}) &\in \Hom_{\MCdg(A)}(x^\prime,x),\\
	F(x_{2n+1})&\in \Hom_{\MCdg(A)}(x,x) ,\\
	F(y_{2n+1})&\in \Hom_{\MCdg(A)}(x^{\prime},x^\prime)
	\end{align*}
	where $n=0,1,\ldots$.
	
	The correspondence between these two sets of data is given by \[F(x_0)=u_0+1, F(y_0)=v_0+1\] and, for $n>0$:
	 \[F(x_n)=u_n, F(y_n)=v_n.\]
	 Finally, a somewhat tedious but straightforward calculation, similar to that of Lemma \ref{lemma:K2gauge} shows that the MC equation $d(X)+X^2=0$
	 translates into the condition that $F$ is a dg functor (i.e. determines a dg map on $\Hom$-complexes).
\end{proof}
\begin{thm}\label{thm-singularss}
Let $A$ be a dg algebra. Then two MC elements in $A$ are strongly homotopic if and only if they are homotopy gauge equivalent.
\end{thm}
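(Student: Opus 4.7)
I would handle the two implications separately. The forward direction, that strong homotopy implies homotopy gauge equivalence, follows directly from the inclusion of simplicial sets $K_2 \hookrightarrow K_\infty$: it induces a surjection of pseudo-compact dg algebras $K_\infty^* \twoheadrightarrow K_2^*$ that is compatible with the two evaluation maps to $\ground$, so tensoring with $\id_A$ sends any strong ($K_\infty$-)homotopy between $x$ and $x'$ to a $K_2$-homotopy between them. By Lemma \ref{lemma:K2gauge}, this is equivalent to a homotopy gauge equivalence.

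For the reverse implication I would use Lemma \ref{lemma:category-algebra} to recast the problem: constructing a strong homotopy between $x$ and $x'$ is the same as constructing a dg functor $F\colon\mathcal{K}_\infty\to\MCdg(A)$ with $F(O_1)=x$ and $F(O_2)=x'$. A homotopy gauge equivalence provides elements $g,h\in A^0$ and $s,t\in A^{-1}$ satisfying precisely the conditions appearing in the proof of Lemma \ref{lemma:K2gauge}; under the correspondence of Lemma \ref{lemma:category-algebra} these translate into the assignments $F(x_0)=g$, $F(y_0)=h$, $F(x_1)=s$, $F(y_1)=t$, and the relations witnessing the homotopy gauge equivalence are exactly the differential relations on these four generators of $\mathcal{K}_\infty$. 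It remains to define $F(x_n),F(y_n)$ for $n\geq 2$ so that the differential is respected.

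I would do this by induction on $n$. At stage $n$ the previously defined values determine cycles $F(d(x_n))$ and $F(d(y_n))$ in the appropriate Hom-complexes of $\MCdg(A)$, and the task is to produce bounding chains for them. This step is the main obstacle, and is where Lemma \ref{lemma:resolution} enters. That lemma exhibits $\mathcal{K}_\infty$ as the (bar-cobar) cofibrant resolution of the groupoid $\mathcal{K}$, and each generator $x_n$ (respectively $y_n$) was introduced precisely to kill a cohomology class in the truncated subcategory generated by lower-index generators, a class which vanishes in $\mathcal K$ because there $g$ and $h$ become strict inverses. Since a homotopy gauge equivalence guarantees that $F(x_0)$ and $F(y_0)$ descend to mutually inverse morphisms in $\Hot\MCdg(A)$, the image under $F$ of this cohomology class vanishes in the target as well, and explicit bounding chains can be written down by mimicking the bar-cobar formulas for $\mathcal{K}_\infty$ (with $F(x_1),F(y_1)$ playing the role of the universal homotopies $x_1,y_1$ and the new $F(x_n),F(y_n)$ chosen at each stage to be the $F$-image of the corresponding bar-cobar expression). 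Collecting these choices produces the required dg functor $F\colon\mathcal{K}_\infty\to\MCdg(A)$, and applying Lemma \ref{lemma:category-algebra} turns it back into a strong homotopy between $x$ and $x'$.
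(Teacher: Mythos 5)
The forward direction of your proof is correct and matches the paper's argument exactly: surjectivity $K_\infty^*\twoheadrightarrow K_2^*$, compatibility with evaluation maps, then Lemma \ref{lemma:K2gauge}.

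For the reverse direction you take a genuinely different route, and there is a real gap in it. The paper does not try to extend a partial dg functor $\mathcal K_\infty^{\leq 1}\to\MCdg(A)$ generator by generator; instead it passes through Toën's derived localization, using the fact that $L_i(\mathcal K_0)\simeq\mathcal K$, and then exploits cofibrancy of $\mathcal K_\infty$ to produce a strict dg functor. Your inductive obstruction-theoretic scheme runs into the following problem. At stage $n\geq 2$ the obstruction to defining $F(x_n)$ is the class $[F(d(x_n))]$ in the cohomology of the appropriate Hom-complex of $\MCdg(A)$. Your justification for its vanishing is that the corresponding class vanishes in $\mathcal K$; but the class $[d(x_n)]$ is \emph{not} a boundary in the subcategory $\mathcal K_\infty^{<n}$ generated by the lower-index generators -- indeed that is precisely why the free generator $x_n$ had to be adjoined. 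So $F$ is applied to a cycle that is not a boundary in its domain, and there is no reason its image should become a boundary in $\MCdg(A)$. (The inclusion $\mathcal K_\infty^{\leq 1}\hookrightarrow\mathcal K_\infty$ is a cofibration of dg categories but is not acyclic, since for instance $H^{-1}$ of the relevant Hom-complex of $\mathcal K_\infty^{\leq 1}$ is nonzero; this is exactly why one cannot simply extend along it.) Likewise, the suggestion to ``mimic the bar-cobar formulas'' and define $F(x_n)$ as ``the $F$-image of the corresponding bar-cobar expression'' is vacuous: $x_n$ is a \emph{free} generator of $\mathcal K_\infty$, not an expression in the lower generators, so there is nothing to apply $F$ to. Making your approach rigorous would essentially require proving a coherence theorem -- that any homotopy equivalence in a dg category extends to a coherent $\mathcal K_\infty$-shaped diagram, possibly after modifying the homotopies -- which is a nontrivial fact, and this is exactly what the derived-localization argument packages for you.
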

\begin{proof}
If two MC elements  $x, x^\prime$ in $A$ are strongly (or $K_\infty$) homotopic then they are $K_2$ homotopic since $K_2^*$ is a quotient of $K_\infty^*$ and thus by Lemma \ref{lemma:K2gauge} they are homotopy gauge equivalent.

Conversely, let $x,x^\prime\in \MC(A)$ be homotopy gauge equivalent and consider a map $f:x\to x^\prime$ inducing an isomorphism in $\Hot(\MCdg(A))$.
Let $\mathcal K_0$ be the $\ground$-linear category generated by two objects $O$ and $O^\prime$ and a single morphism $i:O\to O^\prime$. Then there is
a unique dg functor $F:\mathcal K_0\to \MCdg(A)$ mapping $i$ to $f$. Since $f$ represents an isomorphism in $\Hot(\MCdg(A))$, the functor $F$ factors through $L_i(\mathcal K_0)$, the derived localisation of $\mathcal K_0$, cf. \cite{Toen07b}. On the other hand, it follows from the proof of
\cite[Corollary 9.7]{Toen07b} that $L_i(\mathcal K_0)$ is quasi-equivalent to the category $\mathcal K$ consisting of two mutually inverse isomorphisms between two objects $O_1$ and $O_2$. Since $\mathcal K_\infty$ is a cofibrant replacement of $\mathcal K$, we obtain a dg functor
$\mathcal K_\infty\to \MCdg(A)$ taking $O_1$ and $O_1$ to $x$ and $x^\prime$ respectively. By Lemma \ref{lemma:category-algebra} this implies that $x$ and $x^\prime$ are strongly homotopic.
\end{proof}
\begin{cor}
	For $n=2,3\ldots,\infty$ the relation of $K_n$-homotopy on MC elements of a dg algebra is an equivalence relation.
\end{cor}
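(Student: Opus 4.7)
The plan is to reduce $K_n$-homotopy of MC elements to homotopy gauge equivalence, which is manifestly an equivalence relation (being the relation of isomorphism in the homotopy category $\Hot(\MCdg(A))$). The key inputs are already in place: Lemma \ref{lemma:K2gauge} identifies $K_2$-homotopy with homotopy gauge equivalence, and Theorem \ref{thm-singularss} does the same for $K_\infty$-homotopy. So the only real task is to sandwich the intermediate cases $2<n<\infty$ between these two endpoints.

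The basic observation is that the surjective dg algebra maps $K_\infty^*\twoheadrightarrow K_n^*\twoheadrightarrow K_2^*$, upon tensoring with $A$, carry MC elements to MC elements and restrict to the evaluations $ev_0,ev_1$ on the nose. Consequently, if $x,x'\in\MC(A)$ are $K_\infty$-homotopic via some $X\in\MC(A\otimes K_\infty^*)$, then its image in $\MC(A\otimes K_n^*)$ exhibits them as $K_n$-homotopic; and if they are $K_n$-homotopic, then they are in turn $K_2$-homotopic. Thus we have the chain of implications
\[
K_\infty\text{-homotopic}\ \Longrightarrow\ K_n\text{-homotopic}\ \Longrightarrow\ K_2\text{-homotopic}.
\]

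Combining this chain with Lemma \ref{lemma:K2gauge} and Theorem \ref{thm-singularss}, both endpoints coincide with homotopy gauge equivalence, so all three notions agree; in particular $K_n$-homotopy on $\MC(A)$ is precisely the relation of isomorphism in $\Hot(\MCdg(A))$ restricted to objects, hence an equivalence relation.

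I do not expect any serious obstacle here: the only nontrivial content lies in the already-proved identifications at the extremes, and the squeezing argument is formal once one notes that the quotient maps between the $K_n^*$ are morphisms of dg algebras. The mild subtlety worth flagging is why symmetry fails for small $n$ (e.g.\ for $n=0$, as already remarked in the text): one needs the quotient to $K_2^*$ to retain enough cells $e,f,s,t,st,ts$ to reconstruct a homotopy gauge equivalence, which is exactly why the corollary is restricted to $n\geq 2$.
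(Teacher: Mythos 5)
Your proposal is correct and matches the paper's argument; the paper's proof invokes Theorem \ref{thm-singularss} somewhat tersely, but the intended content is precisely the squeeze you spell out, using that each $K_n^*$ (for $2\leq n\leq\infty$) sits in the tower of surjections $K_\infty^*\twoheadrightarrow K_n^*\twoheadrightarrow K_2^*$, so that $K_\infty$-homotopy, $K_n$-homotopy and $K_2$-homotopy are sandwiched and all coincide with homotopy gauge equivalence by Lemma \ref{lemma:K2gauge} and Theorem \ref{thm-singularss}.
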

\begin{proof}
	Indeed, by Theorem \ref{thm-singularss} two MC elements in a dg algebra $A$ are $K_n$-homotopic  if and only if they are homotopy equivalent as objects in $\MCdg(A)$. The latter relation is obviously an equivalence relation.
\end{proof}
\begin{cor}\label{cor:strongh}
	A strong homotopy equivalence between two dg algebras $A$
	and $B$ induces a bijection $\MChmod(A)\cong\MChmod(B)$.
\end{cor}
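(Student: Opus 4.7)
The plan is to use the functoriality of $\MC$ together with Theorem \ref{thm-singularss} (which identifies homotopy gauge equivalence with strong homotopy of MC elements) and the fact, established in the corollary just above, that strong homotopy of MC elements is itself an equivalence relation. Given a strong homotopy equivalence realised by maps $f\colon A \to B$ and $g\colon B \to A$, the induced maps $f_*\colon \MC(A) \to \MC(B)$ and $g_*\colon \MC(B) \to \MC(A)$ will descend to mutually inverse bijections on homotopy moduli sets.

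First I would verify well-definedness. Suppose $x, x' \in \MC(A)$ are homotopy gauge equivalent. By Theorem \ref{thm-singularss}, there exists an MC element $X \in A \otimes I^*$ restricting to $x$ and $x'$ at the two endpoints. Applying the dg algebra map $f \otimes \id_{I^*}$ yields an MC element of $B \otimes I^*$ restricting to $f_*(x)$ and $f_*(x')$, so these two are strongly homotopic, hence homotopy gauge equivalent by a second application of Theorem \ref{thm-singularss}. This produces the descended map $\overline{f}\colon \MChmod(A) \to \MChmod(B)$, and symmetrically $\overline{g}\colon \MChmod(B) \to \MChmod(A)$.

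To show $\overline{g} \circ \overline{f} = \id_{\MChmod(A)}$, fix $x \in \MC(A)$. By hypothesis there is a zig-zag of elementary strong homotopies connecting $g \circ f$ to $\id_A$, realised by dg algebra maps $H_i\colon A \to A \otimes I^*$ whose endpoint evaluations give consecutive terms of the zig-zag. Applying each $H_i$ to $x$ produces an MC element of $A \otimes I^*$ whose endpoints are the consecutive images of $x$ under these maps. Since strong homotopy of MC elements is an equivalence relation, the zig-zag concatenates to show that $g_*(f_*(x))$ is strongly homotopic to $x$, hence homotopy gauge equivalent to $x$ by Theorem \ref{thm-singularss}. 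The symmetric argument handles $\overline{f} \circ \overline{g}$, completing the proof.

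The only real subtlety is bookkeeping: strong homotopy of dg algebra maps is by definition the equivalence relation generated by single $I^*$-homotopies, while strong homotopy of MC elements is defined via a single $I^*$-homotopy. Translating a map-level zig-zag into an MC-level relation is precisely what the equivalence-relation corollary cited above allows; no further input is needed.
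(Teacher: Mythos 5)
Your proof is correct and follows the same route as the paper: both translate the problem through Theorem \ref{thm-singularss} to reduce homotopy gauge equivalence to strong homotopy of MC elements, and both use functoriality of $\MC$ to descend the maps $f$ and $g$. The paper compresses your well-definedness argument and the zig-zag concatenation (which indeed relies on the preceding corollary that strong homotopy of MC elements is an equivalence relation) into the word ``clearly,'' so your writeup is essentially a detailed version of the same argument.
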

\begin{proof}
	The given strong homotopy equivalence between A and B clearly induces a bijection of MC elements up to strong homotopy. By Theorem \ref{thm-singularss} this becomes a bijection of MC elements up to homotopy gauge equivalence, i.e. a bijection $\MChmod(A)\cong\MChmod(B)$.
\end{proof}
The following result is a strong homotopy analogue of Proposition \ref{prop:homotopic} and its proof is completely analogous, after  replacing $\Omega[0,1]$ with $I^*$ and Corollary \ref{cor-completehomotopygauge} with Corollary \ref{cor:strongh}.
\begin{prop}\label{prop:homotopic1}
Let $f,g : A\to B$ be two strongly homotopic maps of dg algebras. Then the induced functors on homotopy categories: \begin{enumerate}\item
$\Hot\MCdg(f), \Hot\MCdg(g):	\Hot\MCdg(A)\to \Hot\MCdg(B)$;\item
$\Hot\Tw(f), \Hot\Tw(g):	\Hot\Tw(A)\to \Hot\Tw(B)$;\item	
$\Hot\Twfg(f), \Hot\Twfg(g):	\Hot\Twfg(A)\to \Hot\Twfg(B)$
\end{enumerate}	
 are isomorphic.\qed
\end{prop}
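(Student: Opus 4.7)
The plan is to follow the proof of Proposition \ref{prop:homotopic} essentially line by line, with $\Omega[0,1]$ replaced by $I^*$ and Corollary \ref{cor-completehomotopygauge} replaced by Corollary \ref{cor:strongh}. The first step is to establish a strong-homotopy analogue of Lemma \ref{lem:homotopic}: for any dg algebra $A$, the two natural maps
\[
\id_A\otimes ev_0,\ \id_A\otimes ev_1 : A\otimes I^*\rightrightarrows A
\]
induce isomorphic functors on each of $\Hot\MCdg$, $\Hot\Tw$, $\Hot\Twfg$.

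To prove this lemma, I would consider the unit map $i:A\to A\otimes I^*$, $a\mapsto a\otimes 1$. By Lemma \ref{lem:diagonal2}, $i$ is a strong homotopy equivalence. I claim that $i$ induces a quasi-equivalence $\MCdg(A)\to\MCdg(A\otimes I^*)$. Quasi-essential surjectivity follows from Corollary \ref{cor:strongh}, which yields a bijection on $\MChmod$. Quasi-fully faithfulness follows from the observation that for $x,y\in\MC(A)$, the induced map of morphism complexes is
\[
A^{[x,y]}\longrightarrow (A\otimes I^*)^{[i(x),i(y)]}\cong A^{[x,y]}\otimes I^*,
\]
and $I^*$, being the normalized cochain algebra of the contractible simplicial set $I$, is quasi-isomorphic to $\ground$, so the map is a quasi-isomorphism by the Künneth theorem (over a commutative ring of finite projective dimension, or after direct inspection of the small model $K_\infty^*$). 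Since both $(\id_A\otimes ev_0)\circ i$ and $(\id_A\otimes ev_1)\circ i$ equal the identity on $A$, both evaluation maps are quasi-inverse to $i$, and so they induce isomorphic functors on the homotopy category. The analogous argument for $\Tw(A)$ (resp.\ $\Twfg(A)$) proceeds by identifying a twisted $A$-module of the form $V\otimes A$ with an MC element of $\End(V)\otimes A$ and applying the same reasoning to $\End(V)\otimes A$ in place of $A$; the acyclicity of $I^*$ again yields quasi-fully faithfulness for the morphism complexes, which are of the form $\underline{\Hom}(V,W)\otimes A^{[x,y]}$ before and $\underline{\Hom}(V,W)\otimes A^{[x,y]}\otimes I^*$ after.

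Given the lemma, the proposition follows at once: if $h:A\to B\otimes I^*$ is an elementary strong homotopy with $(\id_B\otimes ev_0)\circ h=f$ and $(\id_B\otimes ev_1)\circ h=g$, then applying $\Hot\MCdg$ (resp.\ $\Hot\Tw$, $\Hot\Twfg$) and invoking the lemma produces an isomorphism between the functors induced by $f$ and by $g$. General strong homotopy is the equivalence relation generated by elementary strong homotopy, so Proposition \ref{prop:comp} together with transitivity of natural isomorphism completes the argument.

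The main subtlety I anticipate is the quasi-fully faithfulness statement in the $\Tw$ case, particularly for modules $V\otimes A$ with $V$ of infinite rank, where tensoring with $I^*$ needs to be controlled carefully. However, because $I^*$ (or equivalently its smaller model $K_\infty^*$) is quasi-isomorphic to $\ground$ as a complex of $\ground$-modules, and because the morphism complexes in $\Tw$ are of the shape $\underline{\Hom}(V,W)\otimes A^{[x,y]}$, the required quasi-isomorphism reduces to the well-known fact that tensoring with an acyclic complex of $\ground$-modules (suitably flat, which $I^*$ is, being free of finite rank in each degree) preserves quasi-isomorphisms. Everything else is a mechanical translation of the smooth-homotopy argument into the strong-homotopy setting.
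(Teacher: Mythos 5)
Your proposal matches the paper's own proof, which simply declares the argument to be "completely analogous" to that of Proposition~\ref{prop:homotopic} after replacing $\Omega[0,1]$ with $I^*$ and Corollary~\ref{cor-completehomotopygauge} with Corollary~\ref{cor:strongh}; your expanded reconstruction of the analogue of Lemma~\ref{lem:homotopic} is exactly what is intended. For the $\Tw$ case with $V$ of infinite rank (where the morphism complex is really $\Hom_{\ground}(V,\,W\otimes A)$ rather than $\uHom(V,W)\otimes A$), the cleanest way to close the gap you flag is to observe that $\ground\to I^*$ (equivalently, $K_\infty^*\to\ground$) is not merely a quasi-isomorphism but a chain homotopy equivalence, since $I_*$ is a bounded-below complex of free modules with cohomology $\ground$; chain homotopy equivalences survive every additive functor, so no K\"unneth or flatness hypotheses are needed.
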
	
The following result is a strong homotopy analogue of Theorem \ref{thm:homotopyequivalence1} and the proof is completely analogous, after replacing $\Omega[0,1]$ with $I^*$, Corollary \ref{cor-completehomotopygauge} with Corollary \ref{cor:strongh} and Proposition \ref{prop:homotopic} with Proposition \ref{prop:homotopic1}.
\begin{thm}\label{thm:homotopyequivalence}
	Let $A$ and $B$ be two dg algebras that are strongly homotopy equivalent. Then
	there are quasi-equivalences of dg categories between
		\begin{enumerate}
			\item $\MCdg(A)$ and $\MCdg(B)$,
			\item $\Tw(A)$ and $\Tw(B)$,
			\item $\Twfg(A)$ and $\Twfg(B)$.\qed
		\end{enumerate}
\end{thm}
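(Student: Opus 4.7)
The plan is to mimic the proof of Theorem \ref{thm:homotopyequivalence1} verbatim, replacing smooth homotopy with strong homotopy throughout: in particular, invoking Corollary \ref{cor:strongh} where the earlier proof used Corollary \ref{cor-completehomotopygauge}, and Proposition \ref{prop:homotopic1} where it used Proposition \ref{prop:homotopic}. The latter is conveniently packaged so as to cover all three dg categories $\MCdg$, $\Tw$ and $\Twfg$ at once. Fix $f\colon A\to B$ and $g\colon B\to A$ with $f\circ g$ and $g\circ f$ strongly homotopic to $\id_B$ and $\id_A$ respectively.

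For (1), the functors $\MCdg(f)$ and $\MCdg(g)$ are essentially surjective on homotopy categories because, by Corollary \ref{cor:strongh}, they induce mutually inverse bijections $\MChmod(A)\leftrightarrow\MChmod(B)$, and these sets are precisely the isomorphism classes of objects in $\Hot\MCdg(A)$ and $\Hot\MCdg(B)$. By Proposition \ref{prop:homotopic1}, the compositions $\Hot\MCdg(f\circ g)$ and $\Hot\MCdg(g\circ f)$ are isomorphic to the respective identity functors. Thus $\Hot\MCdg(f)$ and $\Hot\MCdg(g)$ are mutually inverse equivalences of homotopy categories. To upgrade this to a quasi-equivalence of dg categories, observe that the natural isomorphism between $\id$ and $\Hot\MCdg(g\circ f)$ is witnessed, for each $x$, by a homotopy equivalence $x\to gfx$ in $\MCdg(A)$, which induces a quasi-isomorphism on every $\underline{\Hom}$-complex; hence $(g\circ f)_*$ is a quasi-isomorphism on every Hom-complex, and symmetrically for $(f\circ g)_*$. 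These two facts together force $\MCdg(f)$ to induce quasi-isomorphisms on Hom-complexes, i.e.\ to be quasi-fully faithful.

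For (2) and (3) the same argument goes through. A (finitely generated) twisted $A$-module amounts to a pair $(V,x)$ with $V$ a graded free $\ground$-module (of finite rank) and $x$ an MC element in $\End(V)\otimes A$; and $\id_{\End(V)}\otimes f$ remains a strong homotopy equivalence since tensoring a strong homotopy $A\to B\otimes I^*$ with $\End(V)$ yields a strong homotopy $\End(V)\otimes A\to\End(V)\otimes B\otimes I^*$. Proposition \ref{prop:homotopic1} gives the required isomorphism of induced functors on $\Hot\Tw$ and $\Hot\Twfg$, while Corollary \ref{cor:strongh} applied to each $\End(V)\otimes A$ and $\End(V)\otimes B$ gives essential surjectivity of the induced functors on homotopy categories. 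Since $\Tw(A)$ and $\Twfg(A)$ are strongly pre-triangulated, an equivalence of homotopy categories already suffices for quasi-equivalence. The only genuine subtlety lies in part (1), where $\MCdg$ is not pre-triangulated in general and so the shortcut available for (2) and (3) does not apply; the explicit argument with homotopy equivalences and $\underline{\Hom}$-complexes sketched above is exactly what is needed to bridge this gap.
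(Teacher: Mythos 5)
The overall structure of your argument mirrors the paper exactly (which itself says only ``the proof is completely analogous'' to Theorem~\ref{thm:homotopyequivalence1}), and for parts (2) and (3) your appeal to pre-triangulatedness of $\Twfgfree$, $\Tw$ and $\Twfg$ is a clean and valid way to pass from $\Hot$-equivalence to quasi-equivalence. The problem is the explicit bridge you build for part (1). You assert that since the natural isomorphism $\id\cong\Hot\MCdg(gf)$ is witnessed by homotopy equivalences $\phi_x\colon x\to gfx$, each of which induces quasi-isomorphisms on $\uHom$-complexes, ``hence $(g\circ f)_*$ is a quasi-isomorphism on every Hom-complex''. That inference does not hold. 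The quasi-isomorphisms you get are $(\phi_y)_*\colon\uHom(x,y)\to\uHom(x,gfy)$ and $(\phi_x)^*\colon\uHom(gfx,gfy)\to\uHom(x,gfy)$; the naturality square relating them to $(gf)_*$ only commutes in $\Hot\MCdg(A)$, i.e.\ on $H^0$, and since $\MCdg(A)$ is not closed under shifts you have no way to promote this to higher cohomology of $\uHom$. The general principle ``$\Hot(F)$ naturally isomorphic to $\id$ via homotopy equivalences $\Rightarrow F$ quasi-fully faithful'' is false: take the dg category with one object and $\uEnd=\ground[x]$, $|x|=-1$, $d=0$, and the endofunctor $x\mapsto 0$; this induces the identity on $\Hot$ with $\phi=1$, but kills all negative cohomology.

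The step that actually delivers quasi-full faithfulness of $\MCdg(gf)$ — and which is what the paper has in mind via Proposition~\ref{prop:homotopic1} and the mechanism of Lemma~\ref{lem:homotopic} — is the factorization $\MCdg(gf)=\MCdg(\id\otimes ev_1)\circ\MCdg(H)$ through $\MCdg(A\otimes I^*)$, where $H\colon A\to A\otimes I^*$ is the given strong homotopy. The inclusion $i\colon A\to A\otimes I^*$ induces a quasi-equivalence $\MCdg(A)\to\MCdg(A\otimes I^*)$: quasi-essential surjectivity comes from Corollary~\ref{cor:strongh} applied to the strong homotopy equivalence of Lemma~\ref{lem:diagonal2}, and quasi-full faithfulness comes from acyclicity of $I^*$ as a resolution of $\ground$ (exactly as acyclicity of $\Omega[0,1]$ is used in Lemma~\ref{lem:homotopic}). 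Since $(\id\otimes ev_j)\circ i=\id$, a two-out-of-three argument makes both $\MCdg(\id\otimes ev_j)$ and then $\MCdg(H)$ quasi-equivalences, so $\MCdg(gf)$ (and symmetrically $\MCdg(fg)$) is quasi-fully faithful. Your concluding two-out-of-six step then correctly extracts quasi-full faithfulness of $\MCdg(f)$. So the endpoint is right, but the one step you flag as doing the real work needs to be replaced by the factorization through the cylinder algebra.
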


\begin{eg}\label{eg:K_0}
	Assume that $2$ is not invertible in $\ground$ and consider the dg algebra $K_0^*$; recall that it is the path algebra of the quiver \xymatrix
	{
		\bullet\ar[r]^s&\bullet
	}
with $|s|=1$ with the differential being $\ad(s)$. It is clear that $s\in K^*_0$ is an MC element. It is easy to see that the $K^*_0$-module
$K^{*[x]}_0$ is not isomorphic to $K^*_0$ as its first homology group is $\ground/2\neq 0$. It follows that $s$ is not homotopy gauge equivalent to zero and therefore by Theorem \ref{thm:homotopyequivalence}, $K^*_0$ is not $K_2$ homotopy equivalent to $\ground$.
\end{eg}
\begin{eg} \label{eg-polyderham}
Now let $\ground$ be a field of characteristic zero and consider $A:=\ground[z,dz]$, the polynomial de Rham algebra of the line.
This is quasi-isomorphic to $\ground$, and even polynomially homotopy equivalent to $\ground$, however $\MCdg(A)$ is not quasi-equivalent to $\MCdg(\ground)$ (and so, $A$ is not $K_2$ homotopy equivalent to $\ground$). To see this note that MC elements are just polynomial 1-forms and a map in $\MCdg(\ground[z,dz])$ between two such elements $x$ and $y$ is a polynomial $f$ satisfying $df + fx + yf = 0$. This differential equation will not usually have polynomial solutions, so different choices of $x$ and $y$ give a large number of MC elements in $A$ which do not map to one another (and thus represent non-isomorphic objects in $\Hot\MCdg(A)$).
	The finitely generated twisted modules represented by these $\MC$ elements are examples of $\cat O$-coherent $\cat D$-modules with irregular singularities at infinity.
\end{eg}
\begin{rk}
	It is interesting to find out whether there is a closed model category on dg algebras with weak equivalences being being what we call strong homotopy equivalences. Such a closed model category cannot be transferred from the category of complexes. For example, if $\ground$ is a field of characteristic zero, the de Rham algebra $\ground[z,dz]$ is chain homotopy equivalent to $\ground$ as a complex of $\ground$ vector spaces, but supports many nontrivial  MC element and so, cannot be strongly homotopy equivalent to $\ground$.  
\end{rk}	
\section{Categories of twisted modules associated with simplicial sets}\label{simplicial}
In this section we consider twisted modules over the dg pseudo-compact algebra $C^*(X)$, the normalized cochain complex of a simplicial set $X$. We have the dg categories $\MCdg(C^*(X)), \Tw(C^*(X))$ and $\Twfg(C^*(X))$ that we will abbreviate to $\MCdg(X), \Tw(X)$ and $\Twfg(X)$ respectively. These dg categories are not (up to quasi-equivalence) invariants of the weak homotopy type of $X$, however they are homotopy invariants of $X$ in a sense that we will now make precise.

Recall cf. \cite{Goerss99} that two maps of simplicial sets $f,g:X\to Y$ are called homotopic if they can be extended to a map $X\times K_0\to Y$; recall that $K_0$ stands for the standard simplicial interval having two nondegenerate 0-simplices and one nondegenerate 1-simplex. This notion of homotopy is completely adequate only in the case where $Y$ is a Kan complex (in which case it is an equivalence relation). We will now introduce the notion of a \emph{strong homotopy} of maps between simplicial sets and the concomitant notion of strong homotopy equivalence. Let $C$ be a fibrant cylinder object for the simplicial point. For example, we can take $C=I$ or $C=K_\infty$. Then $X\times C$ is a cylinder object for any simplicial set $X$; moreover it is \emph{very good} in the sense that the natural projection $X\times C\to X$ is a fibration of simplicial sets. We will denote by $i_0, i_1:X\to X\times C$ the two natural inclusions of $X$ into $X\times C$.
\begin{defn}
	An elementary strong homotopy of maps of simplicial sets $f,g:X\to Y$ is a map $h:X\times C\to Y$ such that $h\circ i_0 =f$ and $h \circ i_1=g$.
	the maps $f$ and $g$ are called strongly homotopic if there is a chain of elementary homotopies connecting $f$ and $g$.
	
	Furthermore, $X$ and $Y$ are called strongly homotopy equivalent if there are maps $f:X\to Y$ and $g:Y\to X$ such that $f\circ g$ and $g\circ f$ are strongly homotopic to $\id_Y$ and $\id_X$ respectively.
\end{defn}
\begin{prop}
	The relation of strong homotopy does not depend on the choice of a very good cylinder object $C$. Any such very good cylinder is strongly homotopy equivalent to the point.
\end{prop}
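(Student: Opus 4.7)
My plan exploits the fact that a very good cylinder $C$ for the simplicial point is, by definition, a contractible Kan complex: the structure map $C \to *$ is both a weak equivalence and a Kan fibration, hence an acyclic fibration in the Kan-Quillen model structure. Combined with the observation that the endpoint inclusion $\partial : * \sqcup * \hookrightarrow C$ is a monomorphism, and therefore a cofibration, standard lifting properties will deliver both claims essentially formally.

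For the independence statement, let $C$ and $C'$ be two very good cylinders with endpoint inclusions $\partial$ and $\partial'$ respectively. The lifting problem with cofibration $\partial' : * \sqcup * \to C'$ on the left and acyclic fibration $C \to *$ on the right, together with the map $\partial : * \sqcup * \to C$ on top, admits a solution $\phi : C' \to C$ which by construction preserves the two endpoints. Given any elementary $C$-homotopy $h : X \times C \to Y$ between $f$ and $g$, the composite $h \circ (\id_X \times \phi) : X \times C' \to Y$ is then an elementary $C'$-homotopy between the same pair $f, g$. Swapping the roles of $C$ and $C'$ shows the two equivalence relations on maps $X \to Y$ coincide.

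For the second claim, fix an endpoint $\partial_0 \in C$, let $g : * \to C$ be its inclusion and let $f : C \to *$ be the unique map; then $f \circ g = \id_*$ trivially, so it suffices to construct an elementary strong homotopy, with cylinder $C$ itself, between $\id_C$ and the constant map $c_{\partial_0} = g \circ f : C \to C$. The map $C \sqcup C = C \times (* \sqcup *) \hookrightarrow C \times C$ is a monomorphism, hence a cofibration, and $C \to *$ is an acyclic fibration; lifting $(\id_C, c_{\partial_0}) : C \sqcup C \to C$ through these data yields the required $H : C \times C \to C$.

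The only (mild) technical checks are that the stated maps really are cofibrations and acyclic fibrations in the Kan-Quillen model structure; these are standard and present no real obstacle. If anything delicate arises, it will be in the very first step of identifying $C \to *$ as an acyclic fibration, which is immediate from the definition of a \emph{very good} cylinder combined with $C \simeq *$.
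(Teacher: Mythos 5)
Your proof is correct and follows essentially the same route as the paper: both parts rest on the same lifting arguments in the Kan--Quillen model structure (cofibration against acyclic fibration), with the comparison map between two cylinders handled exactly as in the paper's proof (you build $\phi\colon C'\to C$, the paper builds $C\to C'$, which amounts to the same thing by symmetry). The only difference is that for the second claim the paper cites ``general theory of closed model categories'' (a weak equivalence between fibrant--cofibrant objects is a homotopy equivalence), whereas you unwind that general statement into the explicit lifting problem producing $H\colon C\times C\to C$; that is a welcome filling-in of detail, not a different argument.
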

\begin{proof}
Let $P$ stand for the simplicial point. For any two very good cylinder objects $C$ and $C^{\prime}$ of $P$ consider the diagram
\[\xymatrix{
P\coprod P\ar^{i_0\coprod i_1}[d]\ar^{i_0\coprod i_1}[r]&C^{\prime}\ar[d]\\
C\ar[r]&P.
}\]
Since the left downward arrow is a monomorphism and thus a cofibration of simplicial sets, and the right downward arrow is a fibration ($C^{\prime}$ being fibrant), there exists a filler $C\to C^{\prime}$. It follows that any strong homotopy based on $C^{\prime}$ gives rise to a strong homotopy based on $C$. Symmetrically, any strong homotopy based on $C$ gives rise to a strong homotopy based on $C^{\prime}$; this proves the first claim of the proposition.  The second claim follows from general theory of closed model categories: any very good cylinder object is weakly equivalent to the point; then, being a fibrant-cofibrant object it is homotopy equivalent to the point through any fixed good cylinder object, i.e. it is strongly homotopy equivalent to the point.
\end{proof}
\begin{rk}
	Two natural candidates for $C$ are $K_\infty$ and $I$, the singular simplicial set of the unit interval $[0,1]$. The multiplication on $[0,1]$ turns $I$ into a simplicial monoid, and the multiplication map $I\times I\to I$ could be viewed as a homotopy between the identity map on $I$ and the map to the point. This is an explicit strong homotopy equivalence between $I$ and the point.
\end{rk}
\begin{prop}\label{prop:strongsimplicial}
	Let $f,g:X\to Y$ be two maps of simplicial sets. If $f$ and $g$ are strongly homotopic, then the induced maps of pseudo-compact dg algebras
	$f^*,g^*:C^*(Y)\to C^*(X)$ are strongly homotopic.
	
	If two simplicial sets $X$ and $Y$ are strongly homotopy equivalent, then the pseudo-compact dg algebras $C^*(X)$ and $C^*(Y)$ are strongly homotopy equivalent.
\end{prop}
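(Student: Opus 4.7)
The second assertion is an immediate consequence of the first: if $f:X\to Y$ and $g:Y\to X$ give mutually inverse strong homotopy equivalences, then by contravariant functoriality of $C^*$ the maps $f^*,g^*$ are mutually inverse up to the strong homotopies produced from $fg\sim \id_Y$ and $gf\sim \id_X$ by applying the first assertion. Hence I focus on the first.

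Choose the fibrant cylinder $C=I$ and let $h:X\times I\to Y$ be an elementary strong homotopy between $f$ and $g$. I would produce the required elementary $I$-homotopy of pseudo-compact dg algebras as the composite
\[ H \;:=\; \Big( C^*(Y)\xrightarrow{\,h^*\,} C^*(X\times I)\xrightarrow{\,\sigma\,} C^*(X)\otimes I^*\Big), \]
where $\sigma$ is the dual of the Eilenberg--Zilber shuffle map $\nabla:C_*(X)\otimes C_*(I)\to C_*(X\times I)$, landing in the completed pseudo-compact tensor product $C^*(X)\otimes I^*$. The endpoint identities $(\id\otimes ev_0)\circ H=f^*$ and $(\id\otimes ev_1)\circ H=g^*$ reduce, via $f=h\circ i_0$ and $g=h\circ i_1$, to the normalized-chain identity $(\id\otimes ev_i)\circ\sigma=i_i^*$; this in turn is a short calculation since, in the normalized complex, the only shuffle summand of $\nabla(x\otimes s_0^n e_i)$ that survives is the one equal to $i_i(x)$, every other summand containing a degenerate factor in the $I$-direction.

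The main obstacle is verifying that $\sigma$ is a strict morphism of pseudo-compact dg algebras, not merely a chain map; this amounts to showing that $\nabla$ strictly intertwines the Alexander--Whitney diagonals on $C_*(X)\otimes C_*(I)$ and on $C_*(X\times I)$. I would attempt this by a direct computation on simplices, exploiting the specific combinatorics of the shuffle map on the normalized chain complex, where the higher discrepancy terms simplify. Should the strictness prove too delicate, a fallback is available: the opposite Eilenberg--Zilber map $AW^*:C^*(X)\otimes I^*\to C^*(X\times I)$ is manifestly a strict dg algebra quasi-isomorphism, and combining this with Lemma \ref{lem:diagonal2} (identifying $A\otimes I^*$ with $A$ up to strong homotopy) allows one to transfer $h^*$, up to strong homotopy, to a strict dg algebra map landing in $C^*(X)\otimes I^*$ with the correct endpoint evaluations.
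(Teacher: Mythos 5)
Your primary argument is exactly the paper's: precompose the dual of the Eilenberg--Zilber shuffle map, $\nabla^*\colon C^*(X\times I)\to C^*(X)\otimes I^*$, with $h^*$, where $h$ is the elementary strong simplicial homotopy. The fact you are tentative about --- that $\nabla^*$ is a strict morphism of pseudo-compact dg algebras, equivalently that $\nabla$ strictly intertwines the Alexander--Whitney diagonals --- is a classical theorem going back to Eilenberg and Mac~Lane, and the paper simply invokes it as known; no re-derivation by direct simplex computation is needed, so you should state the primary route with confidence.

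The proposed fallback, however, rests on a false premise. The map $AW^*\colon C^*(X)\otimes I^*\to C^*(X\times I)$, i.e.\ the cochain cross product $u\otimes v\mapsto p_1^*u\cup p_2^*v$, is \emph{not} a strict dg algebra morphism: $(u\cup u')\times(v\cup v')$ and $\pm(u\times v)\cup(u'\times v')$ evaluate the four cochains on different front/back sub-simplices of a given simplex (the first places $u'$ before $v$ along the ordered vertices, the second places $v$ before $u'$), and these agree only up to cochain homotopy. On the chain side this is the statement that $AW$ does \emph{not} strictly intertwine diagonals --- were it to, the cup product would be strictly graded-commutative. The classical asymmetry is exactly the opposite of what your fallback presumes: $\nabla$ is a strict coalgebra map while $AW$ is not, hence $\nabla^*$ is a strict algebra map while $AW^*$ is not. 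Moreover, even granting the claimed strictness, $AW^*$ points in the wrong direction to transport $h^*$ into $C^*(X)\otimes I^*$ without further argument. Drop the fallback; the primary route already closes the proof.
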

\begin{proof}
	The second statement of the proposition follows from the first. For the first, choosing $X\times I$ as a very good cylinder object for $X$, consider a homotopy $h:X\times I\to Y$ such that $h\circ i_0=f$ and $h\circ i_0=g$. This gives rise to a map of dg pseudo-compact algebras
	$C^*(Y)\to C^*(X\times I)$ and, composing the latter with the Eilenberg-Zilber map $C^*(X\times I)\to C^*(X)\otimes I^*$ (which is known to be a dg algebra map) we obtain the desired strong homotopy between $f^*$ and $g^*$.
\end{proof}
\begin{cor}\label{cor:equivalence}
Let $X$ and $Y$ be two strongly homotopy equivalent simplicial sets. Then the following dg categories are quasi-equivalent:
\begin{enumerate}
	\item $\MCdg(X)$ and $\MCdg(Y)$,
	\item $\Tw(X)$ and $\Tw(Y)$,
	\item $\Twfg(X)$ and $\Twfg(Y)$.
\end{enumerate}
\end{cor}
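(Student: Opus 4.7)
The plan is to combine the two main results immediately preceding the corollary. The heavy lifting has already been done: Proposition \ref{prop:strongsimplicial} translates strong homotopy equivalence of simplicial sets into strong homotopy equivalence of their normalized cochain algebras, and Theorem \ref{thm:homotopyequivalence} translates strong homotopy equivalence of (pseudo-compact) dg algebras into quasi-equivalence of the associated dg categories $\MCdg$, $\Tw$, and $\Twfg$. So the corollary should follow by stringing these two implications together.

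Concretely, I would first apply Proposition \ref{prop:strongsimplicial} to the assumed strong homotopy equivalence $X \simeq Y$ to obtain a strong homotopy equivalence of pseudo-compact dg algebras $C^*(X) \simeq C^*(Y)$. Next, I would invoke Theorem \ref{thm:homotopyequivalence}, noting (as the paper emphasizes at the start of Section \ref{strong}) that all definitions, results, and proofs of that section apply verbatim to pseudo-compact dg algebras, with the understanding that homomorphisms are continuous and tensor products are completed. Applied to $A = C^*(X)$ and $B = C^*(Y)$, this yields the three desired quasi-equivalences $\MCdg(X) \simeq \MCdg(Y)$, $\Tw(X) \simeq \Tw(Y)$, and $\Twfg(X) \simeq \Twfg(Y)$, by the definitions $\MCdg(X) = \MCdg(C^*(X))$ and so on.

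There is essentially no obstacle: the only point that requires a moment's care is the pseudo-compact setting, since Theorem \ref{thm:homotopyequivalence} is stated for dg algebras, but this is handled by the blanket remark opening Section \ref{strong}. In particular, the cochain algebra $I^*$ (which mediates the notion of strong homotopy) is itself pseudo-compact, and tensor products $C^*(X) \otimes I^*$ used in the definition of strong homotopy of maps $C^*(Y) \to C^*(X)$ are completed tensor products of pseudo-compact modules; the Eilenberg--Zilber dg algebra map $C^*(X \times I) \to C^*(X) \otimes I^*$ used in Proposition \ref{prop:strongsimplicial} is continuous, so everything lives in the right category. Thus the proof reduces to the single sentence: apply Proposition \ref{prop:strongsimplicial} and then Theorem \ref{thm:homotopyequivalence}.
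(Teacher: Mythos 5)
Your proof is correct and follows exactly the paper's own argument: the paper's proof of this corollary is literally ``This is a direct consequence of Theorem \ref{thm:homotopyequivalence} and Proposition \ref{prop:strongsimplicial}.'' Your additional remarks about the pseudo-compact setting and completed tensor products are accurate and simply spell out what the paper leaves implicit.
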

\begin{proof}
	This is a direct consequence of Theorem \ref{thm:homotopyequivalence} and Proposition \ref{prop:strongsimplicial}.
\end{proof}
\begin{cor}\label{cor:weaklyKan}
	Let $X$ and $Y$ be two weakly equivalent Kan simplicial sets. Then the following dg categories are quasi-equivalent:
	\begin{enumerate}
		\item $\MCdg(X)$ and $\MCdg(Y)$,
		\item $\Tw(X)$ and $\Tw(Y)$,
		\item $\Twfg(X)$ and $\Twfg(Y)$.
	\end{enumerate}
\end{cor}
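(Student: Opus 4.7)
The plan is to reduce Corollary \ref{cor:weaklyKan} to Corollary \ref{cor:equivalence} by showing that any two weakly equivalent Kan simplicial sets are strongly homotopy equivalent in the sense introduced earlier.

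First, I would invoke the Kan-Quillen model structure on simplicial sets, in which every object is cofibrant and Kan complexes are precisely the fibrant objects. By the general Whitehead theorem in a model category, a weak equivalence between fibrant-cofibrant objects is a homotopy equivalence, and moreover the homotopies can be taken with respect to any very good cylinder object. Thus, given a weak equivalence $f: X\to Y$ of Kan complexes, there exists $g: Y\to X$ together with homotopies $f\circ g\sim \id_Y$ and $g\circ f\sim \id_X$ realized via the cylinder $(-)\times I$, where $I$ is the singular simplicial set of $[0,1]$, which is a very good cylinder for the simplicial point. By definition, this yields a strong homotopy equivalence between $X$ and $Y$.

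An alternative, more explicit route starts from the classical simplicial homotopy equivalence (which uses the cylinder $\Delta^1 = K_0$) guaranteed by Whitehead's theorem between Kan complexes, and then upgrades it. The inclusion $\Delta^1\hookrightarrow I$ is a monomorphism between two contractible simplicial sets, hence an acyclic cofibration in the Kan-Quillen model structure. By the pushout-product axiom, the induced map $X\times\Delta^1\hookrightarrow X\times I$ is also an acyclic cofibration. Since $Y$ is a Kan complex and therefore fibrant, any simplicial homotopy $h: X\times\Delta^1\to Y$ extends to a map $h': X\times I\to Y$; because the endpoints of $I$ factor through $\partial\Delta^1\subset\Delta^1$, the restrictions $h'\circ i_0$ and $h'\circ i_1$ coincide with $f$ and $g$, producing the required strong homotopy.

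With $X$ and $Y$ strongly homotopy equivalent as simplicial sets, all three quasi-equivalences stated in Corollary \ref{cor:weaklyKan} follow immediately from Corollary \ref{cor:equivalence}. The only delicate point in this plan is the verification that the lifted homotopy has the correct values at the two endpoints; this is a straightforward bookkeeping exercise given the observation that $\partial\Delta^1$ embeds compatibly into both $\Delta^1$ and $I$, so no serious obstacle arises.
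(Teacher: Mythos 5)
Your first paragraph reproduces the paper's own argument: invoke the model-categorical Whitehead theorem to upgrade a weak equivalence between fibrant-cofibrant simplicial sets to a homotopy equivalence through the very good cylinder $(-)\times I$, obtaining a strong homotopy equivalence, then apply Corollary \ref{cor:equivalence}. The alternative lifting argument you sketch (extending a $\Delta^1$-homotopy along the acyclic cofibration $X\times\Delta^1\hookrightarrow X\times I$ using fibrancy of $Y$) is also correct, but the first paragraph already matches the paper's proof.
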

\begin{proof}
	Two weakly equivalent Kan simplicial sets are homotopy equivalent through any given very good cylinder object. In particular, they are strongly homotopy equivalent. The conclusion then follows from Corollary \ref{cor:equivalence}.
\end{proof}
This also has a consequence for singular cochain algebras of topological spaces.
\begin{cor}\label{cor:mctopologicalequivalence}
Let $X$ and $Y$ be weakly equivalent topological spaces. Then there are quasi-equivalences of dg categories between
\begin{enumerate}
	\item $\MCdg(X)$ and $\MCdg(Y)$,
	\item $\Tw(X)$ and $\Tw(Y)$,
	\item $\Twfg(X)$ and $\Twfg(Y)$.
\end{enumerate}
In particular, if $X$ is a contractible	topological space, then the dg categories $\MCdg(X)$, $\Twfg(X)$ and $\Tw(X)$ are quasi-equivalent to the category of free $\ground$-modules of rank 1, the category of finitely generated free dg $\ground$-modules and the category of all free dg $\ground$-modules, respectively.

\end{cor}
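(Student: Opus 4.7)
The plan is to reduce the three quasi-equivalences to the simplicial-set statement of Corollary \ref{cor:weaklyKan} by applying the singular simplicial set functor $\Sing$. By the conventions of Section \ref{sect-notations}, the pseudo-compact dg algebra $C^*(X)$ attached to a topological space $X$ is, by definition, the normalized cochain algebra of $\Sing(X)$, so $\MCdg(X)$, $\Tw(X)$ and $\Twfg(X)$ are literally the dg categories associated to the simplicial set $\Sing(X)$. Since $\Sing(X)$ is always a Kan complex and a weak equivalence of topological spaces $X \to Y$ induces (essentially by the definition of weak equivalence in $\mathbf{Top}$) a weak equivalence $\Sing(X) \to \Sing(Y)$ of Kan simplicial sets, Corollary \ref{cor:weaklyKan} immediately gives the three required quasi-equivalences.

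For the ``in particular'' statement, a contractible space $X$ is weakly equivalent to the one-point space, so by the first part it suffices to identify the three dg categories associated to $C^*(\mathrm{pt}) = \ground$ (the normalized cochain algebra of a point vanishes in positive degrees). Since $\ground^{1}=0$, the unique MC element of $\ground$ is $0$, and its endomorphism dg algebra in $\MCdg(\ground)$ is $\ground$ itself; thus $\MCdg(\ground)$ is the one-object dg category with endomorphism algebra $\ground$, which is exactly the dg category of free rank-one $\ground$-modules. By definition a twisted $\ground$-module is a dg $\ground$-module whose underlying graded module is free, so $\Tw(\ground)$ is the dg category of all free dg $\ground$-modules, and restricting to the finitely generated case identifies $\Twfgfree(\ground)$ with finitely generated free dg $\ground$-modules.

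The only subtle step is the passage from $\Twfgfree(\ground)$ to $\Twfg(\ground)$: the latter is, by definition, the full sub-dg category of $\Tw(\ground)$ on homotopy retracts of objects of $\Twfgfree(\ground)$, so in general one must interpret the statement up to quasi-equivalence, with $\Twfg(\ground)$ playing the role of the Morita fibrant replacement of $\Twfgfree(\ground)$ in the sense of Remark \ref{remark:Morita}. I expect this to be the only point requiring any care, since over a general base ring $\ground$ a homotopy retract of a finitely generated free dg module need not be strictly free on the nose; once one adopts the ``up to quasi-equivalence'' reading, the computation over the point is immediate, and the corollary follows.
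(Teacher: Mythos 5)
Your proof is correct and matches the paper's own argument: the paper's proof is precisely the observation that $\Sing$ sends a weak equivalence of topological spaces to a weak equivalence of Kan simplicial sets, so the claim follows from Corollary \ref{cor:weaklyKan}. Your extra care in spelling out the computation over the point and, in particular, your flagging that $\Twfg(\ground)$ is only the Morita fibrant replacement of $\Twfgfree(\ground)$ (hence literal quasi-equivalence with the category of finitely generated free dg $\ground$-modules is delicate when $\Hot(\Twfgfree(\ground))$ is not idempotent complete) is a legitimate subtlety that the paper passes over in silence.
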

\begin{proof}
Since the topological spaces $X$ and $Y$ are weakly equivalent, so are their singular simplicial sets. Since the latter are Kan complexes, the claim follows from Corollary \ref{cor:weaklyKan}.
\end{proof}
\subsection{Reduced and minimal twisted modules}
Let $A$ be a non-negatively graded pseudo-compact algebra, such as $C^*(X)$ for a simplicial set $X$, and $M:=V\otimes A$ be a twisted $A$-module. The differential $D_M$ on $M$ is determined by its restriction on $V$; furthermore we have: $D_M|_{V\otimes 1}=d^0+d^1+\ldots$ where $d^n:V\to V\otimes A^n$. In particular, $d^0:V\to V\otimes A^0$ determines an $A^0$-linear differential on $V\otimes A^0$.
\begin{defn}
	A twisted $A$-module $M$ as above is called \emph{reduced} if $d_0$ factors through $V\hookrightarrow V\otimes A^{0}: v\mapsto v\otimes 1$, i.e. if it is induced by a differential in the graded $\ground$-module $V$. If, further, $d_0=0$, we will call the twisted $A$-module $M$ \emph{minimal}. We will denote by $\Twred(A)$, $\Twm(A)$, $\Twfgm(A)$,  and $\Twfgred(A)$ the categories of reduced, minimal twisted $A$-modules and their perfect versions respectively. If $A=C^*(X)$ for a simplicial set $X$, we will denote these categories by $\Twred(X)$, $\Twm(X)$, $\Twfgm(X)$  and $\Twfgred(X)$ respectively.
\end{defn}

\begin{rk}
		If $A$, in addition to being non-negatively graded, is \emph{connected} i.e. $A^0=\ground$, then clearly any twisted $A$-module is reduced.
		Such is the case, when $A=C^*(X)$ for a reduced simplicial set $X$. 
\end{rk}
\begin{rk}	
		The notion of a minimal twisted module is similar to that of a minimal $A_\infty$-module, \cite{Keller01}; indeed in the case when $A$ is a completed tensor algebra representing an $A_\infty$ algebra, then a minimal twisted $A$-module is a contramodule corresponding to a minimal $A_\infty$-module under the comodule-contramodule correspondence, cf. \cite[Theorem 5.2]{Positselski11}.
\end{rk}
\begin{prop}
	A homotopy equivalence between two minimal twisted modules is necessarily an isomorphism.
\end{prop}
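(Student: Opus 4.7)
The plan is to descend the problem modulo the two-sided ideal $A^{\ge 1}$, show that the resulting map at the $A^0$-level is forced to be an isomorphism by minimality, and then lift this back to an isomorphism of the full twisted modules via a Nakayama-type argument.

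First, for any twisted $A$-module $M = V \otimes A$, the quotient $\bar M := M/(M \cdot A^{\ge 1}) \cong V \otimes A^0$ carries a natural dg $A^0$-module structure. Writing $D_M|_V = d^0 + d^1 + \cdots$ with $d^n : V \to V \otimes A^n$, only the $A^0$-linear extension of $d^0$ survives on $\bar M$; hence in the minimal case $\bar M$ has zero differential. Since every $A$-linear map and every $A$-linear homotopy between twisted modules descends modulo $A^{\ge 1}$ to an $A^0$-linear map of the same degree, a homotopy equivalence $f : M \to N$ with homotopy inverse $g$ and homotopies $h_M, h_N$ descends to a homotopy equivalence $\bar f : V \otimes A^0 \to W \otimes A^0$ in the dg category of $A^0$-modules.

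Second, I note that when both source and target of a chain homotopy equivalence carry the zero differential, the homotopy relations collapse: the identity $\bar D_M \bar h_M + \bar h_M \bar D_M = \id - \bar g \bar f$ immediately gives $\bar g \bar f = \id$, and symmetrically $\bar f \bar g = \id$. Therefore $\bar f$ is an honest isomorphism of graded $A^0$-modules.

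Third, I lift this to the conclusion that $f$ itself is an isomorphism. The map $f$ preserves the decreasing filtration $F^n M := V \otimes A^{\ge n}$ (since $A^{\ge n}$ is a two-sided ideal), and on the $n$-th associated graded piece $V \otimes A^n \to W \otimes A^n$ the induced map is $\bar f \otimes_{A^0} \id_{A^n}$, which is therefore an isomorphism for every $n$. Because $A$ is non-negatively graded, the filtration $\{F^n M\}_n$ is degree-wise finite (only finitely many $A^k$ contribute in any fixed total degree), so an elementary induction on filtration steps assembles these graded isomorphisms into the isomorphism $f : M \to N$. The main delicate point to check is that this lifting is compatible with the pseudo-compact conventions in force: completed tensor products are not exact in general, but the degree-wise finiteness of $F^\bullet M$ reduces the whole argument to finite-dimensional linear algebra in each total degree, so no convergence or exactness issue actually arises.
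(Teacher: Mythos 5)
Your strategy mirrors the paper's at its core: both proofs exploit the grading of $A$ together with the fact that minimality kills the $A^0$-component of the differential, so that a homotopy equivalence becomes ``upper triangular with an isomorphism on the diagonal'' with respect to the $A^{\ge n}$-filtration and can therefore be inverted. The paper first reduces to a single endomorphism $f=1+D_A(g)$ homotopic to the identity and writes the inverse as a geometric series; you instead reduce mod $A^{\ge 1}$, observe that a homotopy equivalence of zero-differential complexes is an honest isomorphism, and then lift along the filtration. These are the same mechanism in two packagings, and your first two steps are correct and cleanly presented.

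The justification of your lifting step, though, is wrong as stated. You claim the filtration $F^n M = V\otimes A^{\ge n}$ is degree-wise finite because $A$ is non-negatively graded, but this fails when $V$ is unbounded below: in total degree $k$ every $V^{k-j}\otimes A^j$, $j\ge 0$, contributes, and the filtration never stabilizes. Since the definition of a (minimal) twisted module puts no boundedness hypothesis on $V$, you cannot appeal to finiteness. What actually rescues the argument is that $V\otimes A$ is, by the paper's standing conventions, the \emph{completed} tensor product, so in each total degree it is a product $\prod_{j\ge 0}V^{k-j}\otimes A^j$; hence the $A^{\ge n}$-filtration is complete and Hausdorff, and a filtered map between complete Hausdorff filtered objects that induces an isomorphism on associated graded is itself an isomorphism. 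This completeness is precisely what makes the paper's geometric series $\sum_i(-D_A(g))^i$ converge as well. With this replacement of the final justification, your proof is correct.
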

\begin{proof}
	It suffices to show that any endomorphism of a minimal twisted module that is homotopic to the identity is invertible. Let $V\otimes A$ be such a minimal $A$-module; then its dg algebra of endomorphisms is $A\otimes\End(V)$; by minimality the differential in it has the form
	$D_A=d_A^1+d_A^2+\ldots$
	where \[d_A^n|_{V\otimes 1}: V\to V\otimes A^n.\]
	Let $f\in A\otimes\End(V)$ be a closed endomorphism homotopic to the identity; thus $f=1+D_A(g)$ for some $g\in A\otimes\End(V)$. Then $D_A(g)$ must have the form $D_A(g)=d_A^1(g)+d_A^2(g)+\ldots$ with $d_A^n(g)\in A^n\otimes\End(V)$ and therefore $f$ is invertible: $f^{-1}=1+\sum_{i=1}^{\infty}(-1)^i(\sum_{n=1}^\infty d_A^i(g))$.
\end{proof}
The following result is analogous to the well-known theorem on minimal $A_\infty$ modules \cite{Keller01}.
\begin{prop}\label{prop:minimal}
	If $\ground$ is a field then any reduced twisted $A$-module is homotopy equivalent to a minimal one.
\end{prop}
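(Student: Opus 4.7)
The plan is to produce a minimal model via the homological perturbation lemma applied to the filtration by $A$-degree. Let $M = V\otimes A$ be a reduced twisted $A$-module, so the differential has the form $D_M = \delta\otimes 1 + 1\otimes d_A + d'$, where $\delta$ is a differential on $V$ and $d' : V \to V\otimes A^{\geq 1}$ is extended $A$-linearly. I will first produce a strong deformation retract from $(V,\delta)$ onto a representative $H$ of its cohomology, tensor this up to a strong deformation retract of twisted modules with the ``unperturbed'' differential $\delta\otimes 1 + 1\otimes d_A$, and then apply homological perturbation with respect to $d'$.

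First, since $\ground$ is a field, choose a graded splitting $V = H \oplus U$ where $H$ is a set of cycle representatives for $H^*(V,\delta)$ and $(U,\delta|_U)$ is acyclic; in fact $U$ further splits as $B\oplus \delta B$ with $\delta:B \xrightarrow{\cong} \delta B$. This produces a strong deformation retract datum $(i,p,h)$ with $i:H\hookrightarrow V$, $p:V\twoheadrightarrow H$, and $h:V\to V$ of degree $-1$ satisfying $pi=\id_H$, $\id_V - ip = \delta h + h\delta$, together with the side conditions $hi=0$, $ph=0$, $h^2=0$. Tensoring with $A$ (using $A$-linear extensions, which respect the pseudo-compact topology) gives a strong deformation retract
\[
(H\otimes A,\, 1\otimes d_A) \xrightarrow{i\otimes 1} (V\otimes A,\, \delta\otimes 1 + 1\otimes d_A) \xrightarrow{p\otimes 1} (H\otimes A,\, 1\otimes d_A),
\]
with homotopy $h\otimes 1$, and all maps $A$-linear.

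Now apply the homological perturbation lemma to the perturbation $d'$. The transferred differential on $H\otimes A$ is
\[
D_H := 1\otimes d_A + (p\otimes 1)\, d'\, (1 - (h\otimes 1)d')^{-1}\, (i\otimes 1),
\]
and the updated inclusion, projection and homotopy assemble into a strong deformation retract from $(V\otimes A, D_M)$ to $(H\otimes A, D_H)$. The resulting maps are $A$-linear since $d'$, $i\otimes 1$, $p\otimes 1$ and $h\otimes 1$ all are. Because $d'$ raises $A$-degree by at least one, so does every nonzero term of the transferred perturbation, and hence $D_H$ restricted to $H\otimes 1$ has no $H\otimes A^0$ component: the module $(H\otimes A, D_H)$ is a minimal twisted $A$-module. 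The induced maps give the required homotopy equivalence.

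The main technical point to justify is convergence of the geometric series $(1 - (h\otimes 1)d')^{-1} = \sum_{n\geq 0} ((h\otimes 1)d')^n$ in the pseudo-compact setting. For this I filter $V\otimes A$ by $F^p = V\otimes A^{\geq p}$; the operator $(h\otimes 1)d'$ strictly increases this filtration (since $d'$ lands in $V\otimes A^{\geq 1}$ and $h\otimes 1$ preserves $A$-degree), and $V\otimes A$ is complete with respect to it because $A$ is pseudo-compact and non-negatively graded. Thus the series converges termwise in the pseudo-compact topology, and the HPL formulas define honest continuous $A$-linear operators. This filtration argument, together with the observation that each step of HPL preserves $A$-linearity, is what I expect to require the most care; once it is in place, the perturbed $(H\otimes A, D_H)$ is the desired minimal model and the HPL maps realise the homotopy equivalence to $(V\otimes A, D_M)$.
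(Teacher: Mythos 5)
Your proof is correct and follows essentially the same route as the paper: both use the field hypothesis to construct a Hodge-type decomposition of $(V,d^0)$ and then invoke the homological perturbation lemma (the paper cites an abstract Hodge decomposition and Corollary~3.17 of a reference by Chuang--Lazarev--Mannan, which packages exactly your SDR data plus the $(1-(h\otimes 1)d')^{-1}$ formula). Your explicit justification of convergence via the filtration by $A$-degree and pseudo-compact completeness is a helpful unpacking of a point the paper leaves implicit in its citation.
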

\begin{proof}
	Let $A\otimes V$ be a reduced twisted $A$-module; it has differential $d^0+d^\prime:=d^0+d^1+\ldots$ as described above and $d^0$ makes $V$ into a dg $\ground$-vector space. Since $\ground$ is a field, $V$ admits a decomposition $V\cong H(V)\oplus d_0(V)\oplus U$ with $d^0$ mapping $U$ isomorphically onto $V$. Denote by $t:V\to V$ the projection onto $H(V)$ and by $s:V\to V$ the operator that is inverse to $d^0$ on $d^0(V)$ (viewed as an operator $U\to d^0(V)$) and whose restriction on $H(V)$ and $U$ is zero. The pair of operators $(s,t)$ determines an abstract
	Hodge decomposition on $V$ (cf. for example \cite{Chuang19} concerning this notion) and we can apply the Perturbation Lemma as formulated in \cite[Corollary 3.17]{Chuang19}. Namely, the twisted module $A\otimes V$ is isomorphic to the direct sum of $M_1:=A\otimes(d^0(V)\oplus U)$ and $M_2:=A\otimes H(V)$ where $M_1$ is supplied with the differential $\id_A\otimes d^0$ and $M_2$ with the differential $td^\prime(1+sd^\prime)^{-1}t$. Since $M_1$ is clearly homotopy equivalent to zero and $M_2$ is minimal, the claim follows.
\end{proof}
Then we have the following result.
\begin{cor}\label{cor:minimal}
	If $A$ and $B$ are two non-negatively graded pseudo-compact dg algebras that are strongly homotopy equivalent. Then the following categories are quasi-equivalent:
	\begin{enumerate}
		\item $\Twm(A)$ and $\Twm(B)$,
		\item $\Twfgm(A)$ and $\Twfgm(B)$.
	\end{enumerate}
	Let $A$ be a connected pseudo-compact dg algebra over a field. Then the following dg categories are quasi-equivalent:
	\begin{enumerate}
		\item $\Tw(A)$ and $\Twm(A)$,
		\item $\Twfg(A)$ and $\Twfgm(A)$.
	\end{enumerate}
\end{cor}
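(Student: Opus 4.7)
The plan splits naturally into the two parts of the corollary, and in both cases the strategy is to restrict already-established quasi-equivalences to the minimal subcategories.

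For the second pair of claims (connected $A$ over a field), I would show that the full subcategory inclusions $\Twm(A)\hookrightarrow\Tw(A)$ and $\Twfgm(A)\hookrightarrow\Twfg(A)$ are already quasi-equivalences. Quasi-fully faithfulness is automatic, since these are full dg subcategories. For quasi-essential surjectivity, the connectedness hypothesis $A^0=\ground$ forces every twisted $A$-module to be reduced, and then Proposition \ref{prop:minimal} exhibits a minimal representative in each homotopy equivalence class. For the perfect version one notes in addition that being a perfect twisted module is a homotopy-invariant property (it is a retract-up-to-homotopy condition), so the minimal replacement provided by Proposition \ref{prop:minimal} of a perfect twisted module remains perfect, giving essential surjectivity on $\Twfgm(A)$.

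For the first pair of claims (strongly homotopy equivalent $A$ and $B$), the plan is to restrict the quasi-equivalences of Theorem \ref{thm:homotopyequivalence}. A dg algebra map $f:A\to B$ preserves the grading, and hence preserves the minimality condition on the MC element underlying a twisted module; thus $f$ induces a dg functor $f_*:\Twm(A)\to\Twm(B)$. Given a strong homotopy equivalence with inverse $g:B\to A$, the corresponding $g_*:\Twm(B)\to\Twm(A)$ is defined in the same way. Quasi-fully faithfulness of $f_*$ is inherited from Theorem \ref{thm:homotopyequivalence}, because $\Twm$ sits inside $\Tw$ as a full dg subcategory. For quasi-essential surjectivity, given $N\in\Twm(B)$ set $M:=g_*N\in\Twm(A)$; then $f_*M=(fg)_*N$, and Proposition \ref{prop:homotopic1} applied to the strong homotopy $fg\simeq\id_B$ gives $(fg)_*N\simeq N$ in $\Hot\Tw(B)$. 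Since both $(fg)_*N$ and $N$ lie in $\Twm(B)$ and the inclusion is full, this isomorphism already holds in $\Hot\Twm(B)$. The perfect version $\Twfgm(A)\simeq\Twfgm(B)$ is obtained by the same argument once one observes that the elementary functor $V\otimes A\mapsto V\otimes B$ underlying $f_*$ preserves finite rank of $V$, and that being a perfect twisted module (a homotopy-retract property) is preserved by dg functors.

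The main subtlety—rather than a serious obstacle—is verifying that all relevant constructions restrict cleanly from $\Tw$ to $\Twm$. This amounts to two elementary points: that dg algebra maps preserve minimality (immediate from preservation of grading, since minimality asks that the $\End(V)^1\otimes A^0$ component of the MC element vanishes), and that homotopy equivalences in $\Tw$ between objects of $\Twm$ are automatically homotopy equivalences in $\Twm$ (immediate from fullness). Together these let the minimal case piggyback on the $\Tw$ statement, so no independent minimal analogue of Lemma \ref{lem:homotopic} or Proposition \ref{prop:homotopic1} is needed.
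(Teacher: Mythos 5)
Your proposal is correct and follows the same overall strategy as the paper: restrict the quasi-equivalences furnished by Theorem~\ref{thm:homotopyequivalence} (resp.\ the inclusion $\Twm(A)\hookrightarrow\Tw(A)$) to the minimal subcategories, using (i) that dg algebra maps preserve the vanishing of the degree-zero component of the twisting MC element, and (ii) that $\Twm$ is a full dg subcategory of $\Tw$. You argue quasi-essential surjectivity in the first pair via Proposition~\ref{prop:homotopic1} together with fullness (if $(fg)_*N \cong N$ in $\Hot\Tw(B)$ with both objects minimal, then the isomorphism already lives in $\Hot\Twm(B)$), which is a cleaner execution: the paper at that point cites Proposition~\ref{prop:minimal}, whose field hypothesis is not available in the first pair of claims, so your route is actually the tidier justification. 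The second pair (connectedness forces reducedness, then Proposition~\ref{prop:minimal} supplies a minimal representative, and homotopy-retract-of-finitely-generated is a homotopy-invariant property preserved under minimal replacement) matches the paper exactly.
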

\begin{proof}
	Let $f:A\to B$ and $g:B\to A$ be two maps that are inverse up to $K_2$-homotopy. These maps induce dg functors $\Tw(f):\Tw(A)\to\Tw(B)$ and $\Tw(g):\Tw(B)\to \Tw(A)$ inducing quasi-equivalence of the corresponding dg categories by (the pseudo-compact analogue of) Theorem \ref{thm:homotopyequivalence}. These functors restrict to the categories of minimal twisted modules and, using Proposition \ref{prop:minimal} we see, that these restrictions give mutually inverse quasi-equivalences.  The statement about perfect minimal twisted modules is proved similarly.
	
	Finally, if $A$ is connected, any twisted $A$-module is automatically reduced, and the proof is finished by appealing to Proposition \ref{prop:minimal}. 	
\end{proof}
\begin{thm}\label{thm:connected}
	Let $X$ be a connected Kan simplicial set. Then the pseudo-compact dg algebra $C^*(X)$ is strongly homotopy equivalent to a connected one.
\end{thm}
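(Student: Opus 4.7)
The plan is to produce a reduced Kan simplicial set weakly equivalent to $X$, pass this through Proposition \ref{prop:strongsimplicial} to obtain a strong homotopy equivalence of pseudo-compact dg algebras, and note that the normalized cochain algebra of a reduced simplicial set is automatically connected (since a single $0$-simplex forces $C^{0} = \ground$). The natural candidate is the Eilenberg subcomplex $Y := E(X,x_0)$ at a chosen basepoint $x_0 \in X_0$, consisting of all simplices of $X$ whose vertices all equal $x_0$; this is tautologically reduced.

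The first step is to verify that $Y$ is itself a Kan complex. Given a horn $\Lambda^{n}_{k} \to Y$, Kan-ness of $X$ furnishes an extension $\sigma \colon \Delta^{n} \to X$. For $n \geq 2$ every vertex of $\Delta^{n}$ already lies in $\Lambda^{n}_{k}$, hence maps to $x_0$, so $\sigma$ factors through $Y$; for $n=1$ the missing vertex can be filled by the degenerate $1$-simplex $s_0(x_0)$, so $Y$ is also Kan in this range. The second step is to check that the inclusion $\iota \colon Y \hookrightarrow X$ is a weak equivalence: connectivity of $X$ trivializes $\pi_0$, while for $n \geq 1$ the homotopy group $\pi_n(X,x_0)$ is computed by $n$-simplices with boundary constant at $x_0$ modulo homotopies with the same constancy, all of which lie in $Y$ by definition, so $\pi_n(\iota)$ is an isomorphism.

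With $Y$ identified as a reduced Kan simplicial set weakly equivalent to the Kan complex $X$, the argument given in the proof of Corollary \ref{cor:weaklyKan} shows that $X$ and $Y$ are strongly homotopy equivalent as simplicial sets (weakly equivalent fibrant-cofibrant objects are homotopy equivalent through any very good cylinder). Proposition \ref{prop:strongsimplicial} then promotes this to a strong homotopy equivalence of pseudo-compact dg algebras $C^{*}(X) \simeq C^{*}(Y)$, and $C^{*}(Y)$ is connected because $Y_0 = \{x_0\}$ implies $C^{0}(Y) = \ground$.

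The main obstacle is the first step, proving that the Eilenberg subcomplex is Kan; this is classical but has to be checked explicitly because Corollary \ref{cor:weaklyKan} requires Kan-ness on both sides of the weak equivalence. An alternative route, which avoids any explicit verification, is to use Kan's loop group: form $G X$ and set $Y := \bar{W}(GX)$, which is reduced by construction of $\bar W$ and Kan because $\bar W$ of a simplicial group is always a Kan complex; the rest of the argument is unchanged.
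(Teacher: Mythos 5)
Your proposal constructs the very same simplicial set the paper uses: the Eilenberg subcomplex $E(X,x_0)$ is exactly the pullback $X' = P\times_{\operatorname{cosk_0}(X)} X$ appearing in the paper's proof, since $\operatorname{cosk_0}(X)_n = X_0^{n+1}$ and the pullback along the basepoint $P\to\operatorname{cosk_0}(X)$ picks out precisely those $n$-simplices of $X$ all of whose vertices equal $x_0$. So the overall strategy agrees. The difference lies in how you establish the hypotheses needed before invoking Proposition~\ref{prop:strongsimplicial}. For Kan-ness, the paper cites \cite{May67} for the fact that $\operatorname{cosk_0}(X)$ is a weakly contractible Kan complex (and implicitly that $X\to\operatorname{cosk_0}(X)$ is a fibration, so the fibre is Kan); you instead verify horn-filling directly, correctly noting that for $n\geq 2$ every vertex of $\Delta^n$ already lies in $\Lambda^n_k$, so any filler in $X$ automatically has all vertices equal to $x_0$ and hence lies in $Y$, while for $n=1$ the degenerate $1$-simplex $s_0(x_0)$ furnishes the filler. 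For the strong homotopy equivalence, the paper simply observes that $X'$ is a deformation retract of $X$, whereas you check that the inclusion induces isomorphisms on $\pi_*$ and then invoke the argument from the proof of Corollary~\ref{cor:weaklyKan}; both are valid. Your version is more self-contained at the cost of a few lines of explicit simplicial combinatorics. The alternative via Kan's loop group and $\bar W(GX)$ that you sketch is a genuinely different construction (yielding a different, though weakly equivalent, reduced Kan model of $X$) and would also work, though it trades explicit horn-filling for general facts about $\bar W$ of simplicial groups.
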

\begin{proof}
	Choosing a vertex of $X$ amounts to constructing a map $P\to X$ from the one-point simplicial set $P$ to $X$.	Let $X^\prime$ be the simplicial set defined by the pullback diagram
	\[
	\xymatrix{
		X^\prime\ar^f[r]\ar[d]&X\ar[d]\\
		P\ar[r]&\operatorname{cosk_0}(X).	
	}
	\]
	Here $\operatorname{cosk_0}(X')$ is the 0-coskeleton (the zeroth stage of the Moore-Postnikov tower of $X$). The simplicial set $X^\prime$ has a single vertex corresponding to the map $P\to X$ and so $C^*(X^\prime)$ is a connected pseudo-compact dg algebra. It is well-known (e.g. \cite[Proposition 8.2, Theorem 8.4]{May67}) that $\operatorname{cosk_0}(X)$ is a weakly contractible Kan simplicial set, and it follows that $X^\prime$ is likewise Kan.   Then $f:X^\prime\to X$ is a strong homotopy equivalence with a strong homotopy inverse $g:X\to X^\prime$ (in fact it is clear that $X^\prime$ is a deformation retract of $X$ so that $g\circ f=\id_{X^\prime}$). By Proposition \ref{prop:strongsimplicial} $C^*(X)$ and $C^*(X^\prime)$ are strongly homotopy equivalent.
\end{proof}
Combining Corollary \ref{cor:minimal} and Theorem \ref{thm:connected} we obtain the following result.
\begin{cor}\label{cor-twistedreduced} \
	\begin{enumerate}\item
		Let $X$ be a Kan simplicial set. Then there is a quasi-equivalence between the following dg categories:
		\begin{enumerate}
			\item $\Tw(X)$ and $\Twred(X)$,
			\item $\Twfg(X)$ and $\Twfgred(X)$.
		\end{enumerate}
		\item If $\ground$ is a field then, additionally, the following dg categories are quasi-equivalent:
		\begin{enumerate}
			\item $\Tw(X)$ and $\Twm(X)$,
			\item $\Twfg(X)$ and $\Twfgm(X)$.
		\end{enumerate}
		\item If $X,X^\prime$ are weakly equivalent Kan simplicial sets, then the following dg categories are quasi-equivalent:
		\begin{enumerate}
			\item $\Twm(X)$ and $\Twm(X^\prime)$,
			\item $\Twfg(X)$ and $\Twfgm(X^\prime)$.	\qed
		\end{enumerate}
	\end{enumerate}
\end{cor}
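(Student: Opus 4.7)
The plan is to combine Corollary \ref{cor:minimal} with Theorem \ref{thm:connected}. Assume first that $X$ is connected (for general Kan $X$ one would work component by component, using that $C^*(X)$ factors as a product over $\pi_0(X)$, or else produce an extended version of Theorem \ref{thm:connected} with a disjoint union of basepoints). By Theorem \ref{thm:connected} we obtain a connected pseudo-compact dg algebra $A$ together with a strong homotopy equivalence $A \simeq C^*(X)$.

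For part (1), I would exploit the remark after the definition of reduced modules: since $A$ is connected, $A^0=\ground$, so every twisted $A$-module is automatically reduced, i.e.\ $\Tw(A)=\Twred(A)$ and $\Twfg(A)=\Twfgred(A)$. The pseudo-compact analogue of Theorem \ref{thm:homotopyequivalence} provides a quasi-equivalence $\Tw(g):\Tw(A)\to\Tw(C^*(X))$ induced by a dg algebra map $g:A\to C^*(X)$ coming from the strong homotopy equivalence. The key observation is that $\Tw(g)$ preserves reducedness: if $V\otimes A$ is a reduced twisted module with MC element $x=x^0+x^1+\ldots$ and $x^0\in\End(V)\otimes 1_A$, then the image $(\id_{\End(V)}\otimes g)(x^0)$ lies in $\End(V)\otimes 1_{C^*(X)}$ by unitality of $g$. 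Hence every object in $\Tw(C^*(X))$ is homotopy equivalent to one in the image of $\Twred(C^*(X))$, giving quasi-essential surjectivity of the inclusion $\Twred(C^*(X))\hookrightarrow\Tw(C^*(X))$. This inclusion is automatically quasi-fully faithful (it is a full dg subcategory), yielding the desired quasi-equivalence. The argument for the perfect variant is identical.

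For part (2), working over a field, the second half of Corollary \ref{cor:minimal} applied to the connected algebra $A$ yields $\Tw(A)\simeq\Twm(A)$ and $\Twfg(A)\simeq\Twfgm(A)$. The first half of Corollary \ref{cor:minimal}, applied to the strong homotopy equivalence $A\simeq C^*(X)$, gives $\Twm(A)\simeq\Twm(C^*(X))$ and $\Twfgm(A)\simeq\Twfgm(C^*(X))$, while the pseudo-compact version of Theorem \ref{thm:homotopyequivalence} gives $\Tw(A)\simeq\Tw(C^*(X))$ and $\Twfg(A)\simeq\Twfg(C^*(X))$. Chaining these quasi-equivalences together produces $\Tw(X)\simeq\Twm(X)$ and $\Twfg(X)\simeq\Twfgm(X)$.

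For part (3), two weakly equivalent Kan simplicial sets are strongly homotopy equivalent (as noted in the proof of Corollary \ref{cor:weaklyKan}), so by Proposition \ref{prop:strongsimplicial} the pseudo-compact dg algebras $C^*(X)$ and $C^*(X^\prime)$ are strongly homotopy equivalent. Corollary \ref{cor:minimal} (first half) immediately yields the quasi-equivalences $\Twm(X)\simeq\Twm(X^\prime)$ and $\Twfgm(X)\simeq\Twfgm(X^\prime)$. The main technical obstacle in this whole package lies in part (1): one must verify that the quasi-equivalence supplied by the strong homotopy equivalence genuinely restricts to the full subcategories of reduced modules, which is what the unitality observation above accomplishes.
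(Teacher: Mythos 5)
Your proposal is correct and follows exactly the route the paper intends — combining Corollary \ref{cor:minimal} with Theorem \ref{thm:connected} — with the additional (and genuinely needed) observation in part (1) that a unital map out of a connected algebra sends reduced twisted modules to reduced twisted modules, which the paper leaves implicit. The paper's proof is a one-line citation of these two results with no further elaboration, so you have simply filled in the details it omits.
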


\begin{rk}\label{rem:cobar}
	If $X$ is a reduced simplicial set and $\ground$ is a field then $C^*(X)$ is a local pseudo-compact dg algebra (which is the dual to a conilpotent dg coalgebra $C_*(X)$). The category of local pseudo-compact dg algebras admits the structure of a closed model category, see \cite{Positselski11} where this result is formulated in the dual language of coalgebras. It makes sense to ask whether weakly (or strongly) homotopy equivalent reduced simplicial sets give rise to weakly equivalent (in the sense of the aforementioned closed model category) local pseudo-compact dg algebras. A partial answer to this question could be extracted from the recent paper \cite{Zeina18} where it is proved that if $X$ is a singular simplicial set of a topological space (or, more generally, a Kan simplicial set) then the cobar-construction of $C_*(X)$ is quasi-isomorphic to the dg algebra of chains on the loop space of $X$. Note that this generalizes the classical result of Adams on the cobar-construction \cite{Adams56} in that simple connectivity of $X$ is not assumed. This result implies that for two weakly equivalent \emph{Kan} simplicial sets $X$ and $X^\prime$ the pseudo-compact local dg algebras $C^*(X)$ and $C^*(X^\prime)$ are indeed weakly equivalent. The Kan condition is essential; e.g. taking for $X$ a simplicial circle with one non-degenerate simplex in degree zero and another in degree one (which is not a Kan simplicial set), a straightforward inspection shows that the cobar-construction of $C_*(X)$ is isomorphic to $\ground[x]$ with $|x|=0$ whereas the singular chain algebra on $\Omega(S^1)=\mathbb Z$ is $\ground[x,x^{-1}]\neq \ground[x]$.
\end{rk}

\section{Twisted modules and sheaves}\label{sheaves}

\subsection{Generalities on dg sheaves}
In this section we will present the local to global arguments needed to apply Schlessinger-Stasheff type results to infinity local systems. The results obtained here are directly used in Section \ref{sect-derham} and Section \ref{sect-dolbeault} and the methods of proof are used in Section \ref{sect-singular}.

Let $X$ be a topological space, always assumed paracompact and Hausdorff in this section.
Let $\cat R$ be a sheaf of $k$-algebras on $X$ and let $\A = (\A^{\bullet}, d)$ be a sheaf of non-negatively graded dg $\cat R$-algebras. Write $\A^{\#}$ for $(\A^{\bullet}, 0)$.
We will consider the dg algebra $A \coloneqq \A(X)$.

Write $A\mods$ for the dg category of dg $A$-modules and $\A\mods$ for the dg category of sheaves of dg modules over $\A$. Write  $\cat R\mods$ for the dg category of sheaves of dg $\cat R$-modules.

There is an adjunction $p^{*}\dashv p_{*}$ between dg modules over $\A$ and dg modules over $A$ which is induced by the map $p: (X,\A) \to (*, A)$ of dg ringed spaces.

For free modules we recall the following straightforward result:
\begin{lemma}\label{lemma-protoswan}
	The adjunction $p^{*} \dashv p_{*}$ induces a dg equivalence between dg $A$-modules and dg $\A$-modules whose underlying $A^{\#}$-modules, respectively $\cat A^{\#}$-modules, are finitely generated and free. 
\end{lemma}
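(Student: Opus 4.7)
The plan is to verify directly that the adjunction $p^{*}\dashv p_{*}$ restricts to mutually quasi-inverse dg functors on the two stated subcategories. Since the claim asserts a dg equivalence (not merely a quasi-equivalence), the unit and counit should be honest isomorphisms, so no derived manipulations are needed.

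First I would describe the functors on objects. Given a dg $A$-module $M=(V\otimes A,D_M)$ with $M^{\#}=V\otimes A^{\#}$ free, set $p^{*}M$ to be the presheaf $U\mapsto V\otimes\A(U)$, which inherits a canonical dg $\A$-module structure; its underlying graded $\cat A^{\#}$-module is the free module $V\otimes\A^{\#}$. Conversely, for a dg $\A$-module $\cat M$ with $\cat M^{\#}=V\otimes\A^{\#}$ free, one has $p_{*}\cat M=\Gamma(X,\cat M)=V\otimes\A(X)=V\otimes A$, free over $A^{\#}$. Thus both functors land in the appropriate subcategories.

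Next I would check the unit and counit are isomorphisms on these subcategories. For free modules the presheaf $U\mapsto V\otimes\A(U)$ is already a sheaf (it is defined so as to be), hence $p_{*}p^{*}M=V\otimes\A(X)=V\otimes A=M$ canonically and compatibly with differentials; the other direction $p^{*}p_{*}\cat M\cong\cat M$ is symmetric, reconstructing the free $\A$-module from the $k$-module $V$ and the sheaf $\A$.

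For the dg-enrichment, I would invoke the free--forgetful adjunctions over $A$ and over $\A$ to identify
\[
\underline{\Hom}_{\A}(V\otimes\A,W\otimes\A)\cong\underline{\Hom}_{k}(V,W\otimes\A(X))=\underline{\Hom}_{k}(V,W\otimes A)\cong\underline{\Hom}_{A}(V\otimes A,W\otimes A),
\]
and check that the differentials induced from $D_{\cat M}$ and $D_{M}$ coincide under this identification, which is automatic since a differential on a free module is determined by its restriction to generators.

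The main potential obstacle, and the reason the lemma is restricted to free (not merely locally free or projective) modules, is the interaction of global sections with the tensor product $V\otimes(-)$ when $V$ is infinitely generated: sheafification of an infinite direct sum of copies of $\A$ does not in general commute with $\Gamma(X,-)$. By interpreting $V\otimes\A$ as the specific presheaf above, which is already a sheaf, this issue disappears and the adjunction collapses to an honest equivalence.
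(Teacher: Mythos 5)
Your proof follows the same broad strategy as the paper's: identify the functors explicitly on free modules, check that the unit and counit of $p^{*}\dashv p_{*}$ are isomorphisms there, and then upgrade to the dg setting via the free--forgetful adjunctions. The gap is in your final paragraph. You correctly point out that the potential obstacle lies with infinitely generated $V$, but the proposed resolution -- that $U\mapsto V\otimes\A(U)$ ``is already a sheaf'' -- is false in general. Take $\A=C^\infty$ on $X=\mathbb{R}$ and $V$ with countable basis $\{v_n\}_{n\in\mathbb{Z}}$. Choose bump functions $\phi_n$ supported in $(n-\tfrac13,n+\tfrac13)$ and the cover $U_n=(n-\tfrac35,n+\tfrac35)$. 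The sections $v_n\otimes\phi_n$ over $U_n$ agree (being zero) on all overlaps and so form a compatible family, yet no element of $V\otimes C^\infty(\mathbb{R})=\bigoplus_n C^\infty(\mathbb{R})$ restricts to them, since the glued section would have to be the infinite sum $\sum_n v_n\otimes\phi_n$. Hence the presheaf fails the gluing axiom, $p^{*}M$ genuinely is a sheafification, and $\Gamma(X,p^{*}M)$ can be strictly larger than $M$. So the unit of the adjunction need not be an isomorphism in the infinite-rank case; asserting that the presheaf is a sheaf does not make the issue disappear, it \emph{is} the issue.

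To be fair, the paper's own one-line justification asserts $(q^{*}W)(X)=W$ without comment and thus passes over the same point. But you have turned the tacit assumption into an explicit claim which, as stated, is wrong. Your write-up would need either to restrict to the finitely generated case (which covers the application to $\Twfgfree(A)$ in Proposition~\ref{prop-cohesivesheaves}), or to supply a separate argument for the identity $\Gamma(X,V\otimes\A)\cong V\otimes A$ in each context where it is invoked for infinite rank (e.g.\ quasi-compactness of $X$, softness of $\A$, or the pseudo-compact setting where the relevant tensor product is an inverse limit, which commutes with global sections, rather than a direct sum, which does not).
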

\begin{proof}
	We first forget the differential and the grading and consider a ringed space $(X, \cat A)$ and let $A = \cat A(X)$. Let $q$ be the map $(X, \cat A)\to (*, A)$ Then $q^{*}$ induces an equivalence between finitely generated free $A$-modules and finitely generated free $\cat A$-modules:
	Any free $\cat A$-module is of the form $q^{*}V$ 
	for a free $A$-module $V$ and $q_{*}q^{*}(W) = (q^{*}W)(X) = W$ as $q_*$ commutes with finite direct sums,
	 which gives the isomorphism $\Hom_{\cat A}(q^{*}V, q^{*}W) \cong \Hom_{A}(V, W)$.
		
		Thus the unit and counit of the adjunction are isomorphisms. They are also compatible with the grading and the differential, hence they are isomorphisms of dg modules, resp.\ dg sheaves, proving the lemma.
	\end{proof}

For a fine sheaf $\cat R$ we can compare categories of locally free sheaves and projective modules over the ring of global sections.

Recall that a sheaf $\cat F$ is \emph{fine} if for any locally finite open cover $\{U_{i}\}$ of $X$ there is family of morphisms $\phi_{i}: \cat F \to \cat F$ such that $\sum \phi_{i} = \id_{\cat F}$ and such that $\phi_{i}$ has support contained in $U_{i}$.
On a paracompact Hausdorff space fine sheaves are always soft and thus $\Ga$-acyclic. A module over a fine sheaf of rings is automatically fine. For more details see e.g. \cite[Section II.3]{Wells07}.

The following is Swan's theorem as stated in \cite[Corollary 3.2]{Morye09}.

\begin{thm}\label{thm-swan}
Assume $(X, \cat R)$ is a locally ringed space of finite covering dimension with $\cat R$ a fine sheaf of commutative algebras. Then the category of finitely generated projective $\cat R(X)$-modules is equivalent to the category of locally free $\cat R$-modules of bounded rank.
\end{thm}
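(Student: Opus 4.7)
The plan is to exhibit the equivalence via the adjunction $p^{*} \dashv p_{*}$ from Lemma \ref{lemma-protoswan} (in its non-dg, ungraded incarnation), which already yields an equivalence between free $\cat R(X)$-modules of finite rank and free $\cat R$-modules of finite rank. Writing $R \coloneqq \cat R(X)$, the strategy is to upgrade this equivalence to one between the Karoubi completions of these two categories, which on the module side gives finitely generated projective $R$-modules, and on the sheaf side, I claim, gives locally free $\cat R$-modules of bounded rank.

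The one nontrivial direction is to show that every locally free $\cat R$-module $\cat F$ of bounded rank is a direct summand of some $\cat R^{N}$ with $N$ finite. First I would choose a cover $\{U_{i}\}$ on which $\cat F|_{U_{i}} \cong \cat R|_{U_{i}}^{n_{i}}$ with $n_{i}$ uniformly bounded by some $r$. Using that $X$ has covering dimension $d < \infty$, refine this to an open cover that decomposes into $d+1$ families of pairwise disjoint open sets $\{V_{j}^{(\alpha)}\}_{j}$ for $\alpha = 0, \ldots, d$. On each $V_{j}^{(\alpha)}$ the sheaf $\cat F$ is still free of rank at most $r$, so after replacing $V_{j}^{(\alpha)}$ by $V^{(\alpha)} \coloneqq \bigsqcup_{j} V_{j}^{(\alpha)}$ and taking a partition of unity subordinate to $\{V^{(\alpha)}\}$ (available by fineness of $\cat R$), I would assemble a global surjection $\cat R^{r(d+1)} \twoheadrightarrow \cat F$. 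Since $\cat F$ is locally free, this surjection splits locally; and because the obstruction to a global splitting lies in $\mathrm{Ext}^{1}_{\cat R}(\cat F, \cat K)$ for $\cat K$ the kernel, which vanishes as $\cat K$ is again a module over the fine (hence soft) sheaf $\cat R$, the sequence splits globally, exhibiting $\cat F$ as a direct summand of $\cat R^{N}$.

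Next I would check that the two functors $\cat F \mapsto \Gamma(X, \cat F)$ and $M \mapsto \cat R \otimes_{R} M$ restrict to well-defined functors between the two target categories. In the first direction, the summand argument above combined with the exactness of $\Gamma(X,-)$ on $\cat R$-modules (using that $\cat R$ is soft on a paracompact Hausdorff space) shows $\Gamma(\cat F)$ is a summand of $R^{N}$, hence finitely generated projective. In the second direction, writing $P$ as $R^{N} e$ for an idempotent $e$ gives $\cat R \otimes_{R} P \cong \cat R^{N} e$ as a summand of a free sheaf, which is locally free of bounded rank.

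Finally, I would verify that the unit $M \to \Gamma(X, \cat R \otimes_{R} M)$ and counit $\cat R \otimes_{R} \Gamma(X, \cat F) \to \cat F$ of $p^{*} \dashv p_{*}$ are isomorphisms on these subcategories. Both are natural transformations of additive functors, so it suffices to check them on free modules and on $\cat R^{N}$, where the statement is exactly Lemma \ref{lemma-protoswan}; splitting off idempotents then propagates the isomorphisms to all projective $R$-modules and all locally free bounded-rank $\cat R$-modules. The main obstacle is the first step: producing the global surjection $\cat R^{N} \twoheadrightarrow \cat F$ with $N$ finite, since this is the place where finite covering dimension and fineness must be used in concert; without the dimension bound one would in general only obtain surjections from free sheaves of countable, not finite, rank.
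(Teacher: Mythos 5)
The paper does not supply a proof of this statement: it is quoted as Swan's theorem in the form given by Morye, with a pointer to \cite[Corollary 3.2]{Morye09}, so there is no in-text argument to compare against. Your proposal is an independent, and essentially correct, reconstruction of the standard Swan--Morye argument.

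The skeleton is right: the adjunction $p^{*}\dashv p_{*}$ already gives a strict equivalence between free $\cat R(X)$-modules of finite rank and free $\cat R$-modules of finite rank, so the content of the theorem reduces to showing that passing to retracts on the two sides produces exactly the stated categories --- equivalently, that every locally free $\cat R$-module of bounded rank is a direct summand of some $\cat R^{N}$ with $N$ finite. Your route through Ostrand's colouring theorem is indeed where the finite-covering-dimension hypothesis enters, the partition of unity is where fineness enters, and the hypothesis that $(X,\cat R)$ is \emph{locally} ringed is (implicitly, but correctly) what guarantees that in each stalk at least one $\rho_{\alpha}$ is a unit, so that the assembled map $\psi\colon\cat R^{r(d+1)}\to\cat F$ is surjective on stalks. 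All three hypotheses of the theorem are thus used, each in the right place.

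Two small points deserve tightening, neither a genuine gap. First, the splitting obstruction should be routed through $\sHom_{\cat R}(\cat F,\cat K)$ rather than through $\cat K$ itself: because $\cat F$ is locally free of finite rank, the local Ext sheaves $\mathcal{E}\!\mathit{xt}^{q}_{\cat R}(\cat F,\cat K)$ vanish for $q>0$, so the local-to-global spectral sequence gives $\Ext^{1}_{\cat R}(\cat F,\cat K)\cong H^{1}(X,\sHom_{\cat R}(\cat F,\cat K))$; it is this sheaf --- again an $\cat R$-module, hence fine, hence soft and $\Gamma$-acyclic --- whose first cohomology vanishes, not $\cat K$ directly. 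Second, the observation that $\Gamma(\cat F)$ is a summand of $R^{N}$ requires no appeal to exactness of $\Gamma$ on $\cat R$-modules: once the surjection $\cat R^{N}\to\cat F$ has been shown to split, one has $\cat R^{N}\cong\cat F\oplus\cat K$ as sheaves, and $\Gamma$ preserves finite direct sums.
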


We note that Lemma \ref{lemma-protoswan} gives an equivalence of dg categories, but the two sides have a priori very different homotopy theories: For dg sheaves the natural class of weak equivalences is given by local quasi-isomorphisms, i.e.\ morphisms which restrict to quasi-isomorphisms on all stalks. 

To make this more precise we recall that the categories $\cat R\mods^{psh}$ and $\cat A\mods^{psh}$ of presheaves of dg $\cat R$-modules, respectively presheaves of dg $\A$-modules, have model structures.

To define this, first recall the definition of a \emph{hypercover}. A hypercover of a presheaf $P$ on a topological space $X$ is an augmented simplicial presheaf $C_{\bullet} \to P$ such that all $C_{n}$ are coproducts of representable presheaves and for all $n$ the map $C_{n} \to (\operatorname{cosk}_{n-1}C)_n$ is a cover, where we take the coskeleton in the augmented sense.
In particular a hypercover of $X$ is defined to be a hypercover of the presheaf $h_{X}$ that $X$ represents, and it may be represented by a cover $\mathfrak U_{0} \to X$ together with covers $\mathfrak U_{n} \to \lim_{k < n} \mathfrak U_{k}$. The fundamental example of a hypercover is the nerve of a \v Cech cover. In this case all the covers (except for $\mathfrak U_{0} \to X$) are isomorphisms.

\begin{defn} The \emph{projective model structure} on presheaves of dg $\cat R$-modules has fibrations and weak equivalences defined object-wise. The \emph{local model structure} on presheaves of dg $\cat R$-modules is the localization of the projective model structure at all hypercovers.
\end{defn}

 Then weak equivalences are given by maps inducing weak equivalences on stalks. We will abuse notations and refer to them as quasi-isomorphisms. An object $P$ is fibrant if it is a \emph{hypersheaf}, i.e.\ for any open subset $U \subset X$ and hypercover $\mathfrak U_{\bullet} \to U$ there is a quasi-isomorphism $P(U) \simeq \check C(\mathfrak U_{\bullet}, P)$.
Here the right hand side is the \v Cech complex of a hypercover, defined exactly like the \v Cech complex for a cover. We say a hypercover is \emph{contractible} if every $\mathfrak U_{n}$ is a coproduct of contractible open sets. Any locally contractible topological space admits a contractible hypercover.

\begin{rk}
We will in the following sometimes compute \v Cech cohomology with respect to a hypercover, but not much generality is lost if the reader wants to mentally replace them by \v Cech covers.
\end{rk}

The local model structure on presheaves of dg $\cat A$-modules is defined in the same way (or it can be transferred via the forgetful functor, see e.g.\ \cite{Goerss06}).

The homotopy category of this model category is the usual derived category of $\cat R\mods$, respectively $\A\mods$. The adjunction $p_{*} \dashv p^{*}$ is Quillen.

For more details on the local model structure see \cite{Dugger04a} (in the case of simplicial presheaves) and \cite{Choudhury19} (for chain complexes).

We now assume $\cat R \to \cat A$ is an object-wise quasi-isomorphism, i.e.\ in particular a local weak equivalence.
Writing $J$ for the forgetful functor we have a dg adjunction $-\otimes_{\cat R}\A \dashv J: \cat R\mods \rightleftarrows \A\mods$. We consider the derived categories of $\cat R\mods$ and $\cat A\mods$ and write $RJ$ for the total derived functor of $J$, i.e.\ the lift of $J$ to the derived categories. As $\A$ and $\cat R$ are quasi-isomorphic $RJ$ is an equivalence.

\begin{defn}\label{defn-F}
Let $F = J \circ p^{*}: A\mods \to \cat R\mods$ be the dg functor given by composition of the two functors defined above. 
\end{defn}
We will use $F$ to map different categories of twisted modules to dg sheaves.
In the remainder of this paper we will abuse notation and write $F$ for different choices of $A$ as well as for the restriction of $F$ to $\Tw(A)$, $\Twfg(A)$ and $\Twfgfree(A)$. 

\subsection{Twisted modules and perfect complexes}\label{sect-cohesivesheaves}

We now consider the functor $F: A\mods \to \cat R\mods$ in more detail. We let $D(X, \cat R)$ or simply $D(\cat R)$ be the derived category of $\cat R\mods$.

In this section we will assume $X$ is \emph{locally good}, which is defined as follows. 
We say a ringed space $(U, \cat R)$ is \emph{good} if the natural map $\mathcal{R}(U)\to Ru_{*}\cat R|_{U}$ is a quasi-isomorphism (here $u:U\to *$ is the map to the one-point space). Then $X$ is locally good if its topology has a basis of good open sets.

Most spaces of interest are locally good, for example algebraic schemes, analytic spaces and locally contractible topological spaces with the constant structure sheaf. Good neighbourhoods are given by affine subvarieties, Stein subspaces and contractible subsets respectively.

\begin{defn}
A dg $\cat R$-module on $X$ is \emph{strictly perfect} if it is bounded and a direct summand of a free sheaf of finite rank in each degree.
A dg $\cat R$-module $G$ is \emph{perfect} if for every $x \in X$ there is a neighbourhood $U$ such that $G|_{U}$ is quasi-isomorphic to a strictly perfect dg sheaf.
\end{defn}

\begin{rk}\label{rk-locallyfree}
	If $(X, \cat R)$ is locally ringed than a dg $\cat R$-module is
	perfect if it is locally quasi-isomorphic to a bounded complex of free sheaves of finite rank in each degree.
	This follows as any direct summand of a free $\cat R$-module is locally free, see \cite[Tag 01C5]{Stacks13}.	
\end{rk}

We denote  by $\Dperf(X, \cat R)$ or $\Dperf(\cat R)$ the subcategory of $D(X, \cat R)$ consisting of perfect dg sheaves of $\cat R$-modules.
We will say a perfect dg sheaf of $\cat R$-modules is \emph{globally bounded} if there are integers $a, b$ and $N$ such that there is a cover $\{U_{i}\}$ such that each $G|_{U_{i}}$ is quasi-isomorphic to a strictly perfect dg sheaf $G^{U}$ which is concentrated in degrees $[a, b]$ and has at most $N$ generators. We let $\Dperf^{B}(\cat R)$ denote the subcategory of globally bounded perfect dg sheaves.

\begin{rk}
In many cases of interest all perfect dg sheaves are globally bounded. An example of a non-globally bounded one is given by the following construction. Consider $\C$ equipped with the holomorphic (or smooth) structure sheaf. Then the skyscraper sheaf $\C_{n}$ at $n \in \C$ is perfect and so is $\oplus_{n \in \set N} \C_{n}^{\oplus n}$. But this sum is clearly not globally bounded.
\end{rk}

For later use we also define $\Dlf(X, \cat R)$ or $\Dlf(\cat R)$ to be the subcategory of $D(\cat R)$ consisting of locally free dg sheaves of $\cat R$-modules, i.e.\ those which are locally quasi-isomorphic to free $\cat R$-modules without any finiteness assumptions. In the case $\cat R=\underline{\ground}$, the locally free dg sheaves of $\cat R$-modules will be referred to as cohomologically locally constant (clc) sheaves.
We will need the following:
\begin{lemma}\label{lemma-perfectkaroubi}
$\Dlf(\cat R)$, $\Dperf(\cat R)$ and $\Dperf^{B}(\cat R)$ are idempotent complete.
\end{lemma}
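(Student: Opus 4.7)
The plan is to first establish that $D(X, \cat R)$ itself is idempotent complete, and then verify that each of the three subcategories is stable under taking direct summands. For the first step, the local model category of presheaves of dg $\cat R$-modules is cocomplete, so its homotopy category $D(X, \cat R)$ admits countable direct sums. By the B\"okstedt--Neeman splitting theorem invoked in Proposition \ref{prop:idempotent}, any idempotent in a triangulated category with countable direct sums splits. Hence any idempotent $e : F \to F$ with $F$ in one of $\Dlf(\cat R)$, $\Dperf(\cat R)$, or $\Dperf^{B}(\cat R)$ yields a splitting $F \simeq G \oplus H$ in $D(X, \cat R)$, where $G$ and $H$ are concretely realized as mapping telescopes of $e$ and $1-e$, in the spirit of the proof of Lemma \ref{lemma-cohesiveretracts}.

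It remains to show that each subcategory is closed under direct summands in $D(X,\cat R)$. Since all three conditions are local, it suffices to check that if $F|_U$ is quasi-isomorphic over a good open $U$ to a dg sheaf of the specified type (free, strictly perfect, or uniformly strictly perfect), then so is the restriction of a direct summand. For $\Dlf(\cat R)$, the mapping telescope of a homotopy idempotent on a free dg sheaf $L$ has, exactly as in the proof of Lemma \ref{lemma-cohesiveretracts}, underlying graded sheaf a countable direct sum of shifted copies of $L^{\#}$, which remains free as a graded $\cat R^{\#}$-module; hence $G|_U$ is quasi-isomorphic to a free dg sheaf. For $\Dperf(\cat R)$, one invokes the classical fact that strictly perfect complexes form a thick subcategory of the derived category of $\cat R(U)$-modules, and are therefore stable under direct summands in the derived category.

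The subtle case is $\Dperf^{B}(\cat R)$, where one must preserve the uniform bounds on cohomological degrees $[a,b]$ and on the number $N$ of generators per degree. The degree bound is the easy part, handled by good truncation, since the cohomology of $G|_U$ is a direct summand of that of $F|_U$ and hence concentrated in $[a,b]$. The bound on generators is more delicate: by passing to sufficiently small open sets on which every finitely generated projective $\cat R(U)$-module becomes free (at least stably), one represents $G|_U$ by a minimal strictly perfect complex whose rank in each degree is bounded by that of a minimal model for $F|_U$. This uniform rank control under taking summands is the main technical obstacle and is what forces one to shrink $U$; once it is established, the three parts assemble into the full statement.
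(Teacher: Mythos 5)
Your overall architecture — split the idempotent in $D(X,\cat R)$ via B\"okstedt--Neeman and then verify that the summands stay in the subcategory — is the right one, and your telescope argument for $\Dlf(\cat R)$ is a reasonable (slightly more explicit) version of what the paper does. However, the $\Dperf(\cat R)$ step has a genuine gap, and you concede that the $\Dperf^B(\cat R)$ step is unfinished.

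For $\Dperf(\cat R)$, the fact you invoke — that strictly perfect complexes form a thick subcategory of $D(\cat R(U)\text{-mod})$ — lives in the derived category of \emph{modules} over the ring $\cat R(U)$, whereas the splitting $F|_U \simeq M|_U \oplus N|_U$ lives in the derived category of \emph{sheaves} $D(U, \cat R|_U)$. The two are related only through the adjunction $Lu^* \dashv Ru_*$ for $u: U \to *$, and a priori a summand of $F|_U$ in the sheaf category need not be of the form $Lu^*(P)$ for a perfect $\cat R(U)$-complex $P$. This is exactly the content of the paper's argument: on a good open $U$ the counit $Lu^*Ru_*G \to G$ is an equivalence (since $G|_U \cong Lu^*G(U)$ and $G(U)\simeq Ru_*G$), and by naturality of the counit applied to the decomposition $G\simeq M\oplus N$ one deduces $M|_U \simeq Lu^*Ru_*M$; then $Ru_*M$ is a summand of the perfect complex $Ru_*G \simeq G(U)$, hence perfect, and $M|_U$ is its sheafification. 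Without something playing the role of that counit argument, your citation of the thick-subcategory fact does not apply to $M|_U$, and the proof does not close. The same transfer-to-modules argument is what makes the $\Dperf^B(\cat R)$ case go through (with bounds inherited from $Ru_*G$), so the ``main technical obstacle'' you name at the end is exactly the step you have skipped, not a separate issue.
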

\begin{proof}
The result for $\Dlf(\cat R)$ follows from \cite{Bokstedt93}.

Next, recall that for any ring perfect dg modules are exactly compact objects in the derived category, and since compact objects are closed under direct summands so are perfect dg sheaves, see e.g.\cite[Proposition 6.4]{Bokstedt93}.

We consider a perfect dg sheaf of the form $G \simeq M \oplus N$ and will now show $M$ is perfect.
It follows from the definition that any point has a neighbourhood $U$ on which we may assume $G$ is strictly perfect. Then the restriction $G|_{U}$ is isomorphic to the sheaf associated to $G(U)$ (apply Lemma \ref{lemma-protoswan} and extended to the idempotent completion). 

We write $u: U \to *$.  Then $G|_{U} \cong u^{*}G(U)\simeq Lu^{*}G(U)$ as $G(U)$ is cofibrant.
We may assume $U$ is good and then $G(U) \simeq Ru_{*}G \simeq Ru_{*}M\oplus Ru_{*}N$. $G(U)$ is perfect and thus so is $Ru_{*}M$. As $G \simeq Lu^{*}G(U)$ it follows that $M \oplus N \simeq Lu^{*}Ru_{*}M \oplus Lu^{*}Ru_{*}N$. Since the map $M \to Lu^{*}Ru_{*}N$ in the derived category corresponds to the zero map $Ru_{*}M \to Ru_{*}N$ under an adjunction it is zero and $M \simeq Lu^{*}Ru_{*}M$.
Thus $M$ is perfect. The same argument applies to globally bounded dg sheaves.
\end{proof}
\begin{prop}\label{prop-cohesivesheaves}
Let $(X, \cat R)$ be a ringed space, and let $\A$ be a fine sheaf of dg algebras on $X$ such that there is a quasi-isomorphism $\cat R \to \A$ satisfying condition (*) below.
Then the associated sheaf functor $p^{*}$ gives a quasi-fully faithful functor $\Twfg(A) \to \A\mods$ and $F = J\circ p^{*}$ induces an embedding of triangulated categories $\Hot(\Twfg(A)) \to \Dperf^{B}(X, \cat R)$.
\end{prop}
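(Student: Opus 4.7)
My proof plan splits the claim into two pieces---quasi-fully faithfulness of $p^*$, and that $F$ lands in $\Dperf^{B}(X,\cat R)$ with induced map on homotopy categories an embedding---and treats them in three stages.

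\emph{Stage 1: Quasi-fully faithfulness of $p^*$.} For finitely generated free twisted modules $M=V\otimes A$, $N=W\otimes A$, Lemma~\ref{lemma-protoswan} would give an isomorphism of dg $\ground$-modules
\[
\uHom_{A}(M,N)\;\cong\;\Gamma\bigl(X,\sHom_{\A}(p^{*}M,p^{*}N)\bigr),
\]
the right-hand side being precisely the dg hom in $\A\mods$. Since $\A$ is fine, the sheaf $\sHom_{\A}(p^{*}M,p^{*}N)$---whose underlying graded sheaf is $\Hom_{\ground}(V,W)\otimes \A^{\#}$, a finite direct sum of copies of $\A$---is itself fine and hence $\Gamma$-acyclic, so global sections agree with $R\Gamma$. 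Combined with cofibrancy of the free $\A$-module $p^{*}M$, which makes $\uHom_{\A}(p^{*}M,-)$ compute $R\uHom_{\A}(p^{*}M,-)$, this would yield $\uHom_{A}(M,N)\xrightarrow{\sim} R\uHom_{\A}(p^{*}M,p^{*}N)$. Extending from $\Twfgfree(A)$ to $\Twfg(A)$ is formal: $\uHom$ preserves homotopy retracts and so does $p^{*}$.

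\emph{Stage 2: The image of $F$ lies in $\Dperf^{B}$.} For $M=V\otimes A$ finitely generated, $F(M)=(V\otimes \A,D_{M})$ viewed as a dg $\cat R$-module. Around each point of $X$ I would invoke condition (*) on $\cat R\to\A$ to select a good open $U$ on which $F(M)|_{U}$ is quasi-isomorphic to a strictly perfect dg $\cat R$-module built from $V$; intuitively, the MC-twist encoded in $D_{M}$ can be absorbed locally once $\cat R$ and $\A$ are close enough. Because $V$ has fixed finite rank in a fixed range of degrees, the resulting rank and degree-range bounds on the local strict models can be chosen uniformly in the base point, placing $F(M)$ in $\Dperf^{B}(X,\cat R)$. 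For a general perfect twisted module, $F(M)$ is a homotopy retract of $F(M')$ with $M'$ finitely generated, and Lemma~\ref{lemma-perfectkaroubi} closes $\Dperf^{B}$ under such retracts.

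\emph{Stage 3: Embedding of triangulated categories.} Stage 1 together with cofibrancy of $p^{*}M$ yields a fully faithful functor $\Hot(\Twfg(A))\to D(\A\mods)$. Since $\cat R\to\A$ is a quasi-isomorphism of sheaves of dg algebras, the restriction $J$ induces an equivalence $D(\A\mods)\xrightarrow{\sim} D(\cat R\mods)$, and by Stage 2 the composition lands in $\Dperf^{B}(X,\cat R)$. This produces the required embedding $\Hot(\Twfg(A))\hookrightarrow \Dperf^{B}(X,\cat R)$.

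The main obstacle I anticipate is Stage 2: one must exhibit a \emph{uniform} local quasi-isomorphism to strictly perfect $\cat R$-complexes of bounded rank and degree range, which is precisely where condition (*) should earn its keep. Without such a local comparison, the MC-twist on $V\otimes\A$ could in principle introduce extra cohomology preventing $F(M)$ from being perfect as a $\cat R$-sheaf; the other ingredients (fineness of $\A$, cofibrancy of free modules, equivalence $J$ on derived categories) are formal by comparison.
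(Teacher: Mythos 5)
Your Stage 1 and Stage 3 rest on the claim that $p^{*}M=(\underline V\otimes\cat A, D_V)$ is cofibrant as a dg $\cat A$-module because its underlying graded sheaf is free over $\cat A^{\#}$. This is false in general, and the failure is not incidental but the philosophical core of the paper. Twisted modules are free \emph{as graded modules} but their differentials are twisted by arbitrary MC elements, so they are not semi-free (= cofibrant) dg modules; if they were, $\Hot(\Tw(A))$ would embed into the ordinary derived category $D(A)$ by the quotient functor and would be a quasi-isomorphism invariant of $A$, which Example~\ref{ex:universal} shows it is not. Concretely, for $A=k[x]$, $d(x)=-x^2$, the twisted module $A^{[x]}$ is acyclic but not contractible; if it were cofibrant the Whitehead theorem would force it to be contractible, a contradiction. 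So the step ``$\uHom_{\cat A}(p^*M,-)$ computes $R\uHom_{\cat A}(p^*M,-)$'' does not follow from freeness of the underlying graded object, and your deduction of a fully faithful functor $\Hot(\Twfg(A))\to D(\cat A\mods)$ collapses.

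This is precisely the gap that the paper's Lemma~\ref{lemma-fullyfaithful} exists to fill, and it requires condition (*) again, not merely in the perfectness argument of Stage 2. The paper argues locally: on a small good open $U$ chosen via (*), $(\underline V\otimes\cat A)|_U$ is homotopy equivalent to $(\cat V|_U,d_V)\otimes_{\cat R|_U}\cat A|_U$, and the untwisted sheaf $(\cat V|_U,d_V)$ \emph{is} cofibrant over $\cat R|_U$, so the local derived Hom can be computed. It then glues these local computations via a \v Cech/hypercover spectral-sequence argument, using fineness of $\cat A$ to control the degreewise cohomology, to conclude that $q_{\cat A}$ is fully faithful on the image of $p^*$. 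Your Stage 2 correctly identifies where condition (*) enters for landing in $\Dperf^B$, but you need it a second time to make the passage $\Hot(\cat A\mods)\to D(\cat A)$ full and faithful; without that, the argument does not go through.
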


\begin{rk}
	Proposition \ref{prop-cohesivesheaves} only depends on the construction of $\Twfg(A)$ up to quasi-equivalence and thus holds equally if we consider $A$ as a dg AM algebra, see Remark \ref{rk:twfgam}.
\end{rk}
The crucial assumption is the following 
\begin{itemize}
\item[(*)] For every free graded $k$-module $G$, every $x \in X$ with a neighbourhood $U'$ and any MC element $\xi \in \MC(\A(U')\otimes \uEnd(G))$ there is a neighbourhood $x\in U \subset U'$ such that $\xi|_{U}$ is homotopy gauge equivalent to an element in the image of the Maurer Cartan set of $\cat R(U)\otimes \uEnd(G)$.
\end{itemize}

We will be particularly interested in cases where (*) is the consequence of the following stronger condition:
\begin{itemize}
\item[(**)] For every $x \in X$ and every free graded $k$-module $G$ there is a neighbourhood $U$ such that $\cat R \to \A$ induces a quasi-equivalence $\MCdg(\cat R(U)\otimes \uEnd(G)) \simeq \MCdg(\A(U)\otimes \uEnd(G))$.
\end{itemize}

\begin{proof}[Proof of Proposition \ref{prop-cohesivesheaves}]

By Lemma \ref{lemma-protoswan} the restriction $p^{*}: \cat \Twfgfree(A) \to \A\mods$ is quasi-fully faithful.
Then $p^{*}$ on $\Twfg(A)$ is just the extension of $p^{*}|_{\Twfgfree(A)}$ to homotopy idempotents and it follows
that $p^{*}$ is also quasi-fully faithful.

It remains to prove the statement on homotopy categories. We have the following composition
\[ \Hot(\Twfg(A)) \stackrel {p^{*}}\longrightarrow \Hot(\A\mods) \stackrel {q_{\A}} \longrightarrow D(\A) \stackrel {RJ} \longrightarrow D(\cat R)\]
where $q_{\A}$ is the quotient by quasi-isomorphisms.

In Lemma \ref{lemma-fullyfaithful} we will show that  $q_{\A}$ is fully faithful on the image of $p^{*}$. It is well-known that $RJ$ is fully faithful. Thus $RJ \circ q_{\A} \circ \Hot(p^{*})$ is fully faithful. It is clearly compatible with shifts and cones.

The fact that $RJ$ lands in $\Dperf(\cat R)$ is Lemma \ref{lemma-mc}.
\end{proof}

\begin{lemma}\label{lemma-mc}
The dg functor $F$ sends perfect twisted $A$-modules to globally bounded perfect sheaves of $\cat R$-modules.
\end{lemma}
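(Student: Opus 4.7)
The plan is to reduce to finitely generated twisted $A$-modules and then apply condition $(*)$ at each point to replace the local MC datum by one coming from $\cat R$, producing local strictly perfect models with uniform bounds.

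First I would reduce to the finitely generated case: a perfect twisted $A$-module is by definition a homotopy retract of a finitely generated one, so its image under the dg functor $F$ is an idempotent direct summand in $D(\cat R)$ of the image of that finitely generated module. Since $\Dperf^{B}(\cat R)$ is closed under direct summands by Lemma \ref{lemma-perfectkaroubi}, it would suffice to prove the statement for a finitely generated twisted $A$-module $M = V \otimes A$, where $V$ is a free graded $\ground$-module concentrated in degrees $[a,b]$ and of finite total rank, and $\xi \in \End(V) \otimes A$ is the corresponding MC element.

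Next, for each $x \in X$ I would apply condition $(*)$ with $G = V$ and $U' = X$ to produce a neighborhood $U \ni x$ together with an MC element $\xi_U \in \MC(\cat R(U)\otimes \End(V))$ whose image in $\cat A(U)\otimes\End(V)$ is homotopy gauge equivalent to $\xi|_U$. Unpacking the four conditions immediately following the definition of $\MCdg$, this equivalence is witnessed by elements $g, h \in (\cat A(U)\otimes\End(V))^{0}$ and homotopies $s, t \in (\cat A(U)\otimes\End(V))^{-1}$. I would then interpret $g, h$ as morphisms of sheaves of right $\cat A|_U$-modules between $(p^{*}M)|_U$ and $L_U \coloneqq (V \otimes \cat A|_U, \xi_U)$, and $s, t$ as chain homotopies; this yields a homotopy equivalence of sheaves of dg $\cat A|_U$-modules, which the forgetful functor $J$ to $\cat R|_U$-modules preserves.

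Finally, because $\xi_U$ already lies in $\cat R(U)\otimes \End(V)$, one can write $L_U = P_U \otimes_{\cat R|_U} \cat A|_U$, where $P_U \coloneqq (V \otimes \cat R|_U, \xi_U)$ is a strictly perfect dg $\cat R|_U$-module concentrated in degrees $[a,b]$ with the same rank profile as $V$. The natural inclusion $P_U \hookrightarrow L_U$ should be a stalkwise quasi-isomorphism, since at each stalk it is obtained from the quasi-isomorphism $\cat R_y \to \cat A_y$ by tensoring with the bounded, degree-wise finitely generated free complex $P_{U,y}$. Combining the last two paragraphs would give $(FM)|_U \simeq P_U$ in $D(\cat R|_U)$ with bounds depending only on $V$, exhibiting $FM$ as a globally bounded perfect dg sheaf. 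The main subtlety to check carefully is that the elements $g, h, s, t$ furnished by $(*)$ genuinely define morphisms of sheaves rather than just sections of global sections; this is automatic because they already live in sheaves evaluated at $U$, but the verification is what makes the passage from homotopy gauge equivalence to local homotopy equivalence of sheaves work.
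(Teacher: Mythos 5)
Your proposal is correct and takes essentially the same approach as the paper: reduce to the finitely generated case via idempotent completeness (Lemma \ref{lemma-perfectkaroubi}), apply condition $(*)$ locally to replace the MC element by one coming from $\cat R(U)$, and conclude that $F(E)|_U$ is quasi-isomorphic to a strictly perfect dg $\cat R|_U$-module with bounds depending only on the rank profile of $V$. You spell out a couple of steps (interpreting $g,h,s,t$ as sheaf morphisms, and the stalkwise quasi-isomorphism $P_U\hookrightarrow P_U\otimes_{\cat R}\cat A$) that the paper leaves implicit, but the argument is the same.
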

\begin{proof}
By Lemma \ref{lemma-perfectkaroubi} it suffices to show that a finitely generated twisted $A$-module $E$ is sent to a globally bounded perfect dg sheaf.

We may write $E = (G\otimes A, D)$ where $G$ is a free graded module over $k$. It suffices to show  that $F(E) \cong (\underline G \otimes \cat A, D)$ is perfect locally.

On any $U$ we know that  $D|_{U} - \id\otimes d_{A(U)}$ can be represented by a MC element $\xi$ in $ \A(U) \otimes \uEnd(G)$. Fix some $x$. For a suitably small neighbourhood we may assume that $\xi$ is as in in condition $(*)$.
Thus there is a homotopy gauge equivalence $g \in \cat A(U)\otimes \uEnd(G)$ between $\xi$ and some element $\eta$ in the image of $\cat R(U)\otimes\uEnd(G)$.

It follows that $g$ gives a homotopy equivalence from $(G \otimes \cat A(U), D_{U})$ to $(G \otimes \cat A(U), d_{G}\otimes \id + \id \otimes d_{\cat A})$ where $d_{G}$ is some differential on $G \otimes \cat R(U)$. Thus  we obtain a quasi-isomorphism from $F(E)|_{U}$ to the perfect dg sheaf $(\underline G\otimes \cat R|_{U}, d_{G})$ of $\cat R|_{U}$-modules.

Boundedness follows immediately from finite generation of $E$.
\end{proof}

\begin{lemma}\label{lemma-fullyfaithful}
The natural functor $q_{\cat A}: \Hot(\A\mods)\to D(\A)$ is fully faithful when restricted to the image of perfect twisted modules.
\end{lemma}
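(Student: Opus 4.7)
The plan is to exploit fineness of $\cat A$ to show that for perfect twisted $A$-modules $E$ and $E'$, the sheafifications $p^*E$ and $p^*E'$ form a cofibrant-fibrant pair in the local model structure on dg $\cat A$-modules, so that the naive dg Hom computes the derived Hom.

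First I would reduce to the case of finitely generated twisted modules. A perfect twisted $A$-module is by definition a homotopy retract of a finitely generated one in $\Hot(\Tw(A))$, and both $\Hot(\cat A\mods)$ and $D(\cat A)$ are additive categories in which retracts are preserved by $p^*$ and $q_{\cat A}$; hence the statement for finitely generated twisted modules implies it for all perfect ones. So fix $E = (G \otimes A, D_E)$ and $E' = (G' \otimes A, D_{E'})$ with $G, G'$ finite-rank free graded $\ground$-modules.

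Next I would verify the two halves separately. For cofibrancy of the source: because $(p^*E)^{\#}$ is a finite-rank free $\cat A^{\#}$-module built from the bounded graded module $G$, the MC-twisted dg sheaf $p^*E$ admits a finite filtration (by degree of $G$) whose subquotients are shifted copies of $\cat A$, exhibiting it as a semi-free dg $\cat A$-module; such modules are cofibrant in the projective model structure, hence also in the local one. For fibrancy of the target: the underlying graded sheaf $(p^*E')^{\#}$ is a finite direct sum of shifted copies of $\cat A$, and since $\cat A$ is fine, each degree of $p^*E'$ is fine, hence soft, hence has vanishing higher Čech cohomology on the paracompact Hausdorff space $X$; an elementary spectral sequence argument then upgrades this degreewise acyclicity to the hypersheaf condition on $p^*E'$, which is precisely fibrancy in the local model structure.

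With cofibrancy and fibrancy in hand, the standard model-category formula gives
\[ \Hom_{D(\cat A)}(p^*E, p^*E') = H^0 \uHom_{\cat A}(p^*E, p^*E') = H^0 \uHom_A(E, E') = \Hom_{\Hot(\cat A\mods)}(p^*E, p^*E'), \]
where the middle identification is Lemma \ref{lemma-protoswan}, and the induced map is visibly $q_{\cat A}$. The main obstacle is justifying fibrancy of $p^*E'$: while fineness of $\cat A$ immediately gives ordinary Čech acyclicity in each degree, one must upgrade this to derived hypercover descent for the whole dg sheaf with its MC-twisted differential. This is the one place where the interaction of the twist with the local model structure requires attention; the remaining steps are formal, combining the retract reduction, Lemma \ref{lemma-protoswan}, and standard theory of fine sheaves on paracompact Hausdorff spaces.
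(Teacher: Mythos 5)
Your cofibrancy claim for $p^*E$ is incorrect, and this is a fatal gap. Write the twist as an MC element $\xi = \xi_0 + \xi_1 + \xi_2 + \cdots \in \uEnd(G)\otimes A$ with $\xi_n \in \uEnd(G)^{1-n}\otimes A^n$. The filtration of $G\otimes \cat A$ by the internal degree of $G$ is not preserved by the differential: the components $\xi_n$ with $n\geq 2$ strictly lower the $G$-degree and therefore escape any finite decreasing filtration, while $\xi_0$ raises the $G$-degree and escapes the increasing one. Moreover, even on the steps where the filtration is compatible, the subquotient $G^i\otimes \cat A$ carries the induced differential $\id\otimes d_{\cat A} + \xi_1|_{G^i}$, where $\xi_1\in\uEnd(G)^0\otimes\cat A^1$ preserves $G$-degree; this is a nontrivially twisted differential, so the subquotient is not a direct sum of shifted copies of $\cat A$. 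So the filtration you describe does not exhibit $p^*E$ as semi-free. In fact no filtration can: twisted modules are in general genuinely non-cofibrant, and this is precisely why $\Hot(\Tw(A))$ is a finer invariant than $D(A)$. Concretely, take $A = \ground[x]/(x^2)$ with $|x|=1$ and $dx=0$, $G=\ground$, $\xi=x$; then $(A, x\cdot -)$ is acyclic but not contractible, so it is not cofibrant, yet it is a perfectly good finitely generated twisted module.

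The deeper symptom is that your argument never uses condition $(*)$, which the surrounding proposition assumes and without which the lemma is simply false: the example above (with $X$ a point and $\cat R = \ground$) violates $(*)$ and also violates the conclusion, since $q_{\cat A}$ sends the identity of a nonzero object of $\Hot(\A\mods)$ to zero in $D(\A)$. The paper's proof avoids the cofibrancy problem entirely. It computes $R\uHom$ locally: on a good open set $U$, condition $(*)$ provides a homotopy equivalence from $(\underline V\otimes\A)|_U$ to $(\cat V|_U, d_V)\otimes_{\cat R|_U}\A|_U$, and then uses the adjunction $-\otimes_{\cat R}\A\dashv J$ together with cofibrancy of the \emph{free $\cat R|_U$-module} $(\cat V|_U, d_V)$, not of the twisted $\A$-module, to underive the Hom. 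Fineness of $\cat A^\#$ then enters in the global \v Cech-descent step, which is close in spirit to your fibrancy argument. So the fibrancy half of your plan is roughly salvageable, but without condition $(*)$ there is no way to supply the cofibrant replacement your formula requires, and it must be invoked locally, not globally.
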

\begin{proof}
It suffices to consider finitely generated twisted modules, so we fix $(V \otimes A, D_{V})$, $ (W \otimes A, D_{W}) \in \cat \Twfgfree(A)$ and compute $R\uHom_{D(\A)}(\underline V\otimes \A, \underline W\otimes \A)$. The derived Hom can be computed as derived global sections of the sheaf Hom $U \mapsto R\uHom_{\A|_{U}}((\underline V\otimes \A)|_{U}, (\underline W \otimes \A)|_{U})$.

We first compute locally. We write $\cat V = \underline V\otimes \cat R$ and $\cat W = \underline W \otimes \cat R$.
Then let $U$ be any good open set as in condition (*), such that $(\underline V\otimes \A)|_{U}$ is homotopy equivalent to $(\cat V|_{U}, d_V)\otimes_{\cat R|_{U}} \A|_{U}$, say.
Then we can compute:
\begin{align*}
R\uHom_{\A|_{U}}((\underline V\otimes \A)|_{U}, (\underline W \otimes \A)|_{U})
& \simeq R\uHom_{\cat R|_{U}}((\cat V|_{U}, d_V), (\underline W\otimes \A)|_{U}) \\
& \simeq \uHom_{\cat R|_{U}}((\cat V|_{U}, d_V), (\underline W\otimes \A)|_{U}).
\end{align*}
As $(\cat V|_{U}, d_{V})$ is free it is a cofibrant dg sheaf over $\cat R$ and the Hom space is underived.

To compute global sections we pick a hypercover $\mathfrak U$ consisting of good open sets $U$ satisfying condition (*).
By the above the Hom presheaf on $\mathfrak U$ may be written as $U \mapsto \uHom_{\cat R|_{U}}(\cat V, \cat W)\otimes_{\cat R(U)} \cat \A(U)$ with a suitable differential.

We compute \v Cech cohomology. 
Since $\A^{\#}$ is fine each $(\sHom_{\cat R}(\cat V, \cat W) \otimes_{\cat R} \A)^{i}$ has no higher cohomology. 
We filter the map $\epsilon: \check C^{*}(\mathfrak U, \sHom_{\cat R}(\cat V, \cat W) \otimes_{\cat R} \A) \to \check C^{0}(\mathfrak U, \sHom_{\cat R}(\cat V,\cat W)\otimes_{\cat R} \A)$ by the degree of coefficients. The associated map $\operatorname{Gr}(\epsilon)$ is a quasi-isomorphism and, since the filtration is exhaustive and Hausdorff, $\epsilon$ is a quasi-isomorphism, too.
Putting all of this together we have:
\begin{align*}
R\uHom_{D(\cat A)}(\underline V \otimes \cat A, \underline W\otimes \cat A)
&\simeq \check C^{*}(\mathfrak U, R\sHom_{\A}(\underline V\otimes \A, \underline W\otimes \cat A))  \\
&\simeq \check C^{*}(\mathfrak U, \sHom_{\cat R}(\cat V, \underline W\otimes \cat A))  \\
&\simeq \check C^{*}(\mathfrak U, \sHom_{\cat R}(\cat V, \cat W) \otimes_{\cat R} \A)  \\
&\simeq \check C^{0}(\mathfrak U, \sHom_{\cat R}(\cat V, \cat W) \otimes_{\cat R} \A)  \\
&\simeq \uHom_{\cat A\mods}(\underline V\otimes \cat A, \underline W\otimes \cat A). \qedhere
\end{align*}
\end{proof}

\begin{cor}\label{cor-quasifullyfaithful}
	If $\cat A$ is projective over $\cat R$ then $F: \Twfgfree(A) \to \cat R\mods$ is quasi-fully faithful.
\end{cor}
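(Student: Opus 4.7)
The plan is to strengthen the \v Cech computation of Lemma \ref{lemma-fullyfaithful} from $\cat A$-linear to $\cat R$-linear homs, with flatness of $\cat A$ over $\cat R$ providing the extra input needed for the local comparison. By Lemma \ref{lemma-protoswan} the map $\uHom_A(M,N) \to \uHom_{\cat A}(p^*M, p^*N)$ is already an isomorphism, so it suffices to show the natural forgetful inclusion $\uHom_{\cat A}(p^*M, p^*N) \hookrightarrow \uHom_{\cat R}(FM, FN)$ is a quasi-isomorphism. Since homotopy retracts commute with quasi-isomorphisms and $\Twfg(A)$ is the idempotent completion of $\Twfgfree(A)$, I would reduce immediately to the case $M, N \in \Twfgfree(A)$.

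Fix a hypercover $\mathfrak U$ of $X$ by good open sets satisfying condition (*). On each $U \in \mathfrak U$, condition (*) together with quasi-full-faithfulness of $p^*$ lets me replace $FM|_U$ and $FN|_U$, up to homotopy equivalence in $\cat A|_U\mods$, by $\cat V_U \otimes_{\cat R|_U} \cat A|_U$ and $\cat W_U \otimes_{\cat R|_U} \cat A|_U$, where $\cat V_U, \cat W_U$ are finite-rank free $\cat R|_U$-modules equipped with twisted differentials coming from MC elements in $\cat R|_U$. The key local computation is then the natural map
\[
\sHom_{\cat R|_U}(\cat V_U, \cat W_U) \otimes_{\cat R|_U} \cat A|_U \longrightarrow \sHom_{\cat R|_U}(\cat V_U \otimes_{\cat R|_U} \cat A|_U, \cat W_U \otimes_{\cat R|_U} \cat A|_U),
\]
which by tensor-hom adjunction and cofibrancy of the finite-rank free module $\cat V_U$ reduces to showing that left multiplication $\cat A|_U \to \sHom_{\cat R|_U}(\cat A|_U, \cat A|_U)$ is a quasi-isomorphism. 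This is where flatness enters: combined with the local quasi-isomorphism $\cat R|_U \to \cat A|_U$, it gives $R\sHom_{\cat R|_U}(\cat A|_U, \cat A|_U) \simeq R\sHom_{\cat R|_U}(\cat R|_U, \cat A|_U) \simeq \cat A|_U$ in the derived category, and the identification of underived with derived $\sHom$ follows from a filtration argument exploiting boundedness-below of $\cat A|_U$ as a complex of flat $\cat R|_U$-modules.

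Once this local comparison is in place, the global argument proceeds exactly as in Lemma \ref{lemma-fullyfaithful}: the sheaf $\sHom_{\cat R}(FM, FN)$ inherits an $\cat A$-action from its target, hence is fine and $\Gamma$-acyclic; filtering by the degree of coefficients in $\cat A$ and passing to the associated graded collapses the \v Cech complex $\check C^*(\mathfrak U, \sHom_{\cat R}(FM, FN))$ to its degree-zero piece, yielding the desired identification $\uHom_{\cat R}(FM, FN) \simeq \uHom_{\cat A}(p^*M, p^*N) \cong \uHom_A(M, N)$. The main technical obstacle I expect is the passage from derived to underived $\sHom$ in the local step, where bare flatness is subtler than K-projectivity; in the concrete contexts of Sections \ref{sect-derham}-\ref{sect-singular} (constant sheaves over a field, or the smooth/analytic structure sheaves) the local structure of $\cat A$ over $\cat R$ is rich enough to make this comparison immediate.
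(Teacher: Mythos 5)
Your argument is essentially the paper's own proof: reduce via Lemma \ref{lemma-protoswan} to comparing $\cat R$-linear with $\cat A$-linear sheaf homs, untwist the source locally using condition (*), invoke flatness of $\cat A$ over $\cat R$ for the local comparison, and rerun the \v Cech argument of Lemma \ref{lemma-fullyfaithful} verbatim. The paper phrases the local step as comparing $\uHom_{\cat R}((\cat V,d_V)\otimes\cat A,\, \underline W\otimes\cat A)$ with $\uHom_{\cat R}((\cat V,d_V),\, \underline W\otimes\cat A)$ rather than isolating the map $\cat A\to\sHom_{\cat R}(\cat A,\cat A)$, but after unwinding the tensor--hom adjunctions these are the same reduction, and the derived-versus-underived subtlety you flag is equally present in the paper's one-line invocation of flatness.
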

\begin{proof}
	We use the notation from the proof of Lemma \ref{lemma-fullyfaithful}.
	It suffices to compare $\uHom_{\cat R}(\underline V\otimes \cat A, \underline W\otimes \cat A)$ and $\uHom_{\cat A}(\underline V\otimes \cat A, \underline W\otimes \cat A)$. 
	Locally on $U$  the terms are quasi-isomorphic to $\uHom_{\cat R}((\cat V, d_{V})\otimes \cat A, \underline W\otimes \cat A)$ and $\uHom_{\cat R}((\cat V, d_{V}), \underline W\otimes \cat A)$, respectively. If $\cat A$ is projective over $\cat R$ then we ay replace $\uHom$ and $\otimes$ by their derived version and the two terms are quasi-isomorphic. The local-to-global argument remains unchanged.
\end{proof}
\begin{lemma}\label{lemma-surjective}
Let $(X, \cat R)$ and $\cat A$ be as in Proposition \ref{prop-cohesivesheaves}. If moreover $(X, \cat A^{0})$ is locally ringed, $\cat A^{0}$ is commutative, $A$ is flat over $A^{0}$ and $X$ has finite covering dimension then $\Dperf(X, \cat R)$ lies in the image of $\Hot(F)$.
\end{lemma}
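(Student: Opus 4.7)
The plan is to produce, for each $\cat G \in \Dperf(X,\cat R)$, a cohesive dg $A$-module $M$ with $F(M) \simeq \cat G$ in $D(\cat R)$; by Corollary \ref{cor-twistedcohesive} (which applies since $A$ is concentrated in non-negative degrees and flat over $A^{0}$), the category of cohesive $A$-modules is quasi-equivalent to $\Twfg(A)$, so such an $M$ lifts to an object of $\Hot(\Twfg(A))$. This reduces surjectivity to constructing a global cohesive realization of $\cat G$.

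First, I would replace $\cat G$ by $\cat G' := \cat G \otimes^{L}_{\cat R} \A$. As $\cat R \to \A$ is a quasi-isomorphism, $\cat G$ and $\cat G'$ agree in $D(\cat R)$. Moreover, $\cat G'$ is a perfect dg $\A$-sheaf: on a good open $U$ where $\cat G|_{U}$ is quasi-isomorphic to a strictly perfect complex $P$ of $\cat R|_{U}$-modules, the derived tensor specializes to $P\otimes_{\cat R|_{U}}\A|_{U}$, which is strictly perfect over $\A|_{U}$. Any strictly perfect $\A$-module locally takes the form $\cat F\otimes_{\cat A^{0}}\A$ with $\cat F$ a bounded free $\cat A^{0}$-module of finite rank in each degree. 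The main task is to rectify these local models into a global dg $\A$-sheaf of the form $(\cat E\otimes_{\cat A^{0}}\cat A, D)$ where $\cat E$ is a bounded locally free graded $\cat A^{0}$-module of finite rank in each degree. This globalization leverages the fineness of $\cat A^{0}$ (inherited from $\cat A$) to glue local choices, and the hypothesis $(*)$ of Proposition \ref{prop-cohesivesheaves} together with Theorem \ref{thm-singularss} to correct the homotopical discrepancies on overlaps.

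Once $\cat G'$ is presented in cohesive form, Swan's theorem (Theorem \ref{thm-swan}) applies: its hypotheses, namely that $(X, \cat A^{0})$ is locally ringed with $\cat A^{0}$ fine commutative and that $X$ has finite covering dimension, all hold. Swan yields, degree by degree, an equivalence between bounded locally free $\cat A^{0}$-modules of finite rank and bounded projective $A^{0}$-modules via global sections, producing $E$ corresponding to $\cat E$. The $\cat A^{0}$-linear part of the differential $D$, viewed as a map $\cat E \to \cat E\otimes_{\cat A^{0}}\cat A$, descends under this equivalence (using that tensoring the fine sheaf $\cat A$ with a finitely generated projective $A^{0}$-module commutes with global sections) to an $A^{0}$-linear map $E \to E\otimes_{A^{0}}A$; the Leibniz rule extends this to a dg $A$-module structure on $M := E\otimes_{A^{0}}A$, a cohesive $A$-module whose image under $F = J\circ p^{*}$ is quasi-isomorphic to $\cat G' \simeq \cat G$. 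The principal obstacle is the rectification step, where the local strictly perfect models must be glued using the strong homotopy machinery developed in Sections \ref{strong}--\ref{sheaves}; I expect this to be the most technically delicate part of the argument, as the local trivializations over overlapping opens only agree up to homotopy gauge equivalence and not strict isomorphism.
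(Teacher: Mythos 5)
Your overall plan — reduce to cohesive modules via Corollary \ref{cor-twistedcohesive}, relate locally free $\cat A^{0}$-sheaves to projective $A^{0}$-modules via Swan's theorem, and globalize — has the right ingredients but the central step is a gap. You want to first rectify a perfect dg $\A$-sheaf $\cat G'$ into a globally cohesive-sheaf presentation $(\cat E\otimes_{\cat A^{0}}\cat A, D)$ with $\cat E$ a bounded locally free $\cat A^{0}$-module of finite rank, and only then take global sections. You yourself flag this as ``the most technically delicate part,'' but in fact it is never carried out, and the appeal to Theorem \ref{thm-singularss} to ``correct the homotopical discrepancies on overlaps'' does not supply the needed argument: that theorem compares strong homotopy and homotopy gauge equivalence of MC elements in a single algebra, whereas what you need is a descent-type statement gluing local strictly perfect models up to a coherent system of homotopies. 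Nothing in Sections \ref{strong}--\ref{sheaves} provides this.

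The paper circumvents rectification entirely by reversing the order: it takes global sections first, setting $\Ga_{\cat V} := (\Ga(X, \cat V\otimes_{\cat R}\A), D_{\cat V}\otimes\id + \id\otimes d_A)$, which is automatically a \emph{quasi-cohesive} module (of the form $Q\otimes_{A^{0}}A$), and then invokes Block's Theorem 3.2.7 from \cite{Block10}: a quasi-cohesive module is homotopy equivalent to a cohesive one provided its ``degree zero part'' $\Ga(X,\cat V\otimes_{\cat R}\A^{0})$ is a perfect $A^{0}$-module. Proving this perfection is where the remaining hypotheses enter: finite covering dimension yields a finite cover $\{U_i\}$ on which $E := \cat V\otimes_{\cat R}\A^{0}$ is strictly perfect, Swan's theorem (degree by degree, using fineness of $\cat A^{0}$) shows each $E(U_i)$ is strictly perfect over $A^{0}(U_i)$, and then a partition-of-unity argument combined with Quillen's criterion \cite[Proposition 1.1]{Quillen96} (a complex is perfect iff its identity is algebraically nuclear up to homotopy) shows $E(X)$ is perfect. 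This ``algebraically nuclear'' device is the key mechanism absent from your proposal, and it is what replaces the sheaf-level rectification you would otherwise need.
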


\begin{proof}
	Consider a globally bounded perfect dg sheaf $\cat V$ of $\cat R$-modules on $X$. 
	Let $\Ga_{\cat V} \coloneqq (\Ga(X, {\cat V}\otimes_{\cat R} \A), D_{{\cat V}}\otimes \id + \id \otimes d_{A})$.
	This is a dg sheaf of $A$-modules which is not necessarily a perfect twisted module. However, it is a dg $A$-module of the form $Q\otimes_{A^{0}} A$ where $Q$ is some dg $A^{0}$-module. Such objects are called \emph{quasi-cohesive modules} in \cite{Block10}.
	
	By \cite[Lemma 2.3 and Proposition 2.5]{Morye09} the associated sheaf functor will send $\Ga_{\cat V}$ to ${\cat V}$.
	We will show that $\Ga_{\cat V}$ is homotopy equivalent to perfect twisted module  $\Ga_{{\cat V}}'$. 
	Then it is clear that $F(\Ga_{{\cat V}}')$ is homotopy equivalent to ${\cat V}$, proving essential surjectivity.
	
	By \cite[Theorem 3.2.7]{Block10} the quasi-cohesive module $\Ga_{\cat V}$ is homotopy equivalent to a cohesive module if  $\Ga(X, {\cat V}\otimes_{\cat R} \A^{0})$ is a perfect dg module over $A^{0}$. Here we use flatness of $A$ over $A^0$.
	
	It follows from our assumptions that ${\cat V} \otimes_{\cat R} \cat A^{0}$ is a globally bounded perfect dg sheaf of $\A^{0}$-modules, so by Lemma \ref{lemma-fineperfect} below
	it is quasi-isomorphic to a strictly perfect complex $P$.
	Thus by Theorem \ref{thm-swan} the global sections of $P$ are a perfect $\mathcal A^0(X)$-module quasi-isomorphic to $\Gamma(X, \mathcal V \otimes_{\cat R} \cat A^0)$.
	
	Now $\Gamma_{\cat V}$ is homotopy equivalent to a cohesive module und by Proposition \ref{prop:retract} it is also homotopy equivalent to a perfect twisted module.
\end{proof}
We needed the following  key lemma, which may be of independent interest.
The proof follows a standard method going back to \cite[Exposé II]{Berthelot06}
but over a fine structure sheaf one can avoid the usual assumption of compactness. 
The proof of (1) was shown to us by Zhaoting Wei.
\begin{lemma}\label{lemma-fineperfect}
	Let $(X, \mathcal A^0)$ be a locally ringed space with $\cat A^0$ commutative and fine and such that $X$ is of finite covering dimension. 
	Then
	\begin{enumerate}
		\item any globally bounded perfect dg sheaf of $\mathcal A^0$-modules $E$ is quasi-isomorphic to a bounded above complex $P$ of locally free sheaves of finite type,
		\item moreover, $P$ may be chosen to be a strictly perfect complex.
	\end{enumerate}
\end{lemma}
\begin{proof}
	We prove (1). Let $\mathcal A$ be the abelian category of $\mathcal A^0$-modules with the  subcategory $\mathcal D$ of finitely generated locally free $\mathcal A^0$-modules.
	In the derived category of $\mathcal A^0$-modules we consider the subcategory $\mathcal C$ of globally bounded perfect sheaves.
	
	Then we are in the setting of Lemma 1.9.5 of \cite{Thomason90} with $F: \mathcal D \to \mathcal A$ given by the inclusion.
	The lemma states that if $\mathcal D$ has enough objects to resolve cohomology sheaves, then any object in $\mathcal C$ is quasi-isomorphic to a bounded above chain complex $P$ over the category $\mathcal D$.
	
	The main part of this proof is devoted to showing the resolution property in our setting.
	We first give the precise statement needed to apply \cite[Lemma 1.9.5]{Thomason90}:
	Let $E$ be in $\mathcal C$ and $n$ be such that $H^i(E) = 0$ for $i > n$. 
	Let $A \to H^n(E)$ be an epimorphism of $\mathcal A^0$-modules. Then there is a $D \in \mathcal D$ and $D \to A$ such that the composition $D \to H^n(E)$ is an epimorphism.

	So let $E$ be a globally bounded perfect complex on $X$.
	By assumption there is a cover $\{U_{i}\}_{i \in I}$ of $X$ such that for each $i$ we have a quasi-isomorphism $\alpha_i: F_{i} \to E|_{U_{i}}$ from a strictly perfect dg sheaf on $U_i$.
	As $(X, \mathcal A^0)$ is locally ringed we may assume the $F_i$ are bounded complexes of finitely generated free $\cat A^0$-modules.
	
	Applying the canonical truncation functor to $F_i$ if necessary we may assume that the $F_i$ are concentrated in degree at most $n$.
	
	Next, following the argument in \cite[Proposition III.4.1]{Wells07} we may use a partition of unity to modify our cover to assume there is finite covering $\{U_i\}_{i \in I}$ (where each $U_i$ may have infinitely many connected components) such that each $F_i$ is a complex of free $\cat A^0$-modules.
	As $E$ is globally bounded we may also assume that each $F_i$ is finitely generated and bounded.
	
	Since each degree component of $F_i$ is free we can extend it to a finitely generated free sheaf on $X$.
	It is not in general possible to extend the differentials, but we use the following trick:
	for any $U_i$ we choose a proper open subset $W_i$, big enough such that $\{W_i\}$ is still an open covering. 
	Then using again that $\mathcal A^0$ is fine there is a section $\phi_i$ of $\mathcal A^0$ with support contained in $U_i$ that is equal to 1 on $W_i$.
	So we may define a new differential $\phi_i d$ on $F_i$ and extend it by 0 to obtain a global differential on the extension of $F_i$ to $X$. We denote this global complex of sheaves by $G_i$ and observe that $G_i|_{W_i}$ agrees with $F_i$.
	We need to define a map $\gamma_i: G_i \to E$. 
	Using that $F_i$ is bounded we may define $\gamma_i$ on $U_i$
	by $\phi_i^{k+1} \alpha_i$ in degree $m+k$, where $m$ is such that $F_i$ is 0 in degrees less than $m$.
	We then extend $\gamma_i$ by zero outside of $U_i$.
	
	By construction $G_i^k = 0$ if $k > n$, so $\gamma_i^n$ maps to $H^n(E)$ and as $\gamma_i|_{W_i}$ agrees with $\alpha_i$ it is a quasi-isomorphism for any $i$.
	Hence we obtain surjections $\gamma_i^n|_{W_i}$ which assemble to give a surjection 
	\[
	\gamma^n \coloneqq \sum_i \gamma_i^n: G \coloneqq \bigoplus_{i \in I} G_i^n \to H^n(E)
	\]
	where the sum is finite and thus $G$ is in $\mathcal D$.
	Since $G$ is free we can lift $\gamma^n$ to any epimorphism $A \to H^n(E)$.
	This shows the resolution property, so given $E$ we may construct a quasi-isomorphic bounded above complex $P$ of finitely generated locally free sheaves. This proves part (1).
	
	We prove (2).
	We have to show that $P$ may be chosen bounded below.
	We do this by considering its projective amplitude.
	
	We observe first that as $\A^0$ is fine, free $\mathcal A^0$-modules are projective objects in the abelian category of $\mathcal A^0$-modules.
	Conversely, since $(X, \mathcal A^0)$ is local any projective $\A^0$-module is locally free since it is a direct summand of a free $\A^0$-module, cf.\ Remark \ref{rk-locallyfree}.
	
	Next we note that there is $K$ such that for any $\mathcal A^0$-module $M$ the groups $\Ext^i(P, M)$ vanish for $i > K$.
	These Ext groups may be computed using a \v Cech resolution with open cover $\{W_i\}_{i\in I}$. 
	On each $W_i$ the complex $E|_{W_i}$ is represented by a perfect complex concentrated in degrees at least $n_i$, say.
	So for $K > |I|+ \max(-n_i)$ we obtain the desired vanishing.
	
	Now as $\cat A^0\mods$ has enough injectives and projectives the argument in \cite[Tag 0A5M]{Stacks13} applies and $P$ may be represented by a bounded below complex.
\end{proof}

We can now compare perfect twisted modules with perfect dg sheaves. The following two results are needed in the next section.

\begin{thm}\label{thm-twistedsheaves}
Let $(X, \cat R)$ and $\cat A$ be as in Proposition \ref{prop-cohesivesheaves}. If moreover $(X, \cat A^{0})$ is locally ringed, $\cat A^{0}$ is commutative, $A$ is flat over $A^{0}$ and $X$ has finite covering dimension then $F$ induces an equivalence $\Hot(\Twfg(A)) \to \Dperf^{B}(X, \cat R)$.
\end{thm}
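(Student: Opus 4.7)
The plan is to assemble the equivalence directly from the two preceding results, Proposition \ref{prop-cohesivesheaves} and Lemma \ref{lemma-surjective}, which together already contain all the substantive work. First I would note that Proposition \ref{prop-cohesivesheaves} already gives the embedding of triangulated categories $\Hot(F): \Hot(\Twfg(A)) \to \Dperf^{B}(X, \cat R)$; this combines quasi-fully faithfulness with the fact (via Lemma \ref{lemma-mc}) that the target lands inside globally bounded perfect dg sheaves. Therefore the only thing still to establish is essential surjectivity onto $\Dperf^{B}(X, \cat R)$.

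For essential surjectivity I would invoke Lemma \ref{lemma-surjective}, which is stated precisely under the extra hypotheses appearing in this theorem: $\cat A^{0}$ commutative with $(X, \cat A^{0})$ locally ringed, $A$ flat over $A^{0}$, and $X$ of finite covering dimension. Given any object of $\Dperf^{B}(X, \cat R)$, that lemma constructs a perfect twisted $A$-module $\Ga'_{\cat V}$ whose image under $F$ is quasi-isomorphic to $\cat V$; the construction proceeds by first forming the quasi-cohesive module $\Ga_{\cat V}$, promoting it to a cohesive module via \cite[Theorem 3.2.7]{Block10}, and then replacing that cohesive module by a perfect twisted module via Proposition \ref{prop:retract}. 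Combining fully faithfulness with essential surjectivity onto $\Dperf^{B}(X, \cat R)$ then yields the desired equivalence.

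The point requiring the most attention, and therefore the natural place for an obstacle, is the interplay between ``perfect'' and ``globally bounded perfect.'' The proof of Lemma \ref{lemma-surjective} uses global boundedness essentially: it needs a \emph{finite} open cover together with finitely generated strictly perfect local models $E'_{i}$ (even though the individual $U_{i}$ may have infinitely many connected components), and this finiteness is what allows the nuclearity-type argument to identify $\Ga(X,\cat V\otimes_{\cat R}\cat A^{0})$ as perfect and invoke Block's theorem. An arbitrary perfect dg sheaf would only deliver local strict perfection on a possibly infinite cover, and would land in quasi-cohesive modules which one cannot in general convert to cohesive ones. The codomain $\Dperf^{B}(X, \cat R)$ appearing in Proposition \ref{prop-cohesivesheaves} is chosen exactly so that the two lemmas fit together without gap, so the plan reduces to invoking them in sequence and recording that the result is an equivalence of triangulated categories compatible with shifts and cones.
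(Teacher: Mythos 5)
Your proposal is correct and follows exactly the paper's own (two-line) proof: combine the embedding from Proposition~\ref{prop-cohesivesheaves} with the essential surjectivity furnished by Lemma~\ref{lemma-surjective}. Your added commentary on where global boundedness is genuinely needed in Lemma~\ref{lemma-surjective} (finite cover, finitely many strictly perfect local models, the nuclearity argument) is accurate and in fact more careful than the paper's terse proof; it also correctly flags that Lemma~\ref{lemma-surjective}, while stated for $\Dperf$, is only proved for $\Dperf^B$, which is exactly the codomain appearing in the theorem.
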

\begin{proof}
This is Proposition \ref{prop-cohesivesheaves} together with Lemma \ref{lemma-surjective}, which says that the functor $\Hot(\Twfg(A)) \to \Dperf^{B}(X, \cat R)$ is essentially surjective.
\end{proof}

\begin{thm}\label{thm-twistedsheaves2}
Let $(X, \cat R)$ and $\cat A$ be as in Proposition \ref{prop-cohesivesheaves}. If moreover $(X, \cat A^{0})$ is locally ringed, $\cat A^{0}$ is commutative and $\cat R$ is the constant sheaf $\underline k$ then $F$ induces an equivalence $\Hot(\Twfg(A)) \to \Dperf(X, \underline k)$.
\end{thm}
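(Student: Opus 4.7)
The fully faithful half is immediate from Proposition \ref{prop-cohesivesheaves}, which already embeds $\Hot(\Twfg(A))$ fully faithfully into $\Dperf^{B}(X, \underline k) \subseteq \Dperf(X, \underline k)$. The substantive content of the theorem is essential surjectivity onto $\Dperf(X, \underline k)$.

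For essential surjectivity I plan to mimic Lemma \ref{lemma-surjective}, trading the hypotheses of finite covering dimension, global boundedness, and flatness of $A$ over $A^{0}$ for the hypothesis $\cat R = \underline k$. Given $\mathcal V \in \Dperf(X, \underline k)$, I form the dg $A$-module $\Ga_{\mathcal V} := \Ga(X, \mathcal V \otimes_{\underline k} \A)$. On any sufficiently small good open $U$, one has $\mathcal V|_{U} \simeq \underline{V_{U}}$ for a bounded complex $V_{U}$ of finitely generated projective $k$-modules, so $\mathcal V \otimes_{\underline k} \A|_{U}$ is locally strictly perfect over $\A|_{U}$. A \v Cech-theoretic computation on a good cover, analogous to the end of the proof of Lemma \ref{lemma-fullyfaithful} and using that $\A$ is fine (hence soft and $\Ga$-acyclic), then shows $F(\Ga_{\mathcal V}) \simeq \mathcal V$ in $\Dperf(X, \underline k)$, so it remains only to identify $\Ga_{\mathcal V}$ with an object of $\Twfg(A)$ up to homotopy.

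The main obstacle will be verifying that $\Ga_{\mathcal V}$ is itself homotopy equivalent to a perfect twisted $A$-module without the three hypotheses listed above. The key point is that two features of $\cat R = \underline k$ conspire to let us drop them. First, every $\underline k$-module is flat, so the role played by $A^{0}$-flatness of $A$ in Block's Theorem 3.2.7 is already carried by $\mathcal V$ itself. Second, fineness of $\A^{0}$ still supplies a partition of unity $\{\phi_{i}\}$ subordinate to an arbitrary locally finite good cover $\{U_{i}\}$, and the identity on $\Ga(X, \mathcal V \otimes_{\underline k} \A^{0})$ decomposes as a locally finite sum $\sum \phi_{i}$ in which each $\phi_{i}$ factors up to homotopy through the strictly perfect module $V_{i} \otimes A^{0}(U_{i})$, hence is algebraically nuclear up to homotopy. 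Invoking \cite[Proposition 1.1]{Quillen96} as in the proof of Lemma \ref{lemma-cohesiveretracts}, this yields perfectness of $\Ga(X, \mathcal V \otimes_{\underline k} \A^{0})$ as an $A^{0}$-module; Block's theorem then produces a cohesive $A$-module homotopy equivalent to $\Ga_{\mathcal V}$, and Proposition \ref{prop:retract} upgrades it to a perfect twisted $A$-module, completing the proof.
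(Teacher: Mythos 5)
Your fully-faithfulness reduction and the overall goal are right, but the route you take for essential surjectivity has genuine gaps, and the paper avoids them with a rather different — and cheaper — argument.

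Your plan is to run Lemma \ref{lemma-surjective} again, dropping finite covering dimension, global boundedness, and $A^0$-flatness of $A$. Two of these drops don't go through as claimed. First, Block's Theorem 3.2.7 (applied to conclude that the quasi-cohesive module $\Ga_{\mathcal V}$ is homotopy equivalent to a cohesive one) has flatness of $A$ over $A^0$ as a hypothesis on $A$ itself; flatness of $\mathcal V$ over $\underline k$ is a property of a different module over a different ring and does not substitute for it. With $\cat R = \underline k$ you have no control over the relationship between $A$ and $A^0$, so you cannot invoke that theorem. Second, the nuclearity step fails: algebraic nuclearity in the sense of \cite{Quillen96} means factoring through a \emph{strictly perfect} complex, and a merely \emph{locally finite} partition $\sum_i\phi_i$ need not be a finite sum when evaluated on global sections. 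In Lemma \ref{lemma-surjective} the finite covering dimension hypothesis is used exactly to pass (via \cite[Proposition III.4.1]{Wells07}) to a \emph{finite} cover, so that the identity is a genuinely finite sum of nuclear maps. Without finite covering dimension you have not produced a finite factorization, and the conclusion that $\Ga(X,\mathcal V\otimes_{\underline k}\A^0)$ is a perfect $A^0$-module does not follow.

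The paper sidesteps all of this. Using Lemma \ref{lemma-perfectconstant} it identifies $\Dperf(X,\underline k)$ with bounded dg sheaves whose cohomology sheaves are locally constant; since such a complex is a \emph{finite} extension of its cohomology sheaves, and $\Hot(F)$ is fully faithful with triangulated source, it suffices to hit every locally constant sheaf $\cat M$. For that one uses only that $\cat M\otimes\A^0$ is locally free, so by Swan's Theorem \ref{thm-swan} the global sections $\Ga(X,\A\otimes\cat M)$ form a cohesive module, which is then a perfect twisted module by Proposition \ref{prop:retract}. No appeal to Block's Theorem 3.2.7, no flatness of $A$ over $A^0$, and no finite covering dimension is needed. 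If you want to keep your construction of $\Ga_{\mathcal V}$, you should at least restrict to the case where $\mathcal V$ is a single locally constant sheaf and argue via Swan as the paper does, then extend by the triangulated structure; the attempt to treat general $\mathcal V$ directly by the Lemma \ref{lemma-surjective} mechanism won't close.
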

\begin{proof}
Again this follows by Proposition \ref{prop-cohesivesheaves} together with essential surjectivity. By Lemma \ref{lemma-perfectconstant} below we may identify perfect dg sheaves with dg sheaves with locally constant cohomology.
Then we note that any locally constant sheaf $\cat M$ on $X$ is in the essential image of $F$. $\cat  M\otimes \A^{0}$ is locally free and thus $\Ga(X, \cat A \otimes \cat M)$ is a cohesive module by Theorem \ref{thm-swan} and thus homotopy equivalent to a perfect twisted module by Proposition \ref{prop:retract}.
Clearly $\cat M$ is quasi-isomorphic to $F(\Ga(X, \cat A \otimes \cat M))$.

As $F$ is quasi-fully faithful and $\Hot(\Twfg(A))$ is triangulated this shows that any subcategory of $\Dperf(X, \underline k)$ containing locally constant sheaves  and closed under triangles is in the essential image of $\Hot(F)$. But any perfect complex over $\underline k$ is a finite extension of its cohomology sheaves, thus contained in the image of $\Hot(F)$. We note that the global boundedness condition on perfect dg sheaves is automatic for clc sheaves on a connected space.
\end{proof}
\begin{lemma}\label{lemma-perfectconstant}
Let $X$ be locally contractible. Then $\Dlf(X, \underline{\ground})$ is equivalent to the derived category of clc sheaves. Moreover, $\Dperf(X, \underline k)$ is equivalent to the category of sheaves with locally constant cohomology sheaves whose fibres are perfect when considered as dg modules over $k$.
\end{lemma}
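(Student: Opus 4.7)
My plan is to prove both equivalences by reducing to the local case on contractible open neighbourhoods. The forward implication of the first equivalence is essentially by definition: if $\cat F|_U$ is quasi-isomorphic to a free dg sheaf $(\underline V, d)$, then $\mathcal H^i(\cat F)|_U \cong \underline{H^i(V, d)}$ is constant, so $\cat F$ has locally constant cohomology sheaves. For the converse, fix a point $x \in X$ and, using that $X$ is paracompact Hausdorff and locally contractible, choose a contractible neighbourhood $U$ of $x$ small enough that each cohomology sheaf $\mathcal H^i(\cat F)|_U \cong \underline{M^i}$ is constant, and on which $H^p(U, \underline{M^i}) = 0$ for $p > 0$; the existence of such a basis of $\underline{\ground}$-acyclic contractible opens is standard for paracompact Hausdorff locally contractible spaces. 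Writing $u : U \to *$, the hypercohomology spectral sequence
\[ E_2^{p,q} = H^p(U, \mathcal H^q \cat F) \Longrightarrow H^{p+q}(U, \cat F) \]
collapses onto the row $p = 0$, so $Ru_*(\cat F|_U) \simeq \bigoplus_i M^i[-i]$ in $D(\ground)$.

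Next I select a K-projective dg $\ground$-module $V$ together with a quasi-isomorphism $V \to Ru_*(\cat F|_U)$; I may take $V$ free as a graded $\ground$-module. The counit of the adjunction $u^* \dashv Ru_*$ yields a morphism $u^* V \cong \underline V \to \cat F|_U$ in $D(\underline{\ground}|_U)$, which I check is a quasi-isomorphism on stalks: at any $y \in U$, both stalks have cohomology $M^\bullet$, and the induced map on cohomology is the identity via the edge homomorphism of the collapsed spectral sequence. This exhibits $\cat F|_U$ as locally quasi-isomorphic to a free dg sheaf, placing $\cat F$ in $\Dlf(X, \underline{\ground})$ and yielding the first equivalence.

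For the perfect version, perfectness is a local condition, and the argument above identifies $\cat F|_U$ with a constant dg sheaf $\underline V$ on each contractible good $U$. A constant dg sheaf $\underline V$ is a direct summand (up to quasi-isomorphism) of a strictly perfect dg $\underline{\ground}|_U$-module if and only if $V$ itself is a perfect complex of $\ground$-modules; and since the stalk of $\underline V$ at any point is $V$, this is equivalent to the stalks of $\cat F$ being perfect dg $\ground$-modules. The main obstacle is justifying the collapse of the spectral sequence and the quasi-isomorphism of the counit; both depend on the existence of a basis of contractible opens $U$ with $H^{>0}(U, \underline{\ground}) = 0$, which is where the paracompact Hausdorff local contractibility hypothesis plays its essential role.
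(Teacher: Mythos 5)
Your argument is correct and, in the converse direction, more careful than the paper's. The paper asserts that on a contractible $U$ with constant cohomology sheaves, $\cat M|_U$ is quasi-isomorphic to the direct sum of its cohomology sheaves --- a formality claim that is not automatic over a general ground ring (for instance, over $\ground = \mathbb{Z}/4$ the two-term complex $\mathbb{Z}/4 \xrightarrow{2} \mathbb{Z}/4$ in degrees $0,1$ is not quasi-isomorphic to $\mathbb{Z}/2 \oplus \mathbb{Z}/2[-1]$) --- and then takes free resolutions of the individual cohomology sheaves. Your version instead takes a K-projective free model $V$ of $Ru_*(\cat F|_U)$ and verifies that the resulting map $\underline V = u^*V \to \cat F|_U$ is a quasi-isomorphism on stalks via the hypercohomology spectral sequence; this sidesteps formality entirely and is the cleaner route. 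One clause should be excised: the collapse of the spectral sequence identifies the $\ground$-modules $H^n(U,\cat F) \cong M^n$ but does not yield ``$Ru_*(\cat F|_U) \simeq \bigoplus_i M^i[-i]$'' in $D(\ground)$, which is again a formality statement; fortunately you never use it, since $V$ is a model of $Ru_*(\cat F|_U)$ itself and your stalk check only needs the edge homomorphism of the collapsed spectral sequence. The reduction of the perfect case to the first statement is essentially the same in both approaches. (Both your argument and the paper's implicitly rely on boundedness or finite homological dimension hypotheses for the convergence of the spectral sequence when $\cat F$ is unbounded; this is covered by the standing assumptions where the lemma is applied.)
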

\begin{proof}
The cohomology of $\cat M \in \Dlf(X, \underline k)$ is locally given as the cohomology of a complex of $\ground$-modules, and  thus constant. 

Conversely consider a dg sheaf $\cat M$  and some contractible open set $U$ on which its cohomology is a constant $\underline k$-module. As $U$ has no cohomology $\cat M|_{U}$ is quasi-isomorphic to a direct sum of its cohomology sheaves.
Using free resolutions of the cohomology sheaves shows that $\cat M$ is locally quasi-isomorphic to a free dg sheaf.

The statement for perfect dg sheaves follows similarly.
\end{proof}
\begin{cor}\label{cor-cohesiveperfect}
In the setting of Theorem \ref{thm-twistedsheaves} and Theorem \ref{thm-twistedsheaves2} we also have $\Hot(\cat P_{A}) \cong \Dperf^{B}(X, \cat R)$.
\end{cor}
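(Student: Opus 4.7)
The plan is to obtain the desired equivalence as a composition of two quasi-equivalences already established in the paper, namely
\[
\Hot(\cat P_{A}) \simeq \Hot(\Twfg(A)) \simeq \Dperf^{B}(X, \cat R).
\]
The second equivalence is provided by Theorem~\ref{thm-twistedsheaves} (resp.\ Theorem~\ref{thm-twistedsheaves2}) and is implemented by $\Hot(F)$. The first equivalence is Corollary~\ref{cor-twistedcohesive}, whose hypotheses are that $A$ be non-negatively graded and flat over $A^{0}$; the former is automatic from the standing hypothesis at the start of Section~\ref{sheaves} that $\cat A$ is a sheaf of non-negatively graded dg $\cat R$-algebras, and the latter is part of the assumptions of Theorem~\ref{thm-twistedsheaves} (for Theorem~\ref{thm-twistedsheaves2} this flatness must be added, which is harmless as it holds in all the concrete situations considered in Section~\ref{applications}).

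The one thing worth checking is that the composite equivalence is genuinely induced by the extension of $F$ to $\cat P_{A}$. First I would recall that Corollary~\ref{cor-twistedcohesive} is proved by exhibiting $\cat P_{A}$ as a Morita fibrant replacement of $\Twfgfree(A)$ via the inclusion $J:\Twfgfree(A)\hookrightarrow \cat P_{A}$, so the quasi-equivalence $\Hot(\Twfg(A))\simeq \Hot(\cat P_{A})$ is simply the zig-zag through $\Twfgfree(A)$. Second, observe that the sheafification functor $p^{*}$ and the forgetful functor $J$ used to define $F$ in Definition~\ref{defn-F} both extend verbatim from $\Twfg(A)$ to $\cat P_{A}$, producing a dg functor $\tilde F:\cat P_{A}\to \cat R\mods$; the diagram
\[
\xymatrix{\Twfgfree(A)\ar@{^{(}->}[r]\ar@{^{(}->}[d] & \Twfg(A)\ar[d]^{F}\\ \cat P_{A}\ar[r]_{\tilde F} & \cat R\mods}
\]
commutes by construction. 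Passing to homotopy categories and using the quasi-equivalence on the left column gives the conclusion.

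The only obstacle, which is more a bookkeeping issue than a mathematical one, is to make sure $\tilde F$ lands in $\Dperf^{B}(X,\cat R)$ for every cohesive module, not just for perfect twisted ones. This follows from Proposition~\ref{prop:retract}, which says that any cohesive $A$-module is a homotopy retract of a finitely generated twisted module: $\Hot(\tilde F)$ therefore sends cohesive modules into the idempotent-completion of the image of $\Hot(F)$, which by Lemma~\ref{lemma-perfectkaroubi} already lies inside $\Dperf^{B}(X,\cat R)$. This completes the argument.
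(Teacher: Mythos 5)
There is a genuine gap in your proof: the hypothesis of Corollary~\ref{cor-twistedcohesive} that $A$ be flat over $A^{0}$ is not harmless in the setting of Theorem~\ref{thm-twistedsheaves2}. Indeed, the paper notes explicitly in the proof of Theorem~\ref{thm-derhampiecewise} that for the piecewise smooth de Rham algebra of a simplicial complex $K$ one has that $\Omega(K)$ is \emph{not} flat over $\Omega^{0}(K)$; that is precisely the reason Theorem~\ref{thm-twistedsheaves2} is formulated without a flatness hypothesis. So your claim that flatness ``holds in all the concrete situations considered in Section~\ref{applications}'' is false, and the composite $\Hot(\cat P_{A})\simeq\Hot(\Twfg(A))\simeq\Dperf^{B}(X,\cat R)$ does not apply to that case.

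The paper's argument avoids this entirely by proving less in the first step. It does not assert the quasi-equivalence $\cat P_{A}\simeq\Twfg(A)$ (which really does need flatness, via Lemma~\ref{lemma-cohesiveretracts}), but only the weaker statement that there is a fully faithful functor $\Hot(\cat P_{A})\to\Hot(\Twfg(A))$. This embedding exists unconditionally: by Proposition~\ref{prop:retract} every cohesive module is a retract of a finitely generated twisted module, so $\Hot(\cat P_{A})$ sits inside the idempotent completion of $\Hot(\Twfgfree(A))$, which is $\Hot(\Twfg(A))$ by Proposition~\ref{prop:idempotent}. Composing with the equivalences from Theorems~\ref{thm-twistedsheaves} and~\ref{thm-twistedsheaves2} then gives a fully faithful functor $\Hot(\cat P_{A})\to\Dperf^{B}(X,\cat R)$, and essential surjectivity is verified by re-reading the surjectivity arguments: in Lemma~\ref{lemma-surjective} and in the proof of Theorem~\ref{thm-twistedsheaves2} the preimages are constructed as cohesive modules in the first place, so they lie in the image of $\cat P_{A}$. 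Your second observation (that the sheafification functor applied to a cohesive module lands in $\Dperf^{B}$, via Proposition~\ref{prop:retract} and Lemma~\ref{lemma-perfectkaroubi}) is correct, but it does not rescue the first step, and the overall argument as written only proves the Corollary under an additional flatness assumption that the statement does not carry.
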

\begin{proof}
	This follows from the equivalence of cohesive modules and twisted modules provided by Corollary \ref{cor-twistedcohesive}.
\end{proof}
\begin{rk}
Corollary \ref{cor-quasifullyfaithful} shows that if we assume $\cat A$ is projective over $\cat R$ then moreover $F$ is quasi-fully faithful in Theorems \ref{thm-twistedsheaves} and \ref{thm-twistedsheaves2} and gives a quasi-equivalence with a dg category of perfect complexes.

In general, the equivalences of homotopy categories may be be enhanced to quasi-equivalences of dg categories between $\Twfg(A)$ and the dg-category of fibrant cofibrant dg $\cat R$-modules which are perfect dg sheaves. As presheaves in the image of $F$ are fibrant it suffices to compose $F$ with functorial cofibrant replacement.
\end{rk}

\section{Applications}\label{applications}
\subsection{The de Rham algebra}\label{sect-derham}
In this section the ground ring $k$ is $\set R$ and  $X$ is a connected smooth manifold. We consider perfect twisted modules over the de Rham algebra $\Om(X)$. We denote by $\Om$ the dg sheaf of de Rham algebras.

Recall that we consider $\Om(X)$ as a dg AM algebra and that all tensor products are understood to be completed.

Using what we have done so far we can recover and generalise the main result of \cite{Block09}, up to replacing infinity local systems by clc sheaves with cohomology sheaves of finite rank.

\begin{rk}
Note that one may consider cohesive modules (or equivalently perfect twisted modules) over the de Rham algebra $A$ as \emph{$\Z$-graded connections}. By Theorem \ref{thm-swan} we may consider a a complex of finitely generated projective $\Om^{0}(X)$-modules $E$ as a dg vector bundle $\cat E$, and the differential becomes a
 $\set Z$-graded connection $\set E: \cat E \to \cat E \otimes_{\Om^{0}} \Om$ satisfying $d\set E + \set E^{2} = 0$. This is the natural derived analogue of a vector bundle with a flat connection.
\end{rk}

\begin{thm}\label{thm-derham}
Let $X$ be a connected manifold (not necessarily compact).
Then the dg functor $F:\Twfg(X) \to \underline{\set R}\mods$ sending  $E$ to $U \mapsto E \otimes_{\Om(X)} \Om(U)$ 
is quasi-fully faithful and induces an equivalence $\Hot(\Twfg(X)) \cong \Dperf(X, \underline{\set R})$.
\end{thm}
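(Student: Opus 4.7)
The strategy is to apply Theorem \ref{thm-twistedsheaves2} with $\cat R = \underline{\mathbb R}$ and $\cat A = \Om$, then invoke Corollary \ref{cor-quasifullyfaithful} to upgrade the resulting equivalence of homotopy categories to a quasi-fully faithful dg functor. Most hypotheses are immediate: $\Om$ is a fine sheaf of dg $\underline{\mathbb R}$-algebras (using partitions of unity), $\underline{\mathbb R} \hookrightarrow \Om$ is a local quasi-isomorphism by the classical Poincar\'e lemma, $(X, \Om^0) = (X, C_X^{\infty})$ is locally ringed with $\Om^0$ commutative, and $\Om$ is flat over $\underline{\mathbb R}$ since $\mathbb R$ is a field. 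The non-trivial point to check is condition $(*)$ of Proposition \ref{prop-cohesivesheaves}, which I will establish in the stronger form $(**)$.

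To verify $(**)$, fix a point $x\in X$ and choose a coordinate neighbourhood $U\cong \mathbb R^n$ of $x$. The straight-line retraction $H:[0,1]\times\mathbb R^n\to\mathbb R^n$, $H(t,y)=ty$, together with evaluation at the origin, realises the unit map $\mathbb R\to\Om(U)$ as a smooth homotopy equivalence of dg AM algebras in the sense of Section \ref{smooth}. For any finitely generated free graded $\mathbb R$-module $G$ the algebra $\End(G)$ is finite-dimensional, so tensoring preserves smooth homotopy equivalences and yields a smooth homotopy equivalence $\End(G)\to\Om(U)\otimes\End(G)$. Theorem \ref{thm:homotopyequivalence1} then supplies the quasi-equivalence $\MCdg(\End(G))\simeq \MCdg(\Om(U)\otimes\End(G))$, which is exactly $(**)$, and since $U$ is connected $\underline{\mathbb R}(U)=\mathbb R$, so the map involved is precisely the one induced by $\underline{\mathbb R}\to\Om$.

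With these ingredients, Theorem \ref{thm-twistedsheaves2} delivers the equivalence $\Hot(\Twfg(\Om(X)))\cong \Dperf(X,\underline{\mathbb R})$, while Corollary \ref{cor-quasifullyfaithful}, applicable by the flatness observation above, upgrades $F$ itself to a quasi-fully faithful dg functor. The substantive step is the local verification of $(**)$: morally a \emph{Maurer--Cartan Poincar\'e lemma} stating that on a Euclidean ball every flat $\End(G)$-valued $\mathbb Z$-graded superconnection is homotopy gauge equivalent to a constant one. This is precisely where the analytic Schlessinger--Stasheff theorem of Section \ref{smooth} does the work; bypassing that framework would force one to re-derive the statement by hand, integrating matrix-valued differential equations on flat connections as in \cite{Block09}.
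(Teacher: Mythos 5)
Your proof is correct and follows the same route as the paper's: reduce to Theorem~\ref{thm-twistedsheaves2} and Corollary~\ref{cor-quasifullyfaithful}, and verify the local condition (**) by exhibiting $\mathbb R\to\Om(U)$ as a smooth homotopy equivalence of AM algebras on a coordinate ball and then tensoring with $\End(G)$. The only cosmetic difference is that you invoke Theorem~\ref{thm:homotopyequivalence1} directly, whereas the paper routes through Corollary~\ref{cor:deRham}; your citation is if anything the more precise one, since $\mathbb R\otimes\End(G)$ is an AM algebra but not itself a de~Rham algebra.
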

\begin{proof}
The  theorem follows from Theorem \ref{thm-twistedsheaves} or Theorem \ref{thm-twistedsheaves2} together with Corollary \ref{cor-quasifullyfaithful}, applied to $\cat R = \underline {\set R}$ and $\cat A = \Om$.

To check the conditions fix some point $x \in X$ and some perfect dg $\underline \R$-module $G$. We consider the smooth homotopy equivalence $\set R\otimes \uEnd(G) \to \Om(U)\otimes \uEnd(G)$ given by inclusion and evaluation at $x$.
Then we apply Corollary \ref{cor:deRham}(i) to verify that the de Rham algebra satisfies (**) and the assumptions of Proposition \ref{prop-cohesivesheaves}.
\end{proof}

\begin{rk}\label{rk-infinitylocal}
To recover the results of \cite{Block09} we use Corollary \ref{cor-cohesiveperfect} and then recall that perfect dg sheaves over $\underline{\set R}$ are clc sheaves by Lemma \ref{lemma-perfectconstant}, which are in turn equivalent to various other notions of \emph{infinity local systems}.

In fact, under mild assumptions, the following are all quasi-equivalent dg categories:
\begin{enumerate}
\item perfect clc sheaves, sometimes called homotopy locally constant sheaves, i.e.\ fibrant cofibrant dg sheaves whose cohomology sheaves are locally constant of finite rank,
\item perfect dg modules over the dg algebra of chains on the Moore loop space of $X$,
\item the dg category obtained from the cotensor action of singular simplices on $X$ on the dg category of perfect chain complexes, see \cite{Holstein3},
\item (combinatorial) infinity local systems on a simplicial set as explicitly described in terms of a Maurer-Cartan condition in \cite{Block09}.
\end{enumerate}
One can extend all these notions by dropping the assumption of perfectness and the quasi-equivalences still hold.

The equivalence of (1) and (2) follows from \cite{Holstein1} and \cite{Holstein2}, (2) and (3) are identified in \cite{Holstein1}. The correspondence of (3) and (4) follows from \cite{Holstein3}; note that there is a difference of definition between the objects considered in (3) and (4) for an arbitrary simplicial set, but on fibrant simplicial sets the definitions agree.
In \cite{Holstein1} it is shown that all of these can be interpreted as categorified cohomology of $X$, i.e.\ cohomology of $X$ with coefficients in the dg category of perfect complexes.
Keeping with this viewpoint one could consider the dg category of cohesive modules over $\Om(X)$ as categorified de Rham cohomology.

Unravelling definitions we may also see that the category (4) for a reduced simplicial set $K$ agrees precisely with our definition of $\Tw(K)$. One may deduce that the two notions agree for arbitrary Kan complexes from Corollary \ref{cor-twistedreduced} and homotopy invariance of infinity local systems.

The main result of \cite{Block09} shows that if $X$ is a compact manifold and $k = \set R$ then the dg category of infinity local systems as in (4) is $A_{\oo}$-quasi-equivalent to the dg category of $\Z$-graded connections, using computations with iterated integrals. In Theorem \ref{thm-derham} we directly establish a quasi-equivalence of cohesive modules for the de Rham algebra with (1). There is, incidentally, also a direct proof comparing $\Z$-graded connections to (2), also using iterated integrals \cite{Arias16}.
\end{rk}

We now extend this result to the case where we replace the manifold $X$ by a simplicial complex $K$. We write $\Om(K)$ for de Rham algebra of piecewise smooth differential forms on $K$. Piecewise smooth differential forms define a sheaf on the underlying topological space $|K|$ of $K$ that we also denote by $\Om$.

\begin{thm}\label{thm-derhampiecewise}
Let $K$ be a connected finite dimensional simplicial complex. Then the functor $F: \Twfg(K)\to \underline{\set R}\mods$ sending $E$ to $U \mapsto E \otimes_{\Om(X)} \Om(U)$ 
is quasi-fully faithful and induces an equivalence $\Hot(\Twfg(K)) \cong \Dperf(|K|, \underline{\set R})$.
\end{thm}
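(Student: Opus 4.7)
The plan is to mirror the proof of Theorem \ref{thm-derham}, applying Theorem \ref{thm-twistedsheaves2} to the pair $\cat R = \underline{\set R}$, $\cat A = \Om$ (the sheaf of piecewise smooth forms on $|K|$), and then invoking Corollary \ref{cor-quasifullyfaithful} for quasi-fully faithfulness of $F$.

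First I would verify the standing hypotheses of the sheaf-theoretic framework of Section \ref{sheaves}. The sheaf $\Om^0$ of piecewise smooth functions on $|K|$ admits partitions of unity subordinate to any locally finite open cover (construct ordinary smooth bumps on each simplex and extend across faces via Seeley's extension theorem), so $\Om^0$ is fine and the $\Om^0$-module $\Om$ is fine as well. Moreover $\Om$ is flat over the constant sheaf $\underline{\set R}$ (being a sheaf of $\set R$-vector spaces), the pair $(|K|, \Om^0)$ is locally ringed with commutative structure sheaf, and the covering dimension of $|K|$ equals $\dim K < \infty$.

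Next I would establish condition $(**)$ of Proposition \ref{prop-cohesivesheaves}. Fix $x \in |K|$ and a free graded $\ground$-module $G$; take $U = \operatorname{st}(v)$, the open star of a vertex $v$ whose closed star contains $x$. The straight-line contraction of the open star to $v$ is piecewise linear, hence piecewise smooth, and exhibits the inclusion $\set R \hookrightarrow \Om(U)$ and evaluation at $v$ as mutually inverse piecewise smooth homotopy equivalences of dg AM algebras. Tensoring with $\uEnd(G)$ and invoking Corollary \ref{cor:deRham}(i) (equivalently Theorem \ref{thm:homotopyequivalence1}) yields the required quasi-equivalence $\MCdg(\set R \otimes \uEnd(G)) \simeq \MCdg(\Om(U) \otimes \uEnd(G))$. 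As a byproduct, the case $G = \ground$ gives the piecewise smooth Poincaré lemma, so $\underline{\set R} \to \Om$ is a local quasi-isomorphism. With $(**)$ verified, Theorem \ref{thm-twistedsheaves2} delivers essential surjectivity of $\Hot(F)$ onto $\Dperf(|K|, \underline{\set R})$, while Corollary \ref{cor-quasifullyfaithful}, combined with flatness of $\Om$ over $\underline{\set R}$, gives quasi-fully faithfulness.

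The main obstacle I anticipate is the rigorous construction of the piecewise smooth contracting homotopy on $\Om(\operatorname{st}(v))$: since the straight-line deformation of the open star crosses simplex boundaries, one must check that the resulting map $\Om(U) \to \Om(U) \otimes \Om[0,1]$ respects the inverse limit describing $\Om(U)$ and lands in genuinely piecewise smooth forms on $|K| \times [0,1]$ (in particular, extends smoothly to all boundary strata of each product simplex $\Delta^n \times [0,1]$). Seeley's extension theorem together with the AM structure of $\Om(U)$ handles this, but the verification is more delicate than in the smooth manifold case.
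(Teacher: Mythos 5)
Your proposal follows essentially the same route as the paper: verify that piecewise smooth functions form a fine sheaf (via Seeley's extension theorem), establish condition (**) of Proposition \ref{prop-cohesivesheaves} via a piecewise linear contracting homotopy together with Theorem \ref{thm:homotopyequivalence1}, and then apply Theorem \ref{thm-twistedsheaves2} (correctly avoiding Theorem \ref{thm-twistedsheaves}, since $\Om(K)$ is not flat over $\Om^{0}(K)$). You are slightly more explicit than the paper in invoking Corollary \ref{cor-quasifullyfaithful} with flatness of $\Om$ over $\underline{\set R}$ to obtain quasi-full faithfulness, which is in fact needed for the stated conclusion and is left implicit in the paper's proof. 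One small slip: you should pick a vertex $v$ whose \emph{open} star contains $x$ (not merely whose closed star does), so that $\operatorname{st}(v)$ is a genuine neighbourhood of $x$; such a $v$ always exists since the open stars of vertices cover $|K|$.
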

\begin{proof}
First we show that piecewise smooth functions (and thus piecewise smooth forms) form a fine sheaf on $K$.

It is enough to construct, given two closed subsets $A$ and $B$ of $K$, a section $s$ of $\Om^0$ that is equal to 1 on $A$ and $0$ on $B$. We proceed by induction on the dimension of the simplex. So assume we have constructed the restriction of $s$ to $k$-simplices and denote it by $s'$. Consider a $(k+1)$-simplex $L$. We have to check that we can separate $L\cap A$ and $L\cap B$ by a function that restricts to $s'$ on the boundary. As $\Om^0$ is fine on $L$, we may choose a section $t$ that is equal to 1 on $L\cap A$ and 0 on $L\cap B$. Then on the boundary of $L$ we observe that the function $t - s'$ is 0 on the intersections of $A$ and $B$ with the boundary of $K$. We can easily find a smooth function $t'$ on $K$ that restricts to $t-s'$ and which has support disjoint from $A$ and $B$. Then we let $s = t - t'$. We can clearly do this for all $(k+1)$-simplices simultaneously as we did not change $s'$.

Now we need to check that $\Om$ satisfies condition (*) to deduce the theorem from Theorem \ref{thm-twistedsheaves2}. The other conditions on $(|K|,\Om)$ are immediate. Note that we cannot use Theorem \ref{thm-twistedsheaves} as $\Om(K)$ is not flat over $\Om^0(K)$.

Let $x \in |K|$. There is a neighbourhood $U$ of $x$ and a piecewise linear contracting homotopy $H: U \times [0,1] \to U$. This induces a map $H^{*}: \Om(U) = \lim_{\Delta \in K} \Omega(U \cap \Delta) \to \lim_{\Delta \in K} \Omega((U \cap \Delta) \times [0,1]) \cong  \Omega(U) \otimes \Omega[0,1]$.
Here for the last equivalence we use Corollary \ref{cor:product}.
Thus the map $H^{*}$ gives a smooth homotopy equivalence between $\set R$ and $\Omega(U)$.

Now we apply Theorem \ref{thm:homotopyequivalence1} to deduce condition (**) and apply Theorem \ref{thm-twistedsheaves2}.
\end{proof}

\begin{rk}
Note that Theorem \ref{thm-derhampiecewise} would be false for the polynomial de Rham algebra that is used for example in rational homotopy theory, cf.\ Example \ref{eg-polyderham}.
\end{rk}

\subsection{The Dolbeault algebra}\label{sect-dolbeault}

In this subsection $k$ is $\C$ and $X$ is a (not necessarily compact) complex manifold equipped with its sheaf of holomorphic functions $\cat O_{X}$. We revisit Block's proof \cite{Block10} that the derived category of perfect dg coherent sheaves on a complex manifold $X$ is equivalent to the homotopy category of cohesive modules over the Dolbeault algebra $(\cat A^{0*}(X), \bar\partial)$.
Note that the main result in the previous section draws from the methods in \cite{Block10};
we have generalised the setting and added some details regarding faithfulness of the functor from twisted modules to perfect dg sheaves.

Thanks to Lemma \ref{lemma-surjective} we may answer the implicit question in \cite[Remark 4.1.4]{Block10}.

\begin{thm}
	Let $X$ be a complex manifold. The functor $F: \Twfg(\cat A^{0*}(X), \bar \partial) \to \cat O_{X}\mods$ sending $V \otimes \cat A^{0*}(X)$ to its dg associated sheaf
	induces an equivalence $\Hot(\Twfg{(\cat A^{0*}(X), \bar\partial)}) \cong \Dperf^{B}(X, \cat O_{X})$.
\end{thm}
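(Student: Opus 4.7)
The plan is to deduce the theorem directly from Theorem \ref{thm-twistedsheaves} together with Corollary \ref{cor-quasifullyfaithful}, applied with $\cat R = \cat O_X$ and $\cat A = \cat A^{0*}$ (the Dolbeault sheaf with differential $\bar\partial$). Theorem \ref{thm-twistedsheaves} then produces the equivalence $\Hot(\Twfg(\cat A^{0*}(X), \bar\partial)) \cong \Dperf^B(X, \cat O_X)$, and Corollary \ref{cor-quasifullyfaithful} promotes the ambient dg functor $F$ to a quasi-fully faithful one, assembling into the desired quasi-equivalence.

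First I would dispose of the routine hypotheses. The natural map $\cat O_X \to \cat A^{0*}$ is a quasi-isomorphism by the Dolbeault-Grothendieck resolution. The sheaf $\cat A^{0*}$ is fine since it is a module over the fine sheaf $C^\infty_X = \cat A^{00}$; $(X, C^\infty_X)$ is locally ringed; $C^\infty_X$ is commutative; and $X$ has finite covering dimension $2\dim_\C X$. Because $\cat A^{0*}$ is locally free of finite rank over $C^\infty_X$, Theorem \ref{thm-swan} implies that $\cat A^{0*}(X)$ is projective, hence flat, over $C^\infty(X)$. The flatness of $\cat A^{0*}$ over $\cat O_X$ required by Corollary \ref{cor-quasifullyfaithful} follows by combining this local freeness over $C^\infty_X$ with Malgrange's classical result that $C^\infty_X$ is flat over $\cat O_X$.

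The heart of the argument is the verification of condition (*) of Proposition \ref{prop-cohesivesheaves}. Fix $x \in X$ and a free graded $\ground$-module $G$, and let $U$ be a polydisc around $x$. An MC element $\xi \in \cat A^{0*}(U) \otimes \uEnd(G)$ amounts to a flat $\bar\partial$-superconnection on the trivial $\Z$-graded bundle with fibre $G$ over $U$; I need to produce a homotopy gauge equivalence between $\xi$ and an element in the image of $\cat O_X(U) \otimes \uEnd(G)$, i.e.\ a holomorphic MC element of antiholomorphic form degree zero. Writing $\xi = \xi^{(0,0)} + \xi^{(0,1)} + \cdots$ by antiholomorphic form degree and applying the $\bar\partial$-Poincaré homotopy operator available on any polydisc, one inductively constructs a degree-zero invertible gauge $g = e^{u_1}e^{u_2}\cdots$ that strips off the positive antiholomorphic degree components of $\xi$, leaving only the $(0,0)$-piece.

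The main obstacle is making this iterative rectification rigorous: each step requires solving a $\bar\partial$-equation of the form $\bar\partial u_k + [\xi^{(0,0)}, u_k] = -\xi^{(0,k)} + (\textrm{lower-order corrections})$ and controlling the convergence of $e^{u_1}e^{u_2}\cdots$, typically after shrinking $U$. This is a graded version of the Newlander-Nirenberg theorem and is precisely the local integrability statement underlying Block's correspondence, so the argument can be imported essentially verbatim from \cite{Block10}. Once condition (*) is secured, Lemma \ref{lemma-mc} gives perfectness of the image of $F$ and Lemma \ref{lemma-surjective} gives essential surjectivity of $\Hot(F)$ onto $\Dperf^B(X, \cat O_X)$, completing the proof.
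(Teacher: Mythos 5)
Your proposal is correct and takes essentially the same route as the paper: both derive the result from Theorem \ref{thm-twistedsheaves} together with Corollary \ref{cor-quasifullyfaithful} applied with $\cat R = \cat O_X$ and $\cat A = \cat A^{0*}$, both cite Malgrange for the flatness of smooth over holomorphic functions, and both ultimately attribute condition (*) to \cite[Lemma 4.1.5]{Block10} (you sketch the $\bar\partial$-Poincar\'e rectification that underlies it, while the paper simply cites it).
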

\begin{proof}
	The equivalence of homotopy categories follows from  Theorem \ref{thm-twistedsheaves} with $\cat R = 
	\cat O_X$ and $\cat A = (\cat A^{0*}(X), \bar\partial)$. Condition (*) is exactly the content of \cite[Lemma 4.1.5]{Block10}.
\end{proof}

\begin{rk}
One might try to also view this result through a suitable Schlessinger-Stasheff theorem.
There are, however, considerable conceptual obstacles to implementing this.
Note that the inclusion $\cat O(U) \to (\cat A^{0*}(U),\bar \partial)$ does not have a section as a function
 of topological vector spaces for any open set $U$, see  \cite[Proposition 5.4]{Mityagin71}.
 On a closed poly-disk $D$ there is a section (not compatible with restrictions), but it is not clear how to construct a homotopy equivalence between $\cat O(D)$ and $(\cat A^{0*}(D),\partial)$, or even what the correct notion of homotopy equivalence would be.
\end{rk}

\subsection{The singular cochain algebra}\label{sect-singular}

In this subsection $X$ is a topological space and $C^{*}(X)$ the pseudo-compact dg algebra of its normalized singular cochains. We will assume that $X$ is connected and locally contractible, and that
 $k$ has finite homological dimension.

We will consider infinitely generated modules, so recall from Section \ref{sect-notations} that whenever we consider $M\otimes C^*(X)$ for some graded $k$-module $M$ we will understand it as the completed tensor product.

To define a functor from $\Tw(C^*(X))$ to $\underline k\mods$ we recall that the presheaf of singular cochains with coefficients in any abelian group $L$, given for an open set $U\subset X$ by $U \mapsto C_{\text {sing}}^*(U, L)$, has a sheafification given by $U \mapsto  C^{*}_{\text{sing}}(U,L)/C_0^{*}(U,L)$. Here $C_{0}^{*}(U,L)$ consists of those singular cochains on $U$ such that there is an open cover of $U$ on which they vanish. See \cite{Sella16} for details.
We write $\cat C^{*}(L) = (\cat C^{*}(L), d_{\cat C})$ for the normalization of $C_{\text{sing}}^{*}(U,L)/C_{0}^{*}(U,L)$. This is a flabby sheaf if $X$ is semi-locally contractible and there is a quasi-isomorphism $\underline L \to \cat C^{*}(L)$. When $L = k$ we drop it from the notation, and we note that $\underline L \otimes \cat C^{*} \cong \cat C^{*}(L)$.

Let us consider the dg functor $F: \Tw(X) \to \underline k\mods$ defined by  
\begin{equation}\label{eq:def-F}{F(M)(U)=M\otimes_{C^{*}(X)}\cat C^*(U)},
\end{equation} where $U$ is an open subset of $X$. Note that as $C^{*}(X)$ is different from $\cat C^{*}(X)$ this differs from Definition \ref{defn-F}.
Then we have the following result.
\begin{thm}\label{thm-singularcochains}
The dg functor $F: \Tw(X) \to \underline k\mods$ defined above
 is quasi-fully faithful, and induces an equivalence $\Hot(\Tw(X)) \cong \Dlf(X, \underline k)$.
\end{thm}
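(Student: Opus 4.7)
The strategy is to adapt the local-to-global machinery of Section \ref{sheaves} to the setting of the pseudo-compact algebra $C^*(X)$ and the sheaf $\cat C^*$, while accounting for two wrinkles not present in the de Rham or Dolbeault applications: the category $\Tw(X)$ allows infinitely generated twisted modules, and the target $\Dlf(X,\underline k)$ is all locally free dg sheaves rather than perfect ones. Throughout, the pseudo-compact structure on $C^*(X)$ means tensor products $M = V\otimes C^*(X)$ are completed, which must be tracked through every construction.

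The first step is to supply the local input, which replaces condition (**) from Section \ref{sheaves}. For a contractible open $U \subset X$, the singular simplicial set $\operatorname{Sing}(U)$ is weakly equivalent to a point through Kan complexes. By Corollary \ref{cor:mctopologicalequivalence}, the map $k \to C^*(U)$ induces quasi-equivalences $\MCdg(k\otimes\uEnd(G)) \simeq \MCdg(C^*(U)\otimes\uEnd(G))$ and $\Tw(k)\simeq \Tw(C^*(U))$ for any free graded $k$-module $G$ (where $\uEnd(G)$ and all tensor products are interpreted pseudo-compactly). Since $X$ is locally contractible, every point admits a neighbourhood basis of such $U$, so every MC element in $C^*(U)\otimes\uEnd(G)$ is, after shrinking $U$, homotopy gauge equivalent to one in the image of $k\otimes\uEnd(G)$.

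The second step is quasi-full faithfulness. For $M = V\otimes C^*(X)$ and $N = W\otimes C^*(X)$, I would pick a contractible hypercover $\mathfrak U_\bullet \to X$ (available by local contractibility). On each $U\in\mathfrak U_\bullet$ the sheaves $F(M)|_U$ and $F(N)|_U$ are homotopy equivalent, by the first step, to free $\underline k|_U$-modules tensored with $\cat C^*|_U$, so the local $R\uHom$ reduces to an underived Hom. A \v Cech-type computation following the pattern of Lemma \ref{lemma-fullyfaithful}, using flabbiness of $\cat C^*$ to kill higher cohomology of $\sHom(F(M),F(N))$ degree-wise and a filtration-by-degree argument to lift this to the total complex, then identifies $R\uHom_{\underline k}(F(M),F(N))$ with $\uHom_{\Tw(X)}(M,N)$ after rewriting the latter through the restriction--sheafification maps $C^*(X) \to \cat C^*(U)$.

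The third step is essential surjectivity onto $\Dlf(X,\underline k)$. Given $\cat V\in\Dlf(X,\underline k)$, I would form the dg $\cat C^*$-module $\cat V\otimes\cat C^*$, which is locally free by construction; taking global sections and transferring along the quasi-isomorphism $C^*(X)\to\cat C^*(X)$ together with a twisted (i.e. semi-free) replacement yields a candidate object of $\Tw(X)$ whose image under $F$ is the sheafification of $\cat V\otimes\cat C^*$, hence quasi-isomorphic to $\cat V$. Any clc sheaf is built from locally constant sheaves of $k$-modules by (possibly infinite) extensions, and since $\Hot(\Tw(X))$ is triangulated and contains infinite direct sums, the essential image is closed under the relevant operations. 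The main obstacle, I expect, lies in the third step: controlling infinitely generated twisted modules through the completed tensor product, and in particular verifying that the twisted replacement of $\Gamma(X,\cat V\otimes\cat C^*)$ can genuinely be taken in $\Tw(C^*(X))$ rather than merely in $C^*(X)$-modules, without the finite-dimensional input used by Lemma \ref{lemma-surjective} via Swan's theorem.
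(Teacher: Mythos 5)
Your high-level outline captures the general shape of the argument — invoke the weak-equivalence invariance (Corollary \ref{cor:mctopologicalequivalence}) as the local input in place of (**), and then push through a local-to-global argument modelled on Lemma \ref{lemma-fullyfaithful} — but there are two genuine gaps, both of which the paper resolves with specific constructions that your sketch does not supply.

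The first gap is in full faithfulness. After reducing to the comparison of $\Hom$-complexes, you need to compare $\uHom(V,W)\otimes C^*(X)$ (with its twisted differential, over the \emph{singular} pseudo-compact cochain algebra) with $\uHom(V,W)\otimes\cat C^*(X)$ (over the \emph{sheafified} version). You dismiss this as "rewriting through the restriction--sheafification maps," but the map $C^*(X)\to\cat C^*(X)$ is a quasi-isomorphism of algebras, not obviously one after forming these twisted Hom-complexes, and the latter are not themselves twisted modules (they carry a two-sided twist). The paper's Lemma \ref{lemma-mcsheafcohomology} handles exactly this point, via a spectral sequence whose $E_1$-page reduces the problem to degree-zero coefficients, where a careful twisted adaptation of the barycentric subdivision and small-simplex argument is carried out by hand. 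Without something playing this role your step 2 does not close.

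The second gap is in essential surjectivity, and you correctly flag where it lies but do not resolve it. Your proposal to take $\Gamma(X,\cat V\otimes\cat C^*)$ and then produce a "twisted (semi-free) replacement" is precisely what cannot be done directly: an arbitrary $C^*(X)$-module has no reason to admit a resolution of the form $(V\otimes C^*(X),D_V)$ with $V$ free and the tensor product completed, and the machinery of Lemma \ref{lemma-surjective} (Swan's theorem, nuclear maps) is unavailable without finite generation, as you note. The paper takes a different route entirely. It first uses Lemma \ref{lemma-freeresolution} to hit locally constant sheaves, then triangle closure to get all bounded clc sheaves. For bounded-below sheaves it constructs the preimage as a homotopy colimit of lifts of truncations $\tau_{\leq i}\cat M$, relying on Lemma \ref{lem:truncation} which shows that truncations of twisted modules exist and are compatible with $F$. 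For bounded-above sheaves (where naive homotopy limits in $\Hot(\Tw(X))$ would not behave well) it performs an explicit inductive construction of a twisted module $Q'\otimes C^*(X)$ by building cones on extension maps $\eta_i$; the finite homological dimension of $k$ enters precisely here, to ensure each graded piece $(Q'_i)^m$ stabilizes as $i\to-\infty$, so $Q'$ is well-defined. The general case is then the extension of $\tau_{\geq 0}$ by $\tau_{\leq 0}$. Your phrase "built from locally constant sheaves by (possibly infinite) extensions" gestures at this, but infinite extensions in the bounded-above direction are exactly the delicate point, and a working argument needs the truncation lemma and the stabilization trick that you have not proposed.
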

The proof is somewhat long and technical and will occupy the rest of the paper. Many of the technical complications of the proof disappear under the assumption that $\ground$ is a field.

Given a simplex $\sigma$ we will denote its vertices by $\sigma_{0}, \dots, \sigma_{n}$ and write $\sigma_{i_{0}\dots i_{k}}$ for the subsimplex spanned by $\sigma_{i_{0}}, \dots, \sigma_{i_{k}}$.

Recall that we may write objects of $\Tw(X)$ as $(V \otimes C^{*}(X), D_{V})$ where $V$ is some free graded $\ground$-module.

\begin{lemma}\label{lemma-locallyconstant}
Let $V$ be a $\ground$-module considered as a dg module concentrated in degree 0. Then there is a bijective correspondence between $C^*(X)$-modules of  the form $(V \otimes C^*(X), D_V)$ and functors from the fundamental groupoid $\Pi(X)$ of $X$ to $\End(V)$, where the latter is viewed as a linear category with one object.
\end{lemma}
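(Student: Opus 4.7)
My approach starts from the observation (immediately after the definition of $\Tw(A)$ in Section \ref{twisted}) that a twisted module structure on $V\otimes C^{*}(X)^{\#}$ is the same data as a Maurer--Cartan element of degree $1$ in the pseudo-compact dg algebra $\End(V)\otimes C^{*}(X)$. When $V$ is concentrated in degree $0$, $\End(V)$ is also concentrated in degree $0$, so such an MC element $x$ reduces to a continuous assignment $\sigma\mapsto x(\sigma)\in\End(V)$ on singular $1$-simplices of $X$, vanishing on degenerate simplices by normalization of $C^{*}(X)$. Conversely, every such assignment defines a degree-$1$ element; the only remaining constraint is the MC equation.

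The first step would be to unpack $dx+x^{2}=0$ on an arbitrary singular $2$-simplex $\sigma$. Using the Alexander--Whitney cup product and the standard simplicial coboundary, it becomes
\[
x(\sigma_{02})=x(\sigma_{01})+x(\sigma_{12})+x(\sigma_{01})\,x(\sigma_{12}).
\]
Setting $g(\sigma):=1+x(\sigma)$ rewrites this as the multiplicative cocycle identity $g(\sigma_{02})=g(\sigma_{01})\,g(\sigma_{12})$, with $g(\text{const})=1$ coming from normalization. Since $V$ lives in degree $0$ there are no higher constraints, so MC elements correspond bijectively to such $\End(V)$-valued multiplicative $1$-cocycles on the singular simplicial set of $X$.

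The second step is to identify these cocycles with functors $\Pi(X)\to\End(V)$. I would show that $g$ depends only on the path-homotopy class of a $1$-simplex: any path-homotopy between $\sigma,\sigma'$ with common endpoints can be realised by a singular $2$-simplex $\tau$ whose third edge is a constant $1$-simplex, so the cocycle identity on $\tau$ together with $g(\text{const})=1$ forces $g(\sigma)=g(\sigma')$. Invertibility of $g$ on each path-class follows by applying the cocycle relation to a $2$-simplex with edges $\gamma,\bar\gamma$ and the constant path at the starting point, which exists because $\gamma\cdot\bar\gamma$ is null-homotopic rel endpoints. Hence $g$ descends to a functor $\Pi(X)\to\End(V)$ that automatically factors through $\Aut(V)$, as $\Pi(X)$ is a groupoid.

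For the inverse assignment, a functor $F:\Pi(X)\to\End(V)$ gives a normalized $1$-cochain $x_{F}(\sigma):=F([\sigma])-1_{V}$; functoriality applied to the three edges of any singular $2$-simplex (using $[\sigma_{01}]\cdot[\sigma_{12}]=[\sigma_{02}]$ in $\Pi(X)$) reverses the calculation in Step 1 and yields the MC equation. The two constructions $x\leftrightarrow g\leftrightarrow F$ are mutually inverse by inspection. The one non-formal point, and hence the main technical obstacle, is the explicit construction in Step 3 of the singular $2$-simplices witnessing path-homotopy invariance and invertibility; this is a standard simplicial exercise but is the only geometric input.
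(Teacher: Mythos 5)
Your proposal is correct and follows essentially the same approach as the paper: identify twisted modules with degree-$1$ MC elements of $\End(V)\otimes C^{*}(X)$, unpack the MC equation on a $2$-simplex via the Alexander--Whitney cup product, pass to the substitution $g=1+x$ to get a multiplicative cocycle, and observe that this cocycle descends to a functor on $\Pi(X)$, with the evident inverse assignment. The paper's proof asserts the homotopy-invariance and multiplicativity of $g$ directly from the cocycle identity without exhibiting the witnessing $2$-simplices; your explicit construction of those simplices (collapsing an edge of the homotopy square, and the fold map for invertibility) is sound and simply fills in that step.
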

\begin{proof}
It suffices to identify $\MC(C^{*}(X, \End(V)))$ with functors from $\Pi(X)$ to $\End(V)$.
Consider an element $f\in \MC(C^{*}(X, \End(V)))$, which is by definition a 1-cochain. To define the functor $\Phi(f): \Pi(X) \to \End(V)$ it suffices to specify it on the morphisms of $\Pi(x)$. For a singular 1-simplex $\sigma$ of $X$, viewed as a morphism of $\Pi(X)$, set  $\Phi(f)([\sigma]) = 1+f(\sigma)$.

Assuming $f$ is MC we obtain, for any singular 2-simplex $\tau$,
\[ 
0 = (df + f^{2})(\tau) = f(\tau_{01}) - f(\tau_{02})+ f(\tau_{12}) + f(\tau_{01})f(\tau_{12}).
\]
We deduce that $1+f(\tau_{02}) = (1+f(\tau_{01}))(1+f(\tau_{12}))$. This shows that homotopic paths have the same image and concatenation of paths is sent to multiplication, so $\Phi(f)$ is a well-defined functor.

Conversely, given a functor $F: \Pi(X) \to \End(V)$, define a cochain $\Psi(F)$ by $\Psi(F)(\sigma) = F([\sigma]) - 1$. Given any two-simplex $\tau$ we know $F(\tau_{01}) \circ F(\tau_{12}) = F(\tau_{02})$, and the same computation as above shows $\Psi(F)$ is MC. The maps $\Psi$ and $\Phi$ are inverse to each other.
\end{proof}

Lemma \ref{lemma-locallyconstant} is compatible with the correspondence between locally constant sheaves and representations of the fundamental groupoid.
With the notation of the proof, for every MC element $x \in \MC(\End(V) \otimes C^{*}(X))$ we have that $(\underline V \otimes \cat C^{*}, d + x)$ is a soft resolution of the locally constant sheaf $\cat V$ associated to $\Phi(g)$. To check the monodromy we may observe that if $f: C_{*}(X) \to V$ represents a section and $\sigma$ is a 1-simplex connecting two points $\sigma_{1}$ and $\sigma_{0}$ then the cocycle condition $(d_{C}+g)(f)(\sigma) = 0$ gives $f(\sigma_{0}) = \Phi(g)([\sigma])(f(\sigma_{1}))$.

\begin{lemma}\label{lemma-mcsheafcohomology}
Let $A, B$ be $k$-algebras concentrated in degree zero and $x, y$ be MC elements in $A \otimes C^{*}(X)$ and $B \otimes C^{*}(X)$ respectively. Let $M$ be a $(B,A)$-bimodule and consider the dg module $(M \otimes C^{*}(X), D_{M})$ where $D_{M}(f) = df + yf  - (-1)^{|f|}fx$. Then the natural quotient map $q_{M}:M \otimes C^{*}(X) \to M\otimes \cat C^{*}(X)$ is a quasi-isomorphism.
\end{lemma}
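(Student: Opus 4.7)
The plan is to identify both complexes with the derived global sections of a locally constant sheaf $\cat L_M$ on $X$, so that $q_M$ is exactly the comparison map. By Lemma~\ref{lemma-locallyconstant}, the MC elements $x$ and $y$ yield functors $\Phi(x)\colon \Pi(X) \to A$ and $\Phi(y)\colon \Pi(X) \to B$ from the fundamental groupoid. Combined with the $(B,A)$-bimodule structure on $M$, these assemble into a functor $\Pi(X) \to \ground\mods$ (via $\gamma \mapsto \Phi(y)(\gamma)\cdot(-)\cdot\Phi(x)(\gamma)^{-1}$) and hence a locally constant sheaf $\cat L_M$ on $X$ with fibre $M$.

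For the sheaf side I would show that $(\underline M \otimes \cat C^{*}, D_M)$ is a flabby (hence $\Ga$-acyclic) resolution of $\cat L_M$. Flabbiness is inherited from $\cat C^{*}$, and to verify that the cohomology sheaves are $\cat L_M$ concentrated in degree zero I would argue locally on a contractible open $U$. Lemma~\ref{lemma-locallyconstant} applied to $U$ makes $x|_U$ and $y|_U$ correspond to trivial functors on $\Pi(U)$, so by the results of Section~\ref{strong} they are homotopy gauge equivalent to zero; this produces a quasi-isomorphism between $(M \otimes \cat C^{*}(U), D_M|_U)$ and the untwisted $(M \otimes \cat C^{*}(U), d)$. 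Since $\underline{\ground}|_U \to \cat C^{*}|_U$ is a resolution and $U$ is contractible, the latter has cohomology concentrated in degree zero and isomorphic to $M$, where finite homological dimension of $\ground$ allows one to replace $M$ by a bounded projective resolution if it is not $\ground$-flat. Tracking monodromies shows the resulting local system is $\cat L_M$, and taking global sections yields $(M \otimes \cat C^{*}(X), D_M) \simeq R\Ga(X, \cat L_M)$.

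An analogous argument handles the other side. Picking a contractible hypercover $\mathfrak U_\bullet \to X$ (available since $X$ is paracompact and locally contractible), the same local trivialization shows that $(M \otimes C^{*}(U), D_M|_U)$ has cohomology $M$ in degree zero for every $U$ appearing in $\mathfrak U_\bullet$. The presheaf $U \mapsto (M \otimes C^{*}(U), D_M|_U)$ satisfies \v Cech descent with respect to $\mathfrak U_\bullet$ (classical Mayer-Vietoris for singular cochains, to which the twist is a natural perturbation), so the associated \v Cech spectral sequence converges to $R\Ga(X, \cat L_M)$ and identifies $(M \otimes C^{*}(X), D_M)$ with this object. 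Since $q_M$ is compatible with both identifications, it is a quasi-isomorphism.

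The main obstacle is the \v Cech descent for the twisted singular-cochain presheaf when $M$ is infinitely generated, since one must work with the completed tensor product dictated by the pseudo-compact structure on $C^{*}(X)$ and verify convergence of the spectral sequence in that setting. A secondary subtlety is ensuring that the local homotopy-gauge trivializations of $x|_U$ and $y|_U$ can be chosen coherently enough across intersections of the hypercover so that the local cohomology sheaves patch together to the specific local system $\cat L_M$ determined by the monodromies, rather than some twisted version thereof.
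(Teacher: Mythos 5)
Your strategy --- identify both sides with $R\Ga(X,\cat L_M)$ for a suitable local system $\cat L_M$ and argue $q_M$ is the comparison --- is genuinely different from the paper's proof, which is a direct, elementary argument. The paper observes $\cat C^{*}(X)=\varinjlim_{\mathfrak U} C^{*}_{\mathfrak U}(X)$ and shows each $q^{\mathfrak U}_M\colon M\otimes C^{*}(X)\to M\otimes C^{*}_{\mathfrak U}(X)$ is a chain homotopy equivalence by redoing the classical barycentric subdivision argument verbatim with twisted operators: it defines $b_M f(\sigma)=Y(\sigma_{0b})f(\beta\sigma)$, checks $b_M D_M + D_M b_M=\id$ using $Y(\sigma_{0b})Y(\sigma_{b0})=\id$ (Lemma~\ref{lemma-locallyconstant}), and then builds $S_M$, $T_M$, $P_M$ formally exactly as in the untwisted case. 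No sheaf theory, no hypercovers, no spectral sequences, and no completeness/convergence issues arise.

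The gap in your version is not the ``secondary subtlety'' you flag but the one you call the ``main obstacle.'' When you assert that the presheaf $U\mapsto (M\otimes C^{*}(U),D_M|_U)$ satisfies \v Cech hyperdescent so that the spectral sequence identifies $(M\otimes C^{*}(X),D_M)$ with $R\Ga(X,\cat L_M)$, you are invoking a statement that is essentially the content of the lemma itself: descent of singular cochains (twisted or not) to their sheafification is precisely the claim that $q_M$ is a quasi-isomorphism, and the standard proof of that descent \emph{is} the subdivision argument. Calling the twist a ``natural perturbation'' of Mayer--Vietoris does not establish the twisted homotopy identities one actually needs; the paper has to construct the twisted $b_M$, $S_M$, $T_M$ and verify $b_MD_M+D_Mb_M=\id$, and this is where the nontrivial use of the MC/local-system structure (the identity $Y(\sigma_{0b})Y(\sigma_{b0})=\id$) enters. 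Without that verification, ``the twist is a perturbation'' is an assertion, not a proof. The coherence-of-trivializations issue you raise is also real, but secondary: even granting it, the descent step is circular as stated. The paper's approach buys you a short, self-contained, characteristic-free proof with no sheaf-theoretic prerequisites and no convergence worries for infinitely generated $M$; yours would, if completed, package the result conceptually as an identification of twisted singular and sheaf cohomology, but at the cost of proving a descent statement that is at least as hard as the lemma and is most naturally established by the very subdivision argument you are trying to avoid.
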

\begin{proof}
We proceed exactly in the same way as to establish the quasi-isomorphism $C^{*}(X) \to \cat C^{*}(X)$. For the reader's convenience we provide some details. We first observe that $\cat C^{*}(X) = \varinjlim C^{*}_{\mathfrak U}(X)$ where the limit is over covers of $X$ and $C^{*}_{\mathfrak U}(X)$ are those singular cochains which vanish on $\mathfrak U$. So it suffices to show $q_{M}^{\mathfrak U}: M \otimes C^{*}(X)\to M\otimes C^{*}_{\mathfrak U}(X)$ is a quasi-isomorphism for every cover $\mathfrak U$. 

The natural quotient map $q^{\mathfrak U}: C^{*}(X)\to C^{*}_{\mathfrak U}(X)$ has a homotopy inverse $P$ induced by iterated barycentric subdivision of simplices. Inspection of the proof e.g.\ in \cite[Proposition 2.21]{Hatcher02}
shows that this homotopy equivalence is entirely formal, depending only on the boundary operator $d$ and the operator $b$ induced by taking the cone over the barycentre of a simplex. As long as they satisfy $db + bd = \id$ one may define a subdivision chain map $S$ and the homotopy $T$ from $S$ to the identity, and use these to define the desired map $P$, see below.
Thus we may repeat the whole construction with twisted coefficients.

Write $X = x+1$ and $Y = y+1$. 
We may write the differential on $M \otimes C^{*}(X)$ as 
\[
(D_{M}f)(\sigma) = Y(\sigma_{01})f(\partial_{0}\sigma) + \sum_{i=1}^{n-1}(-1)^{i} f(\partial_{i}\sigma) + (-1)^{n}f(\partial_{n})X(\sigma_{n-1,n}).
\]
We observe that this is just a two-sided version of the usual singular cochain complex with local coefficients.

Let $\beta(\sigma)$ denote the cone over the barycentre of $\sigma$. We then define $b_{M}f(\sigma)$ as $Y(\sigma_{0b})f(\beta\sigma)$ where $\sigma_{0b}$ denotes the 1-simplex connecting $\sigma_{0}$ to the barycentre of $\sigma$. A straightforward computation, using the fact that $Y(\sigma_{0b})Y(\sigma_{b0}) = \id$ by Lemma \ref{lemma-locallyconstant}, shows $b_{M}D_{M}+D_{M}b_{M} = \id$. 

We inductively define a twisted subdivision ${S_M}(f) = D_{M}S_{M}b_{M}(f)$, with $S_{M}(f) = f$ on a $0$-cochain, and a chain homotopy $T_{M}(f) = (\id - D_{M}T_{M})b_{M}$, with $T_{M}(f)= 0$ on a $0$-cochain. Then $D_{M}T_{M} + T_{M}D_{M} = \id - S_{M}$.
For $m \geq 0$ let $H_{m}= \sum_{0\leq i < m} S^{i}_{M}T_{M}$, this is a chain homotopy from $\id$ to $S_{M}^m$. For every simplex $\sigma$ there is a minimum $m(\sigma)$ such that $\beta^{m(\sigma)}(\sigma)$ is contained in $\mathfrak U$. We then define the map $H$ by $H(f)(\sigma) = H_{m(\sigma)}(f)(\sigma)$ and the map $P_{M} = S_{M}^{m(\sigma)} + D_{M}H_{m(\sigma)} - D_{M}H$.
One can check that $H$ is a chain homotopy between the identity and $P_{M}\circ q^{\mathfrak U}_{M}$. 
Moreover $P_{M}$ is a right inverse of $q^{\mathfrak U}_{M}$. This establishes the desired homotopy equivalence. Details are as for the untwisted dual case, which may be found in \cite{Hatcher02}.
\end{proof}
\begin{rk}
	Associated to a representation of the fundamental groupoid $R:\Pi(X)\to\End(V)$ is, according to Lemma \ref{lemma-locallyconstant}, an MC element $\Psi(R)$. The corresponding twisted module $(V\otimes C^*(X),D_V)\cong (V\otimes C^*(X))^{[\Psi(R)]}$ coincides with the singular complex of $X$ with local coefficients corresponding to the representation $R$. To a $\Pi(X)$-bimodule, i.e.
	a representation $P:\Pi\times\Pi^{\text{op}}\to\End(M)$ one similarly associates a pair $\Psi_1(P),\Psi_2(P)$ of MC elements and a two-sided twisted complex $(M\otimes C^*(X))^{[\Psi_1(P),\Psi_2(P)]}$; this complex was used in the proof of Lemma \ref{lemma-mcsheafcohomology}. Any $\Pi(X)$-bimodule determines, via the canonical functor $\Pi(X)\to\Pi(X)\times\Pi(X)^{\text{op}}$, a left $\Pi(X)$-module. It is easy to see that for a singular $n$-cochain $f$ with values in $M$, the map
	\[
	f\to (f)\cdot(\sigma_{n, n-1}\cdot \sigma_{n-1,n-2}\cdot\ldots\cdot\sigma_{1,0})
	\]
determines an isomorphism from the two-sided complex with local coefficients to the one-sided complex. This is analogous to the well-known isomorphism between the two-sided Hochschild complex of a group and a one-sided complex, cf. \cite[Chapter 6, p. 293]{MacLane67}.	
\end{rk}

The following lemma is only needed if $\ground$ is not a field. In that case not all locally constant sheaves are locally free, but the underlying graded module of a twisted module needs to be free.
\begin{lemma}\label{lemma-freeresolution}
Any locally constant sheaf $\cat V$ on $X$ is the image under $F$ of a bounded twisted $C^*(X)$-module $W\otimes C^*(X)$.
\end{lemma}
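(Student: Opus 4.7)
My plan is to realize $\cat V$ as $F$ of the totalization of a bounded resolution of $V := \cat V_{x_0}$ by locally constant sheaves whose stalks are free $\ground$-modules, splicing together the strict twisted modules produced by Lemma \ref{lemma-locallyconstant} via the differentials of the resolution.

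Since $X$ is connected and locally contractible, $\cat V$ corresponds to a $\ground[G]$-module $V$ with $G = \pi_1(X, x_0)$, so it suffices to construct a bounded resolution $0 \to W^{-d} \to \dots \to W^0 \to V \to 0$ by $\ground[G]$-modules whose underlying $\ground$-modules are free. I start with any $\ground[G]$-free resolution of $V$ (which is automatically $\ground$-free, since $\ground[G]$ is $\ground$-free) and truncate at length $d$ equal to the homological dimension of $\ground$. The truncation kernel is $\ground$-projective; to upgrade it to $\ground$-free I apply the Eilenberg swindle, enlarging the resolution by a trivial summand $0 \to Q \to Q \to 0$ with $Q$ a sufficiently large $\ground[G]$-free module.

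Each resulting term $W^i$ then determines, via Lemma \ref{lemma-locallyconstant}, an MC element $\xi_i \in \End(W^i)^0 \otimes C^1(X)$ and hence a strict twisted $C^*(X)$-module. The $G$-equivariant differentials $d^i \colon W^i \to W^{i+1}$ give maps between these, which I assemble into a single twisted module on the bounded graded $\ground$-free module $W = \bigoplus_{i=-d}^0 W^i$ with total differential $D = d_{C^*} + \xi + d_W$, where $\xi$ is the block-diagonal sum of the $\xi_i$. The MC equation $D^2 = 0$ holds because each $\xi_i$ is MC and $d_W$ commutes with $\xi$ by $G$-equivariance.

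Finally, since $F$ is a dg functor, $F(W \otimes C^*(X), D)$ is the totalization of the complex with terms $F(W^i \otimes C^*(X)) = (W^i \otimes \cat C^*, d + \xi_i)$. By the discussion following Lemma \ref{lemma-locallyconstant}, each such term is a soft resolution of the locally constant sheaf $\cat W^i$ associated to $W^i$, so the totalization is quasi-isomorphic to the total complex of $\cat W^\bullet$, which resolves $\cat V$. The main obstacle is the construction of the bounded $\ground$-free $\ground[G]$-resolution: $V$ may have infinite projective dimension over $\ground[G]$, so the truncated kernel is only $\ground$-projective a priori, and the Eilenberg swindle is essential for obtaining genuine $\ground$-freeness.
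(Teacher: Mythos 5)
Your proof is correct, but it takes a genuinely different route from the paper's. The paper's proof stays entirely within the obstruction-theoretic framework: it applies Lemma \ref{lemma-locallyconstant} once to obtain a (non-free) twisted module $(V\otimes C^*(X),D_V)$ with $D_V$ concentrated in $C^1$-degree, chooses a finite $\ground$-free resolution $(W,d_W)\to V$, and then builds the differential $D_W=\sum w_i$ term by term, killing the successive obstructions using $\Ext^{-k}(V,V)=0$ for $k>0$. The resulting MC element on $W$ typically has components in all $C^i$-degrees. Your approach instead exploits the $\pi_1(X)$-module structure from the start: by resolving $V$ inside $\ground[G]$-modules (so that all differentials are $G$-equivariant), you get to choose $w_1=\xi$ to be the block-diagonal sum of strict MC elements produced by Lemma \ref{lemma-locallyconstant}, and the $G$-equivariance of $d_W$ then forces the super-commutator $[\xi,d_W]$ to vanish, so no higher corrections $w_{\geq 2}$ are needed and the MC equation closes at the first step. (This last point is where the signs in the twisted module differential actually matter — you should spell out that $G$-equivariance of $d^i$ gives exactly the cancellation in $\xi d_W+d_W\xi$ once the Koszul sign from $\End(W)\otimes C^*(X)$ is accounted for.) Your route is more structured and yields a twisted module whose MC element is concentrated in $C^0\oplus C^1$, at the cost of needing the resolution to be $\ground[G]$-linear, which you correctly address by an Eilenberg swindle with a large $\ground[G]$-free module to make the final syzygy $\ground$-free rather than merely $\ground$-projective. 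The paper's route is less structured but more directly reusable: the inductive MC-lifting along a quasi-isomorphism to a free complex is exactly the technique re-used in the truncation Lemma \ref{lem:truncation}, where no group action is available, whereas your argument would not apply there.
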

\begin{proof}
By Lemma \ref{lemma-locallyconstant} we know there is a $C^*(X)$-module $(V\otimes C^*(X), D_V)$ mapping to $\cat V$. The only problem is that $V$ might not be free over $k$. We pick a finite free resolution $q: (W, d_{W}) \to V$. Now we need to construct a differential $D_W$ on $W \otimes C^*(X)$ that maps to $D_V$.

$D_V$ is determined by the map $D^{1}_{V}|_{V}: V \to V \otimes C^{1}(X)$. For degree reasons all the maps $D^{i}_{V}: V \to V \otimes C^{i}(X)$ for $i \neq 1$ are zero. 

As $W$ is free we can lift $D_{V}^{1}$ to a chain map $w_{1}: W \to W \otimes C^{1}(X)$. As $D_{V}^{2}=0$ we know $w_{1}^{2} = d_{W}(w_{2})$ for some $w_{2}: W \to W[-1] \otimes C^{2}$.
We let $w_{0}= d_{W}$. Then this is the beginning of an inductive construction of a homomorphism $\sum_{i \geq 0} w_{i}$ that will define a differential on $W \otimes C^*(X)$. Assume we are given $w_{i}$ for $i \leq k$ satisfying $\sum_{i=0}^{n} w_{i}w_{n-i} = 0$ for every $n \leq k$. Then $\sum_{i=1}^{k} w_{i}w_{k+1-i}$ is an object of $\End(W)\otimes C^{k+1}$.

We now compute $[w_{0}, \sum_{i=1}^{k} w_{i}w_{k+1-i}]$ to check that $\sum w_{i}w_{k+1-i}$ is a $d_{W}$-cocycle. We observe that
\[
\sum_{0 \leq m, i, j \leq k; m+i+j=k+1} [w_{m}, w_{i}w_{j}] = 0
\]
by symmetry. Then we split the sum as
\[
\left[w_{0}, \sum_{i=1}^{k} w_{i}w_{k+1-i}\right] + \sum_{m = 1}^{k} \left[w_{m}, \sum_{i=0}^{k+1-m} w_{i} w_{k+1-m-i}\right] = 0
\]
But for $m \geq 1$ all $\sum_{i} w_{i} w_{k+1-m-i}$ are $0$ by induction. Thus the first term in the sum is $0$, which is what we had to show.

As $H^{-k}(\uEnd((W, d_{W}))) = \Ext^{-k}(V, V) = 0$ we see that the cocycle $\sum_{i=1}^{k} w_{i}w_{k+1-i}$ is a boundary and we can define $w_{k+1}$ such that $\sum_{i=0}^{k+1} w_{i}w_{k+1-i} = 0$. As $W$ is finite this process terminates. $D_{W}|_{W} \coloneqq \sum w_{i}$ defines a differential on $W\otimes C^*(X)$ that is compatible with $D_{V}$.

Now we filter $q: (W \otimes C^*(X), D_W) \to (V \otimes C^*(X), D_V)$ by the singular cochain degree. This is a complete exhaustive filtration and the associated graded map consists of quasi-isomorphisms $(W, d_{W})\otimes C^{p}(X) \simeq V \otimes C^{p}(X)$, thus $q$ is a quasi-isomorphism.

In fact, $q$ is a quasi-isomorphism if we replace $X$ by any open subset $U$ and thus we have constructed $(W \otimes C^*(X), D_{W})$ whose image under $F$ is quasi-isomorphic to $\cat V$.\end{proof}

\begin{lemma}\label{lemma-singularclc}
	For any twisted module $E$ the sheaf $F(E)$ is clc.
\end{lemma}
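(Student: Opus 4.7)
The plan is to work locally around each point of $X$ and appeal to the topological version of the Schlessinger--Stasheff theorem, Corollary \ref{cor:mctopologicalequivalence}, to trivialize the twisted module structure up to homotopy on a contractible neighborhood.

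First, writing $E = (V \otimes C^*(X), D_V)$ for some free graded $\ground$-module $V$, I would fix $x \in X$ and, using local contractibility, choose a contractible open neighborhood $U$ of $x$. Base changing along the restriction map $C^*(X) \to C^*(U)$ gives the twisted $C^*(U)$-module $E|_U := E \otimes_{C^*(X)} C^*(U)$. Since $U$ is contractible, Corollary \ref{cor:mctopologicalequivalence} asserts that the unit $\ground \to C^*(U)$ induces a quasi-equivalence $\Tw(\ground) \to \Tw(C^*(U))$, so $E|_U$ is homotopy equivalent in $\Tw(C^*(U))$ to a constant twisted module of the form $(W, d_W) \otimes_\ground C^*(U)$ for some free dg $\ground$-module $(W, d_W)$.

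The next step is to push this local equivalence forward to sheaves on $U$. For any open $U' \subset U$, associativity of the tensor product gives $F(E)(U') = E \otimes_{C^*(X)} \cat C^*(U') \cong E|_U \otimes_{C^*(U)} \cat C^*(U')$, so the restriction $F(E)|_U$ is the image of $E|_U$ under the dg functor $\Phi : M \mapsto \bigl(U' \mapsto M \otimes_{C^*(U)} \cat C^*(U')\bigr)$. Since $\Phi$ is a dg functor it preserves homotopy equivalences, and I obtain $F(E)|_U \simeq W \otimes_\ground \cat C^*|_U$ as sheaves of dg $\ground$-modules on $U$. As $\cat C^*|_U$ is a resolution of the constant sheaf $\underline \ground|_U$, the cohomology sheaves of $W \otimes_\ground \cat C^*|_U$ are $H^*(W, d_W) \otimes_\ground \underline \ground|_U$, which are constant on $U$. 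Since $x$ was arbitrary, $F(E)$ has locally constant cohomology and is therefore clc.

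The only genuinely delicate point is the interaction between the pseudo-compact algebra $C^*(X)$ and the discrete sheaf $\cat C^*(U)$: the tensor products defining $F(E)$ and the associativity isomorphism above should be interpreted in line with the conventions of Section \ref{sect-notations}. Once this is verified, everything reduces to a direct local-to-global application of Corollary \ref{cor:mctopologicalequivalence}, since the clc condition is purely about cohomology sheaves and involves no compact generation or finiteness hypothesis that would have to be tracked separately.
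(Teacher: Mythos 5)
Your argument follows the same route as the paper's own proof, which is essentially a two-sentence version of what you wrote: restrict to a contractible open set $U$, apply Corollary \ref{cor:mctopologicalequivalence} (via the weak equivalence between $U$ and a point) to see that the restricted twisted module is homotopy equivalent to an untwisted one, and conclude that the cohomology sheaves are constant on $U$. You spell out the base-change and sheafification bookkeeping more explicitly than the paper does, and you correctly flag the tensor-product subtlety coming from the pseudo-compact convention, but the core idea and the key lemma invoked are identical.
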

\begin{proof}
	Consider $P = (V \otimes C^*(X), D_V)$ in $\Tw(C^*(X))$ and restrict $P$ to a contractible subset $U \subset X$.
	We use the weak equivalence between $U$ and a point and apply Corollary \ref{cor:mctopologicalequivalence} to show that $P|_{U}$ is weakly equivalent to a constant sheaf on $U$ with fibre $(V, d_{V})$.
\end{proof}

\begin{lemma}\label{lemma-singularff}
	The natural functor $\Tw(C^{*}(X)) \to \cat C^{*}\mods$ is quasi-fully faithful.
\end{lemma}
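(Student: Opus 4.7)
The plan is to imitate the local-to-global argument of Lemma \ref{lemma-fullyfaithful}, with Lemma \ref{lemma-mcsheafcohomology} (in a mildly generalized form) playing the role of the smoothing step that was previously supplied by fineness of a dg sheaf of algebras. Fix two twisted $C^{*}(X)$-modules $M=(V\otimes C^{*}(X),D_{V})$ and $N=(W\otimes C^{*}(X),D_{W})$, encoded by Maurer--Cartan elements $x\in\uEnd(V)\otimes C^{*}(X)$ and $y\in\uEnd(W)\otimes C^{*}(X)$; recall that all tensor products with the pseudo-compact algebra $C^{*}(X)$ are implicitly completed. Since $M^{\#}$ is free over $C^{*}(X)^{\#}$, adjunction identifies the left hand side of the comparison map with the two-sided twisted complex $(\uHom_{k}(V,W)\otimes C^{*}(X),\tilde D)$, where $\tilde D(f)=d(f)+yf-(-1)^{|f|}fx$ comes from the corresponding $(\uEnd(W)\otimes C^{*}(X),\uEnd(V)\otimes C^{*}(X))$-bimodule structure.

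For the right hand side, choose a hypercover $\mathfrak U_{\bullet}\to X$ by contractible open sets, which exists because $X$ is locally contractible. On each $U_{\alpha}$ in the hypercover the restriction $F(M)|_{U_{\alpha}}\cong\underline V\otimes\cat C^{*}|_{U_{\alpha}}$ is free as a sheaf of $\cat C^{*\#}|_{U_{\alpha}}$-modules, so
\[
\sHom_{\cat C^{*}}(F(M),F(N))(U_{\alpha})\cong(\uHom_{k}(V,W)\otimes\cat C^{*}(U_{\alpha}),\tilde D).
\]
One then computes the derived Hom as the \v Cech complex $\check C^{*}(\mathfrak U,\sHom_{\cat C^{*}}(F(M),F(N)))$ of this hypercover. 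Filtering the resulting double complex by the internal degree coming from $\cat C^{*}$ and using that $\cat C^{*}$ is flabby on the locally contractible paracompact Hausdorff space $X$ (hence so is each internal degree of $\sHom_{\cat C^{*}}(F(M),F(N))$, viewed as a possibly infinite product of shifts of $\cat C^{*}$), the \v Cech columns are acyclic in positive \v Cech degree. The associated spectral sequence collapses to
\[
R\uHom_{\cat C^{*}}(F(M),F(N))\simeq(\uHom_{k}(V,W)\otimes\cat C^{*}(X),\tilde D).
\]

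It remains to verify that the natural map $(\uHom_{k}(V,W)\otimes C^{*}(X),\tilde D)\to(\uHom_{k}(V,W)\otimes\cat C^{*}(X),\tilde D)$ is a quasi-isomorphism, and this is a direct extension of Lemma \ref{lemma-mcsheafcohomology} to the graded $(\uEnd(W),\uEnd(V))$-bimodule $\uHom_{k}(V,W)$ together with the MC elements $x$ and $y$. The subdivision proof of Lemma \ref{lemma-mcsheafcohomology} uses only the two-sided bimodule action and the cone-over-barycentre operator $b$, so it carries over verbatim to this graded setting. The main obstacle is administrative rather than conceptual: one must reconcile the hypercover descent and the flabbiness argument with the fact that $V$ and $W$ may be infinitely generated, so that $V\otimes C^{*}(X)$ and $\uHom_{k}(V,W)\otimes\cat C^{*}$ are to be understood in the completed sense. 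This is handled by arguing levelwise in the internal grading of $\cat C^{*}$, where each piece is a (possibly infinite) product of flabby singular cochain sheaves on $X$ and all the spectral sequence and subdivision arguments apply term by term.
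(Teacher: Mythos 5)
Your plan follows the same skeleton as the paper's: identify the $\Tw(C^*(X))$-side Hom with a two-sided twisted complex $(\uHom(V,W)\otimes C^*(X),\tilde D)$, identify the sheaf-side Hom with $(\uHom(V,W)\otimes\cat C^*(X),\tilde D)$, and then show the quotient map is a quasi-isomorphism. But there are two issues, one cosmetic and one serious.

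The cosmetic one: quasi-fully-faithfulness here is a statement about the (underived) dg Hom-complexes in $\Tw(C^*(X))$ and in $\cat C^*\mods$. The identification of $\uHom_{\cat C^*}(F(M),F(N))$ with $\uHom(V,W)\otimes\cat C^*(X)$ is immediate from Lemma \ref{lemma-protoswan}, so the hypercover/flabbiness detour through $R\uHom$ is not needed for the statement you are actually trying to prove.

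The serious one is the final step. You claim that the subdivision proof of Lemma \ref{lemma-mcsheafcohomology} "carries over verbatim" to the graded bimodule $\uHom(V,W)$ with the graded algebras $\uEnd(V),\uEnd(W)$ and general MC elements $x,y$. This is false as stated. That proof builds a contracting homotopy $b_M f(\sigma)=Y(\sigma_{0b})f(\beta\sigma)$ and the identity $b_M D_M + D_M b_M=\id$ hinges on the invertibility relation $Y(\sigma_{0b})Y(\sigma_{b0})=\id$, which is supplied by Lemma \ref{lemma-locallyconstant}: an MC element in a degree-zero $\End(V)\otimes C^*(X)$ is forced to live in $\End(V)\otimes C^1(X)$ and encode a functor on the fundamental groupoid. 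In the graded case the MC element has components $x^0,x^1,x^2,\ldots$, the degree-two part of the MC equation acquires extra terms involving $x^0$ and $x^2$, and $1+x^1(\sigma_{0b})$ is simply not invertible in general, so the barycentric machine does not run. The paper sidesteps exactly this obstruction by filtering by singular degree: at the $E_1$-page only the $C^1$-components $x^1,y^1$ survive, and the MC equation forces them to commute with $d^0$, so they act on $H^q(\uHom(V,W))$ as honest fundamental-groupoid representations in the degree-zero algebras $\End(V)^0,\End(W)^0$. Only then can Lemma \ref{lemma-mcsheafcohomology} be applied, and comparing the two $E_2$-pages proves the quotient map is a quasi-isomorphism. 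Your proof needs this reduction (or an argument replacing it), not a verbatim repetition of the subdivision trick.
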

\begin{proof}
	Given twisted modules $(V \otimes C^*(X), D_V)$ and $(W \otimes C^*(X), D_W)$ over $C^*(X)$ we know that $\uHom_{\Tw({C^*(X)})}(V \otimes C^{*}(X), W \otimes C^{*}(X))$ is given by $(\uHom(V,W)\otimes C^*(X))$
	with a differential defined by $f \mapsto D_W \circ f - (-1)^{|f|}(f \otimes \id_{\cat C^*(X)}) \circ D_V$ on $\uHom(V,W)$.
	
	We then compute $\uHom_{\cat C^{*}}(\underline V\otimes \cat C^*, \underline W \otimes \cat C^*)$. 
	There is an isomorphism of presheaves  between $U \mapsto 
	\uHom_{
		\mathcal C^*(U)}(\underline V\otimes \cat C^*(U), \underline W \otimes \cat C^*(U))$  and $U \mapsto \uHom(V, W) \otimes \mathcal C^*(U)$ with differenial induced by $f \mapsto D_{W}\circ f -(-1)^{|f|} \circ D_V$. 
	Thus the global sections of the sheafifications agree and we have
	$\uHom_{\cat C^{*}}(\underline V\otimes \cat C^*, \underline W \otimes \cat C^*) \cong \uHom(V, W) \otimes \cat C^{*}(X)$ with differential as above.
	
	Writing $M$ for $\uHom(V,W)$ we now need to show that there is a quasi-isomorphism $M \otimes C^{*}(X) \simeq M \otimes \cat C^{*}(X)$. Note that $M \otimes C^{*}(X)$ with its differential $D_{M}$ is not a $C^{*}(X)$-module, and in particular not a twisted module. We may still consider its sheafification.
	
	By Lemma \ref{lemma-singularclc} we know $\underline V\otimes \cat C^{*}$ and $\underline W \otimes \cat C^{*}$ are clc and thus so is $\sHom(V, W) \otimes \cat C^{*}$.
	Moreover, by Corollary \ref{cor-twistedreduced} we may assume that $V \otimes C^{*}$ and $W \otimes C^{*}$ are reduced, 
	so we may assume that $D^{0}_{M}$ induces a differential on $M$.
	
	We consider the natural map induced by the quotient $C^{*}(X) \to \cat C^{*}(X)$ and filter both 
	sides by the singular degree. We claim the associated spectral sequences agree on the second sheet, showing the map is a quasi-isomorphism as the filtration is complete exhaustive. For the first spectral sequence we have ${}^{I}E_{1}^{pq} = H^{q}(M\otimes C^{0}(X))\otimes_{C^{0}(X)} C^{p}(X)$, which we may rewrite as $H^{q}(M)\otimes C^{p}(X)$.
	The second sheet computes cohomology of a dg module $(H^{q}(M)\otimes C^*(X), d_{1})$, which satisfies the conditions of Lemma \ref{lemma-mcsheafcohomology} for $A = \End(V)$ and $B = \End(W)$.
	For the second spectral sequence one has ${}^{II}E_{1}^{pq}  = H^{q}(M)\otimes \cat C^{p}(X)$, and by Lemma \ref{lemma-mcsheafcohomology} the $E_{2}$-terms agree. 
	
	Here the first spectral sequence computes the Ext groups between clc sheaves using the singular cochain complex, and the second spectral sequence computes the Ext groups using a soft resolution. 
\end{proof}
Recall that to any dg  $\ground$-module $C^*$ one can associate its canonical truncation ${\tau_{\leq i}C^*}$ obtained by replacing 
$C^n$ with zero for $n>i$ and with 
${\ker(C^i\to C^{i+1})}$ for 
$n=i$. Then $\tau_{\leq i}C^*$ 
is a dg submodule in $C^*$ and we set $\tau_{\geq i+1}C^*:=C^*/(\tau_{\leq i}C^*)$. This construction works for complexes over any abelian category, in particular one can define canonical truncations for dg sheaves of $\ground$-modules. The following result shows that there are corresponding truncation functors for twisted $C^*(X)$-modules.
\begin{lemma}\label{lem:truncation}
For every twisted $C^*(X)$-module $M$ there is a twisted module $\tau_{\leq i}M$ and a map	$\tau_{\leq i}M\to M$ such that 
$F(\tau_{\leq i}M)\to F(M)$ is isomorphic in the derived category of dg $\ground$-sheaves on $X$ to the canonical map $\tau_{\leq i}F(M)\to F(M)$. Similarly there is twisted module $\tau_{\geq i}M$ and a map	$M\to\tau_{\geq i}M$ such that 
$F(M)\to F(\tau_{\geq i}M)$ is isomorphic in the derived category of dg $\ground$-sheaves on $X$ to the canonical map $F(M)\to \tau_{\geq i}F(M)$.
\end{lemma}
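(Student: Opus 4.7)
The plan is to construct $\tau_{\leq i}M$ by normalising $M$ to a reduced twisted module and then transferring its Maurer--Cartan structure across a free replacement of the canonical truncation of the underlying dg $\ground$-module. Since the singular simplicial set of $X$ is Kan, Corollary \ref{cor-twistedreduced} lets me replace $M$ up to quasi-isomorphism by a reduced twisted module $(V\otimes C^*(X),D_V)$ whose differential satisfies $D_V|_V=d_V\otimes 1+x$, where $d_V$ is a differential on $V$ and $x\in\End(V)\otimes C^{\geq 1}(X)$ is an MC element for the differential $[d_V,-]$. I then form the canonical truncation $\tau_{\leq i}(V,d_V)$ of dg $\ground$-modules. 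Its only potentially non-free component is $\ker(d_V\colon V^i\to V^{i+1})$; since $\ground$ has finite homological dimension, I can resolve this kernel by a bounded free $\ground$-module resolution and thereby produce a quasi-isomorphism $q\colon(W,d_W)\to\tau_{\leq i}(V,d_V)$ with $W$ degreewise free and concentrated in degrees $\leq i$.

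Next I would inductively lift $x$ along $q$: simultaneously construct an MC element $\tilde x=\sum_{k\geq 1}\tilde x_k$ on $W$ (with $\tilde x_k\colon W\to W\otimes C^k(X)$) and a chain map $\phi=q+\sum_{k\geq 1}\phi_k\colon W\to V\otimes C^*(X)$ whose $C^*(X)$-linear extension defines a morphism from $(W\otimes C^*(X),\,d_W\otimes 1+\tilde x)$ to $(V\otimes C^*(X),D_V)$. The induction is on cochain degree: at stage $k$ the compatibility equation reads $[d,\phi_k]=q\tilde x_k-x_k q+(\text{terms in }\phi_j,\tilde x_j,\,j<k)$, and by the same obstruction argument used in the proof of Lemma \ref{lemma-freeresolution} (exploiting the freeness of $W$ and the fact that $q$ is a quasi-isomorphism onto the subcomplex $\tau_{\leq i}V\subset V$) a solution for $\tilde x_k,\phi_k$ can be found. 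Setting $\tau_{\leq i}M:=(W\otimes C^*(X),\,d_W\otimes 1+\tilde x)$ equipped with the morphism induced by $\phi$ yields the desired twisted module and map.

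To identify the map on sheafifications with the canonical truncation of $F(M)$, I would filter both $F(\tau_{\leq i}M)(U)=W\otimes\cat C^*(U)$ and $F(M)(U)=V\otimes\cat C^*(U)$ by cochain degree. By Lemma \ref{lemma-mcsheafcohomology} the $E_2$-pages of the resulting spectral sequences compute the cohomology sheaves out of $H^*(W,d_W)$ and $H^*(V,d_V)$ respectively (with their natural fundamental-groupoid actions), and the map induced by $\phi$ realises the canonical inclusion $H^{\leq i}\hookrightarrow H^*$. The construction of $\tau_{\geq i+1}M$ together with its map from $M$ is formally dual: take a bounded, degreewise free resolution of $\tau_{\geq i+1}(V,d_V)$ concentrated in degrees $\geq i+1$ (resolving $V^{i+1}/\operatorname{im}(d_V)$ instead of the kernel), and run the analogous lifting in the opposite direction.

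The principal obstacle is the simultaneous inductive construction of $\tilde x$ and $\phi$, because the MC data on $W$ and the intertwining morphism must be produced in tandem while keeping track of signs and of the twisted differentials on both sides. Conceptually, however, this is a standard homotopy-transfer argument for MC elements along a quasi-isomorphism of the underlying dg $\ground$-modules, closely analogous to the inductive construction in Lemma \ref{lemma-freeresolution}, so no essentially new ideas are required.
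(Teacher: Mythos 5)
Your construction of $\tau_{\leq i}M$ is essentially the paper's: reduce via Corollary \ref{cor-twistedreduced}, form the canonical truncation of $(V,d_V)$, replace it by a bounded free complex $(W,d_W)$, and lift the twisting data by the inductive obstruction argument of Lemma \ref{lemma-freeresolution}. The paper streamlines the lifting step by first observing that $D_V$ already restricts to the sub-dg-module $\tau_{\leq i}(V)\otimes C^*(X)$ (since the higher components of $D_V$ strictly lower $V$-degree, except $d^1$ which commutes with $d_V$ and hence preserves $\ker d_V^i$); you instead recreate the MC element on $W$ by a transfer, which amounts to the same computation. For the identification with the canonical truncation, the paper argues locally: restrict to a contractible $U$ and invoke Corollary \ref{cor:mctopologicalequivalence} to untwist, which is shorter than your spectral sequence comparison, though your route also works.

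The genuine gap is in your treatment of $\tau_{\geq i}$. A bounded, degreewise free complex of $\ground$-modules concentrated in degrees $\geq i+1$ and quasi-isomorphic to $\tau_{\geq i+1}(V,d_V)$ does \emph{not} exist in general: free resolutions extend \emph{downward} in cohomological degree, and a free "coresolution" going upward from $V^{i+1}/\operatorname{im}(d_V)$ would require that module to embed into a free $\ground$-module with free cokernels, which fails already for $\ground=\mathbb{Z}$ and a torsion quotient (e.g.\ $\mathbb{Z}/2$ in degree $i+1$ cannot be the $H^{i+1}$ of a complex of free abelian groups vanishing below degree $i+1$, since $\ker d^{i+1}$ would be torsion-free). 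Finite homological dimension of $\ground$ does not rescue this. Moreover, if you instead allow the free replacement to extend below degree $i+1$, you no longer get a map $M\to\tau_{\geq i}M$ in the right direction without further argument. The paper sidesteps all of this by defining $\tau_{\geq i}M$ as the cone of the already-constructed map $\tau_{\leq i-1}M\to M$; since $\Tw(C^*(X))$ is strongly pre-triangulated, this cone is automatically a twisted module, comes with a map from $M$, and sheafifies to the cone of $\tau_{\leq i-1}F(M)\to F(M)$, which is $\tau_{\geq i}F(M)$. You should replace your "formally dual" resolution by this cone construction.
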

\begin{proof}
	We will prove the statement for the truncation $\tau_{\leq i}$; the other claim for $\tau_{\geq i}$ will follow by taking $\tau_{\geq i}M$ to be the cone of the map $\tau_{\geq i-1}M\rightarrow M$. Let $(V \otimes C^*(X), D_{V})$ be a twisted $C^*(X)$-module that will be assumed to be reduced (or we replace it by a reduced one by Corollary \ref{cor-twistedreduced}). Note that $D_V$ restricts to $(\tau_{\leq i}(V) \otimes [C^*(X)])$ and so  
	$ (\tau_{\leq i}(V) \otimes [C^*(X)], D_{V})$ is well-defined as a dg $C^*(X)$-module. This may not be a twisted $C^*(X)$-module since $\tau_{\leq i}(V)$ may not be free over $\ground$. 
	
	We pick a $\ground$-free resolution $(W,d_W)$ of $\tau_{\leq i}(N)$ and, arguing as in the proof of Lemma \ref{lemma-freeresolution}, construct a differential $D_W$ on $W\otimes C^*(X)$ together with a filtered quasi-isomorphism
	$(W\otimes C^*(X), D_W)\to (\tau_{\leq i}(V) \otimes [C^*(X)], D_{V})$.
	
	Let us set $\tau_{\leq i}(V \otimes C^*(X), D_{V}):=(W\otimes C^*(X), D_W)$. We need to show that the truncation so obtained agrees with the truncation of dg sheaves upon applying the functor $F$. This is a local statement, and so it suffices to prove it with $X$ replaced by a small contractible neighbourhood $U\subset X$.  This is, however, obvious since the twisted $C^*(U)$-module $(W\otimes C^*(U), D_W)$
	is homotopy equivalent to the (untwisted) tensor product of complexes $(W,d_W)$ and $C^*(U)$ by Corollary \ref{cor:mctopologicalequivalence}.
\end{proof}
\begin{proof}[Proof of Theorem \ref{thm-singularcochains}]
We use Lemma \ref{lemma-perfectconstant} to identify $\Dlf(X, \underline k)$ with cohomologically constant sheaves. Then by Lemma \ref{lemma-singularclc} the image of $F$ consists of locally free dg sheaves. Next we show that the functor $\Hot(\Tw(X)) \to \Dlf(X, \underline k)$ induced by $F$ is fully faithful.

To this end note that this functor can be represented as the following composition:
\[
\Hot(\Tw(X)) \to \Hot(\cat C^{*}\mods) \to \Dlf(X,  {\cat C^{*}}) \to \Dlf(X, \underline k)
\]
  By Lemma \ref{lemma-singularff} the first functor is fully faithful.
 To show the second functor is fully faithful on the image of $\Hot(\Tw(X))$ we claim $\uHom(\underline V \otimes \cat C^{*}, \underline W \otimes \cat C^{*}) \simeq R\uHom(\underline V \otimes \cat C^{*}, \underline W \otimes \cat C^{*})$.
We deduce this claim by following verbatim the proof of Lemma \ref{lemma-fullyfaithful}.
By Corollary \ref{cor-twistedreduced} (1)
 we have a homotopy equivalence $(V \otimes C^{*}(U), D_{W}) \simeq (V,d_{V})\otimes C^{*}(U)$ on any contractible set $U$. This takes the place of condition (*).
 We allow for unbounded dg sheaves, but this does not affect the proof as the filtration by degree of $\sHom(V,W)\otimes \cat C^{*}$ is still exhaustive and Hausdorff. Note that the dg $k$-module $(V, d_{V})$ is cofibrant as it is free in each degree and $k$ has finite homological dimension. Hence the associated sheaf $\underline V \otimes \cat C^{*}$ is also cofibrant.

Since $\underline \ground \simeq \cat C^{*}$ we have $D(X, \cat C^{*}) \cong D(X, \underline \ground)$ and $\Hot(\Tw(X)) \to D_{lf}(X, \underline \ground)$ is fully faithful. 
  
Moreover, as $\cat  C^{*}$ is projective over $k$ we may refine the argument and show, as in the proof of Corollary \ref{cor-quasifullyfaithful}, that the functor $F: \Tw(X) \to \underline \ground \mods$ is quasi-fully faithful. 

Now we determine the quasi-essential image of $F$.
The subcategory of $\Dlf(X, \underline k)$ given by bounded dg sheaves is the smallest triangulated subcategory inside the derived category of dg $\ground$-module sheaves on $X$ containing all locally constant sheaves. This follows since any bounded element in $\Dlf(X, \underline k)$ is an iterated extension of its cohomology sheaves.
The image of $F$ contains all locally constant sheaves by Lemma \ref{lemma-freeresolution}. Thus, since $F$ is compatible with cones, the quasi-essential image of $F$ contains all \emph{bounded} clc sheaves. 

 Observe that every bounded below clc sheaf $\cat M$ in $\Dlf(\underline k)$ is a homotopy colimit (in the sense of \cite{Bokstedt93}) of its truncations, $\hocolim_i \tau_{\leq i} \cat M \simeq \cat M$.
By Lemma \ref{lemma-freeresolution} we may lift the diagram $\tau_{\leq i}\cat M$ to a diagram $\{P_i\}$ in $\Hot(\Tw(X))$. As $\Hot(\Tw(X))$ has arbitrary direct sums we may define $P = \hocolim_i P_{i}$ and there is a natural map $\cat M \to F(P)$ which is an isomorphism on cohomology (as we can check on truncations using $\tau_{\leq i} F(P) \simeq F(\tau_{\leq i} (P)) \simeq \tau_{\leq i} \cat M$ by Lemma \ref{lem:truncation}). Thus, all \emph{bounded below} clc sheaves are in the quasi-essential image of $F$.

Finally we write a bounded above clc sheaf $\cat M$ as a limit of bounded dg sheaves; $\cat M \cong \lim \tau_{\geq i} \cat M$.
We will explicitly construct a twisted $C^*(X)$-module $Q$ with a map $F(Q) \to \cat M$ such that $\tau_{\geq i}F(Q)  
\simeq \tau_{\geq i}\cat M$, showing $F(Q) \simeq \cat M$.

To find $Q$ we proceed as follows. We fix some $Q_{0} = Q_{0}' \otimes C^{*}(X)$ mapping to $\tau_{\geq 0}\cat M$ and then construct twisted modules $Q_{i} = Q_{i}'\otimes C^{*}(X)$, $i < 0,$ inductively.  
We may write $\tau_{\geq i} \cat M$ as an extension of $\tau_{\geq i+1}\cat M$ by $H^{i}(\cat M)[-i]$.
Using Lemma \ref{lemma-freeresolution} we obtain  $W_{i} \otimes C^*(X)$ mapping to $H^{i}(\cat M)[-i]$ under $F$ where $W_{i}$ is a finite complex of free $\ground$-modules; moreover, because $\ground$ is of finite homological dimension $\operatorname{gd}(\ground)$, the length of $W_i$ is bounded independently of $i$.
 
As $F$ is quasi-full we may lift the extension map $\tau_{\geq i+1}\cat M \to H^{i}(\cat M)[-i+1]$ to $\eta_{i}: Q_{i+1} \to W_{i}\otimes C^*(X)[1]$. Now the cone on $\eta_{i}$ is defined as the twisted module of the form $Q'_{i} \otimes C^{*}(X)$ where $Q'_{i} = Q'_{i+1} \oplus W_{i}[1]$ and the differential is given by $D_{Q_{i}} = (D_{Q_{i+1}}, D_{W} + \eta_{i})$, see Section \ref{twisted}.
Thus we let $Q_{i}$ be the cone of the map $\eta_{i}$. By construction there is a quasi-isomorphism $F(Q_{i}) \simeq \tau_{\geq i}\cat M$. 

By construction $Q_{i}'$ is eventually constant, to be precise the degree $m$ part $(Q_{i}')^{m}$ is independent of $i$ if $i < m - \operatorname{gd}(\ground) -1$. We define a graded $\ground$-module $Q'$ by  $(Q')^{m} \coloneqq (Q_{i}')^{m}$ for sufficiently small $i$.
Similarly, the differential $D_{Q_{i}}$ restricted to $Q_{i}'$ is eventually constant and we define $D_{Q}$ on $(Q')^{m}$ to be $D_{Q_{i}}$ (for sufficiently small $i$) and extend by the Leibniz rule.

Then $Q = (Q' \otimes C^{*}(X), D_{Q})$ is the desired twisted module. There is a natural map $Q \to Q_{i}$ and the maps $F(Q_{i}) \to \tau_{\geq i}\cat M$ induce a map $F(Q) \to \cat M$.
We need to check that $\tau_{\geq i}F(Q) \simeq F(\tau_{\geq i}Q)$ is equivalent to $\tau_{\geq i}\cat M$. By construction $\tau_{\geq i+1} Q'  = Q'_{i+1} \oplus \tau_{\geq i+1}W_{i}[1] \oplus \tau_{\geq i+1}W_{i-1}[2] \oplus \dots$. All summands but the first are acyclic for $D^{0}$, thus after applying $F$ we can show that $F(\tau_{\geq i} Q) \simeq \tau_{\geq i}\cat M$.
 
This shows that that every \emph{bounded above} clc sheaf is in the quasi-essential image of $F$. 

As every dg sheaf $\mathcal F$ is an extension of a bounded above sheaf $\tau_{\geq 0}\mathcal F$ by a bounded below dg sheaf $\tau_{\leq 0}\mathcal F$, it follows that $F$ is quasi-essentially surjective.
\end{proof}
\begin{cor}
With $X$ as above $F$ induces an equivalence $\Hot(\Twfg(X) ) \to \Dperf(X, \underline k)$.
\end{cor}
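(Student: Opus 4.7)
The plan is to deduce this directly from Theorem \ref{thm-singularcochains} by restricting the equivalence $\Hot(\Tw(X)) \simeq \Dlf(X,\underline k)$ to perfect objects on both sides. Quasi-full faithfulness of $F$ on $\Twfg(X)$ is automatic by restriction from $\Tw(X)$, so the substantive content reduces to two claims: that $F$ maps $\Twfg(X)$ into $\Dperf(X,\underline k)$, and that every perfect dg sheaf lies in the quasi-essential image.

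For the first claim, I would take a finitely generated twisted module $M = (V \otimes C^*(X), D_V)$ with $V$ a finitely generated graded free $k$-module, and fix a contractible open $U \subset X$. Corollary \ref{cor:mctopologicalequivalence} applied to the weak equivalence $U \simeq *$ yields that $M|_U$ is homotopy equivalent to a constant twisted module of the form $(V,d_V) \otimes C^*(U)$, so $F(M)|_U$ is quasi-isomorphic to $(V,d_V) \otimes \underline k|_U$. Since $V$ is finitely generated free, this is strictly perfect, and so $F(M)$ is perfect. For a general object of $\Twfg(X)$, which is a homotopy retract of a finitely generated twisted module, the same conclusion follows from idempotent-completeness of $\Dperf(X,\underline k)$ (Lemma \ref{lemma-perfectkaroubi}).

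For essential surjectivity, the first observation is that on a connected base a perfect dg sheaf $\mathcal F$ has cohomology sheaves that are locally constant with perfect $k$-module stalks (Lemma \ref{lemma-perfectconstant}), and since $X$ is connected, these stalks are determined up to monodromy by a single one, so $\mathcal F$ is bounded in a fixed range of degrees $[a,b]$ and each $H^i(\mathcal F)$ has finitely generated fibres. I would then re-run the bounded-above part of the construction from the proof of Theorem \ref{thm-singularcochains}, building $Q$ as an iterated cone of lifts of the extensions $\tau_{\geq i+1}\mathcal F \to H^i(\mathcal F)[-i+1]$ through the quasi-fully faithful $F$. The essential refinement is that Lemma \ref{lemma-freeresolution} may be strengthened in our setting: a finitely generated $k$-module admits a bounded resolution by finitely generated free $k$-modules because $k$ has finite homological dimension, so the resolutions $W_i$ can be chosen to be finitely generated. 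Since only finitely many degrees contribute, the iteration $Q'_a = Q'_b \oplus W_b[1] \oplus \cdots \oplus W_a[1]$ is a finite direct sum of finitely generated pieces and yields a finitely generated twisted module $Q \in \Twfg(X)$ with $F(Q) \simeq \mathcal F$.

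The main obstacle is the verification that this iterative cone construction preserves finite generation at every stage, but this is immediate from inspection: each cone step adds a single finitely generated summand, and the uniform global bound on cohomological degrees (valid on connected $X$) ensures termination. No new analysis beyond what already appears in the proof of Theorem \ref{thm-singularcochains} is required.
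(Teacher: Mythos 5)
Your plan correctly handles quasi-full faithfulness and the fact that $F$ carries $\Twfg(X)$ into $\Dperf(X,\underline{k})$, and for that first half the argument essentially matches the paper's.

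However, there is a genuine gap in your essential-surjectivity step. You assert that because $k$ has finite homological dimension, a finitely generated $k$-module admits a bounded resolution by finitely generated \emph{free} modules, and therefore the $W_i$ in Lemma \ref{lemma-freeresolution} can be chosen finitely generated. This is false for a general ring $k$ of finite homological dimension. For instance, take $k = \mathbb{Z}[\sqrt{-5}]$ (a Dedekind domain, so of global dimension $1$) and let $I = (2, 1+\sqrt{-5})$, which is a non-principal, hence non-free, finitely generated projective ideal. If $0 \to F_n \to \cdots \to F_0 \to I \to 0$ were a finite resolution by finitely generated free modules, then in $K_0(k) \cong \mathbb{Z} \oplus \operatorname{Cl}(k)$ one would have $[I] = \sum (-1)^i[F_i]$; the right side has trivial ideal-class component while the left side does not. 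So $I$ has no finite free resolution, and your "strengthening" of Lemma \ref{lemma-freeresolution} fails. Fundamentally, the obstruction is that perfect complexes are built from finitely generated \emph{projectives}, not \emph{frees}, and there can be a $K$-theoretic obstruction to trading one for the other. There is also a smaller imprecision upstream: the stalks of the individual cohomology sheaves of a perfect dg sheaf are not claimed by Lemma \ref{lemma-perfectconstant} to be finitely generated modules; the right statement is that the locally constant sheaf $\cat M$, as a dg sheaf concentrated in one degree, has fibre quasi-isomorphic to a strictly perfect complex $P$ over $k$.

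The paper circumvents the free-resolution problem rather than solving it: after reducing to a locally constant sheaf $\cat M$ with perfect fibre $P$ (a bounded complex of finitely generated projectives), it builds a complementary bounded complex $Q$ of finitely generated modules with $P \oplus Q$ termwise free and $Q$ having cohomology a module $N$ in degree $0$. Then $\cat M' = \cat M \oplus \underline{N}$ \emph{does} have a fibre with a finite resolution by finitely generated free modules, so Lemma \ref{lemma-freeresolution} applies to produce a finitely generated twisted preimage of $\cat M'$, and one recovers $\cat M$ as a direct summand using the idempotent completeness of $\Hot(\Twfg(X))$ inside $\Dperf(X, \underline{k})$. This "complement and split off an idempotent" device is the missing idea; no iterative cone construction from the bounded-above part of Theorem \ref{thm-singularcochains} is needed, as everything here is globally bounded in a finite range.
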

\begin{proof}
We follow the proof of Theorem \ref{thm-singularcochains}. In particular this means we define the functor $F$ on $\Twfgfree(X)$, the dg category of finitely generated twisted modules, and obtain an embedding $\Hot(\Twfgfree(X)) \to \Dperf(X, \underline k)$. As the right hand side is idempotent complete this extends to an embedding $\Hot(\Twfg(X)) \to \Dperf(X, \underline k)$.

Essential surjectivity needs a little extra care. Considering any perfect $\underline\ground$-module as a finite extension of its cohomology sheaves it suffices to find a preimage for a locally constant sheaf $\cat M$. 

The fiber of $\cat M$ may not be free, but by Lemma \ref{lemma-perfectconstant} it is quasi-isomorphic to a strictly perfect dg module $P$ over $k$. 
Next we find a dg module $Q$ over $k$ such that $P \oplus Q$ is free and of finite rank in each degree, and $Q$ has cohomology concentrated in degree $0$.
To do this let us write $P$ as $P^{n}\to \dots \to P^{0}$. We pick for each $P^{i}$ a $k$-module $R^{i}$ such that $P^{i}\oplus R^{i}$ is free of finite rank. Then let $Q^{i} = R^{i} \oplus \bigoplus_{j = i+1}^{n} R^{j}\oplus P^{j}$ and define differentials inductively. The map $d^{n}: Q^{n}\to Q^{n-1}$ is just the inclusion of $R^{n}$, and $d^{i}$ is defined as the natural inclusion into $Q^{i-1}$ of the cokernel of $d^{i-1}$. With this definition the cohomology of $Q$ is a $k$-module $N$ concentrated in degree 0. 

We now consider the locally constant sheaf $\cat M' = \cat M \oplus \underline N$. By construction its fiber has a finite free resolution of finite rank. We use Lemma \ref{lemma-freeresolution} to lift $\cat M'$ to a finitely generated twisted module, using the fact that we may choose $W$ in the proof of Lemma \ref{lemma-freeresolution} to be of finite rank.  But  $\cat M$ is a summand of $\cat M'$. Thus, as $\Twfg(X)$ is equivalent to an idempotent complete subcategory of $\Dperf(X, \underline k)$, it follows that $\cat M$ is in the essential image of $\Twfg(X)$.
\end{proof}

\begin{rk} 
For an early incarnation of MC elements on singular cochains see \cite{BrownJr59}. There twisting cochains are used to express singular chains on a fibre space in terms of singular chains on base and fibre. One may interpret this as  higher MC elements on $C^{*}(X)$ representing certain infinity local systems.
\end{rk}

There is a version of Theorem \ref{thm-singularcochains} for simplicial sets.

\begin{cor}
Let $X$ be a connected Kan complex. Then there is a quasi-fully faithful functor from $\Tw(X)$ to the category of dg sheaves of $\ground$-modules on $|X|$, the geometric realization of $X$, which induces an equivalence $\Hot(\Tw(X)) \cong \Dlf(|X|)$.
\end{cor}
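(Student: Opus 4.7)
The plan is to reduce the statement to Theorem \ref{thm-singularcochains} applied to the topological space $|X|$, via the weak equivalence $\eta: X \to \Sing(|X|)$ of Kan complexes.

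First I would record that $|X|$, being the geometric realization of a simplicial set, is a CW complex and hence locally contractible; it is connected because $X$ is. Theorem \ref{thm-singularcochains} therefore supplies a quasi-fully faithful dg functor
\[
F: \Tw(|X|) \longrightarrow \underline{\ground}\mods
\]
inducing an equivalence $\Hot(\Tw(|X|)) \cong \Dlf(|X|)$. Under the conventions of Section \ref{sect-notations} the symbol $\Tw(|X|)$ denotes $\Tw(C^*_{\text{sing}}(|X|))$, the category of twisted modules over the normalized singular cochain algebra of $|X|$.

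Next I would observe that, again by the conventions of Section \ref{sect-notations}, the normalized singular cochain algebra of the topological space $|X|$ coincides as a pseudo-compact dg algebra with $C^*(\Sing(|X|))$, the normalized cochain algebra of its singular simplicial set. Consequently there is a tautological identification $\Tw(|X|) = \Tw(\Sing(|X|))$. The unit $\eta: X \to \Sing(|X|)$ is a weak equivalence of simplicial sets; both source and target are Kan ($X$ by hypothesis, $\Sing(|X|)$ because singular simplicial sets of topological spaces are always Kan). Corollary \ref{cor:weaklyKan} therefore produces a quasi-equivalence $\Tw(\Sing(|X|)) \to \Tw(X)$ induced by the dg algebra map $\eta^*: C^*(\Sing(|X|)) \to C^*(X)$.

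Finally, I would define the desired functor as the composition of a quasi-inverse to this quasi-equivalence with $F$:
\[
\Tw(X) \xrightarrow{\;\simeq\;} \Tw(\Sing(|X|)) = \Tw(|X|) \xrightarrow{\;F\;} \underline{\ground}\mods.
\]
Since the first arrow is a quasi-equivalence and the second is quasi-fully faithful with image characterized by $\Dlf(|X|)$ on the homotopy category, the composite is quasi-fully faithful and induces $\Hot(\Tw(X)) \cong \Dlf(|X|)$. There is no genuine obstacle here; the only point requiring care is the identification of $C^*_{\text{sing}}(|X|)$ with $C^*(\Sing(|X|))$ as pseudo-compact dg algebras, which is immediate from the definitions.
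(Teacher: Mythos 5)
Your proposal is correct and follows essentially the same route as the paper: pass to $\Sing(|X|)$, which is Kan and weakly equivalent to $X$, invoke Corollary \ref{cor:weaklyKan} to identify the categories of twisted modules up to quasi-equivalence, and then apply Theorem \ref{thm-singularcochains} using local contractibility of $|X|$. The only small point worth making explicit is that the quasi-inverse to $\Tw(\Sing(|X|)) \to \Tw(X)$ can be taken to be the honest dg functor $\nu^*$ induced by a simplicial homotopy inverse $\nu : \Sing(|X|) \to X$ of $\eta$ (available since both are Kan), so no abstract inversion of quasi-equivalences is needed.
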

\begin{proof}
The singular simplicial set of $|X|$, is weakly equivalent to $X$. Since both are Kan simplicial sets, by Corollary \ref{cor:weaklyKan} their categories of twisted modules are quasi-equivalent.
Now the result follows from \ref{thm-singularcochains} since $|X|$ is locally contractible.
\end{proof}

\appendix
\section{Nuclear spaces}\label{app-nuclear}

In this Appendix we collect some facts about Grothendieck's nuclear spaces used in the main text, for the reader's convenience. Our main 
sources are \cite{Jarchow12} and \cite{Treves67}. We will consider complete locally convex Hausdorff topological vector spaces over $\mathbb{R}$ which will be referred to below as simply `vector spaces'.  If we have a linear continuous injection $U\hookrightarrow V$ that is a homeomorphism of $U$ on its image, we will refer to $U$ as a subspace of $V$.

\begin{defn}Let $U$ and $V$ be  vector spaces. Their projective tensor product ${U\otimes_\pi V}$ is a vector space having a universal property with respect to continuous bilinear maps out of $U\times V$, i.e.\ for any vector space $W$ there is a natural isomorphism between the set of continuous bilinear maps $U\otimes_\pi V\to W$ and the space of bilinear continuous maps $U\times V\to W$.	
\end{defn}
It is clear that the above defines $U\otimes_\pi V$ up to a unique isomorphism, and there is an explicit construction (that we will not need) showing that the vector space with the required universal property exists. There are other natural notions of a tensor product of vector spaces, of which the most important is that of an \emph{injective} tensor product denoted by $U\otimes_\epsilon V$, \cite[Chapter 16]{Jarchow12}. There is a canonical continuous map $U\otimes_\pi V\to U\otimes _\epsilon V$.
\begin{defn}
	A vector space $U$ is called \emph{nuclear} if for any vector space $V$ the canonical map 
	$U\otimes_\pi V\to U\otimes_\epsilon V$ is a topological isomorphism.
\end{defn}
From now on we will refer to projective tensor products as simply tensor products and omit the corresponding subscript. 

The category of nuclear spaces and continuous linear maps is closed with respect to various natural operations.
\begin{thm}\label{thm:stable}
	The collection of nuclear spaces is stable with respect to forming arbitrary direct products, tensor products and passage to subspaces.
\end{thm}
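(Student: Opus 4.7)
The plan is to verify each of the three closure properties in turn, exploiting the interaction between the projective and injective tensor products encoded in the definition of nuclearity.

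For \emph{tensor products}, given nuclear spaces $U$ and $V$ and an arbitrary $W$, I would chain together the nuclearity hypotheses for $U$ and $V$ via associativity of both $\otimes_\pi$ and $\otimes_\epsilon$:
\[
(U \otimes V) \otimes_\pi W \cong U \otimes_\pi (V \otimes_\pi W) \cong U \otimes_\pi (V \otimes_\epsilon W) \cong U \otimes_\epsilon (V \otimes_\epsilon W) \cong (U \otimes V) \otimes_\epsilon W,
\]
where the first middle isomorphism uses nuclearity of $V$ and the second uses nuclearity of $U$ applied to the space $V \otimes_\epsilon W$. Since $W$ was arbitrary, this shows $U \otimes V$ is nuclear.

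For \emph{direct products}, the key input is that both tensor product functors commute with arbitrary direct products in the first variable: for $\bullet \in \{\pi, \epsilon\}$, the natural map $(\prod_\alpha U_\alpha) \otimes_\bullet W \to \prod_\alpha (U_\alpha \otimes_\bullet W)$ is a topological isomorphism (the seminorms defining the product topology restrict to each factor, and the universal properties match up accordingly). Under these identifications, the canonical comparison $U \otimes_\pi W \to U \otimes_\epsilon W$ becomes the product of the componentwise comparison maps, each of which is an isomorphism by the assumed nuclearity of $U_\alpha$.

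The \emph{subspace} case is the main obstacle: the projective tensor product does not in general respect topological embeddings, so a direct comparison of $U \otimes_\pi W$ with $V \otimes_\pi W$ is not available, and the commutative square relating the four tensor products does not on its own yield that $U \otimes_\pi W \to U \otimes_\epsilon W$ is an isomorphism. The cleanest route here is to pass to the equivalent characterization of nuclearity in terms of \emph{nuclear seminorms}: $U$ is nuclear iff for every continuous seminorm $p$ on $U$ there is a larger continuous seminorm $q$ such that the induced map between Banach completions $\hat U_q \to \hat U_p$ is a trace-class operator. This equivalence is Grothendieck's fundamental theorem on nuclear spaces (see \cite{Jarchow12, Treves67}). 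Once available, stability under subspaces is essentially immediate: a continuous seminorm on $U$ extends to a continuous seminorm on $V$ (locally convex Hahn--Banach), one applies the nuclear seminorm property in $V$ to obtain a dominating seminorm whose associated operator between completions is trace-class, and the restriction of this data to $U$ furnishes a trace-class factorization on $U$ via the commuting square involving the isometric inclusions $\hat U_{\cdot} \hookrightarrow \hat V_{\cdot}$.
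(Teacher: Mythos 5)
The paper does not actually give a proof here: it simply cites \cite[Corollary 21.2.3]{Jarchow12}. So what you are doing is attempting the argument from scratch, and I will evaluate it on its own terms.

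Your tensor product argument is correct, and your argument for products also goes through (though the key claim that $\hat\otimes_\pi$ commutes with arbitrary products deserves a little more justification than ``the universal properties match up'': one has to observe that every continuous seminorm on $\prod_\alpha U_\alpha$ is dominated by one depending on finitely many coordinates, check the claim for finite direct sums, and then verify that $(\prod_\alpha U_\alpha)\otimes W$ is dense in $\prod_\alpha (U_\alpha \hat\otimes_\pi W)$; the functor $-\otimes_\pi W$ is a \emph{left} adjoint, so its commuting with a limit is not formal). Also, extending a seminorm from a subspace is not ``Hahn--Banach'' -- it is immediate from the definition of the subspace topology -- but that is only a labelling issue.

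The genuine gap is in the subspace case, which you correctly identify as the hard one but then wave away. Having found $\tilde q \geq \tilde p$ on $V$ with the canonical map $\hat V_{\tilde q}\to \hat V_{\tilde p}$ nuclear, setting $q=\tilde q|_U$, $p \le \tilde p|_U$, and looking at the square
\[
\begin{CD}
\hat U_{q} @>>> \hat U_{p}\\
@VVV @VVV\\
\hat V_{\tilde q} @>>> \hat V_{\tilde p}
\end{CD}
\]
does \emph{not} show that the top arrow is nuclear. The composite $\hat U_{q}\to \hat V_{\tilde q}\to \hat V_{\tilde p}$ is nuclear and happens to factor through the subspace $\hat U_{p}\hookrightarrow \hat V_{\tilde p}$, but nuclear operators between Banach spaces are not stable under corestriction to a closed subspace -- nuclearity is not an injective operator ideal. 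This is exactly the point the proof has to address. The standard fixes are either to replace ``nuclear'' by ``quasi-nuclear'' (a condition of the form $\|Tx\|\le \sum_n |f_n(x)|$ which manifestly depends only on the norm of $Tx$, hence is preserved under corestriction, and which characterizes nuclear spaces upon iterating the canonical maps), or to work with Hilbertian seminorms and Hilbert--Schmidt maps, where orthogonal projection onto the closed subspace makes corestriction harmless, again after an extra iteration because the Hahn--Banach-style extension $\tilde p$ need not be Hilbertian. Without one of these devices, the step ``the restriction of this data to $U$ furnishes a trace-class factorization on $U$'' is unjustified.
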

\begin{proof}
	See \cite[Corollary 21.2.3]{Jarchow12}.	
\end{proof}	
\begin{cor}\label{cor:stable}
	The category of nuclear spaces contains arbitrary limits.
\end{cor}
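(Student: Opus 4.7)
The plan is to reduce the existence of arbitrary limits in the category of nuclear spaces to the two stability properties already granted by Theorem \ref{thm:stable}, namely stability under arbitrary direct products and under passage to subspaces. Recall from basic category theory that a category admits all (small) limits as soon as it admits all (small) products and all equalizers, since a general limit $\varprojlim_{i \in I} U_i$ may be realized as the equalizer of the two natural maps between the products $\prod_i U_i$ and $\prod_{\phi : i \to j} U_j$ induced by the morphisms $\phi$ of the indexing diagram.

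First I would verify that arbitrary products exist and agree with the product in the ambient category of complete locally convex Hausdorff topological vector spaces. This is immediate: the usual direct product $\prod_i U_i$ of a family of complete Hausdorff locally convex spaces, equipped with the product topology, is again complete and Hausdorff and satisfies the universal property of the product with respect to continuous linear maps. By Theorem \ref{thm:stable}, if each $U_i$ is nuclear then so is $\prod_i U_i$, so this product lives in our subcategory.

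Next I would construct equalizers. Given two continuous linear maps $f, g : U \to V$ between nuclear spaces, set
\[
E := \{u \in U : f(u) = g(u)\} = \ker(f - g).
\]
Because $V$ is Hausdorff and $f - g$ is continuous, $E$ is a closed linear subspace of $U$; endowed with the subspace topology it is complete locally convex Hausdorff, and the inclusion $E \hookrightarrow U$ is a topological embedding. It has the expected universal property as an equalizer in the category of complete locally convex spaces, and by Theorem \ref{thm:stable} nuclearity is inherited by subspaces, so $E$ is nuclear.

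Combining these two ingredients in the standard way yields arbitrary limits: for any small diagram $D : I \to \{\text{nuclear spaces}\}$, form the nuclear product $P := \prod_{i \in I} D(i)$, form the nuclear product $Q := \prod_{\phi : i \to j} D(j)$ over the morphisms of $I$, consider the two parallel continuous linear maps $P \rightrightarrows Q$ (one using the projections, the other using $D(\phi)$ composed with projections), and take their equalizer inside $P$, which is nuclear by the previous paragraph. Since limits are characterized by their universal property and this construction satisfies it in the larger category, it does so a fortiori in the full subcategory of nuclear spaces. No step presents a real obstacle once Theorem \ref{thm:stable} is in hand; the only point worth a line of care is confirming that $E$ is closed, which uses the Hausdorff property of $V$.
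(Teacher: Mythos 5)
Your proof is correct and follows essentially the same route as the paper: reduce arbitrary limits to products and passage to subspaces (via equalizers realized as closed kernels), both of which preserve nuclearity by Theorem \ref{thm:stable}. The paper states this reduction in one sentence; you have simply spelled out the standard products-plus-equalizers argument in detail.
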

\begin{proof}
	This follows immediately from Theorem \ref{thm:stable} since any limit can be constructed using direct products and passing to subspaces.
\end{proof}
It turns out that the operation of tensor product with a nuclear space commutes with arbitrary limits:
\begin{thm}\label{thm:limit}
	Let $U_\alpha$ be a diagram of vector spaces and continuous linear maps and $V$ be a nuclear space. Then there is a natural topological isomorphism
\[(\underleftarrow{\lim}_\alpha U_\alpha)\otimes V\cong \underleftarrow{\lim}_\alpha( U_\alpha\otimes V).\]
\end{thm}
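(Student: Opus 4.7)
The plan is to exploit the fact that for a nuclear space $V$, the projective tensor product $-\otimes V$ coincides with the injective tensor product $-\otimes_\epsilon V$, and the latter is much better behaved with respect to limits. First, I would construct the natural continuous linear map
\[
\phi\colon (\varprojlim_\alpha U_\alpha) \otimes V \to \varprojlim_\alpha (U_\alpha \otimes V)
\]
using the universal property of the limit: each projection $\pi_\beta\colon \varprojlim_\alpha U_\alpha \to U_\beta$ induces $\pi_\beta \otimes \id_V$, and the resulting compatible family determines $\phi$. It then remains to show that $\phi$ is a topological isomorphism.

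Any limit in the category of complete locally convex Hausdorff spaces is realized as a closed linear subspace of a direct product (namely, as the equalizer of two maps between products). I would therefore reduce the claim to two special cases: (i) arbitrary direct products, i.e.\ $(\prod_\alpha U_\alpha) \otimes V \cong \prod_\alpha (U_\alpha \otimes V)$, and (ii) closed subspace inclusions, i.e.\ if $U' \hookrightarrow U$ is a subspace then so is $U' \otimes V \hookrightarrow U \otimes V$. Granted both, the general result follows by realizing $\varprojlim_\alpha U_\alpha$ as a closed subspace of $\prod_\alpha U_\alpha$, tensoring with $V$ to land via (ii) as a closed subspace of $(\prod_\alpha U_\alpha)\otimes V$, applying (i) to identify this with $\prod_\alpha (U_\alpha \otimes V)$, and recognising the image as precisely $\varprojlim_\alpha (U_\alpha \otimes V)$.

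For (i), I would invoke nuclearity: since $V$ is nuclear, $(\prod_\alpha U_\alpha) \otimes V$ coincides with $(\prod_\alpha U_\alpha) \otimes_\epsilon V$, and the $\epsilon$-tensor product commutes with arbitrary direct products of complete locally convex spaces (cf.\ \cite[Ch.~16]{Jarchow12}); this is immediate from the description of $U \otimes_\epsilon V$ as a space of equicontinuous bilinear forms on $U' \times V'$. For (ii), the same identification reduces the statement to the fact that $\otimes_\epsilon$ preserves subspace inclusions, which is essentially built into its very construction. Theorem \ref{thm:stable} (together with Corollary \ref{cor:stable}) ensures that all the spaces occurring in this chain of identifications remain within the category of nuclear spaces, so applying the hypothesis on $V$ at each step is legitimate.

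The main obstacle is the product case (i): the projective tensor product does not in general commute with infinite direct products, and the entire substance of the theorem resides in the fact that nuclearity of $V$ repairs this defect by identifying $\otimes_\pi$ with $\otimes_\epsilon$. The remaining ingredients—existence of $\phi$, reduction of arbitrary limits to products and closed subspaces, and preservation of closed subspaces under $\otimes_\epsilon$—are essentially formal consequences of the general theory of complete locally convex spaces.
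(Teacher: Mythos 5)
Your proof is correct and follows essentially the same route as the paper's: reduce the general limit to direct products and closed subspaces (the paper phrases the latter as ``passing to kernels''), then use nuclearity of $V$ to replace $\otimes_\pi$ by $\otimes_\epsilon$ and invoke the good limit behaviour of the injective tensor product (the paper cites \cite[Proposition 16.2.5 and Theorem 16.3.1]{Jarchow12} for exactly this). Your write-up is simply more explicit about the reduction step and the construction of the comparison map.
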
	
\begin{proof}
	It suffices to show that tensor products commutes with direct products and passing to kernels. This follows from  \cite[Proposition 16.2.5 and Theorem 16.3.1]{Jarchow12}, taking into account that injective and projective tensor products with a nuclear space are isomorphic.
\end{proof}	

A lot of vector spaces one encounters in analysis are nuclear. In particular:
\begin{thm}\label{thm:smooth}
	Let $W$ be an open subset of some topological  simplex $\Delta^{n}$. Then the algebra $C^\infty(W)$ of smooth functions on $W$ is 
	nuclear.
\end{thm}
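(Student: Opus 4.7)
The plan is to exhibit $C^\infty(W)$ as an inverse limit of nuclear spaces, starting from the classical result that $C^\infty(\Omega)$ is nuclear for any open subset $\Omega$ of a Euclidean space $\mathbb R^n$ (proved, e.g., in Treves, using that it is a countable inverse limit of certain weighted Sobolev spaces which are each shown to factor nuclearly). This classical fact, combined with Corollary \ref{cor:stable} and Theorem \ref{thm:stable}, will do the bulk of the work; what remains is to reduce $C^\infty(W)$ to something resembling the Euclidean case.

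First I would view $\Delta^n$ inside its affine span $\mathbb R^n$ and exhaust $W$ by an increasing sequence of compact subsets $K_1\subset K_2\subset\cdots$ with $W=\bigcup_i K_i$ and each $K_i$ contained in the relative interior of $K_{i+1}$ in $W$. For each $i$, let $\mathcal E(K_i)$ denote the Fr\'echet space of Whitney jets on $K_i$, which by definition consists precisely of the data one obtains by restricting an element of $C^\infty(W)$ together with all of its partial derivatives to $K_i$. By the very definition given in the paper of $C^\infty$ on a subset of a simplex (smooth in the interior, with all derivatives extending continuously up to the boundary), the natural jet-restriction maps $C^\infty(W)\to\mathcal E(K_i)$ are continuous and jointly yield a topological isomorphism
\[
C^\infty(W)\ \cong\ \varprojlim_i \mathcal E(K_i).
\]

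Next, I would show that each $\mathcal E(K_i)$ is nuclear. Whitney's extension theorem (and, in the simplex-boundary case, Seeley's extension theorem, already invoked earlier in the paper) supplies a continuous linear operator $E_i:\mathcal E(K_i)\to C^\infty(\mathbb R^n)$ splitting the jet-restriction map $C^\infty(\mathbb R^n)\to\mathcal E(K_i)$. This realizes $\mathcal E(K_i)$ as a closed topological direct summand of $C^\infty(\mathbb R^n)$; nuclearity of the latter together with the passage-to-subspaces clause of Theorem \ref{thm:stable} then gives nuclearity of $\mathcal E(K_i)$.

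Finally, Corollary \ref{cor:stable} implies that the inverse limit $\varprojlim_i \mathcal E(K_i)$ is nuclear, hence so is $C^\infty(W)$. The main obstacle is the identification of $C^\infty(W)$ with $\varprojlim_i\mathcal E(K_i)$ together with the construction of the linear Whitney/Seeley extensions: one has to verify that the class of jets arising from smooth functions on the simplex-open set $W$ coincides with the Whitney-jet class on each compact $K_i\subset\Delta^n\subset\mathbb R^n$, so that the standard extension theorems apply; this is essentially the content of Seeley's theorem recalled at the start of Section \ref{sect-notations}, applied in a neighbourhood of each $K_i$.
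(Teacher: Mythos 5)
Your proposal is essentially correct and follows the same overall strategy as the paper's own proof: exhibit $C^\infty(W)$ as an inverse limit of spaces that are complemented subspaces of $C^\infty(\mathbb R^n)$ via extension operators, then invoke stability of nuclearity under subspaces (Theorem \ref{thm:stable}) and limits (Corollary \ref{cor:stable}). The difference is in the decomposition: the paper covers $W$ by closed subsets diffeomorphic to cubes $I^n_\epsilon$ and applies Seeley's theorem for domains with corners to each piece, whereas you use a countable compact exhaustion $K_1\subset K_2\subset\cdots$ of $W$ and Whitney jet spaces $\mathcal E(K_i)$. Your formulation has a mild expositional advantage in that it realizes $C^\infty(W)$ as a countable Fr\'echet inverse limit in one step, rather than an a priori uncountable one.

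There is one genuine gap to flag. For an arbitrary compact set $K\subset\mathbb R^n$, Whitney's extension theorem asserts only set-theoretic surjectivity of the jet-restriction $C^\infty(\mathbb R^n)\to\mathcal E(K)$; it does \emph{not} produce a continuous \emph{linear} extension operator. Linear extension can fail for compact sets with bad geometry (e.g.\ outward cusps), so your claim that ``Whitney's extension theorem\dots\ supplies a continuous linear operator $E_i:\mathcal E(K_i)\to C^\infty(\mathbb R^n)$'' is not automatic for a generic compact exhaustion. You partially acknowledge the issue by invoking Seeley in the simplex-boundary case, but the argument should either (a) choose the $K_i$ explicitly from a class admitting linear extension --- for instance intersections of closed coordinate rectangles with $\Delta^n$, which are domains with corners so that Seeley applies, exactly as the paper does with $I^n_\epsilon$ --- or (b) cite a result guaranteeing linear extension for the class of compacta actually used. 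Once the $K_i$ are so chosen, the rest of the argument (identifying $C^\infty(W)$ with $\varprojlim_i\mathcal E(K_i)$, realizing each $\mathcal E(K_i)$ as a complemented subspace of $C^\infty(\mathbb R^n)$, then applying Theorem \ref{thm:stable} and Corollary \ref{cor:stable}) is sound.
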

\begin{proof}  Let $I^n_\epsilon\subset\mathbb{R}^n$ denote the $n$-dimensional cube in $\mathbb{R}^n$ with side of length $\epsilon>0$. Using Seeley's extension theorem, more specifically its version for domains with corners \cite[Proposition 24.10]{Kriegl97}, we conclude that the restriction map ${C^\infty(\mathbb{R}^n)\to C^\infty(I^n_\epsilon)}$ has a continuous splitting and, since $C^\infty(\mathbb{R}^n)$ is nuclear, \cite[Corollary to Theorem 51.5]{Treves67}, its retract $C^\infty(I^n_\epsilon)$ is likewise nuclear. Moreover, clearly the algebra of smooth functions on any closed subset in $\mathbb{R}^n$ diffeomorphic to $I^n_\epsilon$ also forms a nuclear space as it is isomorphic to $C^\infty(I^n_\epsilon)$.  We then deduce nuclearity of $W$ by representing it as a union of a collection of subsets diffeomorphic to $I^n_\epsilon$ and using Theorem \ref{thm:limit}. 
\end{proof}

Given a smooth manifold $X$ we consider its de Rham algebra $\Om(X)$ and for a simplicial complex $K$ we consider its piecewise smooth de Rham algebra $\Om(K)$. We also consider the piecewise smooth de Rham algebra on any open subset $U$ of the underlying space $|K|$ of $K$. Theorem \ref{thm:smooth} implies the following.
\begin{cor}\label{cor:smooth}
If $W$ be an open subset of $\mathbb{R}^n$ or of some standard simplex $\Delta^{n}$ then the dg algebra $\Omega(W)$ is
nuclear.	\qed
\end{cor}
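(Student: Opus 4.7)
The plan is to reduce Corollary \ref{cor:smooth} to Theorem \ref{thm:smooth} together with the closure properties of the class of nuclear spaces established in Theorem \ref{thm:stable}. The key observation is that, as a topological vector space, $\Omega(W)$ is a finite direct sum of copies of $C^\infty(W)$.

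To see this, I would first fix coordinates $x_1,\ldots,x_n$ on the ambient Euclidean space containing $W$ (where in the simplex case I regard $\Delta^n$ as sitting inside its affine span). Then every differential form $\omega\in \Omega^k(W)$ admits a unique expansion $\omega = \sum_{|I|=k} f_I\, dx_I$ with $f_I\in C^\infty(W)$, so the assignment $\omega \mapsto (f_I)_{|I|=k}$ yields a topological isomorphism $\Omega^k(W)\cong C^\infty(W)^{\binom{n}{k}}$, the topology on both sides being that of uniform convergence of the component functions and all their partial derivatives on compact subsets of $W$ (in the simplex case, with continuous extension to the boundary of every face of $\Delta^n$ meeting $W$). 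Summing over $k$, one obtains a topological isomorphism $\Omega(W)\cong \bigoplus_{k=0}^n C^\infty(W)^{\binom{n}{k}}$, a \emph{finite} direct sum.

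By Theorem \ref{thm:smooth}, the space $C^\infty(W)$ is nuclear. Since a finite direct sum of topological vector spaces coincides with the corresponding finite direct product, Theorem \ref{thm:stable} (nuclearity is preserved by arbitrary direct products) applies, showing that $\Omega(W)$ is nuclear.

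I do not anticipate any genuine obstacle: all of the analytic content — in particular Seeley's extension theorem, which handles the subtle case of an open subset of a simplex with its corners — has already been absorbed into Theorem \ref{thm:smooth}. The only minor verification is that the natural topology on $\Omega(W)$ really agrees with the product topology under the coordinate decomposition above, but both are characterized by the same defining family of seminorms (suprema on compacts of the coefficient functions and their derivatives), so this is immediate from the definitions.
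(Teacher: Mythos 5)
Your proof is correct and fills in precisely what the paper leaves implicit (the paper states the corollary with a \qed and the phrase ``Theorem \ref{thm:smooth} implies the following,'' giving no further argument). The coordinate decomposition $\Omega(W)\cong\bigoplus_{k=0}^n C^\infty(W)^{\binom{n}{k}}$, combined with Theorem \ref{thm:smooth} and closure of nuclearity under finite products (a special case of Theorem \ref{thm:stable}), is exactly the intended reasoning.
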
	

\begin{thm}\label{thm:product}
	Let $U, W$ be open subsets of topological simplices $\Delta^{n}$ and $\Delta^m$ respectively for some $n,m>0$. Then there is a natural topological isomorphism $\Omega(U\times W)\cong \Omega(U)\otimes \Omega(W)$.\
\end{thm}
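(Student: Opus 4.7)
The plan is to reduce the statement to the corresponding topological isomorphism of smooth functions, $C^\infty(U \times W) \cong C^\infty(U) \otimes C^\infty(W)$, and then invoke Grothendieck's classical theorem on nuclear spaces. The natural candidate is the map $\Omega(U) \otimes \Omega(W) \to \Omega(U \times W)$ sending $\alpha \otimes \beta$ to $\pi_U^* \alpha \wedge \pi_W^* \beta$, where $\pi_U, \pi_W$ are the projections onto the two factors; this is manifestly a morphism of dg algebras.

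Since $\Delta^n$ has a trivial tangent bundle (on its interior, with the usual extension via Seeley to boundary forms), $\Omega(U)$ has a canonical topological vector space identification $\Omega(U) = C^\infty(U) \otimes_{\mathbb{R}} \Lambda^*(\mathbb{R}^n)$, the latter being the finite-dimensional exterior algebra on $dx_1, \ldots, dx_n$; likewise $\Omega(W) = C^\infty(W) \otimes_{\mathbb{R}} \Lambda^*(\mathbb{R}^m)$ and $\Omega(U \times W) = C^\infty(U \times W) \otimes_{\mathbb{R}} \Lambda^*(\mathbb{R}^n) \otimes_{\mathbb{R}} \Lambda^*(\mathbb{R}^m)$, using the product trivialization of the cotangent bundle by $dx_i, dy_j$. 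Since tensoring with a finite-dimensional space commutes with any topological tensor product and with completion, the claim reduces to showing that the natural map $C^\infty(U) \otimes C^\infty(W) \to C^\infty(U \times W)$ --- the continuous linear extension of $(f, g) \mapsto [(x,y) \mapsto f(x)g(y)]$ obtained from the universal property of the projective tensor product --- is a topological isomorphism.

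For this final step, I would invoke nuclearity: by Theorem \ref{thm:smooth}, both $C^\infty(U)$ and $C^\infty(W)$ are nuclear, so the projective and injective tensor products coincide, placing us in the setting of Grothendieck's classical theorem (see, e.g., \cite[Theorem 51.6]{Treves67}), which identifies $C^\infty(U) \mathbin{\hat{\otimes}} C^\infty(W)$ with $C^\infty(U \times W)$. The main obstacle is really just this analytic input, which I treat as a black-box citation; the remaining work is bookkeeping --- matching topologies across the tensor decompositions and verifying that the wedge product and exterior differential, being determined by their action on smooth functions and on the generators $dx_i, dy_j$, are preserved by the identification.
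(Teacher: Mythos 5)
Your overall strategy matches the paper's: both proofs reduce the statement to the scalar-valued case $C^\infty(U\times W)\cong C^\infty(U)\otimes C^\infty(W)$ (the paper states this reduction without your explicit $\Lambda^*$-bookkeeping, but it is the same observation) and then appeal to Grothendieck/Treves for the analytic content.

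The gap is in the final citation. Treves \cite[Theorem 51.6]{Treves67} asserts $\mathcal E(X)\hat\otimes\mathcal E(Y)\cong\mathcal E(X\times Y)$ for $X,Y$ \emph{open subsets of Euclidean space}. Here $U$ and $W$ are open in closed simplices $\Delta^n,\Delta^m$, so $C^\infty(U)$ is a space of functions smooth up to the boundary/corners -- this is not a space of the form $\mathcal E(X)$, and the theorem does not apply to it directly. Nuclearity (your Theorem \ref{thm:smooth}) ensures the projective and injective tensor products agree, but that alone does not identify the completed tensor product with $C^\infty(U\times W)$; one still needs to produce the isomorphism. The paper bridges this by first reducing to cubes $I^n_\epsilon$ (using Theorem \ref{thm:limit}, as in the proof of Theorem \ref{thm:smooth}, to pass tensor products through the limit expressing $C^\infty(U)$ as an inverse limit over cubes), and then invoking Seeley's extension theorem to exhibit $C^\infty(I^n_\epsilon)$ as a \emph{retract} of $C^\infty(\mathbb R^n)$. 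The desired map for cubes is then a retract of the map $C^\infty(\mathbb R^n)\otimes C^\infty(\mathbb R^m)\to C^\infty(\mathbb R^{n+m})$, and since a retract of an isomorphism is an isomorphism, Treves 51.6 applies at that stage. You invoke Seeley only to extend the cotangent-bundle trivialization, not for this retraction step, so the argument as written does not actually cover the simplex case.
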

\begin{proof}
It suffices to prove  the isomorphism $C^\infty(U\times W)\cong C^\infty(U)\otimes C^\infty(W)$. Arguing as in the proof of Theorem \ref{thm:smooth}, we represent $U$ and $V$ as unions of subsets diffeomorphic to cubes $I^n_\epsilon$ and $I^m_{\epsilon^\prime}$; it will be sufficient to prove the desired isomorphism for $U=C^\infty(I^n_\epsilon)$, $V=C^\infty(I^m_{\epsilon^\prime})$. 
Since $C^\infty(I^n_\epsilon)$ and $C^\infty(I^m_{\epsilon^{\prime}})$ are retracts of $C^\infty(\mathbb{R}^n)$ and $C^\infty(\mathbb{R}^m)$ respectively, the natural map ${C^\infty(I^n_\epsilon)\otimes C^\infty(I^m_{\epsilon^\prime})\to C^\infty(I^n_\epsilon\times I^m_{\epsilon^\prime})}$ is a retract of the map $C^\infty(\mathbb{R}^n)\otimes C^\infty(\mathbb{R}^m)\to C^\infty(\mathbb{R}^n\times \mathbb{R}^m)$ and so is an isomorphism since the latter map is, \cite[Theorem 51.6]{Treves67}.
\end{proof}

\begin{cor}\label{cor:product}
Let $U$ and $W$ be open subsets of the underlying spaces of simplicial complexes $K$ and $L$. Then $\Om(U \times W) \cong \Om(U)\otimes \Om(W)$.
\end{cor}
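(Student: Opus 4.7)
The plan is to reduce to the simplex-wise statement of Theorem \ref{thm:product} by commuting the completed tensor product with the defining inverse limits, using nuclearity. First I would unwind the definitions: by the paper's convention,
\[
\Omega(U)=\varprojlim_{\Delta\in K}\Omega(|\Delta|\cap U),\qquad \Omega(W)=\varprojlim_{\Delta'\in L}\Omega(|\Delta'|\cap W),
\]
and, extending $\Omega$ in the obvious way to the cell structure on $|K|\times|L|$ whose cells are the products $|\Delta|\times|\Delta'|$,
\[
\Omega(U\times W)=\varprojlim_{(\Delta,\Delta')}\Omega\bigl((|\Delta|\times|\Delta'|)\cap(U\times W)\bigr).
\]

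Next I would invoke nuclearity. Each space $\Omega(|\Delta|\cap U)$ and $\Omega(|\Delta'|\cap W)$ is nuclear by Corollary \ref{cor:smooth}, and nuclearity is stable under arbitrary inverse limits by Corollary \ref{cor:stable}, so $\Omega(U)$ and $\Omega(W)$ are themselves nuclear. Two successive applications of Theorem \ref{thm:limit}, one in each tensor factor, then yield
\[
\Omega(U)\otimes\Omega(W)\cong \varprojlim_{(\Delta,\Delta')}\bigl(\Omega(|\Delta|\cap U)\otimes\Omega(|\Delta'|\cap W)\bigr).
\]
For each pair $(\Delta,\Delta')$, Theorem \ref{thm:product} supplies a natural topological isomorphism
\[
\Omega(|\Delta|\cap U)\otimes\Omega(|\Delta'|\cap W)\;\cong\;\Omega\bigl((|\Delta|\cap U)\times(|\Delta'|\cap W)\bigr),
\]
and the naturality in $\Delta$ and $\Delta'$ lets these isomorphisms assemble into an isomorphism of inverse systems. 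Passing to the limit then identifies $\Omega(U)\otimes\Omega(W)$ with $\Omega(U\times W)$.

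The only genuinely delicate point -- which I would flag as the main technical issue rather than a true obstacle -- is justifying the inverse-limit description of $\Omega(U\times W)$. One has to check that demanding a smooth form on each product cell $|\Delta|\times|\Delta'|$ (in the sense that it and its derivatives extend continuously to the boundary) with compatibility along inclusions recovers the intended notion of piecewise smooth form on $|K|\times|L|$. This is an application of Seeley's extension theorem on products of simplices, entirely parallel to the treatment for single simplices given in Section \ref{sect-notations}; once it is in place the argument above is purely formal.
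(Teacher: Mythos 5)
Your proof follows the same route as the paper: write each factor as an inverse limit over simplices, use nuclearity (Corollary~\ref{cor:smooth}) together with Theorem~\ref{thm:limit} to commute the completed tensor product past the limits in each factor, and then apply Theorem~\ref{thm:product} cell by cell. The one thing you do that the paper does not is flag the need to justify the inverse-limit description of $\Omega(U\times W)$ over product cells $|\Delta|\times|\Delta'|$, since $|K|\times|L|$ is not literally a simplicial complex; this is a legitimate point that the paper silently elides, and your sketch of how to handle it (Seeley-type extension on products of simplices, mirroring the single-simplex case) is the right idea.
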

\begin{proof}
By definition $\Om(U) = \lim_{\Delta \in K} \Om(U \cap |\Delta|)$. As the tensor product commutes past the limits by Theorems \ref{thm:smooth} and \ref{thm:limit} it suffices to check the result for open subsets of the standard simplex, which is the content of Theorem \ref{thm:product}.
\end{proof}
\bibliography{./biblibrary2}

\end{document}